\newcommand{\svskip}{\vspace{3mm}}
\newcommand{\ba}{\begin{array}}
\newcommand{\ea}{\end{array}}
\newcommand{\noin}{\noindent}
\newcommand{\med}{\medskip}
\newcommand{\ov}{\overline}
\newcommand{\To}{\rightarrow}
\newcommand{\A}{{\mathbb A}}
\newcommand{\C}{{\mathbb C}}
\newcommand{\Z}{{\mathbb Z}}
\newcommand{\BP}{{\mathbb P}}
\newcommand{\Q}{{\mathbb Q}}
\newcommand{\PP}{\mathbb{P}}
\newcommand{\SO}{{\mathcal O}}
\newcommand{\Aut}{{\rm Aut}}
\newcommand{\Bk}{{\rm Bk}\:}  
\newcommand{\GL}{{\rm GL}}
\newcommand{\lkd}{\ol{\kappa}}
\newcommand{\MNC}{{\rm MNC}}  
\newcommand{\mult}{{\rm mult}}
\newcommand{\NC}{{\rm NC}}
\newcommand{\Pic}{{\rm Pic}\:}
\newcommand{\rank}{{\rm rank}\:}
\newcommand{\red}{{\rm red}\:}
\newcommand{\Sing}{{\rm Sing}\:}
\newcommand{\SNC}{{\rm SNC}}  
\newcommand{\Supp}{{\rm Supp}\:}
\newcommand{\dps}[1]{\displaystyle{#1}}
\newcommand{\wt}{\widetilde}
\newcommand{\ol}{\overline}
\newcommand{\wh}{\widehat}
\newcommand{\is}[2]{({#1}\cdot{#2})}
\newtheorem{thm}{Theorem}[section]
\newtheorem{lem}[thm]{Lemma}
\newtheorem{prop}[thm]{Proposition}
\newtheorem{cor}[thm]{Corollary}
\newtheorem{remark}[thm]{{\sc Remark}}
\newtheorem{setup}[thm]{Setup}
\newenvironment{claim}[1]{\par\noindent{\bf Claim {#1}.}~\em}{\vspace{0mm}}
\newtheorem{obs}[thm]{Observation}
\numberwithin{equation}{section}
\begin{document}
\title[Singularities of a homology plane of general type]
{A homology plane of general type can have at most a cyclic quotient singularity}
\author{R.V. Gurjar, M. Koras*, M. Miyanishi** and P. Russell***}
\thanks{Research supported by the RIP-program at Mathematisches Forschungsinstitut Oberwolfach}
\thanks{* Supported by Polish Grant MNiSW}
\thanks{** "Supported by Grant-in-Aid for Scietific
Research (c), JSPS}
\thanks{*** Supported by NSERC, Canada}
\date{}
\keywords{Homology plane of general type, BMY inequality, Zariski-Fujita decomposition, quotient singularity}
\subjclass{Primary: 14J17; Secondary: 14R05, 14R20}
\maketitle

\begin{abstract}
We show that a homology plane of general type has at worst a single cyclic quotient singular point.
An example of such a surface with a singular point does exist \cite[Theorem 3.1]{MS}. We also show that the automorphism group of a smooth contractible surface of general type is cyclic.
\end{abstract}

\setcounter{section}{-1}
\section{Introduction.}
Recall that a normal affine surface $V$ such that the singular homology groups $H_i(V,\Z)=(0)$
for $i>0$ is called a {\em $\Z$-homology plane} (simply a {\em homology plane}). Similarly,
{\em a $\Q$-homology plane} is defined. In \cite[Lemma 3.2]{GKMR} it was shown that if $V$ is
a $\Q$-homology plane such that the logarithmic Kodaira dimension $\lkd(V\setminus\Sing~V)=2$, then
$V$ has at most one singular point and it is a quotient singular point. In particular, $V$ is rational
by \cite{GPS, GP}.
\svskip

In this paper we will prove the following result.
\svskip

\noindent
\textbf{Theorem 1.\ } {\em Let $S'$ be a $\Z$-homology plane. If $\lkd(S'\setminus \Sing S')=2$, then
$S'$ has at most one singular point and it is a cyclic quotient singularity.}
\svskip

As an application of Theorem 1 we will prove the following result about the automorphism group
of a smooth contractible (hence affine) surface of general type.
\svskip

\noindent
\textbf{Theorem 2.\ } {\em Let $V$ be a smooth homology plane  with $\lkd(V)=2$. Suppose that $\Aut(V)\neq \{1\}$. Then we have the
following assertions.
\begin{enumerate}
\item[{\rm (1)}]
$\Aut(V)$ has a unique fixed point $p$ and the action of $\Aut(V)$ is free on
$V\setminus\{p\}$.
\item[{\rm (2)}]
$\Aut(V)$ is a finite cyclic group isomorphic to a small subgroup of $\GL(2,\C)$ (in the induced action on the tangent space at $p$).
\item[{\rm (3)}]
The quotient surface $V':=V/\Aut(V)$ has a unique singular point $q$ and the local fundamental group at q is isomorphic to
$\Aut(V)$. If V is simply connected, then $\pi_1(V'\setminus{q})=\Aut(V)$.
\end{enumerate}}
\svskip

In \cite{GKMR}, we proved that a normal affine contractible surface $V$ such that $\pi_1(V\setminus\Sing V)=(1)$
is actually smooth. The proof used a deep result due to C. Taubes involving gauge theory.
Our proof of Theorem 1 relies entirely on algebraic surface theory and avoids Taubes's result. We first establish that the boundary divisor for a possible counterexample is \textit{star-shaped}, see Lemma \ref{Lemma 4.3}. We then present two quite different arguments to finish the proof.  In the first approach we use detailed computations of numerical data for the boundary divisor to show in the end that a counterexample does not exist. In the second approach we present a more conceptual argument based on observations concerning singular points on normal affine surfaces with good $\C^*$-actions. We have kept both approaches since the second is quite interesting in its own right, and since we believe that the first can eventually be sharpened to give a global bound for the order of the local fundamental group in Theorem 1.

\section{Preliminaries}
Our proof of Theorem 1 uses several results and terminology from the theory of open algebraic surfaces.
The book \cite{Mi} is a good reference for most of the terminology and basic results from this theory
used here. In order to keep the length of the paper to a minimum we refer the reader to \cite{Mi}
for any terms not defined here. We will also use some results from T. Fujita's paper \cite{F},
particularly \S\S 3, 4, 5, 6.
\svskip

We will only deal with complex algebraic or analytic varieties. We will write the canonical divisor
$K_X$ of a normal surface $X$ simply  $K$ when there is no possibility of confusion. A smooth
projective rational curve $C$ on a smooth projective surface $Z$ is a $(-n)$-curve if $C^{2}=-n$.
We will need this terminology mainly for $n=0,1,2$. A $\BP^{1}$-fibration on a normal projective surface
$Z$ is a morphism $f: Z \to B$  onto a smooth projective curve $B$ such that a general fiber of $f$ is
isomorphic to $\BP^{1}$. A simple normal crossing divisor $D$ on a smooth quasi-projective surface will
be called an $\SNC$ divisor. An irreducible component of an SNC divisor $D$ will be simply called
a component of $D$ if there is no confusion. For any
component $D_0$ of $D$ the connected components of $D-D_0$ which meet $D_0$ are called the {\em branches} of $D$ at $D_0$  and their number is called the {\em branching number} of $D_0$. A component $D_0$ is called a {\em branching component}
of $D$ if the branching number is $\geq 3$, i.e., if $D_0$ meets at least three other components of $D$.  If $D$ is an $\SNC$ divisor such that any
$(-1)$-curve in $D$ is a branching component of $D$ then we say that $D$ is an $\MNC$ divisor to mean a minimal normal
crossing divisor.

For an $\SNC$ divisor $D$ which is a tree (i.e., a connected union of rational curves without loops), we use the terms {\em tip, twig, maximal rational twig}, etc,
from \cite[\S 3]{F}. For an $\SNC$  divisor $D$ with components $D_i$ we denote by $d(D)$
the determinant of the matrix $(-(D_i\cdot D_j))$. We refer to \cite[2.1.1]{KR} for rewriting $d(D)$
in terms of the determinants of a branching component and its branches.

For a smooth projective surface $X$ and a (possibly disconnected) \SNC -divisor $D$ on $X$,
if $\lkd(X\setminus D)\geq 0$, then $K+D$ has a Zariski-Fujita decomposition $K+D\approx P+N$ where $\approx$ denotes the numerical equivalence. Here $P,N$ are
$\Q$-divisors, $P$ is nef, $N$ is effective, the intersection form on the irreducible components of
$\Supp N$ is negative definite and $P\cdot D_i=0$ for any component $D_i$ in $\Supp N$. If $N$ is not
equal to the {\em bark} of $D$, denoted by $\Bk(D)$, then $X$ contains a $(-1)$-curve $C$ not
contained in $D$ which meets $D$ transversally in at most two points. If $C$ meets $D$ in two points,
the intersection points on $D$ belong to  components $D_1, D_2$ contained in distinct
connected components $A, B$. Furthermore, either $A$ and $B$ are linear chains, or $A$ is a fork and
$B$ is a linear chain \cite[Chapter 2, 3.7]{Mi}. See also \cite[Lemma 6.20]{F}. The bark of an
admissible rational fork $T$ is denoted by $\Bk(T)$ in \cite{Mi} and by $\Bk^*(T)$ (and called the
{\em thicker} bark) in \cite{F}. To avoid a confusion, we employ the notation in \cite{Mi} and
call it the bark of a fork. We say that the pair $(Y,F)$ is {\em almost minimal}, or that the divisor $F$ is {\em almost-minimal} if the reference to $Y$ is clear,
if $N=\Bk(F)$ and $P=K_Y+F^\#$ with $F^\#=F-\Bk(F)$. Given a pair $(X,D)$ as above, there exists a birational morphism
$\sigma : X \to Y$ such that $Y$ is a smooth projective surface, $F=\sigma_*(D)$ is an SNC divisor
and $(Y,F)$ is almost minimal. By \cite[Theorem 3.11]{Mi}, we have $\sigma_*(K_X+D^\#) \ge K_Y+F^\#$.
If $(X,D)$ is almost minimal, the nef part $P=K_X+D^\#$ is obtained by peeling off the bark $\Bk(D)$
by the theory of peeling \cite{Mi}.

We say that an $\SNC$ divisor $T$ is {\em contractible} if it contracts to a quotient singular point or
a smooth point. Note that the intersection matrix of $T$ is then negative definite and $d(T)>0$.

As a matter of notation, if $D, D'$ are reduced effective divisors  with D' a part of D, $D-D'$ denotes the union of the irreducible components of $D$ that are not components of of $D'$. This is subtly different from the the set-theoretic difference $D\setminus D'$.

We will need the following useful result about singular fibers of a $\BP^1$-fibration $f : Z \to B$
on a smooth projective surface $Z$ \cite[Chapter 1, Lemma 2.11.2]{Mi}.

\begin{lem}\label{Lemma 1.1}
Let $F_0$ be a scheme-theoretic fiber of $f$ which is not isomorphic to $\BP^1$. Then we have
the following assertions.
\begin{enumerate}
\item[{\rm (a)}]
$F_{0}$ contains a $(-1)$-curve.
\item[{\rm (b)}]
If a $(-1)$-curve $E$ in $F_{0}$ occurs with multiplicity $1$ in $F_{0}$ then $F_{0}$ contains another
$(-1)$-curve.
\item[{\rm (c)}]
By successively contracting $(-1)$-curves in $F_{0}$ we can contract $F_{0}$ to a smooth fiber of the
induced $\BP^1$-fibration on the surface obtained from $Z$ by blowing downs.
\item[{\rm (d)}]
The dual graph of $F_0$ is a tree.
\item[{\rm (e)}] A $(-1)$-curve in $F_0$ of multiplicity $m > 1$ meets at most two other irreducible components of $F_0$ and if $m=1$ it meets precisely one.
\item[{\rm (f)}]
Suppose that $F_0$ contains exactly one $(-1)$-curve $C$ and let $F_0\setminus C=R \cup T$ be
the decomposition into connected components. Then $F_0$ contains at most two components with
multiplicity $1$. If $R$ contains an irreducible component of multiplicity $1$, then $T$ is a chain.
If both $R$ and $T$ contain a component with multiplicity $1$, then $F_0$ is a chain.
\end{enumerate}
\end{lem}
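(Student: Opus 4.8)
The plan is to reduce everything to two numerical facts about an arbitrary fiber together with the relative-minimalization procedure. First I would record the invariants: since all fibers are algebraically equivalent and a general fiber is $\BP^1$, one has $F_0^2=0$ and, by adjunction, $K\cdot F_0=-2$ and $p_a(F_0)=0$; moreover $F_0$ is connected (Stein factorization, the general fiber being connected), and by Zariski's Lemma the intersection matrix of the components of $F_0$ is negative semi-definite with kernel spanned by $F_0$. Writing $F_0=\sum_i m_i C_i$ with $m_i\ge 1$, every component satisfies $C_i\cdot F_0=0$, which yields the \emph{local multiplicity balance} $m_i C_i^2+\sum_{j\ne i}m_j(C_i\cdot C_j)=0$ at each $C_i$; this single relation drives (e) and (f). The vanishing of $p_a(F_0)$ together with negative-definiteness on every reducible subconfiguration forces each $C_i$ to be a smooth rational curve and the dual graph to be a tree (a loop, a positive-genus component, or three branches through one point would make $p_a(F_0)>0$), giving (d).

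For (a), I would use the balance globally: $\sum_i m_i(K\cdot C_i)=K\cdot F_0=-2<0$, so some component has $K\cdot C_i<0$; for it, adjunction gives $C_i^2=-2-K\cdot C_i\ge -1$ (using $p_a(C_i)=0$), while Zariski's Lemma gives $C_i^2\le -1$ once $F_0$ is reducible, whence $C_i^2=K\cdot C_i=-1$ is a $(-1)$-curve (the irreducible non-reduced case being excluded for a $\BP^1$-fibration in characteristic $0$ by the same parity computation). Part (c) is then the relative-minimalization loop: contract a $(-1)$-curve produced by (a); the image is again a fiber of a $\BP^1$-fibration on a smooth surface with the same invariants and one fewer component, so the process terminates at an irreducible fiber, which by its invariants is a smooth $\BP^1$. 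This simultaneously exhibits $F_0$ as a total transform of a smooth fiber under a sequence of blow-ups, the engine for the remaining parts.

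Parts (b) and (e) follow by combining the balance with the blow-up history. For (e) the balance at a $(-1)$-curve $E$ of multiplicity $m$ reads $\sum_{C_j\sim E}m_j=m$ (transversality from (d)); when $m=1$ this forces exactly one neighbour, of multiplicity $1$. For $m>1$ the bound ``at most two'' is read off the blow-up structure: a $(-1)$-curve is born meeting the strict transforms of the at most two components through the blown-up point, and any later blow-up attaching a further component to $E$ must be centred on $E$ and so lowers $E^2$ below $-1$; hence a curve still equal to a $(-1)$-curve in $F_0$ meets at most two components. For (b) I would induct on the number of components: if $E$ is a multiplicity-$1$ $(-1)$-curve it is, by (e), a tip meeting a single multiplicity-$1$ curve $C_1$; contracting $E$ gives a fiber with the same invariants, which is either the curve $C_1$ alone (the two-component base case, in which $C_1$ is then itself a $(-1)$-curve of $F_0$) or is reducible and supplies a $(-1)$-curve via (a) whose strict transform is the required second $(-1)$-curve in $F_0$.

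The delicate part, and the one I expect to be the main obstacle, is (f). Note first that (b) already forces the unique $(-1)$-curve $C$ to have multiplicity $\ge 2$, so by (e) it meets at most two components, giving the splitting $F_0\setminus C=R\cup T$. The three assertions — at most two multiplicity-$1$ components in all, the chain property of the side opposite a multiplicity-$1$ component, and the full chain structure when both $R$ and $T$ carry a multiplicity-$1$ component — I would prove by the same contraction induction, tracking carefully how the multiplicity balance propagates multiplicities toward the tips. The crux is that a multiplicity-$1$ component can occur only as a tip arising as a strict transform (node blow-ups strictly raise multiplicities), so every branching component has multiplicity $\ge 2$; combining this with the balance along each twig forces the relevant sides to be linear. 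Getting the multiplicity bookkeeping exactly right along a possibly long branched tree, while preserving the inductive hypothesis under contraction of $C$, is where essentially all the work lies.
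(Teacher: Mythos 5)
First, a remark on the comparison you were asked for: the paper contains no proof of this lemma at all --- it is quoted as a known result from \cite[Chapter 1, Lemma 2.11.2]{Mi} --- so your proposal has to stand on its own. Your arguments for (a), (c), (d), (e) are the standard ones and are sound: Zariski's lemma plus adjunction and the parity exclusion of multiple fibers give (a) and (d), the contraction loop gives (c) and realizes $F_0$ as the total transform of a $0$-curve, and the balance $\sum_{C_j\sim E}m_j=m$ plus the blow-up history gives (e). The first genuine gap is in (b): your inductive step never actually uses the inductive hypothesis, and the claim it rests on is false. After contracting $E$, the $(-1)$-curve supplied by (a) in the image fiber may be the image of the neighbour $C_1$, whose strict transform in $Z$ has self-intersection $-2$ and is therefore \emph{not} a second $(-1)$-curve of $F_0$. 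Concretely, take the fiber which is a chain with weights $-1,-2,-1$ and all multiplicities $1$: contracting one end curve yields the chain with weights $-1,-1$, and (a) may hand you the image of the middle curve. The repair is easy but requires invoking (b) inductively: either some $(-1)$-curve of the image fiber is the image of a component other than $C_1$ (then that component is the desired second $(-1)$-curve, since it is disjoint from $E$), or else the image of $C_1$ is the \emph{unique} $(-1)$-curve of the image fiber and has multiplicity $1$, so the inductive hypothesis produces a second $(-1)$-curve there, a contradiction.

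The more serious gap is (f), where you offer a plan rather than a proof. Your ``crux'' --- that under the unique-$(-1)$-curve hypothesis every multiplicity-$1$ component is a tip, so branching components have multiplicity $\geq 2$ --- is indeed true and indeed the key point, but the justification you give (``node blow-ups strictly raise multiplicities'') does not prove it: as a statement about arbitrary fibers it is false. Blowing up three distinct points of a smooth fiber produces a branching component of multiplicity $1$, and the chain $-1,-2,-1$ above has a multiplicity-$1$ component that is not a tip; so any correct proof must genuinely use uniqueness of the $(-1)$-curve, which your justification never does. (One way: prove by induction on the number of components, using (a) and (b), that for \emph{every} multiplicity-$1$ component $D$, \emph{each} connected component of $F_0-D$ contains a $(-1)$-curve of $F_0$; uniqueness then forces $F_0-D$ to be connected, i.e.\ $D$ is a tip.) Moreover, even granting the crux, the passage to the three assertions of (f) is only gestured at, and --- as you yourself note --- the hypothesis ``exactly one $(-1)$-curve'' is not preserved when $C$ is contracted, since both neighbours of $C$ may become $(-1)$-curves; so ``the same contraction induction'' needs a different, stronger inductive statement that you never formulate. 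Since (f) is precisely the part of the lemma that carries the content used later in the paper (e.g.\ in Lemmas \ref{Lemma 2.5}, \ref{Lemma 2.7} and \ref{Lemma 4.2}), the proposal as it stands is incomplete.
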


We will use the following consequence of Lemma \ref{Lemma 1.1}, which is probably well-known
to experts.

\begin{lem}\label{Lemma 1.2}
Let $f : Z \to B$ be a $\BP^1$-fibration on a normal projective surface $Z$ and let $F_{0}$ be a
scheme-theoretic fiber of $f$ which is not a smooth fiber. Then $Z$ has at worst rational
singularities at points on $F_{0}$. If an irreducible component $C$ of $F_{0}$ occurs with multiplicity
$1$ in $F_{0}$ then $Z$ is smooth at any point on $C$ which does not lie on any other irreducible
component of $F_{0}$.
\end{lem}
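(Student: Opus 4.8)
The plan is to reduce everything to the structure theory for singular fibers on a \emph{smooth} surface supplied by Lemma \ref{Lemma 1.1}: for the first assertion by passing to a resolution, and for the second by a direct flatness argument. Fix $b_0=f(F_0)\in B$ and let $\sigma:\wt Z\to Z$ be a resolution of singularities (I would take the minimal resolution for the rationality statement). Then $\wt f:=f\circ\sigma:\wt Z\to B$ is again a $\BP^1$-fibration, now on a smooth projective surface, and the scheme-theoretic fiber $\wt F_0:=\wt f^*(b_0)=\sigma^*F_0$ is not isomorphic to $\BP^1$, so all of Lemma \ref{Lemma 1.1} applies to it. In particular, by Lemma \ref{Lemma 1.1}(d) the dual graph of $\wt F_0$ is a tree, and each component of $\wt F_0$ is a smooth rational curve.

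For the first assertion, fix a singular point $s$ of $Z$ on $F_0$. Since $\sigma$ is an isomorphism away from $\Sing Z$, the exceptional divisor $E_s:=\sigma^{-1}(s)$ is contracted to a point of $F_0$ and is therefore a connected union of components of $\wt F_0$. Thus $E_s$ is a connected subtree of the tree $\wt F_0$, i.e. a tree of smooth rational curves, and its intersection form is negative definite, being the exceptional set of a resolution. It is then a standard fact (Artin's numerical criterion for rationality) that a normal surface singularity whose minimal resolution has exceptional divisor a negative-definite tree of smooth rational curves is rational: the arithmetic genus $p_a(D)$ of every effective cycle $D$ supported on such a tree satisfies $p_a(D)\le 0$, as one checks directly for the reduced cycle (a tree with $n$ vertices has $n-1$ edges, giving $p_a=0$) and in general by the Laufer-type induction that peels off end components. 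Applying this at every singular point of $Z$ on $F_0$ gives the first assertion.

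For the second assertion I would argue directly on $Z$. First, $f$ is flat: $Z$ is a normal surface, hence Cohen--Macaulay, $B$ is a smooth curve, and every fiber of $f$ is one-dimensional, so miracle flatness applies. Consequently the arithmetic genus of the fibers is constant, equal to that of a general fiber $\BP^1$, so $p_a(F_0)=0$. Now suppose $C$ occurs with multiplicity $1$ in $F_0$ and let $p\in C$ lie on no other component of $F_0$. The surjection $\SO_{F_0}\twoheadrightarrow\SO_C$ induces a surjection $H^1(\SO_{F_0})\twoheadrightarrow H^1(\SO_C)$ (the cokernel sheaf is supported in dimension $\le 1$), whence $p_a(C)\le p_a(F_0)=0$; since $C$ is irreducible this forces $C\cong\BP^1$, so $C$ is smooth at $p$. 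Because $C$ has multiplicity $1$ and is the only component of $F_0$ through $p$, the scheme-theoretic fiber $f^*(b_0)$ coincides with the reduced curve $C$ near $p$; hence the fiber of $f$ is smooth at $p$. A flat morphism whose fiber is smooth at a point is smooth at that point, and a morphism smooth over a regular base has regular source; therefore $Z$ is smooth at $p$, as claimed.

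The main obstacle I anticipate is the rationality step: the reduction to Lemma \ref{Lemma 1.1} is formal, but verifying that an arbitrary negative-definite tree of rational curves forces $p_a(D)\le 0$ for \emph{every} effective cycle $D$, not merely for the reduced or the fundamental cycle, requires the inductive peeling argument and is the one genuinely non-formal ingredient. By contrast, the flatness argument for the second assertion deliberately sidesteps the integrality subtleties that plague a purely resolution-based approach there: after resolving, one would need the multiplicity of each exceptional curve of $\wt F_0$ to be a positive integer while also accounting for the $(-1)$-curve forced by Lemma \ref{Lemma 1.1}(a), and neither condition alone suffices (configurations such as an $A_3$-chain versus a $D_4$-fork attached to the proper transform $\wt C$ are excluded by different ones of these two constraints), so a clean case-free contradiction is hard to obtain on $\wt Z$. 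This is exactly the difficulty the flatness criterion removes.
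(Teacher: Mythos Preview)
The paper omits a proof of this lemma entirely, calling it ``probably well-known to experts'' and a consequence of Lemma~\ref{Lemma 1.1}. So there is no paper proof to compare against; I evaluate your argument on its own.

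Your argument for the second assertion via miracle flatness is correct and efficient. One small wording slip: it is the \emph{kernel} sheaf of $\SO_{F_0}\twoheadrightarrow\SO_C$ whose vanishing $H^2$ (dimension reasons) yields the surjection on $H^1$, not a ``cokernel sheaf''. Otherwise the chain $p_a(F_0)=0\Rightarrow p_a(C)=0\Rightarrow C\cong\BP^1\Rightarrow$ fiber smooth at $p\Rightarrow f$ smooth at $p\Rightarrow Z$ regular at $p$ is sound.

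Your argument for the first assertion has a genuine gap. The assertion that \emph{any} normal surface singularity whose minimal resolution is a negative-definite tree of smooth rational curves is rational is false, and no ``Laufer-type peeling of end components'' will prove it. A concrete counterexample: take a central $(-3)$-curve $C_0$ with five $(-2)$-tips $C_1,\dots,C_5$. This is negative definite (the star criterion gives $3>5/2$), and the fundamental cycle is $Z_f=2C_0+\sum C_i$, for which $Z_f^2=-2$, $K\cdot Z_f=2$, hence $p_a(Z_f)=1>0$; by Artin's criterion the singularity is not rational. So ``tree of rational curves'' alone is not enough.

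What rescues the lemma is precisely the extra structure of a $\BP^1$-fiber that Lemma~\ref{Lemma 1.1}(c) provides: $\wt F_0$ is obtained from a smooth $\BP^1$ by successive blow-ups. One then shows by induction on the number of blow-ups that \emph{every} effective divisor $D>0$ supported on $\wt F_0$ satisfies $p_a(D)\le 0$. Indeed, if $\pi$ blows down a $(-1)$-curve $L$ in the fiber and $m=-(D\cdot L)$, a direct computation gives
\[
p_a(D)=p_a(\pi_*D)-\tfrac{m(m+1)}{2}\le p_a(\pi_*D),
\]
and the base case $D=nF$ on a smooth fiber gives $p_a=1-n\le 0$. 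Since $E_s\subset\Supp\wt F_0$, Artin's criterion then applies. Your outline correctly identifies this as the delicate step but misattributes the difficulty: it is not that the inductive bookkeeping is tedious, it is that without using Lemma~\ref{Lemma 1.1}(c) the claim is simply false.
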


Let $C$ be a $(0)$-curve on a smooth projective rational surface $Z$. Then the linear system $|C|$ induces a $\BP^1$-fibration on $Z$ with $C$ as a fiber. We record the following consequences of this fact which we will use repeatedly below.

\begin{lem}\label{Lemma 1.2.0}
Let $Z$ be a smooth projective rational surface and $T$ a $SNC$-divisor on $Z$ consisting of rational curves. Suppose one of the following.\\
(a) A maximal twig $L$ of $T$ is a non-contractible linear chain.\\
(b) A linear chain $L$ connecting two branching components of $T$ is not contractible.\\
Then by blowing up and down over $L$ we can replace $Z$ by a smooth projective surface $Z'$, $T$ by an SNC-divisor $T'$ and $L$ by a linear chain $L'$ such that $Z'\setminus T' = Z\setminus T$ and $L'$ contains a $(0)$-curve $C$ which is a tip of $L'$ and a tip of $T'$ in case (a).

$C$ induces a $\C$-fibration of $Z\setminus T$ in  case (a) and a $\C^*$-firation in case (b).
\end{lem}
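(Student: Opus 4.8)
The plan is to reduce the statement to the combinatorics of elementary transformations of linear chains, and then to read off the fibration from the sentence preceding the lemma, namely that a $0$-curve $C$ on a smooth projective rational surface defines a $\BP^1$-fibration $|C|$ having $C$ as a fiber. First I would record two harmlessness facts. Blowing up a point lying on $T$ and then contracting a $(-1)$-curve contained in the resulting boundary leaves $Z\setminus T$ unchanged and preserves the $\SNC$ property; since every move below is of this type (``over $L$''), the identity $Z'\setminus T'=Z\setminus T$ will hold automatically. Secondly, a linear chain of rational curves is contractible (to a quotient or smooth point) exactly when its intersection matrix is negative definite, and blowing up a point of a negative definite configuration again produces a negative definite configuration; hence contracting a $(-1)$-curve preserves the property of being \emph{not} negative definite.

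Since $L$ is not contractible it is not negative definite, and this is the engine for producing a $0$-curve. I would contract interior $(-1)$-curves of $L$ one at a time; each meets a neighbour, so its image lies on the boundary and the complement is preserved. By the second fact this process cannot terminate at a chain with all self-intersections $\le -2$ (a negative definite Hirzebruch--Jung string), so it must stop at a configuration having a component $C$ with $C^2\ge 0$. If $C^2>0$ I lower it to $0$ by repeatedly blowing up the node where $C$ meets a fixed neighbour, each blow-up decreasing $C^2$ by one while keeping $C$ in the chain.

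In case (b) this already suffices: the resulting $0$-curve is a component of a chain joining two branching components, so it meets exactly two components of $T'$. In case (a) I still have to slide the $0$-curve out to the free tip $L_1$. The basic device is the transfer move: if $C$ is a $0$-curve with neighbours $X$ and $Y$, then blowing up $C\cap Y$ and contracting $C$ yields a new $0$-curve whose left neighbour has self-intersection raised by $1$ and whose right neighbour has it lowered by $1$. Iterating raises $X^2$ until it is $\ge 0$; I then turn $X$ into the $0$-curve and repeat, pushing non-negativity one component closer to $L_1$ at each stage, until $L_1^2\ge 0$. Because $L_1$ is the free tip and meets only its single neighbour, blowing up that node $L_1^2$ times finally makes $L_1$ a $0$-curve that is a tip of both $L'$ and $T'$. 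Throughout, the branching components may change self-intersection but remain branching and $T'$ remains $\SNC$.

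With $C$ in place, $|C|$ gives a $\BP^1$-fibration $\phi\colon Z'\to\BP^1$ with $C$ as a fiber, and a general fiber $F$ satisfies $F\approx C$, so it meets $T'$ in $C\cdot(T'-C)$ reduced points. In case (a), $C$ is a tip of $T'$ and $C\cdot(T'-C)=1$, so the fibers of $\phi$ over $Z\setminus T$ are $\BP^1$ minus one point, i.e.\ $\A^1\cong\C$: a $\C$-fibration. In case (b), $C$ is interior with $C\cdot(T'-C)=2$, giving $\C^*$-fibers. I expect the real difficulty to be concentrated in the previous paragraph: organizing the elementary transformations so as to slide a $0$-curve to a prescribed tip while keeping the divisor $\SNC$ and retaining the twig (resp.\ connecting-chain) shape. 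The existence of the $0$-curve and the final fibration count are then formal. All of the moves used are the standard elementary transformations of \cite[\S\S4--6]{F} and \cite[Chapter 1]{Mi}.
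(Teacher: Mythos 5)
The paper records this lemma without proof (it is stated as a standard consequence of the fact that a $(0)$-curve on a rational surface induces a $\BP^1$-fibration), so your proposal has to stand on its own. Your overall route is the standard and correct one: use non-negative-definiteness (equivalently non-contractibility) of the chain as the engine to force a component of self-intersection $\ge 0$, lower it to a $(0)$-curve by blowing up nodes, slide it to the free tip by elementary transformations in case (a), and then read off the fibration type from $C\cdot(T'-C)=1$ or $2$. Phases 2--4 (the reduction of $C^2>0$ to $0$, the transfer move, and the fiber count) are correct as you describe them.

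There is, however, one genuine gap, in phase 1. You contract only \emph{interior} $(-1)$-curves of $L$ and then claim the process can terminate only at a chain with all weights $\le -2$ or at one containing a component with $C^2\ge 0$. That dichotomy is false if the end components of $L$ are excluded from contraction: the chain with weights $(-1),(-2),(-1)$ has $\det(-I)=0$, hence is not negative definite and satisfies the hypothesis of the lemma (in case (b) with both ends meeting branching components, or in case (a) with one end free), yet it has no interior $(-1)$-curve and no component of self-intersection $\ge 0$, so your process stalls on it at the very first step. The repair is simply to allow contraction of \emph{every} $(-1)$-curve of (the image of) $L$, end components included: contracting the end curve meeting a branching component $B$ raises $B^2$ by one and leaves $B$ branching, the contracted curve still lies in the boundary chain, and your two preservation facts (complement unchanged, SNC preserved, non-negative-definiteness of the chain preserved, since the old chain is the reduced total transform of the new one) apply verbatim. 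With all $(-1)$-curves eligible, a terminal chain with no component $\ge 0$ really is a Hirzebruch--Jung string with all weights $\le -2$, hence negative definite --- a contradiction --- and the chain cannot contract away entirely, since that would make $L$ contractible; so the dichotomy, and with it the rest of your argument, is restored. (On the example above the repaired process indeed works: contracting the two tips produces exactly the desired $(0)$-curve.)
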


Here by a $\C$- (resp. $\C^*$-) fibration of a surface $Y$ we mean a morphism to a curve with general fiber $\C$ (resp. $\C^*)$. By the easy addition theorem for Kodaira dimension (see \cite[Lemma 1.13.1]{Mi}) we have $\lkd (Y)=-\infty$ (resp. $\lkd (Y) \leq 1$) if $Y$ has a $\C$- (resp. $\C^*$-) fibration. So we obtain

\begin{cor}\label{Corollary 1.2.0.1}
If $\lkd (Z\setminus T) =2$, then an $L$ as in Lemma \ref{Lemma 1.2.0} does not exist.
\end{cor}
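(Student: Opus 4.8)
The plan is to argue by contradiction, using Lemma \ref{Lemma 1.2.0} as a structural obstruction. I would suppose that an $L$ as in Lemma \ref{Lemma 1.2.0} does exist, so that one of the two hypotheses (a) or (b) is satisfied. In either case Lemma \ref{Lemma 1.2.0} applies: after a sequence of blow-ups and blow-downs carried out over $L$, it produces a smooth projective surface $Z'$ with SNC boundary $T'$ satisfying $Z' \setminus T' = Z \setminus T$, together with a $(0)$-curve $C$ inside the transformed chain $L'$.

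First, in case (a), the curve $C$ is a tip of $T'$, and Lemma \ref{Lemma 1.2.0} guarantees that the linear pencil $|C|$ restricts to a $\C$-fibration of $Z \setminus T$. The easy addition theorem for logarithmic Kodaira dimension then forces $\lkd(Z \setminus T) = -\infty$, which is incompatible with the hypothesis $\lkd(Z \setminus T) = 2$. In case (b), the same lemma yields instead a $\C^*$-fibration of $Z \setminus T$, whence $\lkd(Z \setminus T) \leq 1$, again contradicting $\lkd(Z \setminus T) = 2$. Since both alternatives lead to a contradiction, no such $L$ can exist.

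I expect essentially all the difficulty to be absorbed into Lemma \ref{Lemma 1.2.0} together with the easy addition theorem quoted just above the corollary; the corollary itself is then a purely logical consequence and requires no new computation. The one point I would check carefully is that the fibration produced really is a fibration of the \emph{open} surface $Z \setminus T$ and not merely of the completion $Z'$: it is crucial that deleting the boundary $T'$ converts the $\BP^1$-fibration induced by the $(0)$-curve $C$ into a genuine $\C$- or $\C^*$-fibration, so that the Kodaira-dimension bound applies to $Z \setminus T$ itself. This is precisely the assertion in the final sentence of Lemma \ref{Lemma 1.2.0}, so I do not anticipate a genuine obstacle here.
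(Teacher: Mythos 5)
Your proof is correct and is exactly the paper's argument: the corollary is stated immediately after the observation that a $\C$-fibration (resp. $\C^*$-fibration) forces $\lkd(Y)=-\infty$ (resp. $\lkd(Y)\leq 1$) by the easy addition theorem, so an $L$ as in Lemma \ref{Lemma 1.2.0} would contradict $\lkd(Z\setminus T)=2$ in either case (a) or (b). Your careful point about the fibration living on the open surface $Z\setminus T = Z'\setminus T'$ is precisely what the last sentence of Lemma \ref{Lemma 1.2.0} guarantees, so nothing further is needed.
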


In several places below we use the following formula of Fujita \cite[4.15,4.16]{F}.

\begin{lem}\label{Lemma 1.2.1}
Let $f : Z \to B$ be a $\BP^1$-fibration on a smooth projective surface $Z$ and $D$ a reduced effective divisor on $Z$. Then
\[
h-\Sigma +\nu -2=\wt{b}_1-~\wh{b}_2=b_2(D)-b_2(Z).
\]
Here $h$ is the number of components of $D$ that are horizontal for $f$, $\nu$ is the number of fibers of $f$ contained in $D$, $\Sigma$ is the sum over all other fibers $F$ of $\sigma(F)-1$, with $\sigma(F)$  the number
of components of $F$ not in $D$, and $\wt{b}_1, ~\wh{b}_2$ are modified Betti numbers of $Z\setminus D$ introduced by Fujita in \cite[1.13]{F}.
\end{lem}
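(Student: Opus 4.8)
The plan is to prove the two asserted equalities separately by showing that the leftmost expression $h-\Sigma+\nu-2$ and the middle expression $\wt b_1-\wh b_2$ both equal the single, concrete quantity $b_2(D)-b_2(Z)$. The substantive work is the identification of the left-hand side with $b_2(D)-b_2(Z)$, which is a direct Betti-number count driven by the structure of the fibration $f$; the middle equality will then fall out of Fujita's definitions by rank–nullity. Throughout I would work with $\Q$-coefficients, since only ranks matter, and note that all sums over fibers below are finite because only finitely many fibers are reducible or contained in $D$.

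First I would compute $b_2(D)$. Because $D$ is a reduced curve, its top homology is carried by the fundamental classes of its irreducible components: passing to the normalization identifies only finitely many points, which affects $H_0$ and $H_1$ but not $H_2$, so $b_2(D)=n$, the number of components. Splitting these into horizontal and vertical ones gives $n=h+v$ with $v=\sum_F c(F)$, where $c(F)$ is the number of components of the fiber $F$ that lie in $D$. Next I would compute $b_2(Z)$ from the fibration: by contracting $(-1)$-curves in fibers as in Lemma \ref{Lemma 1.1}(c) I reduce $f$ to a relatively minimal model $W\to B$, a $\BP^1$-bundle with $b_2(W)=2$ (independent of the genus of $B$, by the projective bundle formula), so $Z$ is obtained from $W$ by $N$ blow-ups at points of fibers. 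Each such blow-up raises $b_2$ by $1$ and adds exactly one component to one fiber, whence $b_2(Z)=2+N=2+\sum_F\big(m(F)-1\big)$, with $m(F)$ the total number of components of $F$. Substituting and using $c(F)-m(F)=-\sigma(F)$, the terms telescope:
\[
b_2(D)-b_2(Z)=(h+v)-\Big(2+\sum_F\big(m(F)-1\big)\Big)=h-2+\sum_F\big(1-\sigma(F)\big)=h-\Sigma+\nu-2,
\]
where in the last step $1-\sigma(F)=1$ on the $\nu$ fibers contained in $D$, $1-\sigma(F)=0$ on general fibers, and $1-\sigma(F)=-(\sigma(F)-1)$ on the remaining reducible fibers.

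For the middle equality I would unwind Fujita's definitions \cite[1.13]{F} of the modified Betti numbers. These are exactly the kernel and cokernel dimensions of the natural map $\iota\colon H_2(D;\Q)\to H_2(Z;\Q)$ sending each component to its class in $Z$ (equivalently, the relevant terms read off from the long exact sequence of the pair $(Z,Z\setminus D)$ together with Lefschetz duality): $\wt b_1=\dim\Ker\iota$ records the linear relations among the components of $D$ and injects into $H_1(Z\setminus D)$, while $\wh b_2=\dim\Coker\iota$ measures the part of $H_2(Z)$ not spanned by $D$. Rank–nullity then gives $\wt b_1-\wh b_2=\dim H_2(D;\Q)-\dim H_2(Z;\Q)=b_2(D)-b_2(Z)$ with no further work, completing the chain.

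The main obstacle is a matter of conventions rather than of genuine difficulty. I must match Fujita's precise definition of $\wt b_1$ and $\wh b_2$ (and the exact sequence / duality identification behind them) to the kernel–cokernel description above, taking care over orientations and coefficients; and in the telescoping I must keep the three classes of fibers — those contained in $D$, the general ones, and the remaining reducible ones — separated so that they contribute $\nu$, $0$, and $-\Sigma$ with the correct signs. Everything else is a routine additive computation of Betti numbers under blow-up.
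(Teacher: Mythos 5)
Your proposal is correct, but be aware that the paper does not actually prove this lemma: it is quoted from Fujita \cite[4.15,4.16]{F}, so there is no in-paper argument to compare against, and your write-up is a self-contained reconstruction of Fujita's formula. Your computation of $b_2(D)-b_2(Z)$ is complete and airtight: $b_2(D)$ equals the number of components since normalization only affects $H_0$ and $H_1$; a relatively minimal model of $f$ is a $\BP^1$-bundle with $b_2=2$ independently of $g(B)$; each blow-up adds $1$ to $b_2$ and exactly one component to exactly one fiber, giving $b_2(Z)=2+\sum_F(m(F)-1)$; and the telescoping over the three classes of fibers (contained in $D$, irreducible general, remaining ones) produces $h-\Sigma+\nu-2$ with the right signs. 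The only point of substance you defer is the one you flag: that Fujita's $\wt{b}_1$, $\wh{b}_2$ are $\dim\Ker\iota$ and $\dim\Coker\iota$ for $\iota\colon H_2(D;\Q)\to H_2(Z;\Q)$. That identification is the correct one, and it can be verified rather than taken on faith: in the long exact homology sequence of the pair $(Z,D)$, Lefschetz duality gives $H_3(Z,D;\Q)\cong H^1(Z\setminus D;\Q)$ and $H_2(Z,D;\Q)\cong H^2(Z\setminus D;\Q)$, and since $H_3(D;\Q)=0$ one obtains $\dim\Ker\iota=b_1(Z\setminus D)-b_1(Z)$ and $\dim\Coker\iota=b_2(Z\setminus D)-\dim\Ker\bigl(H_1(D;\Q)\to H_1(Z;\Q)\bigr)$, which are precisely Fujita's modified Betti numbers of $Z\setminus D$; this reading is also consistent with how the present paper uses the lemma (vanishing of $H_1(S_0;\Q)$ and $H_2(S_0;\Q)$ forces $\wt{b}_1=\wh{b}_2=0$). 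As a sanity check that no other convention could be intended, note that the naive alternative $\wh{b}_2=b_2(Z\setminus D)-b_1(D)$ would make the stated identity fail already for $Z=B\times\BP^1$ with $D$ a horizontal section and $g(B)\geq 1$, where $h=1$, $\nu=\Sigma=0$ and $b_2(D)-b_2(Z)=-1$. With the kernel--cokernel reading in place, your rank--nullity step finishes the proof exactly as you say, and the overall two-step shape (exact-sequence identification plus fibration count) is essentially the route Fujita himself follows.
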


In our arguments an inequality of Bogomolov-Miyaoka-Yau type (simply the BMY-inequality) proved by R. Kobayashi,
S. Nakamura and F. Sakai plays a crucial role. The formulation best suited to our purpose is found in
\cite[Lemma 8 and Corollary 9]{GM} (see also see \cite[Chapter 2, Theorem 6.6.2]{Mi}) and stated as follows.

\begin{lem}\label{Lemma 1.3}
Let $(\ol{X}, \ol{D})$ be a relatively minimal log-projective surface with log-terminal singularities.
Namely, if $(X,D)$ be the minimal resolution of $(\ol{X},\ol{D})$ with $D$ the reduced sum of the
proper transform of $\ol{D}$ and the exceptional locus $E$, then $(X,D)$ is almost minimal. Suppose that
$\lkd(X\setminus D)\ge 0$ and that $X\setminus D$ is quasi-affine. \\
(i) We have the inequality
\[
\chi(X\setminus D)+\sum_{p}\frac{1}{|G_p|} \ge 0\ ,
\]
where $p$ ranges over all singular points of $\ol{X}\setminus \ol{D}$ and $G_p$ is the local fundamental group.\\
(ii) If $\lkd(X\setminus D)=2$ the above inequality is strict.
\end{lem}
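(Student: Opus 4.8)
The plan is to recognize the asserted inequality as the logarithmic Bogomolov--Miyaoka--Yau inequality in its orbifold form, and to reduce it to two facts: that the nef part of the Zariski--Fujita decomposition has nonnegative self-intersection, and that the left-hand side $\chi(X\setminus D)+\sum_p 1/|G_p|$ is \emph{exactly} the orbifold Euler number $\bar c_2$ of the open orbifold $\ol X\setminus\ol D$. Writing $P$ for the nef part of $K_X+D\approx P+N$, the theory of peeling identifies $P=K_X+D^\#$ with the pullback $\sigma^*(K_{\ol X}+\ol D)$ of the log canonical class of the orbifold, so that $P^2$ equals the orbifold first Chern number $\bar c_1^2$. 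The crux is then the orbifold/log inequality $\bar c_1^2\le 3\bar c_2$ of Kobayashi--Nakamura--Sakai; granting it, part (i) follows at once because $P$ is nef, whence $0\le P^2=\bar c_1^2\le 3\bar c_2$, and part (ii) follows because $\lkd(X\setminus D)=2$ forces $P$ to be big and nef, hence $P^2>0$ and the inequality is strict.

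I would dispatch the bookkeeping identification of $\bar c_2$ first, as it is purely topological. Since the minimal resolution $\sigma:X\to\ol X$ is an isomorphism away from the (finitely many, isolated) singular points and contracts the exceptional locus $E\subset D$, the surface $X\setminus D$ is homeomorphic to $\ol X\setminus\ol D$ with the singular points $p$ deleted; deleting a point drops the topological Euler number by one, so $e(\ol X\setminus\ol D)=\chi(X\setminus D)+\#\{p\}$. Assigning to each singular point its orbifold weight $1/|G_p|$ in place of $1$, i.e.\ subtracting $\sum_p\!\left(1-1/|G_p|\right)$, gives
\[
\bar c_2=e_{\mathrm{orb}}(\ol X\setminus\ol D)=\chi(X\setminus D)+\sum_p\frac{1}{|G_p|},
\]
which is precisely the left-hand side. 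Thus the statement to be proved is literally $\bar c_2\ge0$ in case (i) and $\bar c_2>0$ in case (ii).

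The remaining, genuinely hard, ingredient is the orbifold BMY inequality $\bar c_1^2\le 3\bar c_2$ for a relatively minimal log-terminal pair with $\lkd\ge0$ and quasi-affine interior. I would invoke this as the cited theorem rather than reprove it: its proof rests either on the existence of a finite-volume orbifold K\"ahler--Einstein metric on $X\setminus D$ together with a Chern--Gauss--Bonnet argument, or on Sakai's algebraic semistability estimates for the logarithmic cotangent sheaf. This analytic (or semistability) input is the main obstacle to a self-contained treatment; by contrast, the identification $\bar c_1^2=P^2$ through peeling and the Euler-characteristic count above are formal. Once $\bar c_1^2\le 3\bar c_2$ is in hand, the nefness of $P$ (respectively its bigness when $\lkd=2$) closes the argument immediately, with no delicate casework on the singularities themselves.
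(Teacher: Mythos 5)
Your proposal is correct and is essentially the paper's own treatment: the paper offers no proof of Lemma \ref{Lemma 1.3}, simply citing it from \cite[Lemma 8 and Corollary 9]{GM} and \cite[Chapter 2, Theorem 6.6.2]{Mi}, and your reduction of the stated inequality to the form $0\le P^2=\bar c_1^{\,2}\le 3\bar c_2$ (with $P$ big, hence $P^2>0$, when $\lkd(X\setminus D)=2$, giving strictness) together with the Euler-number bookkeeping is precisely how those sources formulate and use the result --- indeed the paper itself invokes the $P^2$-form later, in the proof of Lemma \ref{Lemma 3.2}~(iv). The one caveat is attribution: the inequality under the weak hypothesis $\lkd(X\setminus D)\ge 0$, which your part (i) needs, is not the original Kobayashi--Nakamura--Sakai theorem (that is the $\lkd=2$ case) but Langer's extension \cite{L}, as the paper notes, so the theorem you invoke as a black box should be credited to Langer for (i).
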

Here and later we use $\chi$ to denote topological Euler characteristic. The original BMY-inequality is the case (ii), and A. Langer \cite{L}
has extended it in the form (i). We will use both versions.
\svskip

Now let $S'$ be as in the statement of Theorem 1. Let $\pi : S \to S'$ be a minimal resolution of
singularities. Let $E=\pi^{-1}(\Sing S')$. We consider $E$ as a reduced divisor. Let $\ol{S} \supset
S$ be a smooth projective surface such that $D:=\ol{S}\setminus S$ is an \SNC-divisor.
{\em We will assume that $D$ is an $\MNC$ divisor.} By \cite[Proposition 2.6 (iii)]{KR}, {\em the irreducible
components of $D\cup E$ form a basis of $\Pic(\ol{S})\otimes \Q$. Hence they are numerically
independent}.

\begin{lem}\label{Lemma 1.4}
Under the assumption of Theorem 1, $S'$ has only one singular point q and it is a quotient
singularity. Furthermore, $S'$ is a rational surface and $H_1(S;\Z)=H_1(S';\Z)=0$.
\end{lem}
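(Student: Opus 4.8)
The plan is to extract the structural assertions from the already-known statement for $\Q$-homology planes, and then to deduce the two homology identities by comparing $S$ with $S'$ across the resolution $\pi$.

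Since $H_i(S';\Z)=0$ for all $i>0$ forces $H_i(S';\Q)=0$, a $\Z$-homology plane is in particular a $\Q$-homology plane. Thus $S'$ is a $\Q$-homology plane with $\lkd(S'\setminus\Sing S')=2$, and \cite[Lemma 3.2]{GKMR} applies verbatim: $S'$ has at most one singular point, and that point $q$ is a quotient singularity. Rationality of $S'$ then follows from \cite{GPS, GP}, exactly as recalled in the introduction. It remains only to verify the homology statements; when $S'$ is smooth we have $S=S'$ and $E=\emptyset$, so these are trivial, and I may therefore assume $S'$ carries the single quotient singular point $q$ with exceptional locus $E=\pi^{-1}(q)$.

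The identity $H_1(S';\Z)=0$ is immediate from the definition of a $\Z$-homology plane. For $H_1(S;\Z)$ the decisive input is that the minimal resolution of a two-dimensional quotient singularity has exceptional locus a contractible SNC divisor in the sense of the preliminaries, i.e.\ a \emph{tree of smooth rational curves}; in particular $E$ is connected and $H_1(E;\Z)=0$. Since $\pi$ restricts to an isomorphism $S\setminus E\xrightarrow{\sim} S'\setminus\{q\}$, the induced map of pairs $\pi\colon(S,E)\to(S',\{q\})$ is a relative homeomorphism, and excision (both spaces being triangulable) gives $H_*(S,E;\Z)\cong H_*(S',\{q\};\Z)\cong\widetilde H_*(S';\Z)$, which vanishes in every degree because $S'$ is a $\Z$-homology plane. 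Feeding $H_*(S,E;\Z)=0$ into the long exact homology sequence of the pair,
\[
H_2(S,E;\Z)\to H_1(E;\Z)\to H_1(S;\Z)\to H_1(S,E;\Z),
\]
the two outer terms vanish, whence $H_1(S;\Z)\cong H_1(E;\Z)=0$, as required.

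The step demanding the most care is the topological comparison in the third paragraph: one must legitimately identify $H_*(S,E)$ with $\widetilde H_*(S')$ by excision for these non-compact but triangulable spaces, and one must invoke precisely the quotient (equivalently, log terminal) nature of $q$ to ensure that $E$ is a tree of rational curves, which is exactly what forces $H_1(E;\Z)=0$ rather than merely $H_1(E;\Q)=0$. Once these two facts are secured, the remainder is a routine diagram chase, and everything else in the statement is a direct citation of \cite[Lemma 3.2]{GKMR} together with \cite{GPS, GP}.
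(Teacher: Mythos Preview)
Your proof is correct. For the first assertion and rationality you cite exactly what the paper cites. For $H_1(S;\Z)=0$ you take a different route: the paper simply invokes $\pi_1(S)=\pi_1(S')$ from \cite[Lemma 2.3.1]{KR} and abelianizes, whereas you argue directly in homology via excision $H_*(S,E;\Z)\cong \widetilde H_*(S';\Z)$ and the long exact sequence of the pair $(S,E)$, using that $E$ is a tree of rational curves so $H_1(E;\Z)=0$. Your approach is more self-contained (no external lemma on $\pi_1$ needed, only standard algebraic topology and the well-known structure of the exceptional locus of a quotient singularity), while the paper's is shorter but depends on the cited result; both ultimately hinge on $E$ being simply connected.
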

\begin{proof}
The proof of the first assertion is the same as the proof of Lemma 3.2 in \cite{GKMR}. For the
second assertion, we have $\pi_1(S)=\pi_1(S')$ by \cite[Lemma 2.3.1]{KR}, which implies
$H_1(S;\Z)=H_1(S';\Z)$. Since $S'$ is $\Z$-acyclic, $H_1(S';\Z)=0$. For the rationality of $S'$, we refer
to \cite{GP}.
\end{proof}

An irreducible curve $C$ in $\ol{S}$ is called a {\em simple curve} if $C \simeq \BP^1, C \not\subset
D\cup E, C\cdot D=1$ and $C\cdot E\leq 1$. The assumption that $\lkd(S'\setminus \Sing S')=2$ implies the
following result.

\begin{lem}\label{Lemma 1.5}
There is no simple curve $C$ in $\ol{S}$.
\end{lem}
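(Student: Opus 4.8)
The plan is to derive from a putative simple curve $C$ a $\C$- or a $\C^*$-fibration of the affine surface $V:=S\setminus E\cong S'\setminus\Sing S'$, which is impossible because $\lkd(V)=\lkd(S'\setminus\Sing S')=2$ forbids any such fibration by the easy addition theorem (cf.\ the remark following Lemma \ref{Lemma 1.2.0} and Corollary \ref{Corollary 1.2.0.1}). The starting observation is that $C^\circ:=C\cap V$ is isomorphic to $\A^1$ when $C\cdot E=0$ and to $\C^*$ when $C\cdot E=1$, since $C\cong\BP^1$ meets $D$ in a single point and $E$ in at most one point, both transversally. Thus it suffices to promote the single curve $C^\circ$ to a fibration of $V$ whose general fibre has the same type.

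First I would treat this through the peeling and blow-up machinery already in place. Adjoin $C$ to the boundary and set $T:=D\cup E\cup C$ on $Z:=\ol S$, so that $Z\setminus T=V\setminus C^\circ$; since $\lkd$ does not decrease when one removes a divisor, $\lkd(Z\setminus T)\ge\lkd(V)=2$, whence $\lkd(Z\setminus T)=2$. In $T$ the curve $C$ is a tip attached to a single component of $D$ (the case $C\cdot E=0$) or a length-one chain joining a component of $D$ to a component of $E$ (the case $C\cdot E=1$). When $C^2\ge0$ the maximal twig, respectively the connecting chain, of $T$ through $C$ fails to be negative definite and is therefore non-contractible; but such an $L$ cannot exist by Corollary \ref{Corollary 1.2.0.1}, or equivalently Lemma \ref{Lemma 1.2.0} would produce the forbidden $\C$- or $\C^*$-fibration of $Z\setminus T$. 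Concretely this amounts to blowing up the boundary over the point $p=C\cap D$ until $C$ becomes a $(0)$-curve, whose pencil is a $\BP^1$-fibration meeting $D\cup E$ in one (respectively two) points along the generic fibre and hence restricting to a $\C$- (respectively $\C^*$-) fibration of $V$.

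The main obstacle is the case $C^2<0$, where $|C|$ is rigid and the twig through $C$ may well be contractible, so neither construction above is available. To cover it I would pass to the numerical structure: since the components of $D\cup E$ form a $\Q$-basis of $\Pic(\ol S)\otimes\Q$, the class of $C^\circ$ in $\Pic(V)\otimes\Q$ vanishes, and because $H_1(S;\Z)=0$ by Lemma \ref{Lemma 1.4} this class is torsion; hence $mC^\circ=\divisor(f)$ for some positive integer $m$ and some $f\in\Gamma(V,\SO_V)$. The morphism $f:V\to\A^1$ then has a fibre supported on $C^\circ$, and its Stein factorization is a fibration of $V$ whose special fibre is $\A^1$ (respectively $\C^*$). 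The delicate point, which I expect to be the crux, is to show that the \emph{generic} fibre is again $\A^1$ (respectively $\C^*$) rather than a curve of higher genus or with extra punctures; this I would control using $C\cdot D=1$ and $C\cdot E\le1$ together with Fujita's formula (Lemma \ref{Lemma 1.2.1}) on a relatively minimal model of the fibration, so that the boundary meets the generic fibre as sparingly as it meets $C$. Once this is secured, the resulting $\C$- or $\C^*$-fibration contradicts $\lkd(V)=2$ in every case, proving that no simple curve exists.
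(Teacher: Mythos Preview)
Your outline is on the right track, and the idea of producing a $\C$- or $\C^*$-fibration of $V=S\setminus E$ is indeed the mechanism behind the result. The paper itself only gives a pointer to \cite[2.12]{KR}, so a detailed comparison of methods is not possible, but the torsion Picard argument you set up is precisely the standard input.

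Your treatment of $C^2\ge 0$ is correct: after blowing up $C\cap D$ enough times (each blow-up lowers $C^2$ by one and preserves $C\cdot D=1$, $C\cdot E\le 1$, and the open part $V$), the linear system $|C|$ is a $\BP^1$-pencil whose general member meets $D\cup E$ in at most two points, giving the forbidden fibration of $V$ directly. Your parallel argument via Corollary~\ref{Corollary 1.2.0.1} applied to $T=D\cup E\cup C$ also works, though note that it yields a fibration of $V\setminus C^\circ$ rather than of $V$; either way $\lkd=2$ is contradicted.

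The genuine gap is exactly where you locate it: for $C^2<0$ you obtain $f\in\Gamma(V,\SO_V)$ with $\divisor(f)=mC^\circ$, but you have not shown that the general fibre of $f$ (or of its Stein factorisation) is $\A^1$ or $\C^*$. This is not automatic. One has to pass to a resolution $\tilde f:\tilde S\to\BP^1$ of the rational map $f:\ol S\dashrightarrow\BP^1$; the base points lie in $D\cup E$, so $\tilde S\setminus\pi^{-1}(D\cup E)\cong V$. The fibre over $\infty$ is supported on $\pi^{-1}(D\cup E)$, a disjoint union of two rational trees, so after Stein factorisation the connected fibres over $\infty$ have arithmetic genus $0$ and the general fibre is $\BP^1$. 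The remaining point---that $\pi^{-1}(D\cup E)$ contributes at most one, respectively two, horizontal components---is what actually uses $C\cdot D=1$ and $C\cdot E\le 1$, and it requires an explicit bookkeeping of how the base-point resolution interacts with $D$ and $E$; invoking Lemma~\ref{Lemma 1.2.1} alone does not give this without further input (you would need to know $\Sigma$ and $\nu$, which in turn depend on the structure you are trying to establish). This step is carried out in \cite[2.12]{KR} and is the substance of the lemma; as written, your proposal stops just short of it.
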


\begin{proof}
See the proof of 2.12 in [KR].
\end{proof}

In the terminology of \cite[Chapter 2, 4.3]{Mi} we state this result as:

\begin{cor}\label{Corollary 1.6}
The pair $(\ol{S}, D+E)$ is almost minimal.
\end{cor}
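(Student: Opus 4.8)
The plan is to argue by contradiction, translating the peeling criterion recalled in Section~1 into the language of simple curves. Suppose $(\ol{S}, D+E)$ were not almost minimal. Then, by the theory of peeling (the criterion stated just before Corollary~\ref{Corollary 1.6}, cf. \cite[Chapter 2, 3.7]{Mi} and \cite[Lemma 6.20]{F}), the surface $\ol{S}$ carries a $(-1)$-curve $C$ with $C\not\subset D+E$ meeting $D+E$ transversally in at most two points; moreover, if $C$ meets $D+E$ in two points, these lie on two \emph{distinct} connected components of $D+E$. My goal is to show that such a $C$ must in fact be a simple curve, contradicting Lemma~\ref{Lemma 1.5}.

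First I would record two structural facts. Since $S'$ is affine and $\pi$ is proper with $\pi^{-1}(\Sing S')=E$, no complete curve of $\ol{S}$ other than the components of $E$ can be disjoint from $D$: if $C\cdot D=0$ and $C\not\subset E$, then, as $C$ is complete and not contracted by $\pi$, its image $\pi(C)$ would be a complete curve inside the affine surface $S'$, which is impossible. Hence $C\cdot D\ge 1$. Secondly, I would establish that $D+E$ has exactly two connected components, namely $D$ and $E$; since $E=\pi^{-1}(q)$ is connected and disjoint from $D$, this amounts to the connectedness of $D$. I would deduce this from the topology of $S'\setminus\{q\}=\ol{S}\setminus(D+E)$: because $S'$ is $\Z$-acyclic (Lemma~\ref{Lemma 1.4}) and $q$ is a quotient singularity, whose link is a rational homology sphere, a Mayer--Vietoris computation gives $b_1(S'\setminus\{q\})=0$ while the third Betti number of the open surface counts the boundary components minus one, forcing the boundary $D+E$ to have exactly two connected components. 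Thus $D$ is connected.

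With these in hand the case analysis is short. By the first fact $C\cdot D\ge 1$, and since $C\cdot(D+E)\le 2$ we get $C\cdot D\in\{1,2\}$. If $C\cdot D=2$, then $C\cdot E=0$ and both intersection points of $C$ lie on $D$; but once $D$ is known to be connected these two points lie on a single connected component of $D+E$, contradicting the distinctness clause of the criterion. Therefore $C\cdot D=1$, whence $C\cdot E=C\cdot(D+E)-1\le 1$. As $C$ is a $(-1)$-curve it is isomorphic to $\BP^1$ and $C\not\subset D\cup E$, so $C$ meets every defining condition of a simple curve. This contradicts Lemma~\ref{Lemma 1.5}, and hence $(\ol{S}, D+E)$ is almost minimal.

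The only delicate point is the connectedness of $D$ --- equivalently, that the detecting $(-1)$-curve cannot join two distinct components of the boundary lying in $D$; the rest is a direct reading of the peeling criterion. I expect this to be the main obstacle, and I would either prove it by the Mayer--Vietoris argument sketched above or, more in the spirit of \cite{Mi,F}, extract it from the homology-plane hypothesis on $S'$ recorded in Lemma~\ref{Lemma 1.4}.
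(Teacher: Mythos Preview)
Your argument is correct and is essentially an explicit unpacking of what the paper leaves implicit: the paper simply declares Corollary~\ref{Corollary 1.6} to be a restatement of Lemma~\ref{Lemma 1.5} in the terminology of \cite[Chapter 2, 4.3]{Mi}, whereas you spell out why the peeling $(-1)$-curve $C$ must be simple. The one ingredient you flag as delicate---connectedness of $D$---is precisely what the paper records separately as Lemma~\ref{Lemma 1.7}(1) (with proof outsourced to \cite[2.6(iv)]{KR}); its proof is independent of Corollary~\ref{Corollary 1.6}, so there is no circularity, and you may simply cite it rather than redo the Mayer--Vietoris computation.
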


\begin{lem}\label{Lemma 1.7}
The following assertions hold.
\begin{enumerate}
\item[{\rm (1)}]
$D$ is a rational tree.
\item[{\rm (2)}]
$|d(D)|=d(E) > 0$.
\end{enumerate}
We put a=d(E).
\end{lem}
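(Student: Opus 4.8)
The plan is to read off the topology of $D$ from the $\Z$-acyclicity of $S'$ together with the rationality of $\ol S$, and then to deduce the determinant relation from the unimodularity of the intersection form on $\ol S$. Throughout write $\#D,\#E$ for the numbers of components of $D,E$, and $\rho=b_2(\ol S)$; by the numerical independence recalled just before the lemma we have $\#D+\#E=\rho$.

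For (1) I would first use that $\pi:S\to S'$ contracts the connected curve $E$ to the point $q$ and that $S'$ is $\Z$-acyclic. Since $(S,E)$ is a good pair with $S/E\simeq S'$, one gets $H_n(S,E)\cong\tilde H_n(S')=0$ for all $n$, hence the inclusion induces isomorphisms $H_n(S;\Z)\cong H_n(E;\Z)$. As $E$ is a tree of rational curves this yields $H_1(S)=0$, $H_3(S)=0$ and $H_2(S;\Z)\cong\Z^{\#E}$. Next I would invoke Poincar\'e--Lefschetz duality for the oriented open $4$-manifold $S$, $H_{4-k}(S;\Q)\cong H^k(\ol S,D;\Q)$, and substitute into the long exact cohomology sequence of $(\ol S,D)$. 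Because $\ol S$ is rational, $b_1(\ol S)=b_3(\ol S)=0$; the vanishing $H_3(S)=0$ then forces $b_0(D)=1$, so $D$ is connected, while the exact segment $0\to\Q^{b_1(D)}\to H_2(S;\Q)\to\Q^{\rho}\to\Q^{\#D}\to0$ gives, on taking alternating dimensions, $b_1(D)=\#E+\#D-\rho=0$. Finally $b_1(D)=b_1(\Gamma)+2\sum_i g_i$, where $\Gamma$ is the dual graph and $g_i$ the genera of the components, so both summands vanish: $D$ is a connected tree of rational curves, proving (1).

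For (2) the decisive structural fact is that $D$ and $E$ are disjoint, so the intersection matrix $M$ of the components of $D\cup E$ is block diagonal and $|\det M|=|d(D)|\,d(E)$. Since $\ol S$ is a simply connected rational surface, $L:=H_2(\ol S;\Z)$ is free and unimodular, and the classes $[D_i],[E_j]$ form a $\Q$-basis of $L\otimes\Q$; letting $\Lambda=\Lambda_D\oplus\Lambda_E$ be the sublattice they span and $m=[L:\Lambda]$, unimodularity gives $|\det M|=m^2$, whence $|d(D)|\,d(E)=m^2$ and in particular $\det M_D,\det M_E\neq0$. It then remains to show $m=|d(D)|$, and I would compute $L/\Lambda$ in two steps. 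Contracting $E$ produces the normal projective surface $\ol{S'}=\ol S/E$, with the single quotient point $q$ and boundary $D$, and $\ol{S'}\setminus D=S'$; from the pair $(\ol S,E)$ one finds $H_2(\ol{S'})\cong L/\Lambda_E$, so that $L/\Lambda\cong H_2(\ol{S'})/\ol{\Lambda}_D$. Applying Mayer--Vietoris to $\ol{S'}=S'\cup\nu(D)$, with $\nu(D)$ a tubular neighbourhood of $D$ whose boundary is the link at infinity $M_\infty$, and using $\tilde H_*(S')=0$ together with $H_1(D)=0$, one obtains the short exact sequence $0\to H_2(D)\to H_2(\ol{S'})\to H_1(M_\infty)\to0$.

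Since $D$ is a connected tree of rational curves with $\det M_D\neq0$, $M_\infty$ is the plumbed rational homology sphere of $D$ and $H_1(M_\infty)\cong\Coker(M_D)$ has order $|d(D)|$. Hence $H_2(\ol{S'})/\ol{\Lambda}_D\cong H_1(M_\infty)$, so $m=|d(D)|$; combined with $|d(D)|\,d(E)=m^2$ this gives $|d(D)|=d(E)$, and $d(E)>0$ because the intersection matrix of $E$ is negative definite. I expect the real work to be the integral homological bookkeeping --- the good-pair identifications, the duality with the correct coefficients, and especially the computation of $H_1(M_\infty)$ as $\Coker(M_D)$. The subtle point is that (2) genuinely needs $\Z$-acyclicity: over $\Q$ one recovers only the square relation $|d(D)|\,d(E)=m^2$, and it is the vanishing of the \emph{integral} reduced homology of $S'$ that makes the sequences exact on the nose and pins down $m=|d(D)|$, whereas (1) uses only rational and connectivity information.
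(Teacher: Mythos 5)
Your proposal is correct. Note that the paper itself gives no argument at all here: it simply cites the proofs of 2.6~(iv) and 2.9 in [KR], so any self-contained proof is by necessity "different" from what is printed. Your part (1) — contracting $E$ to identify $H_*(S)$ with $H_*(E)$, then Lefschetz duality $H^k(\ol S,D;\Q)\cong H_{4-k}(S;\Q)$ fed into the long exact sequence of $(\ol S,D)$, using rationality of $\ol S$ and $\#D+\#E=b_2(\ol S)$ to force $b_0(D)=1$ and $b_1(D)=0$ — is exactly the standard argument of this type and is sound. In part (2) your overall scheme (block-diagonal intersection matrix, unimodularity of $H_2(\ol S;\Z)$ giving $|d(D)|\,d(E)=m^2$ with $m=[L:\Lambda]$, then identifying $m$ with a geometrically defined torsion group) is again the expected one, but your specific mechanism for pinning down $m$ — contracting $E$, then Mayer--Vietoris on $\ol{S'}=S'\cup\nu(D)$ and the plumbing identification $H_1(M_\infty)\cong\Coker(M_D)$ of order $|d(D)|$ — computes the index from the $D$-side, i.e.\ at infinity. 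The route suggested by [KR, Proposition 2.6~(ii)] (quoted later in the paper: $H_1(U\setminus\{q\},\Z)\to H_1(S_0,\Z)$ is an isomorphism) computes the same index from the $E$-side: one identifies $L/\Lambda\cong\Pic(S_0)\cong\Pic(U\setminus\{q\})\cong\Coker(M_E)$, of order $d(E)$, and concludes $d(E)^2=|d(D)|\,d(E)$. The two identifications are mirror images and each suffices; yours has the mild advantage of not needing the local class-group computation at $q$, at the cost of invoking the plumbing calculus for the link at infinity. Your closing remark is also accurate and worth keeping: (1) uses only $\Q$-coefficient information, whereas the exactness on the nose that yields $m=|d(D)|$ (equivalently $m=d(E)$) genuinely uses integral acyclicity of $S'$.
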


\begin{proof}
For (1), see for instance the proof of 2.6 (iv) in \cite{KR}. The proof of (2) is similar to the proof of 2.9
in \cite{KR}.
\end{proof}

We set up some notation.

\begin{setup}\label{Setup 1.8}
Let $\ol{Y}$ be a smooth projective surface and $T$ a reduced $\MNC$ divisor in $\ol{Y}$. Let
$Y=\ol{Y}\setminus T$. Suppose that the pair $(\ol{Y},T)$ is not almost minimal. Let
$(\ol{Y'},T')$  be an almost minimal model of $(\ol{Y},T)$.
\begin{enumerate}
\item[{\rm(a)}]
$\ol{Y'}$ is obtained from $\ol{Y}$
by a sequence of birational morphisms $p_i : \ol{Y}_i \to \ol{Y}_{i+1}, \ol{Y}=\ol{Y}_0 \to
\ol{Y}_1 \to \cdots \to \ol{Y}_\ell=\ol{Y}'$. Let $T_i=(p_{i-1})_*(T_{i-1}), T_0=T, T'=T_\ell$. Let $Y_i=\ol{Y}_i\setminus T_i$. For every $i$ there exists a $(-1)$-curve $C_i \nsubseteq T_i$ such that
$p_i : \ol{Y}_i \to \ol{Y}_{i+1}$ is the contraction of all smoothly
contractible components of $C_i + T_i$ without losing the normal
crossing condition (which we call the {\em NC-minimalization}). Finally, for the almost minimal model $(\ol{Y}',T')$, the negative part
$(K_{\ol{Y}'}+T')^-$ coincides with the bark $\Bk(T')$. The contractions in this process involve only  curves (or their images) contained in the negative part of
$(K_{\ol{Y}}+T)^-$, and $l$ is the number of such curves that meet $Y$.
\item[{(b)}] We put
\[
e(\ol{Y_i},T_i)=\chi(\ol{Y_i}\setminus T_i)+\#\{\mbox{connected components of $T_i$}\}.
\]

\end{enumerate}
\end{setup}
\svskip
We point out that here $T_i$ is the direct image of $T_{i-1}$ in the sense of divisors. In particular, an isolated point in $p_{i-1}(T_{i-1})$ becomes part of $Y_i$. We find by an elementary calculation

\begin{lem}\label{Lemma 1.9}
\begin{enumerate}
\item[{(a)}]
  $\chi(Y_{i+1})=\chi(Y_{i})-1$ if and only if either
$\is{C_i}{T_i}=1$ and $T_i+C_i$ does not contract to a smooth point  or $\is{C_i}{T_i}=0$.\\ $\chi(Y_{i+1})=\chi(Y_i)$ (resp. $\chi(Y_{i+1})=\chi(Y_i)+1$) if and only if either $\is{C_i}{T_i}=1$ and $T_i+C_i$ contracts to a smooth point or $C_i$
meets exactly two connected components $T_i^{(1)}$ and $T_i^{(2)}$ of $T_i$ and $T_i^{(1)}+C_i+T_i^{(2)}$
does not contract (resp. does contract) to a smooth point.
\item[{(b)}]
$e(\ol{Y}',T')=e(\ol{Y},T)-\ell$.
\end{enumerate}
\end{lem}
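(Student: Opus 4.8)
The plan is to reduce both parts to a single topological computation of the change in $\chi$ across one NC-minimalization step $p_i\colon \ol Y_i \to \ol Y_{i+1}$, and then to read off (a) case by case and to deduce (b) by bookkeeping the connected components of $T_i$. First I would record the geometry of one step. By the description in Setup \ref{Setup 1.8}, $p_i$ contracts a connected configuration $Z_i = C_i \cup W_i$ to a single point $q \in \ol Y_{i+1}$, where $C_i$ is the $(-1)$-curve with $C_i \not\subseteq T_i$ and $W_i$ is the union of those components of $T_i$ that get contracted; the remaining part $T_i' := T_i - W_i$ maps isomorphically, and $T_{i+1} = (p_i)_* T_i = p_i(T_i')$. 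Since $C_i$ is a $(-1)$-curve it is smooth rational by adjunction, so $\chi(C_i) = 2$; and because $T_i$ is SNC with $C_i$ meeting it transversally, the number of points of $C_i \cap T_i$ is exactly $\is{C_i}{T_i}$.

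Next I would run the stratification. The map $p_i$ restricts to an isomorphism $\ol Y_i \setminus Z_i \xrightarrow{\ \sim\ } \ol Y_{i+1}\setminus\{q\}$ carrying $T_i\setminus Z_i$ onto $T_{i+1}\setminus\{q\}$, hence $Y_i\setminus Z_i \cong Y_{i+1}\setminus\{q\}$. Additivity of $\chi$ over locally closed decompositions then gives
\[
\chi(Y_{i+1}) - \chi(Y_i) = \chi\bigl(Y_{i+1}\cap\{q\}\bigr) - \chi\bigl(Y_i\cap Z_i\bigr).
\]
Since $Y_i\cap Z_i = C_i\setminus T_i$ has $\chi = 2 - \is{C_i}{T_i}$, while $\chi(Y_{i+1}\cap\{q\})$ equals $1$ if $q\notin T_{i+1}$ and $0$ if $q\in T_{i+1}$, I obtain the master identity
\[
\chi(Y_{i+1}) - \chi(Y_i) = \is{C_i}{T_i} - 2 + \epsilon_q, \qquad \epsilon_q = \begin{cases} 1, & q\notin T_{i+1},\\ 0, & q\in T_{i+1}.\end{cases}
\]

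The crux is then the dictionary between $\epsilon_q$ and the hypotheses of (a): $q\notin T_{i+1}$ holds precisely when the connected component of $C_i + T_i$ containing $C_i$ is contracted in its entirety, that is, when $T_i + C_i$ (resp. $T_i^{(1)} + C_i + T_i^{(2)}$ when $\is{C_i}{T_i}=2$) contracts to a smooth point; otherwise a component of $T_i$ survives through $q$ and $q\in T_{i+1}$. Granting this, feeding the three possible values $\is{C_i}{T_i}\in\{0,1,2\}$ into the master identity reproduces (a) verbatim, where for the value $2$ the cited structure of peeling forces the two intersection points to lie on distinct connected components $T_i^{(1)}, T_i^{(2)}$. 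I expect this dictionary to be the main obstacle: one must verify that the NC-minimalization consumes the whole relevant configuration exactly when it is smoothly contractible, and that no additional component of $T_i$ passes through $q$ by accident.

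Finally I would deduce (b) by proving the per-step identity $e(\ol Y_{i+1},T_{i+1}) = e(\ol Y_i,T_i) - 1$ and telescoping over $i = 0,\dots,\ell-1$. In each of the five cases of (a) I would combine the value of $\chi(Y_{i+1})-\chi(Y_i)$ with the change in the number of connected components of $T_i$: the count is unchanged when no component of $T_i$ is wholly contracted (the cases $\is{C_i}{T_i}=0$, and $\is{C_i}{T_i}=1$ not contracting to a smooth point); it drops by one when a single component is wholly contracted, and likewise when $C_i$ merges two components that then survive through $q$; and it drops by two when both components met by $C_i$ are wholly contracted. In every case the two contributions add to $-1$, which yields the per-step identity and hence $e(\ol Y',T') = e(\ol Y,T) - \ell$.
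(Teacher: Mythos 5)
Your proposal is correct and follows essentially the same route the paper intends: the paper offers no details beyond ``we find by an elementary calculation'' for (a) and the remark that each contracted $C_i$ meets $T$ (or its image) in at most two transversal points so that $e(-,-)$ drops by $1$ per contraction for (b), and your argument — additivity of the Euler characteristic over the contracted connected configuration, the dictionary between full contractibility and whether the image point lands in $T_{i+1}$, and the per-step bookkeeping of connected components — is precisely the omitted calculation. The one step you flag as ``granted'' (that the exceptional locus of each $p_i$ is connected and that a partially surviving component necessarily passes through $q$) is indeed the routine verification the paper glosses over, and it holds because each $T_i$ is NC-minimal, so every curve contracted after $C_i$ becomes contractible only by virtue of meeting the previously contracted locus.
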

\svskip

As to $(b)$, each $C_i$ contracted in the above process meets $T$, or its image, transversally in
at most $2$ points. One easily checks that the quantity $e(-,-)$ decreases by $1$ in such a contraction.

The following result will be used later.

\begin{lem}\label{Lemma 1.10}
There is no affine, normal, rational surface $Z'$ which satisfies the following conditions:
\begin{enumerate}
\item[{\rm (a)}]
$\lkd(Z'\setminus \Sing Z')=0$ or $1$.
\item[{\rm (b)}]
$Z'$ is a $\Z$-homology plane.
\item[{\rm (c)}]
$\Sing Z'$ consists of two points, each of type $A_1$.
\end{enumerate}
\end{lem}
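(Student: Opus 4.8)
\noindent The plan is to argue by contradiction: suppose such a $Z'$ exists and extract a contradiction from the \emph{integral} acyclicity in (b). First I would pass to the minimal resolution $\pi:Z\to Z'$. Since each of the two singular points is of type $A_1$, the exceptional locus is $E=E_1\sqcup E_2$, a disjoint union of two $(-2)$-curves, and the smooth locus is $V:=Z'\setminus\Sing Z'=Z\setminus E$. Because $Z'$ is $\Z$-acyclic we have $\chi(Z')=1$, so by additivity of the (compactly supported) Euler characteristic for complex varieties, deleting the two singular points gives
\[
\chi(V)=\chi(Z')-2=-1 .
\]
Since each $A_1$ point has local fundamental group of order $2$, we have $\sum_p 1/|G_p|=1$, so the BMY inequality of Lemma \ref{Lemma 1.3}(i) holds here with \emph{equality}; as the strict form is only guaranteed when $\lkd=2$, this confirms that BMY by itself cannot finish the argument and that the finer structure has to be used.

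Next I would invoke the structure theory of $\Q$-homology planes with quotient singularities (see \cite{Mi}). As $Z'$ is a rational $\Q$-homology plane with $\lkd(V)\in\{0,1\}$, hence not of general type, it carries a $\C^*$-fibration $g:Z'\to B$ (an $\A^1$-fibration is excluded, since it would force $\lkd(V)=-\infty$). The $\Q$-acyclicity of $Z'$ forces $B\cong\A^1$ and bounds the number of degenerate fibers. Composing with $\pi$ yields a $\C^*$-fibration $\wt g:Z\to\A^1$ of the smooth surface $Z$ in which each $(-2)$-curve $E_i$ lies in a fiber; restricting to $V$ and using $\chi(\C^*)=0$, additivity over the base gives
\[
\chi(V)=\sum_b \chi\big(\wt g^{-1}(b)\cap V\big)=-1 ,
\]
the sum running over the finitely many degenerate fibers, namely those meeting $E$ together with any others.

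The decisive step, which I expect to be the main obstacle, is the local analysis at the two $A_1$ points. For an ordinary degenerate fiber of a $\C^*$-fibration the contribution to $\chi(V)$ is non-negative (by the classification of such fibers, cf. Lemma \ref{Lemma 1.1} and \cite[Ch.~3]{Mi}), so the total value $-1$ must be produced precisely by the two fibers through the images of $E_1$ and $E_2$: deleting a $(-2)$-curve $E_i$ from its fiber lowers $\chi$, and matching this against $\chi(V)=-1$ pins each such fiber down to a multiple fiber of multiplicity $2$ associated with the order-$2$ local group at the $A_1$ point. I would then compute $H_1(Z';\Z)$ directly from the $\C^*$-fibration over $\A^1$ together with these two multiplicity-$2$ fibers; the two resulting contributions produce nonzero $2$-torsion that cannot be annihilated, contradicting $H_1(Z';\Z)=0$. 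This is exactly the point at which the hypothesis must be integral: the analogous statement is \emph{false} for $\Q$-homology planes, where configurations with two $A_1$ points do occur, so any correct proof has to exploit $H_1(Z';\Z)=0$ rather than merely its rational counterpart.
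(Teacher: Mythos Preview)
Your argument has a genuine gap at the step where you assert that $Z'$ carries a $\C^*$-fibration. This claim is not justified, and in fact the paper's proof shows it is \emph{false}. For $\lkd(V)=1$, Kawamata's theorem does yield a $\C^*$-fibration, but only on the smooth locus $V=Z_0$; you have not addressed whether it extends across the singular points to a morphism $Z'\to B$, and if it fails to extend at one of the $p_i$ then your subsequent fiber analysis on $Z'$ does not get off the ground. For $\lkd(V)=0$ the situation is worse: $\Q$-homology planes of log Kodaira dimension $0$ do not in general admit $\C^*$-fibrations, so the structure theory you invoke does not supply one. The later fiber-by-fiber Euler-characteristic and torsion computation is therefore built on an unsupported hypothesis.

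The paper's route is essentially the opposite of yours. It first \emph{proves} that $Z_0$ admits no $\C^*$-fibration: if one existed and extended to $Z'$, then by \cite[Theorem~2.17]{MS} a $\Z$-homology plane with a $\C^*$-fibration has at most one singular point, contradicting (c); if it existed but did not extend at some $p_i$, elimination of indeterminacy shows a resolution of $Z'$ has $\lkd=-\infty$, and since the $p_i$ are rational double points this forces $\lkd(V)=\lkd(Z')=-\infty$, contradicting (a). This already disposes of $\lkd(V)=1$ via Kawamata. For the remaining case $\lkd(V)=0$, the paper does not use any fibration at all: it passes to an $\MNC$ compactification $(\ol{Z},T)$, uses that $\Pic(Z_0)$ is torsion to get relative minimality, computes $d(T)=-d(E)=-4$, and then eliminates all possibilities for $T$ via Fujita's classification \cite[\S8.8]{F} of boundaries of almost-minimal surfaces with $\lkd=0$, with the detailed casework deferred to \cite[\S4]{KR}. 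Your proposed $H_1$/torsion endgame is in spirit close to what \cite[Theorem~2.17]{MS} encodes, but only once a $\C^*$-fibration on $Z'$ is in hand---and the point of the paper's Claim is precisely that it never is.
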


\begin{proof}
Suppose $Z'$ exists. Let $Z \to Z'$ be a resolution of singularities and let $E=E_1+E_2$ be the exceptional
locus. $E_1, E_2$ are $(-2)$-curves. Let $p_1,p_2$ be the singular points of $Z'$. Let $Z_0:= Z'\setminus \Sing Z'$.
\svskip

\begin{claim}{\hspace{-1mm}}
There is no $\C^*$-fibration on $Z_0$.
\end{claim}
\svskip

Suppose that such a $\C^*$-fibration exists, say $f : Z_0 \to B$, where $B$ is a smooth curve.
If $f$ extends to a $\C^*$-fibration on $Z'$ then by \cite[Theorem 2.17]{MS} $Z'$ has at most one singular
point. Hence $f$ is not defined at one of the points $p_i$. By elimination of indeterminacies we see that
a resolution of singularities of $Z'$ has $\lkd=-\infty$. Since $p_1,p_2$ are rational double points
we deduce from this that $\lkd(Z'\setminus \Sing Z')=\lkd(Z')=-\infty$, a contradiction. This proves the claim.

By Kawamata's theorem (see \cite[Theorem 6.1.5]{Mi})for open surfaces with $\lkd=1$, it follows that $\lkd(Z_0)=0$. By standard topological
argument $\Pic(Z_0)$ is torsion (see \cite[Proposition 2.6, (iii)]{KR}). It follows that $Z_0$ is relatively
minimal (see 4.2 in \cite{KR}.) Let $Z \subset \ol{Z}$ be a smooth compactification of $Z$ such that
$T=\ol{Z}\setminus Z$ is an $\MNC$-divisor. Fujita classified connected components of boundary divisors
of an almost-minimal surface of Kodaira dimension $0$ in \cite[\S 8.8]{F}. We have $d(T)=-d(E)=-4$ by the same
argument as in Lemma \ref{Lemma 1.7}. By examining all possibilities for $T$ we come to a contradiction. Details are in section
$4$ of \cite{KR} (the assumptions in \cite{KR} are bit different, but the arguments used in section $4$ work
in the present situation, a lot of cases considered in \cite{KR} in fact are ruled out by condition $d(T)=-4$).
\end{proof}

The following result shows that the conclusion of Theorem 1 holds if we assume that $\Sing S'$ consists of a unique quotient singular point and $\lkd (S'\setminus \Sing S')\geq 0$.

\begin{lem}\label{Lemma 1.11}
Suppose that $U'$ is an
affine, normal $\Z$-homology plane with exactly one quotient singular point $q$ which is non-cyclic.
If $\lkd(U'\setminus \{q\})\geq 0$ then $\lkd(U'\setminus \{q\})=2$ .
\end{lem}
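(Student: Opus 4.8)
The statement to prove is Lemma~\ref{Lemma 1.11}: if $U'$ is an affine, normal $\Z$-homology plane with exactly one quotient singular point $q$ that is \emph{non-cyclic}, and $\lkd(U'\setminus\{q\}) \geq 0$, then in fact $\lkd(U'\setminus\{q\}) = 2$.

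The contrapositive is natural here: I want to rule out $\lkd(U'\setminus\{q\}) = 0$ and $\lkd(U'\setminus\{q\}) = 1$. So the plan is to assume $\lkd(U'\setminus\{q\}) \in \{0,1\}$ and derive a contradiction from the non-cyclicity of the quotient singularity. Let me set up notation as in the body: let $\pi\colon U \to U'$ be the minimal resolution, $E = \pi^{-1}(q)$ the exceptional locus, and let $\bar U \supset U$ be a smooth compactification with $D = \bar U \setminus U$ an $\MNC$ (or at least SNC) divisor. Since $q$ is a non-cyclic quotient singularity, $E$ is \emph{not} a linear chain — it must contain a branching component, so $E$ is a rational tree with at least one fork (indeed the dual graph of a non-cyclic quotient singularity is an admissible fork, one of the standard ADE-or-$D_n$-or-exceptional shapes). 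This is the crucial structural input that the cyclic case lacks.

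The core of the argument should be a fibration dichotomy. When $\lkd(U'\setminus\{q\}) \leq 1$, one expects $U'\setminus\{q\} = U_0$ to admit either a $\C$-fibration (forcing $\lkd = -\infty$, which contradicts $\lkd \geq 0$) or a $\C^*$-fibration (consistent only with $\lkd \leq 1$). So first I would show that $\lkd(U_0) = 0$ is impossible and that $\lkd(U_0) = 1$ forces a $\C^*$-fibration via Kawamata's theorem (as invoked in the proof of Lemma~\ref{Lemma 1.10}), and that $\lkd(U_0)=0$ likewise yields strong structural constraints on the boundary $T = \bar Z \setminus Z$ of an almost-minimal model through Fujita's classification in \cite[\S 8.8]{F}. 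The strategy closely parallels Lemma~\ref{Lemma 1.10}: reduce to an almost-minimal model, compute $d(T)$ in terms of $d(E)$ by the argument of Lemma~\ref{Lemma 1.7}, and then exploit the fact that $Z'$ is a $\Z$-homology plane (so $\Pic(Z_0)$ is torsion and numerical constraints are rigid) to pin down the possible dual graphs. The non-cyclicity of $q$ means $E$ carries a branching component; I would track this branching component through the almost-minimalization of Setup~\ref{Setup 1.8}, checking it survives or leaves a recognizable trace, and then confront it with the very limited list of boundary components allowed by Fujita in the $\lkd = 0$ case and by the $\C^*$-fibration structure in the $\lkd = 1$ case.

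I would carry this out as follows. Step one: record the dual graph of $E$ as an admissible fork with center $E_0$ and three (or more) twigs, noting $d(E) > 0$ and that $E$ contracts to the single non-cyclic quotient point. Step two: in the case $\lkd(U_0) = 1$, produce the $\C^*$-fibration $g\colon U_0 \to B$ and analyze how the branching component $E_0$ of $E$ sits relative to the fibration — whether it is horizontal or vertical — using Lemma~\ref{Lemma 1.1} and Lemma~\ref{Lemma 1.2} on the singular fibers, together with Lemma~\ref{Lemma 1.2.1} to count horizontal components and fibers in the boundary; a branching component of $E$ typically forces either too many horizontal curves or a configuration incompatible with $U'$ being $\Z$-acyclic. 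Step three: in the case $\lkd(U_0) = 0$, invoke Fujita's classification of the connected boundary components of an almost-minimal surface of Kodaira dimension $0$ and eliminate each candidate against the constraint that $E$ contracts to a non-cyclic quotient singularity and $U'$ is a homology plane, exactly as the proof of Lemma~\ref{Lemma 1.10} eliminates the two-$A_1$ case. The main obstacle, as I see it, is Step two: controlling the interaction between the fork structure of $E$ and the $\C^*$-fibration is delicate, because the fibration may have several degenerate fibers and the branching component can be absorbed into a fiber in subtle ways. Making the topological bookkeeping tight enough — via the modified Betti number formula of Lemma~\ref{Lemma 1.2.1} combined with the homology-plane condition $H_1 = 0$ and $b_2$ constraints — to force the needed contradiction is where the real work lies, and where I would expect to lean most heavily on the detailed numerical machinery developed in \cite{KR} and \cite{F}.
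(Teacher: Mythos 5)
Your skeleton agrees with the paper's: assume $\lkd(U'\setminus\{q\})\in\{0,1\}$, note that $\lkd=1$ would produce a $\C^*$-fibration on $U_0=U'\setminus\{q\}$ by Kawamata's theorem, and kill the $\lkd=0$ case by the Fujita classification as in Lemma \ref{Lemma 1.10} (the paper simply cites \cite{KR}, \S 4 for this, observing that the contractibility hypothesis there can be relaxed to $\Z$-acyclicity). The genuine gap is in your Step two, which is precisely the step you yourself flag as ``where the real work lies'': you never actually exclude the $\C^*$-fibration, you only propose to do so by tracking the fork $E$ through the fibration with the counting formula of Lemma \ref{Lemma 1.2.1}. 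That bookkeeping alone will not close the argument --- degenerate fibers of $\C^*$-fibrations can contain arbitrarily branched trees, so the mere presence of a branching component in $E$ does not contradict the Betti-number count; the incompatibility of a \emph{non-cyclic} quotient singularity with a $\C^*$-fibration is a structure theorem, not an elementary count, and attempting it from scratch amounts to re-proving the relevant theorem of Miyanishi--Sugie.

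The paper avoids all of this with the same short dichotomy used in the proof of Lemma \ref{Lemma 1.10}, which never looks at how $E$ sits in the fibers. Given a $\C^*$-fibration $f$ on $U_0$, either $f$ extends to $U'$, in which case \cite[Theorem 2.17]{MS} (the structure theory of $\C^*$-fibered homology planes with quotient singularities) applies and is incompatible with $q$ being non-cyclic; or $f$ is undefined at $q$, in which case elimination of indeterminacies shows a resolution of $U'$ has $\lkd=-\infty$, and one derives a contradiction with the hypothesis $\lkd(U_0)\geq 0$. Once no $\C^*$-fibration exists, Kawamata forces $\lkd(U_0)=0$, and \cite{KR}, \S 4 finishes. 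So the missing idea is the extension/non-extension dichotomy together with the citation of \cite[Theorem 2.17]{MS}; without it, your proposal is a research plan whose hardest step is acknowledged but not carried out.
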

\begin{proof}
Suppose that $\lkd(U'\setminus\{q\})<2$. As in the proof of Lemma \ref{Lemma 1.10} above, $U'\setminus \{q\}$ is not
$\C^*$-ruled. Hence $\lkd(U'\setminus \{q\})=0$. This case is ruled out in \cite{KR}, section $4$. (In \cite{KR}
it is assumed that surfaces under consideration are topologically contractible. But in section $4$ it is enough
to assume that they are $\Z$-homology planes.)
\end{proof}

\begin{lem}\label{Lemma 1.12}
Suppose that $Z'$ is a $\Q$-homology plane with only quotient singularities. Then $Z'$ is affine.
Moreover, if $H_1(Z';\Z)=0$ then $Z'$ is a homology plane.
\end{lem}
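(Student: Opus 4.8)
The plan is to prove the two assertions in turn, deriving the integral statement from the affineness statement, which carries all of the geometric content. For the affineness, I would first pass to a resolution and compactification in the standard way: let $\sigma\colon Z\to Z'$ be the minimal resolution, let $E=\sigma^{-1}(\Sing Z')$ (a negative-definite tree of rational curves, since the singularities are quotient singularities), and fix a smooth projective $\ol{Z}\supset Z$ whose boundary $T=\ol{Z}\setminus Z$ is an $\SNC$ divisor. The two things I ultimately want from the compactification are that $Z'$ has no complete curve and that the boundary supports a suitable positive divisor.

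The first step is to show that $Z'$ contains no complete curve. If $C\subset Z'$ were a compact curve, its fundamental class $[C]\in H_2(Z';\Q)$ would map, under the inclusion into a normal projective compactification $\ol{Z'}$, to the class $[C]\in H_2(\ol{Z'};\Q)$, which pairs positively with an ample divisor because $C$ is effective; hence $[C]\neq 0$ already in $H_2(Z';\Q)$, contradicting the $\Q$-homology plane hypothesis $H_2(Z';\Q)=0$. So $Z'$ has no complete curves, and in particular $Z$ contains no complete curve outside $E$.

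The second step is to promote ``no complete curves'' to genuine affineness, and this is where the full strength of $\Q$-acyclicity is needed: the condition $H_3(Z';\Q)=0$ is exactly what excludes quasi-affine-but-not-affine surfaces such as $\A^2\setminus\{0\}$, which have no complete curves. I would invoke the known rationality of $\Q$-homology planes with quotient singularities (\cite{GPS,GP}), so that $\ol{Z}$ is a smooth projective rational surface whose intersection form on $\Pic(\ol{Z})\otimes\Q$ has signature $(1,\rho-1)$ by the Hodge index theorem. By the argument of \cite[Proposition 2.6(iii)]{KR} (the long exact sequences of the pairs $(Z,E)$ and $(\ol{Z},Z)$ together with $H_i(Z';\Q)=0$), the components of $T\cup E$ span, and hence form a $\Q$-basis of, $\Pic(\ol{Z})\otimes\Q$, and the same homology computation forces $T$ to be connected. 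Since the $E$-part of the form is negative definite, the unique positive eigenvalue must be carried by the $T$-components, so the intersection form on the connected divisor $T$ is not negative semidefinite; such a $T$ supports an effective divisor of positive self-intersection, and combined with the absence of complete curves in $Z'$ this yields affineness by Goodman's criterion (cf. \cite{Mi}). I expect this second step to be the main obstacle: one must extract from the boundary an effective divisor that is not merely big but strictly positive on every curve of $Z$, and verify that the associated semiample system contracts nothing in the interior, which is precisely where the basis property and ``no complete curves'' are used together.

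Finally, the integral statement follows formally once affineness is established. A normal complex affine surface has the homotopy type of a CW complex of real dimension at most $2$, so $H_i(Z';\Z)=0$ for $i\geq 3$ and $H_2(Z';\Z)$ is free abelian. The hypothesis $H_i(Z';\Q)=0$ for $i>0$ then forces $H_2(Z';\Z)$ to be torsion, hence zero; together with the assumption $H_1(Z';\Z)=0$ this gives $H_i(Z';\Z)=0$ for all $i>0$, so $Z'$ is a $\Z$-homology plane.
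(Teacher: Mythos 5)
Your proposal is correct and takes essentially the same route as the paper: the paper's proof of affineness is just a citation of \cite[Theorem 2.4]{F} together with the remark that the argument extends to quotient singularities, and your sketch (no complete curves from $\Q$-acyclicity, the boundary components of a compactification spanning $\Pic\otimes\Q$ so that the connected boundary carries the positive part of the intersection form by Hodge index, an effective boundary-supported divisor of positive self-intersection, and a Goodman/Nakai-type criterion) is precisely that extension of Fujita's argument to the singular case. Your derivation of the integral statement --- $H_2(Z';\Z)$ is free abelian because $Z'$ is affine, hence vanishes since $b_2(Z')=0$ --- is the paper's argument verbatim.
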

\begin{proof}
For the affiness of $Z'$ we refer to \cite[Theorem 2.4]{F}. The proof given there in the smooth case
can be easily extented to our setup with quotient singularities. If $Z'$ is affine, then $H_2(Z';\Z)$
is a free abelian group. Hence it vanishes if $b_2(Z')=0$. So $Z'$ is a homology plane in this case.
\end{proof}

\svskip
Let $G$ be a non-cyclic small finite subgroup of $GL(2, \C)$. The surface $(\C^2\setminus \{0\})/G$ has
the structure of a Platonic fibration and its $\MNC$ boundary divisor consists of two forks, $P_0$ and $P_1$, say.
One, $P_0$ say, is the resolution of the origin in $\C^2 /G$ and contractible.
We will call $P_1$ the {\em complementary fork} of $P_0$.  The possibilities for the forks $P_0$ and their complementary forks are well known and a complete list is given in \cite[Chapter 3, section 2.5]{Mi}. If the fork $T$ has has branches $T_1, T_2, T_3$ we say that it is of type $(d(T_1), d(T_2), d(T_3))$. For a contractible fork or a complementary fork we have $d(T_i)\geq 2$ and
$$\frac{1}{d(T_1)}+\frac{1}{d(T_2)}+\frac{1}{d(T_3)}>1$$.

We will later need the following geometric characterization of this situation by Miyanishi and Tsunoda  \cite[5.1 and Theorem 5.1.2]{Mi}
\svskip

\begin{lem}\label{Lemma 1.13}
Let $X$ be a smoth projective surface and $(X,P)$ be an almost minimal pair. Put $X'=X\setminus P$. If $\lkd{(X')}=-\infty$ and if $X'$ is not affine ruled, then one of the following occurs.
\begin{enumerate}
\item[{\rm (a)}] All connected components of $P$ are contractible.
\item[{\rm (b)}] $P$ has two connected components, $P_0$ and $P_1$. One, $P_0$ say, is a contractible fork,
and the other, $P_1$, is the complementary fork of $P_0$, $X'$ has the structure of a Platonic fibration and $\chi{(X')}=0$. If $X''$ is the surface obtained from $X\setminus P_1$ by contracting $P_0$ to a quotient singular point $p$, then $X'' \simeq \C^2/G$ where $G$ is the local fundamental group at $p$.
\end{enumerate}
\end{lem}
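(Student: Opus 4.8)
The plan is to determine the shape of the boundary $P$ by converting any non-negative-definite linear piece of $P$ into a $\C$- or $\C^*$-fibration of $X'=X\setminus P$ and reading off the two alternatives. The first reduction disposes of the affine-ruled possibility at the level of twigs: if some maximal twig $L$ of $P$ (automatically a linear chain) were non-contractible, then Lemma~\ref{Lemma 1.2.0}(a) would, after blowing up and down over $L$, replace $P$ by a boundary having a $(0)$-curve as a tip, and this $(0)$-curve induces a $\C$-fibration on $X'$. A $\C$-fibration makes $X'$ affine ruled, contrary to hypothesis. Hence \emph{every maximal twig of $P$ is contractible}, and by almost minimality the only remaining source of non-negative-definiteness lies in the branching or central part of $P$.

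Now either every connected component of $P$ is negative definite, hence (being a rational tree) contractible, which is conclusion (a); or some component is not contractible. In the latter case I would produce a $\C^*$-fibration on $X'$. Since $\lkd(X')=-\infty$ the surface $X$ is rational and carries a $\BP^1$-fibration; using the $(0)$-curves manufactured by Lemma~\ref{Lemma 1.2.0} one arranges this fibration to be adapted to $P$, so that the number $h$ of horizontal components of $P$ governs the fiber type of the induced map on $X'$. If $h=1$ a general fiber meets $X'$ in an $\A^1$, making $X'$ affine ruled and again contradicting the hypothesis; therefore $h=2$, a general fiber meets $X'$ in a $\C^*$, and we obtain a $\C^*$-fibration $f\colon X'\to B$.

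It then remains to analyze $f$ and recover all the data of conclusion (b). Using Lemma~\ref{Lemma 1.1} to describe the degenerate fibers and Fujita's formula (Lemma~\ref{Lemma 1.2.1}) to compute Euler numbers, one shows that $\chi(X')=0$ and that $P$ consists of exactly two forks, a contractible fork $P_0$ and its complementary fork $P_1$, whose branches are precisely the contractible twigs assembled over the degenerate fibers of $f$. The requirement $\lkd(X')=-\infty$ is what forces the branch determinants to satisfy the Platonic inequality $\frac{1}{d(T_1)}+\frac{1}{d(T_2)}+\frac{1}{d(T_3)}>1$ recorded before the statement; with this, $f$ is identified with the quotient of the Hopf fibration $\C^2\setminus\{0\}\to\BP^1$ by the finite group $G$ read off from $P_0$. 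Contracting $P_0$ to its quotient singular point $p$ and deleting $P_1$ produces the affine surface $X''$, and the identification of $G$ with the local fundamental group at $p$ gives $X''\simeq\C^2/G$.

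The main obstacle is this last step: manufacturing the $\BP^1$-fibration in precisely the adapted form and then rigidly pinning down the Platonic structure. The delicate points are ruling out degenerate fibers beyond the two forks, showing that the two horizontal components assemble into exactly the complementary-fork configuration rather than some larger boundary, and converting $\lkd(X')=-\infty$ into the \emph{strict} triple inequality that separates case (b) from a genuinely affine-ruled surface; the final isomorphism $X''\simeq\C^2/G$ (as opposed to merely a surface with one isolated quotient singularity) likewise requires the full fiber bookkeeping of Lemma~\ref{Lemma 1.1} together with almost minimality.
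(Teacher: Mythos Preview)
The paper does not prove Lemma~\ref{Lemma 1.13} at all: immediately before the statement it says ``We will later need the following geometric characterization of this situation by Miyanishi and Tsunoda \cite[5.1 and Theorem 5.1.2]{Mi}'', and the lemma is simply quoted from the literature with no argument given. So there is no ``paper's own proof'' to compare against; what you have written is an outline of how one would go about establishing the Miyanishi--Tsunoda structure theorem itself.

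As an outline your sketch is in the right spirit, but it is not a proof and has real gaps. First, you invoke Lemma~\ref{Lemma 1.2.0} to produce $(0)$-curves, but that lemma is stated for a smooth projective \emph{rational} surface with a boundary of rational curves, while Lemma~\ref{Lemma 1.13} assumes only that $X$ is smooth projective; you would need to argue separately that $\lkd(X')=-\infty$ forces the relevant components of $P$ to be rational (and, for the fibration step, that the base is $\BP^1$). Second, the passage ``one arranges this fibration to be adapted to $P$, so that $h=1$ or $h=2$'' hides the substantial work: a priori a $\BP^1$-fibration on $X$ could have a horizontal multi-section in $P$, and you have not explained why almost minimality and the non-affine-ruled hypothesis force $h=2$ with both horizontal curves being $1$-sections. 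Third, the identification of the two forks as a contractible fork and its exact complementary fork, together with the isomorphism $X''\simeq\C^2/G$, is the heart of the theorem and requires a careful classification of the singular fibers; you have acknowledged this as ``the main obstacle'' but have not supplied the argument. In short, what you have is a plausible roadmap to the Miyanishi--Tsunoda theorem, not a proof, and the paper's intent is that you take the result as a black box from \cite{Mi}.
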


 Throughout this article, we use the following terminology and
notation concerning a maximal twig T of a divisor Q.  Let $T_1$ (resp. $T_k$) denote the component of $T$ that meets a branching component (resp. is a tip) of $Q$. If $n=d(T)$, we write $\ol{n}=\ol{d}(T)=d(T-T_1)$ and $\wt{n}=\wt{d}(T)=d(T-T_k)$.
$\ol{n}/n=d(T-T_1)/d(T)$ (resp. $\wt{n}/n=d(T-T_k)/d(T)$) is called the {\em capacity} (resp. the {\em inductance}) of
$T$. These quantities are crucial in the calculation of $d(Q)$ (resp. $Bk(Q)$).

\section{Determination of the dual graph of $D$}

We keep the notation of Theorem 1 and Lemma \ref{Lemma 1.4}. We assume that we have a counterexample to Theorem 1, i.e., that $q$ is a non-cyclic quotient singularity, and will eventually arrive at a contradiction. We assume that the number of irreducible components of the boundary $D$ is minimal for such a counterexample. We record some facts and establish some notation.

\begin{setup}\label{Setup 2.1}
\begin{enumerate}
\item[{\rm (a)}]
$E$ is a fork.
\item[{\rm (b)}]
Let $D_0$ be a branching component of $D$. Let $D-D_0=Z_1+\dots+Z_r$ be the decomposition
into connected components, where $r \ge 3$. Let $Z_i^{(1)}$ denote the irreducible component of $Z_i$
which meets $D_0$. It is possible that $D-D_0$ is not MNC. Let $f : \ol{S} \to \ol{Y}$ be a sequence
of blowing downs of (-1)-curves to make the image of $D-D_0$ an $\MNC$ divisor. Let
$Q_i=f_*(Z_i)\ (1 \le i \le r)$, $Q_0=f_*(E)$ and $Q=Q_0+(Q_1+\cdots+Q_r)$,
 If $Q_i$ contracts to a quotient singular point $q_i$, we let $G_i$ denote
the local fundamental group at $q_i$. Let $Y=\ol{Y}\setminus Q$.
\item[{(c)}]
Assume that no $Z_i$ contracts to a smooth point.Then we have
\[
\chi(Y)=2-r \quad \mbox{and} \quad \rank\Pic(\ol{Y})=\#Q+1.
\]
\end{enumerate}
\end{setup}

Here and later we denote by $\#Z$ the number of irreducible components of a (reduced, effective)
divisor $Z$. Note that by Corollary \ref{Corollary 1.2.0.1} the boundary divisor $D$ is not a linear chain.
So $D$ has a branching component $D_0$.

Furthermore, the $(-1)$ curves on $\ov{S}$ contracted in the $\NC$-minimal\-ization $f$ are among
the components $Z_i^{(1)}$. The assumption in $(c)$ is not automatic. When it is satisfied, $Y$ is seen to be the union of $S'\setminus \Sing S'$ and the component $D_0$ punctured in
$r$ points. This yields $\chi(Y)$. For $\rank\Pic(\ol{Y})$, see \cite[2.6, (iii)]{KR}.

\svskip

We will need the following technical result which plays an important role in the sequel.

\begin{lem}\label{Lemma 2.2}
Let $\ol{Y}$ be a smooth projective rational surface, let $Q$ be an $\MNC$-divisor consisting of
smooth rational curves and let $Y=\ol{Y}\setminus Q$. Assume that one of the following two cases takes place.
\begin{enumerate}
\item[{\rm (i)}]
The divisor $Q$ consists of $r+1$ connected components, $Q=Q_0+Q_1+\cdots+Q_r$, where $Q_0$ is a contractible fork
and $r \ge 3$. Further, $\chi(Y)=2-r, \rank\Pic\ol{Y}=\#Q+1$ and
$H_1(\ol{Y}\setminus(Q_1\cup\cdots \cup Q_r);\Z)=0$.
\item[{\rm (ii)}]
The divisor $Q$ consists of $r$ connected components, $Q=Q_1+\cdots+Q_r$, where $r \ge 3$. Further,
$\chi(Y)=2-r$ and $\rank\Pic(\ol{Y})=\#Q$.
\end{enumerate}
Assume furthermore that the following conditions are satisfied in each of the above two cases.
\begin{enumerate}
\item[{\rm (1)}]
$\lkd(Y) \ge 0$.
\item[{\rm (2)}]
If a $(-1)$-curve $C$ contained in $\Supp(K_{\ol{Y}}+Q)^-$ meets two connected components $Q_i$ and
$Q_j$, then $Q_i+C+Q_j$ does not contract to a smooth point, where $(K_{\ol{Y}}+Q)^-$ is the negative
part in the Zariski-Fujita decomposition of $K_{\ol{Y}}+Q$.
\item[{\rm (3)}]
The irreducible components of $Q$ are independent in $\Pic(\ol{Y})$.
\item[{\rm(4)}] No $Q_i$ contracts to a smooth point.
\end{enumerate}
Then the pair $(\ol{Y},Q)$ is almost minimal.
\end{lem}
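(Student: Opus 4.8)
The plan is to argue by contradiction, using the additive invariant $e(\ol{Y},Q)=\chi(Y)+\#\{\text{connected components of }Q\}$ from Setup \ref{Setup 1.8} as the main bookkeeping device. First I would record that $e(\ol{Y},Q)$ is the topological Euler characteristic of the normal surface obtained from $\ol{Y}$ by collapsing each connected component of $Q$ to a point, since collapsing a tree of $m$ rational curves (Euler characteristic $m+1$) to a point lowers $\chi$ by $m$. With the given data this gives $e(\ol{Y},Q)=(2-r)+(r+1)=3$ in case (i) and $e(\ol{Y},Q)=(2-r)+r=2$ in case (ii). Now suppose $(\ol{Y},Q)$ is \emph{not} almost minimal and let $(\ol{Y}',Q')$ be an almost minimal model as in Setup \ref{Setup 1.8}, reached by an NC-minimalization contracting $\ell\ge 1$ curves that meet $Y$. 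By Lemma \ref{Lemma 1.9}(b) we then have $e(\ol{Y}',Q')=e(\ol{Y},Q)-\ell\le e(\ol{Y},Q)-1$. The whole argument reduces to producing a lower bound for $e(\ol{Y}',Q')$ contradicting this drop; note $\lkd(Y')=\lkd(Y)\ge 0$ by condition (1), as log Kodaira dimension is a birational invariant.

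For the lower bound I would split according to whether all connected components of $Q'$ are contractible. If they are, the surface $\ol{Y}'^{\,*}$ obtained by contracting $Q'$ is a normal projective rational surface with only quotient singularities; such a surface is a rational $\Q$-homology manifold, so $b_1=0$ by rationality, hence $b_3=0$ by $\Q$-Poincaré duality, while $b_0=b_4=1$ and $b_2\ge 1$ by projectivity. Hence $e(\ol{Y}',Q')=\chi(\ol{Y}'^{\,*})=2+b_2\ge 3$. If instead some component of $Q'$ is non-contractible, I would invoke the BMY inequality of Lemma \ref{Lemma 1.3}(i) for the almost minimal pair $(\ol{Y}',Q')$ (quasi-affineness being inherited in the situation of Lemma \ref{Lemma 2.2}): writing $c'=s'+t'$ for the numbers of contractible and non-contractible components and using $\sum_p|G_p|^{-1}<s'$ (at most $s'$ quotient singular points, each with $|G_p|\ge 2$), one gets $\chi(Y')>-s'$ and therefore $e(\ol{Y}',Q')=\chi(Y')+s'+t'>t'\ge 1$, so $e(\ol{Y}',Q')\ge 2$. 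I would also use condition (3) here to exclude the degenerate possibility that the first contracted $(-1)$-curve is disjoint from $Q$: such a curve would be numerically orthogonal to every component of $Q$, which in case (ii), where these components span $\NS(\ol{Y})\otimes\Q$, forces it to be numerically trivial, contradicting self-intersection $-1$.

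Combining the two estimates gives $e(\ol{Y}',Q')\ge 2$ in all cases. In case (ii) this already yields $2-\ell=e(\ol{Y}',Q')\ge 2$, hence $\ell=0$, a contradiction, so $(\ol{Y},Q)$ is almost minimal. In case (i) the same bound only gives $3-\ell\ge 2$, i.e. $\ell\le 1$, and the residual possibility $\ell=1$, $e(\ol{Y}',Q')=2$ must be excluded separately. Here I would exploit the extra structure of case (i): the contractible fork $Q_0$ produces a \emph{non-cyclic} quotient singularity, whose component survives the NC-minimalization since conditions (2) and (4) prevent any smooth contraction involving it; together with $\rank\Pic(\ol{Y})=\#Q+1$, the independence of the components of $Q$, and $H_1(\ol{Y}\setminus(Q_1\cup\cdots\cup Q_r);\Z)=0$, this should force $\ol{Y}'^{\,*}$ to be a genuine $\Q$-homology $\BP^2$ with \emph{all} boundary components contractible, placing us in the first sub-case and giving $e(\ol{Y}',Q')\ge 3>2$, the desired contradiction.

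The main obstacle is precisely this case (i) elimination of $\ell=1$: one must rule out a non-contractible boundary component in the almost minimal model, i.e. control the negative-definiteness of the full boundary through the contraction process, using the homological hypotheses and the classification of almost minimal surfaces with $\lkd\ge 0$. A secondary but unavoidable chore is the bookkeeping behind Lemma \ref{Lemma 1.9}(a): for the first contracted curve one must check, via the structure theorem for such $(-1)$-curves together with conditions (2) and (4), that each meeting type ($C\cdot Q=1$ or $2$, on distinct components) lowers $e$ by exactly one and never produces a smooth contraction forbidden by the hypotheses.
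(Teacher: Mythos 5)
Your overall strategy (argue by contradiction using the invariant $e$ of Setup \ref{Setup 1.8}, Lemma \ref{Lemma 1.9}(b), and the BMY inequality applied to the almost minimal model) is the same as the paper's, but two of your steps fail as written, and the decisive one is your treatment of the residual case (i), $\ell=1$. You claim that conditions (2) and (4) ``prevent any smooth contraction involving'' $Q_0$, so that the non-cyclic singularity survives and all components of $Q'$ come out contractible, giving $e'\geq 3$. That premise is false: condition (4) only forbids $Q_0$ \emph{alone} from contracting to a smooth point, and condition (2) only forbids $Q_i+C+Q_j$ from doing so; nothing forbids $Q_0+C_1$ (with $C_1$ meeting only $Q_0$) from contracting to a smooth point, nor $Q_0+C_1+Q_1$ from contracting to a single $(-2)$-curve. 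These are exactly the two possibilities the paper's proof isolates in this case, and in both of them the non-cyclic singularity disappears while a \emph{non-contractible} component $Q_3'$ remains in $Q'$, so the configuration you hope to force never occurs. What actually kills the residual case is a different mechanism entirely: using $H_1(\ol{Y}\setminus(Q_1\cup Q_2\cup Q_3);\Z)=0$ and the Picard-rank hypothesis, one shows that contracting the two surviving $(-2)$-curves $Q_1',Q_2'$ in $\ol{Y}'\setminus Q_3'$ produces an affine $\Z$-homology plane with two $A_1$-singularities and $\lkd=0$ or $1$, and no such surface exists by Lemma \ref{Lemma 1.10}. This auxiliary construction --- the only place where the $H_1$-hypothesis of case (i) enters --- is the heart of the proof and is absent from your plan.

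There is also a hole in your lower bound $e(\ol{Y}',Q')\geq 2$ in the ``some component non-contractible'' branch: when $s'=0$ the sum $\sum_p|G_p|^{-1}$ is empty, Lemma \ref{Lemma 1.3}(i) gives only $\chi(Y')\geq 0$ (strictness requires $\lkd=2$), and you obtain merely $e'\geq t'\geq 1$. Since in case (ii) your entire contradiction is $2-\ell=e'\geq 2$, the possibility $e'=1$ must be excluded separately; this can be patched (if $e'=1$ then $\ell=1$, and a $(-1)$-curve in $\Supp(K_{\ol{Y}}+Q)^-$ meets at most two connected components of $Q$, so $Q'$ still has at least $r-1\geq 2$ components, contradicting $s'+t'=1$), but you did not supply the patch, and the same hole leaves the scenario $\ell=2$, $e'=1$ open in case (i). The paper avoids both problems by a different chain of estimates: it first proves $\chi(Y')\leq\chi(Y)\leq -1$ (its inequality (2.1), which uses conditions (2) and (3)), and separately notes that $\#Q'\geq\rank\Pic(\ol{Y}')$ together with the Hodge index theorem forces at least one non-contractible component of $Q'$; fed into BMY, these pin the residual case down precisely ($k=2$ with two $A_1$-points, $\ell=1$, case (i), $r=3$) before the homology-plane argument is invoked.
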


\begin{proof}
Suppose that $(\ol{Y},Q)$ is not almost minimal. Let $(\ol{Y}',Q')$ be an almost minimal model of
$(\ol{Y},Q)$. Let $(p_i)_{1 \le i\le \ell}$ be a sequence of birational morphisms as in \ref{Setup 1.8}. We claim that $\chi(Y_{i+1}) \le \chi(Y_i)$. If it is not the case, then by \ref{Lemma 1.9}
there exists a $(-1)$-curve $C$ on $Y_i$ meeting exactly two connected components of the image of
the boundary $Q$ on $Y_i$ such that $C$ together with these two components contracts to a smooth point. Then we find curves $\tilde C_1, \ldots,
\tilde C_i$ on $\ol{Y}$, the pre-images of $C_1, \ldots,
C_i$, and we split $Q=Q^{(1)}+Q^{(2)}$, each $Q^{(i)}$ a sum of some $Q_j$, such that $\tilde C_1+\ldots + \tilde C_i+Q^{(1)}$ contracts to a smooth point and $\tilde C_1+\ldots + \tilde C_i+Q^{(1)}$ and $Q^{(2)}$ are disjoint.
Then the intersection matrix of $Q^{(1)}+\tilde C_1+\dots +\tilde C_i$ is negative definite.
It follows that the irreducible components of $Q+\tilde C_1+\cdots+\tilde C_i$ are numerically independent in
$\Pic(\ol{Y})$. This implies that $i=1$. However, such a $(-1)$-curve $C$ does not exist by  assumption (2)
since $C \subset (K_{\ol{Y}}+Q)^-$. Thus
\begin{equation}\label{eqn 2.1}
\chi(\ol{Y}'\setminus Q')\leq \chi(Y).
\end{equation}
Label as $Q'_1,\dots Q'_k$  all the connected components of $Q'$ which contract to quotient singularities
$q'_i$. ($Q_j'$ is not necessarily the image of $Q_j$.) Note that the image of $Q_0$ is contained in one of the $Q'_j$ in case (i).
Let $G_i'$ be the local fundamental group at $q_i'$. We apply the BMY inequality in Lemma \ref{Lemma 1.3}
and obtain the inequality
\begin{equation}\label{eqn 2.2}
\chi(\ol{Y}'\setminus Q')+\sum_{i=1}^{k}\frac{1}{|G_i'|} \ge 0\ .
\end{equation}
We have $\#Q'\geq \Pic(\ol{Y}')$ since some $(-1)$-curve $C \not\subset Q$ is contracted to obtain
the almost minimal model $(\ol{Y}',Q')$. Hence not all connected components of $Q'$ contract to quotient
singularities, i.e., the number of connected components of $Q'$ is strictly greater than $k$. Since
$e(\ol{Y},Q)=2-r+r+1 =3$ in case (i) and $e(\ol{Y},Q)=2$ in case (ii) and since $\ell \ge 1$,
we have (see \ref{Setup 1.8} and \ref{Lemma 1.9})
\begin{eqnarray}\label{eqn 2.3}
k & \leq & e(\ol{Y}',Q')-\chi(\ol{Y}'\setminus Q')-1 \nonumber \\
&=& e(\ol{Y},Q)-\ell-1-\chi(\ol{Y}'\setminus Q')    \\
    & \leq&  3-\ell-1-\chi(\ol{Y}'\setminus Q') \leq  1-\chi(\ol{Y}'\setminus Q').  \nonumber
    \end{eqnarray}
Hence we obtain $\chi(\ol{Y}'\setminus Q') \leq 1-k$. From (\ref{eqn 2.2}), since $|G_i|\geq 2$,
we have $1-k+\frac{k}{2}\geq 0$. This implies $k \leq 2$. Since $\chi(\ol{Y}'\setminus Q') \le
\chi(Y)\leq -1$, (\ref{eqn 2.2}) implies that $k=2$ and $\chi(\ol{Y'}\setminus Q')\geq -1$.  Hence
$\chi(\ov Y'\setminus Q)=\chi(Y)=-1$ and $r = 3$. Also $|G_1'|=|G_2'|=2$. It follows that
$\lkd(\ol{Y'}\setminus Q')=0$ or $1$, for otherwise we have a strict inequality in (\ref{eqn 2.2}).
Meanwhile, (2.3) gives $\ell=1$ and $e(\ol{Y},Q)=3$. This implies that we are in the case (i) and
the morphism $\ol{Y} \to \ol{Y}'$ consists of the contraction $p_1$ of a single $(-1)$-curve $C_1$.
Since $Q_0$ is a contractible fork and $|G_1'|=|G_2'|=2$, $Q_0$ must be touched under $p_1$, i.e. $C_1$ meets $Q_0$.
Either $C_1$ meets only $Q_0$ and $Q_0+C_1$ contracts to a smooth point, or $C_1$ meets $Q_0$ and $Q_1$, say
and $C_1+Q_0+Q_1$ contracts to a $(-2)$-curve $Q'_1$. In both cases, there remains a non-contractible
connected component $Q'_3$ in $Q'$, where one can identify $Q_3$ with $Q'_3$ because $Q_3$ is untouched
by $p$. In the former (resp. the latter) case, we may assume that $Q'_1,Q'_2$ are $(-2)$-curves
(resp. $Q'_2$ is a $(-2)$-curve). Consider the surface $Z=\ol{Y'}\setminus Q'_3$. Let $Z'$ be obtained
from $Z$ by contracting $Q'_1, Q'_2$ to singular points. Since $\ol{Y}\setminus(Q_1\cup Q_2\cup Q_3)
\subset \ol{Y}\setminus Q_3$ and $H_1(\ol{Y}\setminus(Q_1\cup Q_2\cup Q_3);\Z)=0$, we have
$H_1(\ol{Y}\setminus Q_3;\Z)=0$. Since the surface $Z$ is obtained from $\ol{Y}\setminus Q_3$
by a sequence of blowings-down, we have $H_1(Z;\Z)=0$ and also $H_1(Z';\Z)=0$. On the other hand,
$\Pic(\ol{Y}')\otimes \Q$ is freely generated by irreducible components of $Q'$. In fact, the components
of $Q_0+C_1+Q_1+Q_2$ are contracted to points and hence numerically independent. Since $Q_3$ is disjoint
from $Q_0+C_1+Q_1+Q_2$, the components of $Q_3$ are independent. Since $\rank\Pic(\ol{Y})=\#Q+1$,
the irreducible components of $Q+C_1$ form a $\Q$-basis of $\Pic(\ol{Y})\otimes \Q$. Hence
$\Pic(\ol{Y}')\otimes \Q$ is freely generated by the components of $Q'$ and $\Pic(Z')\otimes \Q=0$.
This implies, together with $H_1(Z';\Z)=0$, that $\Pic(Z')=0$. It follows that $b_2(Z')=0$. We have
\[
\chi(Z')=\chi(Z\setminus(Q_0'\cup Q_1'))+2=\chi(\ol{Y'}\setminus Q')+2=-1+2=1.
\]
Thus $b_3(Z')=0$, while $b_4(Z')=0$ because $Z'$ is not compact. Hence $Z'$ is $\Q$-acyclic.
By Lemma \ref{Lemma 1.12}, $Z'$ is affine, whence $H_2(Z';\Z)$ is a free group and $H_2(Z';\Z)=0$
because $H_2(Z';\Q)=0$. Thus $Z'$ is $\Z$-acyclic. Now we reach a contradiction by Lemma \ref{Lemma 1.10}.
\end{proof}

If the condition (2) in Lemma \ref{Lemma 2.2} is violated, it reflects upon the shape of the
divisor $Q$ on the surface $\ol{Y}$.

\begin{lem}\label{Lemma 2.3}
Let the situation be as in 2.1. Suppose that $\lkd(Y)\geq 0$ and that a $(-1)$-curve $C \subset(K_{\ol{Y}}+Q)^-$
meets two connected components, say $Q_{i_0}$ and $Q_{i_1}$, of $Q$, so that $Q_{i_0}+C+Q_{i_1}$ contracts
to a smooth point. Then we have
\begin{enumerate}
\item[{\rm (1)}]
$r=3$.
\item[{\rm (2)}]
$C$ meets $E$. After relabelling the connected components of $Q$, we have $Q_{i_0}=E$ and $Q_{i_1}=Q_1$.
\item[{\rm (3)}]
$Q_1$ is a contractible chain and $Q_2$, $Q_3$ are trees with unimodular intersection matrix.
None of $Q_2, Q_3$ is a chain. One of $Q_2, Q_3$ is non-contractible.
\end{enumerate}
\end{lem}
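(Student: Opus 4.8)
The plan is to run the Euler characteristic / BMY bookkeeping used in the proof of Lemma \ref{Lemma 2.2}, but now keeping track of the fact that a contraction through such a $C$ \emph{raises} $\chi$ by one, and to feed the resulting numerics into Lemmas \ref{Lemma 1.10} and \ref{Lemma 1.11} together with the minimality of the counterexample. First I record the numerical input. Since $Q_{i_0}+C+Q_{i_1}$ contracts to a smooth point its intersection matrix is negative definite, so the class of $C$ is independent of the classes of the components of $Q$; as $\rank\Pic(\ol Y)=\#Q+1$ and the components of $Q$ are independent (\ref{Setup 2.1}), this exhausts the available rank, so $Q+C$ is a $\Q$-basis of $\Pic(\ol Y)\otimes\Q$. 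In the notation of \ref{Setup 1.8}, let $n_+,n_-,n_0$ count the contractions in the minimalization of $(\ol Y,Q)$ that raise, lower and preserve $\chi$; since every $\chi$-raising contraction introduces a new independent class (its configuration is negative definite), the rank equality leaves room for only one, forcing $n_+=1$. Let $g:\ol Y\to\ol W$ contract $Q_{i_0}+C+Q_{i_1}$ to the smooth point $w$, put $Q''=g_*(Q-Q_{i_0}-Q_{i_1})$ and $W_0=\ol W\setminus Q''$; by Lemma \ref{Lemma 1.9}(a) one gets $\chi(W_0)=\chi(Y)+1=3-r$, the number of connected components drops to $r-1$, the images of the surviving $Q_j$ freely generate $\Pic(\ol W)\otimes\Q$, and $W_0$ is obtained from $Y$ by contracting the interior curve $C$, so $\lkd(W_0)=\lkd(Y)\ge 0$.

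For (1) I apply Lemma \ref{Lemma 1.9} and Lemma \ref{Lemma 1.3} to an almost minimal model $(\ol Y',Q')$ exactly as in Lemma \ref{Lemma 2.2}. Writing $k$ for the number of connected components of $Q'$ contracting to quotient singular points, the counting gives $\chi(\ol Y'\setminus Q')=(2-r)+n_+-n_-$, $e(\ol Y',Q')=3-(n_++n_-+n_0)$ and $k\le e(\ol Y',Q')-\chi(\ol Y'\setminus Q')-1=r-2n_+-n_0$, while BMY gives $\chi(\ol Y'\setminus Q')\ge -k/2$ since $|G_i'|\ge 2$. Combining these, the $n_+$-terms cancel and I obtain $r+2n_-+n_0\le 4$, so $r\in\{3,4\}$. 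If $r=4$ then $n_-=n_0=0$, and with $n_+=1$ one finds $k=2$ with both local groups of order $2$; contracting the two $(-2)$-components and using numerical independence produces a rational affine $\Z$-acyclic surface with exactly two $A_1$ points, contradicting Lemma \ref{Lemma 1.10} just as at the end of the proof of Lemma \ref{Lemma 2.2}. Hence $r=3$.

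For (2) suppose $C$ joined two branches, i.e. $Q_{i_0},Q_{i_1}\in\{Q_1,Q_2,Q_3\}$. Then $E=Q_0$ is untouched and survives as a connected component of $Q''$, so contracting the fork $E$ in $\ol W$ yields a normal surface $U'$ whose only singular point is the non-cyclic quotient singularity $q$, with $U'\setminus\{q\}=W_0$ and $\chi(U')=1$. The basis property above gives $b_2(U')=0$, and $H_1(U';\Z)=0$ follows from $H_1(S';\Z)=0$ (Lemma \ref{Lemma 1.4}) as in the end of the proof of Lemma \ref{Lemma 2.2}; thus $U'$ is $\Q$-acyclic, hence affine and a $\Z$-homology plane by Lemma \ref{Lemma 1.12}. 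Since $\lkd(U'\setminus\{q\})=\lkd(W_0)\ge 0$ and $q$ is non-cyclic, Lemma \ref{Lemma 1.11} forces $\lkd(U'\setminus\{q\})=2$, so $U'$ is itself a counterexample to Theorem~1 whose boundary has strictly fewer components than $D$, contradicting the minimality of $D$. Therefore $C$ meets $E$, and after relabelling $Q_{i_0}=E$, $Q_{i_1}=Q_1$.

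For (3): $E+C+Q_1$ is negative definite, so $Q_1$ is contractible, and the peeling dichotomy ``$A$ a fork, $B$ a linear chain'' recalled in the preliminaries (with $A=E$, $B=Q_1$) forces $Q_1$ to be a chain. Contracting $E+C+Q_1$, the surviving boundary $Q_2+Q_3$ freely generates $\Pic(\ol W)\otimes\Q$; upgrading this to an integral basis via $H_1(S';\Z)=0$ (cf. \cite[Proposition 2.6(iii)]{KR}) shows $d(Q_2)d(Q_3)=\pm1$, so $Q_2,Q_3$ are unimodular trees. Neither can be a chain: a unimodular maximal twig that is a linear chain is either contractible, and then contains a $(-1)$-curve, contradicting the $\MNC$ property of $D$, or non-contractible, contradicting Corollary \ref{Corollary 1.2.0.1} applied to $(\ol S,D\cup E)$, where $\lkd(S'\setminus\Sing S')=2$. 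Finally the count above yields $k\le 1$ for the model of $(\ol W,Q'')$, so at most one of $Q_2,Q_3$ is contractible, giving the last assertion. I expect the main obstacle to be part (2): recognizing the contracted surface $U'$ as a genuine $\Z$-homology plane (affineness, $H_1=0$, $b_2=0$) and then closing the argument by minimality together with Lemma \ref{Lemma 1.11}; the bookkeeping pinning down $n_+=1$ and $r=3$ in (1)--(2) is the other delicate point.
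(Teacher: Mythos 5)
Your parts (1) and (2) follow the paper's own strategy and are essentially sound: the Euler-characteristic/BMY bookkeeping giving $r+2n_-+n_0\le 4$, the reduction of $r=4$ to two $A_1$-points and Lemma \ref{Lemma 1.10}, and the construction in (2) of the smaller counterexample $U'$ (via Lemmas \ref{Lemma 1.12} and \ref{Lemma 1.11} and minimality of $\#D$) are exactly the paper's arguments, merely reorganized so that the contraction of $Q_{i_0}+C+Q_{i_1}$ is folded into the minimalization count rather than performed first. One small repair is needed: the rank argument only gives $n_+\le 1$, not $n_+=1$; to get the lower bound you must either begin the minimalization of \ref{Setup 1.8} with $C$ itself (which the setup permits, since the existence of $C$ shows $(\ol{Y},Q)$ is not almost minimal), or, in the case $r=4$, $n_-=n_0=0$, note that $\ell\ge 1$ forces $n_+\ge 1$. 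This matters because you reuse $n_+=1$ later, in the ``$k\le 1$'' count of part (3).

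The genuine gap is in (3), in the step ``neither $Q_2$ nor $Q_3$ is a chain''. Your non-contractible case invokes Corollary \ref{Corollary 1.2.0.1} for $(\ol{S},D\cup E)$, but that corollary only excludes non-contractible chains that are \emph{maximal twigs} of the boundary or chains \emph{connecting two branching components} of it. The chain in question is $Q_2$, an entire connected component of $Q$ living on $\ol{Y}$; its preimage $Z_2$ in $\ol{S}$ need not be a chain at all (the NC-minimalization $f$ may have contracted away branching components of $Z_2$), and even when $Z_2=Q_2$ is a chain, $D_0$ may meet it in an interior component, so that $Z_2$ contains a branching component of $D$ and is neither a maximal twig nor a connecting chain of $D\cup E$. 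Thus the corollary cannot be applied on $\ol{S}$, where $\lkd=2$ is available; and it cannot be applied on $\ol{Y}$, where the chain actually lives, because there you only know $\lkd(Y)\ge 0$. (Your contractible case is fine: unimodularity forces contraction to a smooth point, hence a $(-1)$-curve, contradicting the MNC property of $Q_1+\cdots+Q_r$ from Setup \ref{Setup 2.1}(b), or directly the assumption \ref{Setup 2.1}(c).) The paper closes the case differently: since $Q_2$ is MNC, unimodular and does not contract to a smooth point, it must contain a component of non-negative self-intersection; and because $Q_2$ is a \emph{whole connected component} of the boundary $Q$, the $\BP^1$-fibration this produces has every other component of $Q$ as a fiber component, so $Y$ itself acquires a $\C$-fibration and $\lkd(Y)=-\infty$, contradicting the lemma's own hypothesis $\lkd(Y)\ge 0$. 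Finally, your deduction of ``one of $Q_2,Q_3$ is non-contractible'' from ``$k\le 1$'' is under-justified: you would still need to show that both being contractible forces $k\ge 2$ in the almost minimal model. The paper's argument is immediate and should be used instead: if both were contractible, the components of $Q+C$ would form a negative definite $\Q$-basis of $\Pic(\ol{Y})\otimes\Q$, contradicting the Hodge index theorem.
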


\begin{proof}
Let $\pi : \ol{Y} \to \ol{Y}'$ be the contraction of $Q_{i_0}+C+Q_{i_1}$ to a smooth point.
We infer as in the proof of Lemma \ref{Lemma 2.2} that the irreducible
components of $Q+C$ are independent in $\Pic(\ol{Y})$. Let $Q'=\pi_*(Q)$ and $Y'=\ol{Y}'\setminus Q'$.
We write $\pi(Q_i)=Q_i$ for $i \ne i_0,i_1$. Now $\rank \Pic(\ol{Y}')=\#Q'$ and $\chi(Y')=3-r$.

Suppose that $r\ge 4$. Then $\chi(Y') \le -1$. Suppose that there exists a $(-1)$-curve $C'$ which meets
precisely two connected components of $Q'$, say $Q_{j_1}$ and $Q_{j_2}$, and that $Q_{j_1}+C'+Q_{j_2}$
contracts to a smooth point. Again the irreducible components of $Q'+C'$ are independent in $\Pic(\ol{Y}')$.
This is a contradiction since $\rank\Pic(\ol{Y}') = \#Q'$. Note that $\chi(Y')=3-r=2-(1+r-2)$
since $r-1$ connected components of $Q$ survive in $Q'$. So, $(\ol{Y}', Q')$ satisfies the assumptions
of Lemma \ref{Lemma 2.2}, case (ii), and hence is almost minimal.

Let $k$ be the number of connected components of $Q'$ which contract to quotient singularities. We have
$k <$ (number of connected components of $Q'$) since otherwise we have a contradiction to the Hodge
Index Theorem. So $k < r-1$, i.e. $k \le r-2$. Now the BMY inequality gives
\begin{equation}\label{eqn 2.3}
\frac{k}{2}+\chi(Y')\geq 0\ .
\end{equation}
This implies $\frac{r-2}{2}+3-r\geq 0$, hence $r\leq 4$. So, $r=4$, $k=2$ for two singular points
of type $A_1$ and $\lkd(\ol{Y}'\setminus Q') < 2$ since equality occurs in (\ref{eqn 2.3}). Now $Q'$
has $3$ connected components, two of which are $(-2)$-curves. It follows that $E=Q_0$ must be contracted together with $C$ and $Q_1$, say. We may assume that $Q_2, Q_3$ are (-2)-curves.
Consider the surface $V'$ which is obtained from $\ol{Y}'\setminus Q_4$ by contracting $Q_2, Q_3$ to
singular points of type $A_1$. Note that $H_1(\ol{Y}\setminus Q_4;\Z)=0$ because $\ol{Y}\setminus Q_4$
contains $S=\ol{Y}\setminus(Q_1\cup\cdots\cup Q_4)$. Then we have $H_1(V'; \Z)=0$ since $V'$ is obtained
from $\ol{Y}\setminus Q_4$ by a proper birational morphism. Since $\Pic(V')\otimes \Q=0$ and $H_1(V';\Z)=0$,
it follows that $\Pic(V')=0$ and hence $H^2(V';\Z)=0$. An easy calculation gives $\chi(V')=1$, hence
$b_3(V')=0$. Thus $V'$ is $\Q$-acyclic. By Lemma \ref{Lemma 1.12}, $V'$ is affine $\Z$-acyclic.
Since $\lkd(V')=0$ or $1$ we obtain a contradiction by Lemma \ref{Lemma 1.10}.

Hence $r=3$. Suppose that $E\cdot C=1$ and $C\cdot Q_1=1$, where $Q_1 \ne Q_0=E$. Consider the surface
$W=\ol{Y}\setminus (Q_2\cup Q_3)$, which is the image of $\ol{S}\setminus (Z_2\cup Z_3)$ under a proper
birational morphism. Since $\ol{S}\setminus (Z_2\cup Z_3)$ contains $S$, $H_1(W;\Z)=0$. Note that
$Y'=\pi(W)$. Then $H_1(Y';\Z)=0$. Furthermore, $\Pic(Y')\otimes\Q=0$. The reasoning in the previous
paragraph shows that $\Pic(Y')=0$. This implies that $\Pic(\ol{Y}')$ is freely generated by the irreducible
components of $Q_2\cup Q_3$. It follows that the intersection matrix of $Q_i,\ i=2,3$ is unimodular.
Suppose that $Q_2$ is a chain. Since $Q_2$ is unimodular and does not contract to a smooth point by the assumption in \ref{Setup 2.1}, $Q_2$ contains an irreducible component with non-negative self-intersection number
and hence $\lkd(Y)=-\infty$ by the argument following \ref{Setup 2.1}. This is a contradiction to the
assumption. Note that since $E+C+Q_1$ contracts to a smooth point and $E$ is a fork, $Q_1$ is necessarily
a linear chain. Since the components of $C+Q_1+Q_2+Q_3$ give a basis of $\Pic(\ol{Y})\otimes\Q$ and since $C+Q_1$ is contractible, one of $Q_2, Q_3$ is not contractible by the Hodge index theorem. Hence we obtain the situation described in the statement.

Suppose, still with $r=3$, that $C$ meets two components $Q_1$ and $Q_2$ other than $E$. Consider
$U=\ol{Y}\setminus Q_3$ and $U'=\pi(U)$. Then $H_1(U';\Z)=0$ because $U$ contains $S$ and $U'$ is
the image of $U$ under a proper birational morphism. Let $U''$ denote $U'$ with $E$ contracted to
a singular point $q$. We find that $U''$ is $\Q$-acyclic by the same reasoning as above.
By Lemma \ref{Lemma 1.12}, it is affine and $\Z$-acyclic. By Lemma \ref{Lemma 1.11},
$\lkd(U''\setminus \{q\})=2$. Hence $U''$ satisfies the same assumptions as $S'$ does. However, $\ol{Y}'$ is a
smooth normal completion of the minimal resolution $U'$ of $U''$ with the boundary divisor $Q_3$ with $\#Q_3 < \#D$. This contradicts the assumption that $\#D$ is minimal for a counterexample to Theorem 1.

\end{proof}

Lemmas \ref{Lemma 2.2} and \ref{Lemma 2.3} give the following result.

\begin{cor}\label{Corollary 2.4}
Let things be as in 2.1. If $\lkd(Y) \ge 0$, then either the pair ($\ol{Y},Q$) is almost minimal,
or there exists a $(-1)$-curve $C$ in $(K_{\ol{Y}}+Q)^-$ as in Lemma \ref{Lemma 2.3}.
\end{cor}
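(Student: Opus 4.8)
The plan is to read the corollary as a clean dichotomy governed by condition (2) of Lemma \ref{Lemma 2.2}. First I would confirm that the data of \ref{Setup 2.1} place us squarely in case (i) of Lemma \ref{Lemma 2.2}: the divisor $Q=Q_0+Q_1+\cdots+Q_r$ has $r+1$ connected components with $r\ge 3$, and $Q_0=f_*(E)$ is a contractible fork because $E$ is a fork (\ref{Setup 2.1}(a)) contracting to the quotient singularity $q$. The numerical bookkeeping $\chi(Y)=2-r$ and $\rank\Pic(\ol{Y})=\#Q+1$ is exactly \ref{Setup 2.1}(c), and the vanishing $H_1(\ol{Y}\setminus(Q_1\cup\cdots\cup Q_r);\Z)=0$ follows since that open surface is obtained from $\ol{S}\setminus(Z_1\cup\cdots\cup Z_r)\supset S$ by blowing down, together with $H_1(S;\Z)=0$ from Lemma \ref{Lemma 1.4}.

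Next I would check the side conditions (1), (3), (4) of Lemma \ref{Lemma 2.2}. Condition (1) is just the hypothesis $\lkd(Y)\ge 0$. Condition (4), that no $Q_i$ contracts to a smooth point, holds for $i\ge 1$ by the standing assumption in \ref{Setup 2.1}(c) that no $Z_i$ does (this property is preserved under the NC-minimalization $f$, whose contracted $(-1)$-curves are among the $Z_i^{(1)}$), while $Q_0$ contracts to the quotient singular point, not a smooth point. Condition (3), numerical independence of the components of $Q$ in $\Pic(\ol{Y})$, descends from the numerical independence of the components of $D\cup E$ in $\Pic(\ol{S})$ recorded just before Lemma \ref{Lemma 1.4}: the components of $Q$ are the $f$-images of those components of $E+(D-D_0)$ not contracted by $f$, and such images remain independent.

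With these in hand, the only hypothesis of Lemma \ref{Lemma 2.2} that is not automatic is condition (2), so I would split into two cases. If condition (2) holds, Lemma \ref{Lemma 2.2} applies verbatim and yields that $(\ol{Y},Q)$ is almost minimal, which is the first alternative. If condition (2) fails, then by definition there is a $(-1)$-curve $C\subset (K_{\ol{Y}}+Q)^-$ meeting two connected components $Q_i,Q_j$ of $Q$ so that $Q_i+C+Q_j$ contracts to a smooth point; but this is precisely the hypothesis of Lemma \ref{Lemma 2.3}, whose conclusions then exhibit $C$ as claimed. This gives the second alternative and closes the dichotomy.

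The substance of the corollary is carried entirely by Lemmas \ref{Lemma 2.2} and \ref{Lemma 2.3}, so the only real obstacle at this stage is hypothesis-matching: verifying that \ref{Setup 2.1} genuinely supplies case (i) of Lemma \ref{Lemma 2.2}, in particular the topological vanishing $H_1(\ol{Y}\setminus(Q_1\cup\cdots\cup Q_r);\Z)=0$ and the numerical-independence input. Once those are in place, the corollary is a formal consequence of whether condition (2) holds or not.
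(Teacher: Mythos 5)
Your proposal is correct and follows exactly the paper's (implicit) proof: the paper derives Corollary \ref{Corollary 2.4} directly by combining Lemma \ref{Lemma 2.2} (condition (2) holds, giving almost minimality) with Lemma \ref{Lemma 2.3} (condition (2) fails, giving the $(-1)$-curve $C$), which is precisely your dichotomy. Your additional verification that Setup \ref{Setup 2.1} supplies case (i) of Lemma \ref{Lemma 2.2} — the contractible fork $Q_0=f_*(E)$, the vanishing of $H_1(\ol{Y}\setminus(Q_1\cup\cdots\cup Q_r);\Z)$ via surjectivity of $\pi_1$ under adding codimension-two strata and blow-downs, and the numerical independence inherited from $D\cup E$ — is exactly the hypothesis-matching the paper leaves to the reader.
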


Determination of the graph of $Q$ (or $D$) according to $\lkd(Y)$ will begin with the following result.

\begin{lem}\label{Lemma 2.5}
Let things be as in \ref{Setup 2.1}. If Y is  affine ruled, then $r=3$ and $Q_1,Q_2, Q_3$ are linear
chains.
\end{lem}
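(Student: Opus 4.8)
The plan is to exploit the affine ruling to put a $\BP^1$-fibration on a suitable completion of $Y$, and then to read off the shape of $Q$ from that fibration, using Fujita's formula (Lemma \ref{Lemma 1.2.1}) together with the relation $\chi(Y)=2-r$ recorded in \ref{Setup 2.1}.

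First I would fix an $\A^1$-fibration $\rho\colon Y\to B$ witnessing the affine ruling. Since $\ol Y$ is rational, $B$ is a smooth rational curve, and $\rho$ extends to a $\BP^1$-fibration $p\colon \ol V\to\BP^1$ on a smooth SNC completion $(\ol V,\Delta)$ of $Y$, where $\ol V$ is obtained from $\ol Y$ by blowing up and down over the boundary. Because the number of connected components of the boundary is an invariant of $Y$ (blow-ups and blow-downs over the boundary preserve it), the connected components of $\Delta$ are in natural bijection with $Q_0,Q_1,\dots,Q_r$; in particular $\Delta$ has $r+1$ connected components. A general fiber of $p$ meets $\Delta$ transversally in exactly one point, since its intersection with $Y$ is $\A^1=\BP^1\setminus\{\mathrm{pt}\}$; hence $\Delta$ has a unique horizontal component $H$, a section, so $h=1$, and every other component of $\Delta$ is vertical. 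Feeding $h=1$, the topological identity $\chi(Y)=\chi(\ol V)-\chi(\Delta)$, and $\chi(Y)=2-r$ into Lemma \ref{Lemma 1.2.1} yields a clean numerical relation between the number $\nu$ of fibers contained in $\Delta$ and the degenerate-fiber term $\Sigma$ (one finds $\nu=\Sigma$), which already constrains the degenerate fibers severely.

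Next I would locate the fork $Q_0$ and the curve $D_0$ relative to $p$. The image $\bar D_0\cong\BP^1$ of the branching component $D_0$ lies in $Y$, meets each of $Q_1,\dots,Q_r$ in one point, and is disjoint from $Q_0$ (as $D_0\cap E=\varnothing$). A vertical irreducible curve contained in a fiber meets $\Delta$ in a single point exactly when its trace in $Y$ is $\A^1$; since $\bar D_0$ meets the boundary in $r\ge 3$ points it cannot be a general ($\A^1$-)fiber component, and I would show it is forced to lie in a single degenerate fiber, to which the tails $Q_1,\dots,Q_r$ are attached, while $Q_0$ (carrying the unique branch point of the boundary and disjoint from $\bar D_0$) accounts for the section/horizontal part of $\Delta$. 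The decisive structural input is Lemma \ref{Lemma 1.1}: a degenerate fiber is a tree (Lemma \ref{Lemma 1.1}(d)) that contracts to a smooth fiber (Lemma \ref{Lemma 1.1}(c)), and the part of such a fiber lying in $\Delta$ and separated from the section $H$ by an affine component can carry no branch point. This is what forces each $Q_i$ with $1\le i\le r$ to be a linear chain, the only admissible branching of the whole boundary being absorbed into the fork $Q_0$.

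Finally, to pin down $r=3$ I would return to the numerics: the relation $\nu=\Sigma$ together with $\rank\Pic(\ol Y)=\#Q+1$ and the numerical independence of the components of $Q$ controls the number of vertical tails, and any configuration with $r\neq 3$ is excluded because it would produce in $(\ol S,D+E)$ a non-contractible maximal twig, or a non-contractible chain joining two branching components, contradicting $\lkd(\ol S\setminus(D+E))=\lkd(S'\setminus\Sing S')=2$ through Corollary \ref{Corollary 1.2.0.1}. I expect the main obstacle to be exactly this last bookkeeping: transporting the connected components of the fibration-adapted boundary $\Delta$ back to $Q_0,\dots,Q_r$ across the change of completion, verifying that the fork $Q_0$ is the sole source of branching, and extracting the exact value $r=3$ rather than merely $r\ge 3$. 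The analysis of the single degenerate fiber carrying $\bar D_0$, and the count that rules out extra tails and extra affine fiber components, is the delicate heart of the argument.
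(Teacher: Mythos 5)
Your argument breaks down at its very first substantive step. From ``$Y$ is affine ruled'' you conclude that a general fiber of the induced $\BP^1$-fibration meets the boundary transversally in exactly one point, hence that the boundary contains a unique horizontal component $H$, a $1$-section, so $h=1$, and moreover you place this section inside $Q_0$. But affine ruledness only gives $F_t\cap Y\supseteq \A^1$, not equality, and $Y$ is not affine in this setup: it is $S_0$ together with the image of $D_0$ punctured in $r$ points, and nothing prevents the point at infinity of the general $\A^1$ from lying in $Y$ as well, so that the general fiber is a complete curve disjoint from the boundary. The paper's proof is therefore forced into a trichotomy: the horizontal boundary component $H$ lies in $\wt{E}$; $H$ lies in some $\wt{Q}_k$ with $k\geq 1$; or no horizontal boundary component exists at all ($h=0$, one boundary component $Q_j$ being a $(0)$-curve equal to a full fiber). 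Only the first case yields the conclusion of the lemma; the other two must be killed by long fiber-by-fiber arguments, including a cyclic-covering trick (a covering of the base ramified over two points, whose normalized pullback would give a nontrivial unramified abelian covering of $Y\cup E$, contradicting $H_1(Y\cup E;\Z)=0$). Your proposal selects the favorable case by fiat, and also ignores the possibility of a base point of the pencil lying in $Y$, which the paper handles by enlarging the boundary by $f^{-1}(p)$.

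Beyond that, your structural picture is wrong even in the case where the conclusion does hold. There the section is the \emph{branching component} $H$ of $E$; the three twigs $E_1,E_2,E_3$ of $E$ sprouting from $H$ lie in three \emph{distinct} singular fibers, each of reduced form $E_i+C_i+Q_i$ with $C_i$ the unique $(-1)$-curve; and the image of $D_0$ is a horizontal multisection not contained in the boundary, not a fiber component. So the $Q_i$ are distributed over three different fibers, and $r=3$ falls out because $E$, being a fork, has exactly three branches at $H$, each singular fiber must contain one of them, and each fiber can absorb at most one $Q_j$ --- not from Corollary \ref{Corollary 1.2.0.1} as you suggest. Your picture of a single degenerate fiber containing $\bar D_0$ with all the tails $Q_1,\dots,Q_r$ attached is essentially the configuration the paper proves \emph{impossible} (three or more disjoint boundary trees cannot be linked inside a fiber-tree by the admissible $(-1)$-components, by Lemma \ref{Lemma 1.1}). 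So while your instinct to invoke Lemma \ref{Lemma 1.2.1} and Lemma \ref{Lemma 1.1}(f) points in the right direction, the proposal as written establishes neither $h=1$, nor the location of the section, nor the linearity of the $Q_i$, nor $r=3$.
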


\begin{proof}
Since $Y$ is affine ruled, there exists a pencil $\Lambda=\{F_t\}_{t \in \BP^1}$ of rational curves in $\ol{Y}$
such that $F_t\cap Y$ contains $\A^1$ for general $t\in \BP^1$. We make the following observations, where we freely use Lemma \ref{Lemma 1.1}.

(a) If $\Lambda$ has a base point in $\ov{Y}$ it is unique and we call it $p$. We let $\wt{Y} \to \ol{Y}$ be the elimination of base points and let
$f : \wt{Y} \to \BP^1$ be the induced $\BP^1$-fibration. For a reduced divisor $T$ in $\ol{Y}$,
we denote by $\wt{T}$ its reduced inverse image in $\wt{Y}$. We may write $\wt{T}=T$ if $p \notin T$.
If $p \in Y$ we put $Y^\#=Y\setminus\{p\}$ and let $Q^\#=\wt{Y}\setminus Y^\# =\wt{Q} \cup  f^{-1}(p)$, that is, we incorporate $f^{-1}(p)$ as an extra connected component $\wt{Q}_{r+ 1}$ into the boundary. Otherwise $Y^\# =Y$ and $Q^\#=\wt{Q}$.

(b) If  there is a horizontal curve for $f$ in $Q^\#$, and this is the case if there is a base point, then it is unique. It is a $1$-section of $f$, and we call it $H$.

(c) Let $F$ be a fiber of $f$ and suppose $F \cap \wt{Q}_j \neq \emptyset$. Then there is no $(-1)$-curve in $F \cap \wt{Q}_j$ if $\wt{Q}_j =E$ or $\wt{Q}_{r+1}$ or if there is no horizontal curve in $\wt{Q}_j$. In fact, there are only $\leq (-2)$-curves in $\wt{E}$ and $\wt{Q}_{r+1}$ except for, possibly, $H$. In the last case, $\wt{Q}_j =Q_j \subset F$, and $Q_j$ is MNC.

(d) In the notation of Lemma \ref{Lemma 1.2.1} we have $b_2(Q^\#)-b_2(\wt{Y})=-1$.
The irreducible components of $Q^\#$ are numerically independent in
$\Pic(\wt{Y})$ and hence at most one fiber of $f$ is contained in $Q^\#$. Hence $\nu \leq 1$ in Lemma \ref{Lemma 1.2.1}.
\svskip

\noindent
(1)\ Suppose that $H$ exists and is a component of $\wt{E}$.  We have $h=1$ in Lemma \ref{Lemma 1.2.1}
and by (d) above, $\Sigma=\nu$. Clearly $\nu=0$, for otherwise there is a fiber of $f$ contained in $\wt{E}$, but the intersection matrix of $\wt{E}$ is negative definite. Hence $\Sigma=0$, i.e., any singular fiber
contains exactly one $Y$-component which, by (c) above, is its unique $(-1)$-component. In particular,
it is a multiple component in the fiber, hence it does not meet $H$. It follows that any singular fiber
contains an $\wt{E}$-component. Suppose that a fiber contains $\wt{Q}_i,\wt{Q}_j$ for $i,j>0, i \ne j$ and
let $C$ be its unique $Y$-component. Then $C$ meets $\wt Q_i$ and $\wt Q_j$ and a component of
$\wt{E}$, i.e., the $(-1)$-component $C$ meets $3$ other components of the fiber. This is impossible.
Hence any fiber contains at most one $\wt{Q}_j, j \geq 1$. Since $r \ge 3$, there exist at least $3$ singular fibers.
If $H$ is obtained by the elimination of a base point, then $\wt{E}-H$ would have at most two connected components. Hence
the pencil has no base points, and it follows that $\wt{Y}=\ol{Y}$. So $H \subset E$ and $H$ must be
the branching component of $E$, for otherwise we have at most two singular fibers. Therefore there are
exactly 3 singular fibers $F_1,F_2,F_3$ and we can label the twigs $E_1,E_2,E_3$ sprouting from $H$ so that $E_i \subset F_i$. It is clear now that $(F_i)_{\red}=E_i+C_i+Q_i$, where $C_i$ is the unique
$(-1)$-curve in $F_i$. Hence every $Q_i$ is a linear chain by 1.1(f) and the lemma is proved in this case. Note that we are in the case (i) in Proposition
\ref{Proposition 2.6} below.
\svskip

\noindent
(2) Suppose that $H$ exists and is a component of $\wt{Q}_k, k \geq 1$. We again have $\Sigma=\nu$.
Let $F_E$ be the fiber containing $E$. It is not a chain.

Suppose that $\nu=0$. Let $C$ be the unique $Y^\#$-component of $F_E$. Contracting $(-1)$-curves in
$F_E \cap \wt{Q}_k$ if necessary, we may assume that $C$ is the unique $(-1)$-curve in $F_E$.
Then $F_E-C$ has two connected components, one is the fork $E$ and the second one is a linear chain $T$.
In view of the structure of fibers with exactly one $(-1)$-component, $F_E$ contains two components of
multiplicity $1$, both contained in $E$. Hence $T$ consists of components of
multiplicity $> 1$, but the $1$-section $H$ meets $T$, a contradiction.

Suppose $\nu=1$.  Note that then $p \notin Y$ if there is a base point.
Now $f$ has a fiber $F_\infty$ contained in $\wt{Q}_r$, say, and $\Sigma=1$. If $F_E$ has a single
$Y^\#$-component, we reach a contradiction as in the case $\nu =0$.  So $F_E$ has two $Y^\#$-components, $C_1, C_2$ say. If there is a singular fiber $F_0$ with a unique $Y$-component, then this component has multiplicity $m > 1$. We can take a cyclic
covering of order $m$ of the base curve $\BP^1$ ramified over the points $f(F_0)$ and $\infty$ and the normalization of the fiber product of $f : \wt{Y} \to \BP^1$ with this cyclic covering. We obtain
an unramified abelian covering of $Y\cup E$, which is a contradiction because of $H_1(Y\cup E;\Z)=0$.
There remains only the case where $F_E$ is the only singular fiber other than $F_\infty$.
Contracting $(-1)$-curves if necessary, we may assume that $F_E\cap \wt{Q}_r$ does not contain a $(-1)$-curve.

We have
\begin{equation}\label{eqn 2.5}
F_E -(C_1 + C_2)=E +Q_1+\dots + Q_{r-1}+ (F_E\cap \wt{Q}_r)\ .
\end{equation}
Here $F_E\cap \wt{Q}_r$ could be reduced to a point.

Suppose that $F_E\cap \wt{Q}_r$ contains at least one component. Then $F_E\cap \wt{Q}_r$ is a connected component of $\wt{Q}_r-H$ and contains a component $Q_{r,0}$ meeting $H$.

Suppose that $F_E$ contains only one $(-1)$-component, say $C_1$. Let $F_E - C_1=R + T$ be the decomposition into connected components, where $R$ contains a curve with multiplicity $1$.
Then $T$ is a chain by Lemma \ref{Lemma 1.1}(f). Hence $E \subset R$. It follows that $C_2 \subset R$,
for otherwise $F_E -(C_1 + C_2)$ has at most $3$ connected components while there are at least
four by (\ref{eqn 2.5}). By the same reason, $C_2$ is a branching component of $R$. It follows that the multiplicity $1$ component $Q_{r,0}$ is in $R$. The fiber
$F_E$ can be contracted to $Q_{r,0}$. Reversing this process we regain the fiber $F_E$
by a unique succession of blowings-up. We order the resulting exceptional
components by their order of appearance. Let $T$ be the first branching component of $F_E$ with respect to this ordering of components. There are three branches, $T_1, T_2, T_3$ say, at $T$ where
$T_1,T_2$ are chains made up of the components appearing before $T$, including $Q_{r,0}$, and $C_1 \subset T_3$. It follows that $T=C_2$ or that $T_1+T+T_2 \subset F_E \cap \wt{Q}_r$. In either case $C_1, C_2$ and $E$ are in $T+T_3$ and the multiplicities of $C_1, C_2$ and all components of $E$ have
the multiplicity of $T$ as a common divisor $m > 1$.
 Now we take a covering of the base
curve of order $m$ ramified over the points $f(F_E)$ and $\infty$ and the normalization of the fiber
product of $f$ with this cyclic covering. We obtain an unramified abelian covering of $Y\cup E$. This
gives a contradiction as above.

Thus both $C_1, C_2$ are $(-1)$-components. It follows that each of $C_1, C_2$ meets at most two connected
components of $Q$ and that each meets a $Q_j$ in at most one point transversally. Furthermore,
$C_1, C_2$ are disjoint from each other. It is impossible to obtain a connected fiber by connecting
four trees by two curves satisfying these conditions.

Thus $F_E\cap \wt{Q}_r$ is a point on $H$ and one of $C_1, C_2$ meets $H$. Say $C_2$ does. If $C_2$ is a $(-1)$-curve, it is a tip of $F_E$ of multiplicity 1. It cannot be the only $(-1)$ component of $F_E$ by what we said above about the case of a unique $(-1)$-component. Hence $C_1$ is a $(-1)$-component. If it is the only one, then again $C_2$ is a tip of $F_E$ by what we said above.  As a
$(-1)$-component of $F_E$, $C_1$ meets at most two other components of $F_E$. It follows that the number
of connected components of $F_E - (C_1 + C_2)$ is at most two, which is a contradiction
since (\ref{eqn 2.5}) implies that there are at least $3$ of them.
\svskip

\noindent
(3)\ Suppose that $H$ does not exist. Then there is no base point and the general fiber is contained in $Y$. We have $h=0$ and find $0 \leq \Sigma=\nu -1$. Thus $\nu=1$
and $\Sigma=0$. The fibration has one fiber $F_\infty$ contained in some $Q_j$. In fact, $F_\infty$
coincides with $Q_j$ supportwise since $Q_j$ is an MNC divisor. So actually $Q_j$ is a $(0)$-curve and $Q_j \neq E$. We can assume $j=r$. If there exists a singular fiber $F_1 \neq F_E$ it has a unique $Y$-component of multiplicity $m > 1$. We reach a contradiction  by the covering technique in (2) above. Thus $F_E$ is the unique singular fiber of $f$ and hence contains $E, Q_1,\dots,
Q_{r-1}$. But this is a contradiction since these trees, at least three in number, cannot be connected in $F_E$ by a single
$(-1)$-component.
\end{proof}

The following result will limit the possibilities for the boundary divisor $D$.

\begin{prop}\label{Proposition 2.6}
Let things be as in \ref{Setup 2.1}. Then we have one of the following cases.
\begin{enumerate}
\item[{\rm (i)}]
All components $Q_1,\cdots,Q_r$ contract to quotient singularities.
\item[{\rm (ii)}]
$\lkd(Y)=-\infty$, $r=3$, there exists a $(-1)$-curve $C$ in $\ol{Y}$ such that $Q_1+C+Q_2$ contracts
to a smooth point, $C\cdot (Q_3+E)=0$ and $Q_3$ is the complementary fork of $E$.
\item[{\rm (iii)}]
$\lkd(Y) \ge 0$, $r=3$, $Q_1, Q_2$ contract to quotient singularities, $|G_1|=|G_2|=2$, $Q_3$ does not
contract to a quotient singularity. The pair $(\ol{Y}, Q)$ is almost minimal.
\item[{\rm (iv)}]
$r=3$, $Q_1$ is a linear chain, none of $Q_2, Q_3$ is a linear chain, one is non-contractible and
the intersection matrices of both are unimodular. Further, there exists a $(-1)$-curve $C$
such that $C\cdot E=C\cdot Q_1=1$, $E+C+Q_1$ contracts to a smooth point and $C\cdot Q_2=C\cdot Q_3=0$.
\end{enumerate}
\end{prop}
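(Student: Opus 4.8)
The plan is to argue by cases on $\lkd(Y)$, treating $\lkd(Y)\ge 0$ and $\lkd(Y)=-\infty$ separately and reading off in each branch which of the four alternatives occurs.

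Suppose first $\lkd(Y)\ge 0$, and invoke Corollary~\ref{Corollary 2.4}: either $(\ol Y,Q)$ is almost minimal, or there is a $(-1)$-curve $C\subset(K_{\ol Y}+Q)^-$ of the kind produced in Lemma~\ref{Lemma 2.3}. In the second situation Lemma~\ref{Lemma 2.3} hands me exactly the data of alternative~(iv) (namely $r=3$, $C\cdot E=C\cdot Q_1=1$ with $E+C+Q_1$ contracting to a smooth point, $Q_1$ a linear chain, $Q_2,Q_3$ unimodular non-chain trees one of which is non-contractible, and $C\cdot Q_2=C\cdot Q_3=0$), so nothing more is needed. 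In the almost minimal situation, if every $Q_i$ contracts to a quotient singularity I am in~(i); otherwise I contract $E$ and all contractible $Q_i$ and feed the resulting log surface, whose open part is $Y$ with $\chi(Y)=2-r$, into the BMY inequality of Lemma~\ref{Lemma 1.3}. Writing $G$ for the non-cyclic group at $q$ (so $|G|\ge 8$) and $G_i$ for the groups of the contractible $Q_i$, and using $|G_i|\ge 2$ and that at most $r-1$ of the $Q_i$ contract, the inequality $\chi(Y)+\tfrac1{|G|}+\sum\tfrac1{|G_i|}\ge 0$ gives $0\le(2-r)+\tfrac18+\tfrac{r-1}{2}$, forcing $r\le 3$ and hence $r=3$; with $r=3$ it becomes $\tfrac1{|G_1|}+\tfrac1{|G_2|}\ge 1-\tfrac1{|G|}\ge\tfrac78$, and since $\tfrac12+\tfrac13=\tfrac56<\tfrac78$ this forces exactly two $Q_i$ to contract, both with $|G_1|=|G_2|=2$, the third not — alternative~(iii).

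Now suppose $\lkd(Y)=-\infty$. If $Y$ is affine ruled I apply Lemma~\ref{Lemma 2.5} to get $r=3$ with $Q_1,Q_2,Q_3$ linear chains; its proof exhibits each $Q_i$ as the non-central part of one of the three singular fibres of a $\BP^1$-fibration, hence negative definite and contractible to a cyclic quotient singularity, so I land in~(i). If $Y$ is not affine ruled I pass to an almost minimal model $(\ol Y',Q')$ and apply the Miyanishi--Tsunoda dichotomy of Lemma~\ref{Lemma 1.13}, bookkeeping with the invariant $e(\ol Y,Q)=\chi(Y)+\#\{\text{components}\}=(2-r)+(r+1)=3$ and the relation $e(\ol Y',Q')=3-\ell$ of Lemma~\ref{Lemma 1.9}. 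In alternative~(a) of Lemma~\ref{Lemma 1.13} all components of $Q'$ are contractible, so contracting them gives a projective rational surface $\ol X'$ with $\chi(\ol X')=e(\ol Y',Q')=3-\ell$; as a projective rational surface has $\chi\ge 3$, this forces $\ell=0$, whence $(\ol Y,Q)$ is already almost minimal with every $Q_i$ contractible, i.e. alternative~(i) again. In alternative~(b) one has $\chi(Y')=0$ and two boundary components, so $e(\ol Y',Q')=2$ and $\ell=1$; the single contraction then changes $\chi$ by $\chi(Y')-\chi(Y)=r-2$, which by Lemma~\ref{Lemma 1.9}(a) is at most $1$, forcing $r=3$ and that the contracted $(-1)$-curve $C$ joins two connected components into a smooth point. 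Identifying the surviving contractible fork with $E$ and its complementary fork with $Q_3$ gives $Q_1+C+Q_2$ contracting to a smooth point, $C\cdot(E+Q_3)=0$, and $Q_3$ the complementary fork of $E$ — alternative~(ii).

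The hard part will be this last, non-affine-ruled branch: justifying that Lemma~\ref{Lemma 1.13} applies to the almost minimal model (with $\lkd$ and non-ruledness intact), and, above all, showing in alternative~(b) that the contracted curve $C$ does not meet $E$ — equivalently that $E$, the resolution of the fixed singular point $q$, is the contractible fork of the resulting Platonic fibration rather than being absorbed into the smooth contraction. This amounts to matching the Platonic singularity with $q$ (so that its local group is the non-cyclic $G$ and its resolution is $E$), which I would control through the $H_1$ and local-fundamental-group data of the homology plane together with the minimality of $\#D$, in the spirit of the closing paragraphs of Lemma~\ref{Lemma 2.3}. By comparison the BMY computation behind~(iii) is routine once the bound $|G|\ge 8$ for a non-cyclic small subgroup of $\GL(2,\C)$ is invoked.
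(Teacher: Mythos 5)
Your treatment of $\lkd(Y)\ge 0$ (Corollary \ref{Corollary 2.4} plus the BMY count with $|G_0|\ge 8$), of the affine-ruled case via Lemma \ref{Lemma 2.5}, and of the almost-minimal sub-case of $\lkd(Y)=-\infty$ is sound and essentially the paper's; your use of the bound $\chi\ge 3$ for a normal projective rational surface to eliminate case (a) of Lemma \ref{Lemma 1.13} in the non-almost-minimal branch is a legitimate substitute for the paper's argument (independence of the components of $Q'$ against $\rank\Pic(\ol Y')\le \#Q'$, contradicting the Hodge index theorem).

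The genuine gap is in the remaining branch ($\lkd(Y)=-\infty$, $Y$ not affine ruled, $(\ol Y,Q)$ not almost minimal), and it is not a deferred technicality: the step you flag as ``the hard part'' --- proving that the contracted $(-1)$-curve $C$ does not meet $E$ --- is the wrong goal. Nothing prevents the two connected components joined by $C$ from being $E$ and one of the $Q_i$; this sub-case cannot be excluded, and it is exactly the source of alternative (iv) in the $\lkd(Y)=-\infty$ regime. The paper handles it by analysis, not exclusion: if $C\cdot E=C\cdot Q_1=1$ and $E+C+Q_1$ contracts to a smooth point, then by Lemma \ref{Lemma 1.13}(b) the surviving boundary $Q_2+Q_3$ consists of a contractible fork and its complementary fork; since $H_1(Y\cup E\cup Q_1;\Z)=0$ and this set maps properly birationally onto the punctured Platonic quotient $\C^2/G\setminus\{p\}$, the group $G$ is perfect, hence the binary icosahedral group, so $Q_2$ is the $(2,3,5)$-fork and $Q_2,Q_3$ are unimodular; finally the theory of peeling, applied to $C$ meeting the fork $E$ and the component $Q_1$ of $(K_{\ol Y}+Q)^-$, forces $Q_1$ to be a linear chain --- exactly the data of (iv). Moreover, the tools you cite for your intended exclusion point the wrong way: the minimality-of-$\#D$ argument closing the proof of Lemma \ref{Lemma 2.3} rules out the sub-case in which $C$ \emph{misses} $E$ (the opposite of what you want), and it rests on Lemma \ref{Lemma 1.11}, which requires $\lkd\ge 0$ and is unavailable when $\lkd(Y)=-\infty$. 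To complete your proof you must carry out the $C\cdot E=1$ sub-case to the conclusion (iv), rather than attempt to rule it out.
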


\begin{proof} (1) Suppose that $\lkd(Y) \ge 0$. If the pair $(\ol{Y}, Q)$ is not almost minimal, then
by Corollary \ref{Corollary 2.4}, we have the case (iv). Assume that the pair $(\ol{Y},Q)$ is almost
minimal. Let $Q_0,Q_1,\ldots,Q_k$ be all the connected components of $Q$ which contract to quotient
singularities. Now, since $\chi(Y)=\chi(\ol{Y}\setminus Q)=2-r$, the BMY inequality gives
\[
\frac{1}{|G_0|}+\sum_{i=1}^{k}\frac{1}{|G_i|} \ge r-2.
\]
Hence $\frac{1}{8}+\frac{k}{2} \ge r-2$ because $|G_0| \ge 8$. Since $r \ge k$, this implies that
$r \le 4$. If $r=4$, then $k=4$ and $|G_i|=2$ for $1 \le i \le 4$. So, we are in the case (i).
Suppose that $r=3$. Then $k=1$ is impossible. If $k=2$, then $|G_1|=|G_2|=2$ and we are in
the case (iii). If $k=3$ then we are in the case (i).

(2) Suppose that $\lkd(Y)=-\infty$. In view of Lemma \ref{Lemma 2.5}, we may assume that $Y$ is not
affine ruled.

(2.1) Suppose that $(\ol{Y},Q)$ is not almost minimal. Let $(\ol{Y}',Q')$ be an almost minimal
model. We keep the notation of Setup \ref{Setup 1.8} with $T=Q$ and $T_i=Q^{(i)}$, the image
of $Q$ on $\ol{Y}_i$. We are now in the situation of Lemma \ref{Lemma 1.13} with $(X,P)=(\ol{Y'},Q')$.\\

Suppose that
$\chi(Y') \leq \chi(Y)$. In case (b), $\chi(Y')=0$. Since $\chi(Y)=2-r<0$, case (a) of  Lemma \ref{Lemma 1.13} occurs. Hence the intersection matrix of $Q'$
is negative definite and the irreducible components of $Q'$ are independent in $\Pic(\ol{Y}')$. On the other hand, since at least one blowing down
is involved in passing from $(\ol{Y},Q)$ to $(\ol{Y}',Q')$, we have $\rank\Pic(\ol{Y}') \leq \#Q'$. This
a contradiction to the Hodge Index Theorem. So $\chi(Y')>\chi(Y)$.  As in the first part of the proof of
Lemma \ref{Lemma 2.2}, we find a $(-1)$-curve $C_0$ such that $C_0$ meets two components of $Q$,
say $Q_{i_0}$ and $Q_{i_1}$, and that $Q_{i_0}+C_0+Q_{i_1}$ contracts to a smooth point.
Again the irreducible components of $Q+C_0$ are independent in $\Pic(\ol{Y})$ and hence they form a basis
of $\Pic(\ol{Y})\otimes \Q$. It follows that the irreducible components of $Q^{(1)}$ form a basis of
$\Pic(\ol{Y}'_1)\otimes \Q$ and hence for $i \ge 1$ not all connected components of $Q^{(i)}$ contract to quotient
singularities. Hence we are in case (b) of Lemma \ref{Lemma 1.13}. For $i \geq 1$, the curve $C_i$ will not have the property that it meets
two connected components of $Q^{(i)}$ and together with these contracts to
a smooth point. For otherwise, the irreducible components of $Q^{(i)}+C_i$ are independent in
$\Pic(\ol{Y}_i)$, but $\rank\Pic(\ol{Y}_i) \le \#Q^{(i)}$. Hence $0=\chi(Y') \leq \chi(Y_1)=\chi(Y)+1$ by Lemma 1.12.
It follows that $\chi(Y)=-1$, i.e. $r=3$, $Q=Q_0+Q_1+Q_2+Q_3$. $Q^{(1)}$ has two connected components
and $\chi(Y_1)=0$. Hence no further contraction is possible, i.e. $Y_1=Y'$.

Suppose that $E \neq Q_{i_0}$ and $E \neq Q_{i_1}$. We may assume that $i_0=1, i_1=2$. In the notation of
Lemma \ref{Lemma 1.13} we have $P=Q^{(1)}=E+Q_3$ with $E$ contractible. Hence $Q_3$ is the complementary
fork of $E$ and we have the case (ii). (If $Y_1'$ denotes $\ol{Y}_1\setminus Q_3$ with $E$ contracted to a quotient
singularity, then $Y_1'\simeq \C^2/G$.)

Suppose that $E=Q_{i_0}$. We may assume that $Q_{i_1}=Q_1$. In the notation of
Lemma \ref{Lemma 1.13} we have $P=Q^{(1)}=Q_2+Q_3$ where we can assume $Q_2$ contractible and $Q_3$ is the complementary
fork of $Q_3$. In fact, we may assume that $Q_3$ is not contractible. Then $Q_2$
is contractible since $Y' = Y_1$ is a Platonic fiber space. Let $Y_1''$ denote $\ol{Y}_1\setminus Q_3$
with $Q_2$ contracted to a quotient singularity $p$ with local fundamental group $G$. Then $Y_1''$ is isomorphic to $\C^2/G$. Since
$H_1(Y\cup E;\Z)=0$ implies $H_1(Y \cup E\cup Q_1;\Z)=0$ and since $Y_1''\setminus \{p\}$ is the image of
$Y\cup E\cup Q_1$ under a proper birational morphism, we have $H_1(Y_1''\setminus \{p\};\Z)=0$. It follows that $G$ is
the binary icosahedral group, i.e., $Q_2$ is the $(-2)$-fork of type (2,3,5) (cf. \cite[Satz 2.11]{B})
and $Q_3$ is the complementary fork of $Q_2$, and both are unimodular. Since $C_1$ meets a fork $E$ and a connected component
$Q_1$ of $(K+Q)^-$, the theory of peeling implies that $Q_1$ is a linear chain. So, we have the case (iv).

(2.2) Suppose that $(\ol{Y},Q)$ is almost minimal. Since $\chi(Y)<0$, we are in case (i) by Lemma \ref{Lemma 1.13}.

\end{proof}

Let $S_0=S'\setminus \{q\}=S\setminus E$. Then $H_1(S_0;\Q)=H_2(S_0;\Q)=0$ (cf. \cite[Proposition 2.6]{KR}). Hence
$\wt{b}_1(S_0)=\wh{b}_2(S_0)=0$. We shall consider a branching component of $D$ with precisely three branches.
The following result, restricting the self-intersection number of such a component, will be used in proving Lemma \ref{Lemma 2.9} below.

\begin{lem}\label{Lemma 2.7}
The following assertions hold.
\begin{enumerate}
\item[{\rm (i)}]
There is no branching component $B$ in $D$ such that $B$ has precisely three branches in $D$ and $B^2 \geq 0$.
\item[{\rm (ii)}]
There is no branching component $B$ in D such that $B$ has precisely three branches in $D$, $B^2=-1$ and
two of the three branches consist of single $(-2)$-curves.
\end{enumerate}
\end{lem}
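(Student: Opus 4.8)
The plan is to assume, in each part, that such a branching component $B$ exists and to manufacture from it a $\BP^1$-fibration of $\ol{S}$ by a $(0)$-curve in the boundary. The guiding principle is that if the general fiber of such a fibration meets the boundary $D\cup E$ in at most two points, then $S_0:=S'\setminus\Sing S'$ is $\C$- or $\C^*$-fibered, whence $\lkd(S_0)\leq 1$ by the easy addition theorem, contradicting $\lkd(S_0)=\lkd(S'\setminus\Sing S')=2$. Throughout I use that $E$ is disjoint from $D$, that every component of $E$ has self-intersection $\leq -2$ (minimal resolution of a quotient singularity), and that, since we contract only boundary curves, $S_0$ is unaffected by the modifications. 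Because the irreducible components of $D\cup E$ are numerically independent (they form a basis of $\Pic(\ol{S})\otimes\Q$), a fiber of the fibration can lie in the boundary only if it is a single boundary component; I will use this to pin down $\nu$ in Fujita's formula \ref{Lemma 1.2.1}.

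For part (ii) this principle gives a clean argument, which I would present first. Here $B$ is a $(-1)$-curve meeting two single $(-2)$-curves $B_1,B_2$ and a third branch with first component $B_3^{(1)}$. First I contract $B$; then $B_1$ and $B_2$ become $(-1)$-curves passing, together with the image of $B_3^{(1)}$, through one point. Contracting (the image of) $B_1$ next turns the image of $B_2$ into a $(0)$-curve $C$, whose only remaining boundary neighbor is the image of $B_3^{(1)}$; a direct intersection computation shows $C\cdot(\text{image of }B_3^{(1)})=2$. The pencil $|C|$ is then a $\BP^1$-fibration in which the image of $B_3^{(1)}$ is the unique horizontal boundary component, a $2$-section, while $E$ and the rest of $B_3$ are vertical. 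Hence the general fiber of $S_0$ is $\BP^1$ minus at most two points, i.e. $S_0$ is $\C$- or $\C^*$-fibered, and $\lkd(S_0)\leq 1$, a contradiction.

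For part (i) I would first reduce to $B^2=0$: if $B^2>0$, blow up boundary points successively along a single branch (lengthening that branch rather than creating new ones), lowering $B^2$ to $0$ while keeping exactly three branches and leaving $S_0$ unchanged. With $B^2=0$ the pencil $|B|$ is a $\BP^1$-fibration having $B$ as a (smooth) fiber; since $D$ is a tree, the only components of $D$ meeting $B$ are the first components of the three branches, each of which is therefore a $1$-section, so there are exactly three sections $s_1,s_2,s_3$ and $E$ lies in a special fiber. Applying Fujita's formula \ref{Lemma 1.2.1} to $D\cup E$, and using $\wt{b}_1(S_0)=\wh{b}_2(S_0)=0$ together with $h=3$ and $\nu=1$ (only the fiber $B$ lies in the boundary, by numerical independence), yields $\Sigma=2$, tightly constraining the singular fibers. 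I would then exploit the fork $E$: lying in a fiber, containing no $(-1)$-curve yet forcing the fiber to have $(-1)$-curves (Lemma \ref{Lemma 1.1}) and multiple components, it must be reconciled with the absence of simple curves (Lemma \ref{Lemma 1.5}) and with $H_1(S;\Z)=0$ via a cyclic-covering argument as in the proof of Lemma \ref{Lemma 2.5}.

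The hard part will be precisely this endgame of (i). Unlike (ii), a fibration by three-punctured spheres is \emph{not} by itself incompatible with $\lkd(S_0)=2$ — a product of two three-punctured lines already realizes $\lkd=2$ with exactly this fiber type — so the contradiction cannot come from the fibration alone. It must instead be extracted from the homology-plane hypothesis ($\chi(S_0)=0$, $H_1(S;\Z)=0$) and from the forced shape of the fiber containing the fork $E$, using the count $\Sigma=2$ to limit the number and complexity of singular fibers and then ruling out every surviving configuration by the covering trick or by the minimality of $\#D$. Controlling the multiplicities in the fiber through $E$ so that the covering argument applies is where I expect the main technical difficulty to lie.
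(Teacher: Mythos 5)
Your part (ii) is correct and is in substance identical to the paper's argument: the $(0)$-curve $C$ you produce by contracting $B$ and then $B_1$ has total transform $Z_1+2B+Z_2$ on $\ol{S}$, so your fibration is exactly the one the paper obtains directly from the pencil $|Z_1+2B+Z_2|$, with the first component of the third branch as a $2$-section and $E$ vertical; the conclusion $\lkd(S_0)\le 1$ is the same.

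Part (i), however, contains a genuine gap. Your setup (reduction to $B^2=0$, the fibration $|B|$, $h=3$, $\nu=1$, hence $\Sigma=2$ from Lemma \ref{Lemma 1.2.1}) agrees exactly with the paper's, but at the point where a contradiction must be produced you stop: you declare the ``endgame'' to be the hard part and defer it, so nothing is actually proved in case (i). Moreover, the tools you propose for that endgame (a cyclic-covering argument as in Lemma \ref{Lemma 2.5}, or minimality of $\#D$) are not the ones that make the proof work. What the paper proves is the Claim that the fiber $F_E$ containing $E$ is the \emph{only} singular fiber of $f$, and this is done purely by fiber analysis: first, $F\cap D$ contains no $(-1)$-components for any singular fiber $F$ (such a component would either be branching in $D$ and violate Lemma \ref{Lemma 1.1}~(e), or come from a $(0)$-curve of the original $D$ inducing a $\C$- or $\C^*$-ruling of $S_0$); second, every singular fiber has at least two $S_0$-components (a unique one would be the unique $(-1)$-curve, hence of multiplicity $\ge 2$, hence disjoint from the three $1$-sections, and would then have to meet all three branches $T_i'$ to connect the fiber to the sections, again contradicting Lemma \ref{Lemma 1.1}~(e)); third, $F_E$ has at least three $S_0$-components. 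Since $\Sigma=2$, these counts force $F_E$ to be the unique singular fiber. The contradiction is then immediate and involves no covering trick: $S\setminus F_E$ is a $\BP^1$-bundle over $\C^*$ with three disjoint sections removed, hence isomorphic to $\C^*\times\C^{**}$, so $\lkd(S\setminus F_E)=1$, whereas $S\setminus F_E$ is open in $S_0$ and therefore $\lkd(S\setminus F_E)\ge\lkd(S_0)=2$. This is exactly how the paper circumvents the obstacle you correctly identified (a $\C^{**}$-fibration is not by itself incompatible with $\lkd=2$): the decisive point is the uniqueness of the singular fiber, which your proposal neither states nor proves.
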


\begin{proof}
(i)\ Suppose on the contrary that $B$ is such a branching component with branches $T_1, T_2, T_3$. Let
$H_i$ be the component of $T_i$ meeting $B$ and write $T_i=H_i+T_i'$. By blowing up if necessary the point of intersection of
$B$ with (the inverse image of) $T_1$ we may assume that $B^2=0$. Here we temporarily allow the case $H_1^2=-1$ and by abuse of notation still write $D, \ov{S}, \cdots$ for the corresponding quantities after the blowings-up.
We apply Lemma \ref{Lemma 1.2.1} to $D+E$ and the
$\BP^1$-fibration $f : \ol{S} \to \BP^1$ induced by $B$. We have $h=3$ (the three components $H_i$ are $1$-sections of
$f$ and the only horizontal curves in $D$) and $\nu =1$  since clearly $B$ is the unique
fiber contained in $D$. By the above remark on $\wt{b}_1, \wh{b}_2$, we have $\Sigma =2$. Let $F_E$ be the fiber containing $E$. We make the following
\svskip

\noindent
{\bf Claim.}\ {\em $F_E$ is the only singular fiber of the fibration $f$.}
\svskip

If we admit the claim, then $S\setminus F_E \simeq \C^*\times \C^{**}$, which implies $\lkd(S_0)\leq 1$,
in contradiction to our standing assumption that $\lkd(S'\setminus \{q\})=2$. So it remains to prove the claim.
\svskip

\noindent
{\it Proof of the Claim.}\
Let $L$ be a component of a singular fiber $F$ and denote by $\mu$ its multiplicity in $F$.
We extract the following assertions from Lemma \ref{Lemma 1.1} and Lemma \ref{Lemma 1.5} for easy reference.
\begin{enumerate}
\item[(a)]
If $L^2=-1$, then $L$ meets at most two other components of $F$ and precisely one if $\mu =1$.
\item[(b)]
If $L$ is the unique $(-1)$-component in $F$, then $\mu \geq 2$. Moreover, $F$ has precisely two components
of multiplicity $1$.
\item[(c)]
If $L$ is an $S_0$-component of $F$, then $L\cdot E \le 1$ and $L\cdot D=\Sigma_{i=1}^3 L\cdot T_i\ge 2$. Moreover, $L\cdot T_i \le 1,\ i=1,2,3$.
\end{enumerate}
Suppose $C$ is a $(-1)$-component of $F\cap D$. Then $C$ is not one of the exceptional curves in the blow-up process. So there are two possibilities. First, no blow-up takes place on $C$. Then $C$ is branching in $D$, but non-branching in $F$ by (a), and hence meets one of the sections $H_i$ and at least two other components of $F\cap D$. This is ruled out by (a). Second, a single blow-up occurs on $C$. Then $C$ is a $(0)$-curve in the original $D$ and meets an end component of the exceptional locus which is a linear
chain  or $C$ meets a 1-section of $f$. By (a) this implies
that $C$ is non-branching in the original $D$ and defines a $\C$- or $\C^*$-ruling of $S_0$. This contradicts the assumption $\lkd(S'\setminus \{q\})=2$. Hence $F\cap D$ has no $(-1)$-components.

Suppose that $L$ is the only $S_0$-component in $F$. Then $L$ is the only $(-1)$-component and we have
$\mu \ge 2$ by (b). Hence $L$ does not meet any of the $1$-sections $H_i$. On the other  hand, $F$ meets each $H_i$ and it follows that $L\cdot T_i'=1, i=1,2,3$ and we have a contradiction to (a).

In particular $F_E$ has at least two $S_0$-components. Suppose there are precisely two, $L_1,L_2$ say. If $L_1$ is the only (-1)-component in
$F_E$, then $\mu \ge 2$ and by 1.8 we find that $L_1\cdot T_i' =1$ for two of the $T_i'$, say for $T_1', T_2'$, and that $L_1\cdot E=L_1\cdot T_3=0$.
Then $T_1',T_2'$ are part of $F_E$ and the components in
$T_1',T_2'$ meeting $H_1,H_2$ have multiplicity $1$ in $F_E$. By (b), $T_3'\cap F_E= \emptyset$. In fact, if
$T_3' \neq \emptyset$ the component meeting $H_3$ has multiplicity $1$ in its fiber. Since $F_E\cdot H_3=1$,
we have $L_2\cdot H_3=1$, so $L_2$ is a third component of multiplicity $1$ in contradiction to (b).
So we have $L_1^2=-1=L_2^2$ and we may assume $L_1\cdot E=1$. If $L_1 \cap T_i' \neq \emptyset$ for some $i$ then
$L_1$ has multiplicity $\geq 2$ by (a) and $L_1$ meets at least two of the $T_i'$ by (c), in contradiction to (a). So,
$L_1$ meets at least two of the $H_i$, say $L_1\cdot H_2=1=L_1\cdot H_3$. Then $T_2'\cap F_E=T_3'\cap F_E= \emptyset$ and $L_2$ meets only $T_1$ and $L_2\cdot D=L_2\cdot T_1=1$, a contradiction to
(c).

Hence $F_E$ has at least $3$ $S_0$-components.
Since $\Sigma=2$ there are exactly $3$ and any other singular fiber $F$ has exactly one. By what we said above this proves the claim.

\svskip

(ii)\ Let $Z_i\ (i=1,2)$ be the branches at $B$ consisting of a single $(-2)$-curve. Then the linear pencil
$|Z_1+2B+Z_2|$ induces a $\BP^1$-fibration on $\ol{S}$ for which the component of the third branch
$Z_3$ meeting $B$ is a $2$-section. Hence $S_0$ has a $\C^*$-fibration and $\lkd(S_0) \le 1$. This
contradicts the assumption $\lkd(S_0)=2$.
\end{proof}

In what follows we choose a various branching component $B$ of $D$ so that  Setup \ref{Setup 2.1} holds with $D_0=B$ and apply Proposition
\ref{Proposition 2.6} to $D-B$. When $D-B$ falls in one of the cases treated there, we often say
that $B$ is of type 2.6~(i) or, simply, of type (i), etc. We will write $r=r_B$ in Setup \ref{Setup 2.1} when we need to indicate the dependence on $B$.

First we mention an easy fact.

\begin{lem}\label{Lemma 2.7.1}
In the situation of Proposition \ref{Proposition 2.6}~(i) we have $d(D) < 0$.
\end{lem}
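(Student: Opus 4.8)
The plan is to read off the sign of $d(D)=\det(-(D_i\cdot D_j))$ from the signature of the intersection form on the subspace $V_D\subset\Pic(\ol S)\otimes\Q$ spanned by the components of $D$. The elementary observation driving the argument is that, for a nondegenerate symmetric form, $\det(-(D_i\cdot D_j))$ has sign $(-1)^{p}$, where $p$ is the number of \emph{positive} eigenvalues of the form on $V_D$. Thus it suffices to prove that $p=1$, i.e. that the boundary $D$ fails to be negative definite.

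First I would record nondegeneracy: Lemma \ref{Lemma 1.7} gives $|d(D)|=d(E)=a>0$, so the form is nondegenerate on $V_D$ (equivalently $V_D$ has no $0$-eigenvalue). Next I would assemble the three structural facts available for our counterexample. (i) The components of $D\cup E$ form a $\Q$-basis of $\Pic(\ol S)\otimes\Q$, so $V_D\oplus V_E=\Pic(\ol S)\otimes\Q$ with $\dim V_D=\#D$ and $\dim V_E=\#E$. (ii) $E$ is the exceptional locus of the resolution of the quotient singularity $q$ (a contractible fork), hence $V_E$ is negative definite, of signature $(0,\#E)$. (iii) $D$ and $E$ are disjoint, so $D_i\cdot E_j=0$ for all components, and therefore the decomposition $V_D\oplus V_E$ is \emph{orthogonal}.

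To finish, I would invoke the Hodge index theorem. Since $\ol S$ is rational (Lemma \ref{Lemma 1.4}) we have $\Pic(\ol S)\otimes\Q=\NS(\ol S)\otimes\Q$, and the intersection form has signature $(1,\rho-1)$ with $\rho=\#D+\#E$. Because the sum $V_D\oplus V_E$ is orthogonal and the form is nondegenerate on each summand (on $V_E$ by negative definiteness, on $V_D$ by Lemma \ref{Lemma 1.7}), signatures add. Comparing the total signature $(1,\rho-1)$ with the contribution $(0,\#E)$ of $V_E$ forces $V_D$ to have signature $(1,\#D-1)$. Hence $p=1$ and $d(D)=(-1)^{1}|d(D)|<0$.

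The point demanding the most care is bookkeeping the sign conventions: confirming the relation $\mathrm{sign}\,d(D)=(-1)^{p}$ and the additivity of signatures over an orthogonal direct sum of nondegenerate subspaces (which is exactly why the input of Lemma \ref{Lemma 1.7} is needed). I should flag that this argument never uses the hypothesis that we are in case 2.6~(i); it shows $d(D)<0$ for every counterexample, the real content being that $D$ cannot be negative definite. If one instead wants a proof that stays inside case (i), the natural route is to expand $d(D)$ along the branching component $D_0$ by the formula of \cite[2.1.1]{KR}, $d(D)=\big(-D_0^2-\sum_i\ol{d}(Z_i)/d(Z_i)\big)\prod_i d(Z_i)$, exploiting that the $Q_i$ are contractible; there the obstacle is controlling the signs of the individual factors $d(Z_i)$ and of the bracketed term, which the signature argument sidesteps entirely. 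Equivalently one can argue by contradiction: if $D$ were negative definite, contracting the disjoint configurations $D$ and $E$ would yield a normal projective surface equal to $S'\cup\{\mathrm{pt}\}$, forcing the affine surface $S'$ to be a normal projective surface minus a point — impossible, since such a surface contains complete curves and is not affine.
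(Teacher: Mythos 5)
Your proof is correct, but it takes a genuinely different route from the paper's. The paper argues inside case 2.6~(i): there every connected component of $D-D_0$ is contractible, so $-I(D-D_0)$ is positive definite (with $I(-)$ the intersection matrix); ordering the components with $D_0$ last, Sylvester's criterion shows that $d(D)>0$ would make $-I(D)$ positive definite, i.e.\ $D$ negative definite, which is impossible because $D$ supports an ample divisor ($S'$ being affine); hence $d(D)\le 0$, and $d(D)\neq 0$ by Lemma~\ref{Lemma 1.7}. You instead compute the signature of $V_D$ outright: the components of $D\cup E$ give an orthogonal decomposition $\Pic(\ol{S})\otimes\Q=V_D\perp V_E$, inertia is additive over such a decomposition, $V_E$ is negative definite, and the Hodge index theorem gives total signature $(1,\rho-1)$, so $V_D$ has signature $(1,\#D-1)$ and $d(D)=(-1)^1|d(D)|<0$; your sign bookkeeping $\mathrm{sign}\, d(D)=(-1)^p$ is right. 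The trade-off: your argument is more general --- as you observe, it never uses the case~(i) hypothesis, so it gives $d(D)<0$ for any branching component in the standing setup --- whereas the paper's is more elementary, needing only the definiteness of the principal submatrix (exactly where case~(i) enters) plus the ample divisor supported on $D$. Both need Lemma~\ref{Lemma 1.7} to exclude $d(D)=0$. One caution on your closing aside: the contraction alternative is shakier than stated, since contracting a negative definite boundary need not produce a projective surface and one must still exhibit a complete curve missing the two resulting points; but your main argument does not rely on it.
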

\begin{proof} Let $I(-)$ denote intersection matrix. Then $-I(D-D_0)$ is positive definite since the connected components of $D-D_0$ are
contractible. If $d(D)>0$,
$-I(D)$ is positive definite by Sylvester's criterion. Hence $I(D)$ is negative definite, which is
impossible because $D$ supports an ample divisor. Hence $d(D) \leq 0$. Since $|d(D)|=a > 0$ by
Lemma \ref{Lemma 1.7}, this entails that $d(D)<0$.
\end{proof}

\begin{lem}\label{Lemma 2.8}
Let things be as in \ref{Setup 2.1}. Suppose that all $Z_i$ are MNC. Then the case 2.6~(ii) does
not occur.
\end{lem}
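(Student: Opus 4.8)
The plan is to use the hypothesis that every $Z_i$ is $\MNC$ to move the whole configuration of case 2.6~(ii) back onto $\ol S$, where the branching component $D_0=B$ and the complementary fork $Z_3$ become part of the actual boundary $D$, and then to contradict Lemma \ref{Lemma 2.7} at the central component of $Z_3$ (or, failing that, to produce a $\C$- or $\C^*$-fibration of $S_0$, contradicting $\lkd(S_0)=2$).

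First I would dispose of the compactification. Since $D-D_0=Z_1+\cdots+Z_r$ is a disjoint union of the $Z_i$, it is $\MNC$ exactly when each $Z_i$ is; hence under the hypothesis the $\NC$-minimalization $f:\ol S\to\ol Y$ of Setup \ref{Setup 2.1} is the identity. Thus $\ol Y=\ol S$, $Q_i=Z_i$, $Q_0=E$, and $B=D_0$ is a genuine branching component of $D$ on $\ol S$, meeting the three roots $Z_1^{(1)},Z_2^{(1)},Z_3^{(1)}$. Consequently, in case 2.6~(ii) there is on $\ol S$ an actual $(-1)$-curve $C$ with $C\cdot Z_1=C\cdot Z_2=1$, $C\cdot(Z_3+E)=0$, with $Z_1+C+Z_2$ contracting to a smooth point, and with $Z_3$ the complementary fork of $E$. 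Note that $C$ joins the two parts $Z_1,Z_2$ of the \emph{single} connected component $D$: this is consistent with the almost minimality of $(\ol S,D+E)$ from Corollary \ref{Corollary 1.6}, while it destroys almost minimality of $(\ol S,Q)$, which is exactly the effect of keeping $B$ in the boundary.

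The key structural point is that the complementary fork is non-contractible but has contractible branches. As the boundary at infinity of the affine surface $\C^2/G$ produced in Lemma \ref{Lemma 1.13}(b), the fork $Z_3$ satisfies $d(Z_3)\le 0$, whereas each of its three branches $T_1,T_2,T_3$ is contractible since $d(T_i)\ge 2$. Therefore no maximal twig of $D$ and no chain joining two branching components of $D$ is non-contractible, so Lemma \ref{Lemma 1.2.0} is not directly available; instead the failure of negative-definiteness must reside in the central component $Z_3^c$ of $Z_3$, which is itself a branching component of $D$ on $\ol S$. Writing $(Z_3^c)^2=-e$, the determinant rule \cite[2.1.1]{KR} gives $e\le\sum_i \ol{d}(T_i)/d(T_i)$, which forces $(Z_3^c)^2$ to be close to $0$. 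I would then read off $(Z_3^c)^2$ and the branch determinants $d(T_i)$ from the explicit list of complementary forks in \cite[Chapter 3, 2.5]{Mi}. When $Z_3^c$ carries exactly three branches in $D$ (i.e. $Z_3^{(1)}\ne Z_3^c$) and has non-negative self-intersection, or self-intersection $-1$ with two of its branches single $(-2)$-curves, this contradicts Lemma \ref{Lemma 2.7}. When $(Z_3^c)^2\ge 0$ one may alternatively blow up to turn $Z_3^c$ into a $(0)$-curve and argue as in the proof of Lemma \ref{Lemma 2.7}(i): the induced $\BP^1$-fibration is expected to have a single genuine singular fiber, whence $S_0$ is $\C^*$-ruled, contradicting $\lkd(S_0)=2$ by Corollary \ref{Corollary 1.2.0.1}.

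The main obstacle will be the explicit case-checking in this last step. Running through the admissible complementary forks of \cite[Chapter 3, 2.5]{Mi}, one must verify that $Z_3^c$ always lands in a configuration excluded by Lemma \ref{Lemma 2.7} or yielding a $\C^*$-fibration; the delicate cases are $(Z_3^c)^2=-1$ with branch-type not of the form $(2,2,\ast)$, and the attachment subcase $Z_3^{(1)}=Z_3^c$, where $B$ contributes a fourth branch at $Z_3^c$ and Lemma \ref{Lemma 2.7} no longer applies, so that one is thrown back on the direct fibration argument. A preliminary check that $C\cdot B=0$, so that $B$ meets the contractible chain $Z_1+C+Z_2$ only along $Z_1$ and $Z_2$, should be recorded at the outset.
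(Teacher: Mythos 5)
Your reduction to $\ol{S}$ is correct, and so is the structural bound you extract from $d(Z_3)=-d(E)<0$ with contractible branches (the central component $Z_3^c$ has $(Z_3^c)^2\geq -1$). But the cases you yourself flag as ``delicate'' are not a residue to be mopped up later; they are essentially the entire content of the lemma, and the fallback you propose for them does not exist. Your only non-trivial tool beyond Lemma \ref{Lemma 2.7} is ``blow up to turn $Z_3^c$ into a $(0)$-curve and count singular fibers,'' which requires $(Z_3^c)^2\geq 0$: blowing up only lowers self-intersections, so when $(Z_3^c)^2=-1$ there is no such fibration. Meanwhile Lemma \ref{Lemma 2.7}~(ii) needs two of the three branches at $Z_3^c$ to be single $(-2)$-curves, and for complementary forks of types $(2,3,3)$, $(2,3,4)$, $(2,3,5)$ (and for type $(2,2,n)$ when $D_0$ is attached inside a $2$-branch) at most one of the two branches not containing the attachment point is a single $(-2)$-curve. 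Concretely, the configuration where $Z_3$ is the $(5,2,3)$-fork with central $(-1)$-curve and branches $[3,2]$, $[2]$, $[3]$, with $D_0$ attached to the $(-2)$-tip of the $[3,2]$-branch, passes every test in your proposal; it is precisely the configuration the paper spends the second half of its proof eliminating. The four-branch case $Z_3^{(1)}=Z_3^c$ is likewise untouched: Lemma \ref{Lemma 2.7} is silent, and even if $(Z_3^c)^2\geq 0$ the fiber-counting of Lemma \ref{Lemma 2.7}~(i) does not transfer, since with $h=4$ sections one gets $\Sigma=3$ rather than $\Sigma=2$ and the ``unique singular fiber'' conclusion breaks down.

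What is missing are the two ingredients that actually drive the paper's argument. First, the loop: since $Z_1+C+Z_2$ contracts to a smooth point and both $Z_1,Z_2$ meet $D_0$, the cycle $D_0+Z_1+C+Z_2$ can never lie in a fiber of any $\BP^1$-fibration (fibers are trees, Lemma \ref{Lemma 1.1}~(d)); the paper uses this three separate times, each time producing a fibration supported on some non-contractible chain of $D+E$ disjoint from the loop and concluding the loop is vertical. Second, the determinant identity
\[
d(D)=d(D_0+Z_1+Z_2)\,d(Z_3)-d(Z_1)\,d(Z_2)\,d(Z_3-D_1),
\]
combined with $|d(D)|=d(E)=-d(Z_3)$ from Lemma \ref{Lemma 1.7} and the Claim $d(D_0+Z_1+Z_2)>0$ (itself proved by applying Proposition \ref{Proposition 2.6} to $Z_3^c$ and invoking the loop). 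This forces $d(Z_3-D_1)\leq 0$, which kills the attachment-at-center case outright (there $Z_3-D_1$ is a union of three contractible chains, of positive determinant) and then pins $Z_3$ down to the $(5,2,3)$-fork above, which is finally excluded by applying Proposition \ref{Proposition 2.6} to $Z_3^c$ and checking that the resulting equations for $-D_0^2$ have no integer solutions. Without the loop and the determinant bookkeeping, your case analysis cannot close.
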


\begin{proof}
Suppose that the case (ii) occurs. Let $B$ be the branching component of the fork $Q_3$ and $R_1,R_2,R_3$ the three branches. They are contractible chains. Let $D_1$ be the component of $Q_3$
such that $D_0\cdot D_1=1$. We assume that $D_1$ is a component of $R_1$ if $D_1 \neq B$.
Let $Q_3^\#=Q_3-D_1$. We have

\begin{equation}\label{eqn 2.6}
d(D)=d(D_0+Q_1+Q_2)d(Q_3)-d(Q_1)d(Q_2)d(Q_3^\#).
\end{equation}

By Lemma \ref{Lemma 2.7}, $B^2<0$ if $D_1\neq B$. Since $B$ is the branching component of the fork complementary
to $E$, $B^2\geq -1$. Hence $B^2=-1$ if $D_1\neq B$. We first establish
\svskip

\noindent
{\bf Claim.}\ \ $d(D_0+Q_1+Q_2)>0$.
\svskip

Suppose $d(D_0+Q_1+Q_2)\leq 0$ and apply Proposition \ref{Proposition 2.6} to the branching component $B$
(not $D_0$). Now $D_0+Q_1+Q_2$ is not contractible and hence the connected component of $D-B$ containing $D_0$ is not contractible since it contains $D_0+Q_1+Q_2$. So clearly the assumptions of \ref{Setup 2.1} are satisfied and $B$ is not of type 2.6~(i). If $B$ is of type 2.6~(iii), then $R_2$ and $R_3$ are $(-2)$-curves, $D_1\neq B$ and $|R_2+2B+R_3|$ induces
a $C^*$-fibration on $S'\setminus \{q\}$, a contradiction as before. Hence $B$ is not of type 2.6~(iii).
Clearly, $B$ is not of type 2.6~(iv). Hence $B$ is of type 2.6~(ii). In particular $r_B=3$. If $D_0$ meets $B$
then $D-B$ has $4$ connected components and $r_B=4$. Hence $D_0\cdot B=0$. Since $B$
is of type 2.6~(ii), there exists a $(-1)$-curve $C'$ such that $R_2+C'+R_3$ contracts to a smooth point
and $C'\cdot (Q_1+Q_2+D_0+R_1+E)=0$. Furthermore, $D_0+Q_1+Q_2+R_1$ is the complementary fork of $E$ with
$D_0$ as branching component.
Hence $D_0^2=-1$ by the argument which gave $B^2=-1$. Also, $D_0$ meets $Q_1, Q_2$ in tips.
Since  $d(Q_1+D_0+Q_2)\leq 0$,  $Q_1+D_0+Q_2$ is a non-contractible chain and is obtained from a linear chain by blowing up over a $(0)$-curve contained in it. Hence there exists a divisor $F$ with
$\Supp(F) \subset D_0+Q_1+Q_2$ which induces a $\BP^1$-fibration of $\ol{S}$. Then $B+R_2+C'+R_3$ is
contained in a fiber of this $\BP^1$-fibration. This is a contradiction since $B+R_2+C'+R_3$ is a loop.
\svskip

We continue the proof. We have $\pm d(D)=d(E) > 0$ by Lemma \ref{Lemma 1.7}, while $d(Q_3)=-d(E) <0$
because $Q_3$ is the complementary fork of $E$ on the surface obtained from $\ol{S}$ by contracting
$Q_1+C+Q_2$ to a smooth point. So (\ref{eqn 2.6}) can be written as
\[
-d(Q_1)d(Q_2)d(Q_3^\#)=d(E)(d(D_0+Q_1+Q_2)\pm 1).
\]
Since $Q_1, Q_2$ are contractible, $d(Q_1)>0$ and $d(Q_2)>0$. Hence the above claim implies that $d(Q_3^\#) \leq 0$. It then follows that $D_0\cdot B=0$,
for otherwise $Q_3^\#$ consists of three contractible chains and $d(Q_3^\#)>0$. Suppose that
$D_1\cdot B=1$. Then
$$d(Q_3^\#)=d(B+R_2+R_3)d(R_1-D_1)\leq 0.$$
 Since $R_1-D_1$ is either empty or
a contractible chain, we have $d(R_1-D_1) \geq  0$. Thus $d(B+R_2+R_3) \le 0$ and $B+R_2+R_3$ is
a non-contractible linear chain. As above there exists a divisor $F$ with $\Supp(F) \subset B+R_2+R_3$
such that $|F|$ gives rise to a $\BP^1$-fibration on $\ol{S}$ and $D_0+Q_1+Q_2+C$, which is a loop, is
contained in a fiber. This is a contradiction. Hence $D_1\cdot B=0$. Then in particular
$\#R_1>1$. As for any contractible or complementary fork, one of the $R_i$ is a single $(-2)$-curve. We may assume it is $R_2$. If the tip  meeting $B$ of $R_1$ or of $R_3$ is a $(-2)$-curve, $N$ say, then $N+2B+R_2$ induces a $\BP^1$-fibration with the loop $Q_1+D_0+Q_2+C$
contained in a fiber. Hence these tips are $(\leq -3)$-curves. It follows that $Q_3$ is a $(5,2,3)$-fork. Moreover  $R_1$ has two components
$D_1$ and $D_2$ with $D_2^2=-3$, $D_1^2=-2$ and $D_2$ meeting $B$, and $R_3$ consisting of a single $(-3)$-curve.

$D-B$ has connected components $R_2, R_3$ and $\wh{R}_1=D_0+Q_1+Q_2+{}^tR_1$, where we write ${}^tR_1$ for $R_1$ attached in reversed order to the branching component $D_0$ of $\wh{R}_1$ by $D_1$.
Since $D_0$ is the only possible $(-1)$-curve in $D-B$ the assumptions of \ref{Setup 2.1} are satisfied with respect to  $B$ and we can apply Proposition
\ref{Proposition 2.6}. We infer readily that either the case (i) or the case (ii) occurs. In case (ii),  $\wh{R}_1$ is the complementary fork of $E$
and identical to the fork $Q_3$. Since $R_1$ is not symmetric,
this is not the case. So $B$ is of type 2.6~(i),
i.e., $\wh{R}_1$ is a contractible fork. It follows from Lemma \ref{Lemma 2.7.1} that $d(D)<0$ and hence $d(D)=-d(E)$. $\wh{R}_1$ is either of type $(2,2,5)$ or $(2,3,5)$.
We may assume that $Q_1$ is a $(-2)$-curve. Then $Q_2$ is a $(-2)$-curve in the $(2,2,5)$-case
and either a $(-3)$-curve or a chain of two $(-2)$-curves in the $(2,3,5)$-case. We have
$-d(D)=a=d(E)=-d(Q_3)=7$. Let $D_0^2=-t$. Suppose that $Q_2$ is a $(-2)$-curve. We compute
$-7=d(D)=(4t-4)d(Q_3)-4d(Q_3^\#)=-(4t-4)7+12$. Suppose that $Q_2$ is a $(-3)$-curve. Then $-7=-(6t-5)7+18$.
If $Q_2$ consists of two $(-2)$-curves then $-7= -(6t-7)7+18$. In all three cases, $t$ is not an integer.
Hence the case 2.6~(i) is impossible.
\end{proof}

\begin{lem}\label{Lemma 2.9}
Suppose that $D-D_0$ is of type (iii) in Proposition \ref{Proposition 2.6} and that $Z_1,Z_2$ are MNC.
Let $D_1$ be the component of $Z_3$ which meets $D_0$. Then the following assertions hold.
\begin{enumerate}
\item[{\rm (1)}]
$D_1$ is contained in a twig of $Z_3$. If $Z_3$ is MNC, then $D_1$ is a tip of $Z_3$ and
$D_0^2=D_1^2=-2$. Moreover, $D_1$ is not a maximal twig of $Z_3$.
\item[{\rm (2)}]
Suppose that $Z_3$ contains two $(-1)$-components $B_1, B_2$ which meet each other. Then $Z_3$ is MNC, $D_1,~D_0$ are
$(-2)$-curves and $D_1\cdot (B_1+B_2)=0$.
\end{enumerate}
\end{lem}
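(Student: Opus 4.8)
The plan is to exploit that $Z_1,Z_2$ are single $(-2)$-curves meeting the branching component $D_0$, so that the ``reflected'' chain $W_0:=Z_1+D_0+Z_2$ is the linear chain $[2,t,2]$ with $-D_0^2=t$, and then to run Proposition \ref{Proposition 2.6} a second time with $D_1$ playing the role of the central branching component. First I record that $D_0$ has exactly three branches in $D$, two of them the $(-2)$-curves $Z_1,Z_2$; hence Lemma \ref{Lemma 2.7}(i) gives $D_0^2<0$ and Lemma \ref{Lemma 2.7}(ii) forbids $D_0^2=-1$, so $t\ge 2$ and $W_0$ is a contractible chain with $d(W_0)=4(t-1)\ge 4$. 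Since $D_1$ meets $D_0$ directly, the connected component of $D-D_1$ on the $D_0$-side is exactly $W_0$.

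For the first assertion of (1), suppose $D_1$ were a branching component of $Z_3$. Then $D_1$ is a branching component of $D$ with $r_{D_1}\ge 4$ branches, one of them $W_0$, and Setup \ref{Setup 2.1} applies: the remaining branches lie in $Z_3$ and are $\MNC$ because $D$ is $\MNC$ and the only component of $Z_3$ that can fail the $\MNC$ condition inside $Z_3$ is $D_1$ itself (any other $(-1)$-curve of $Z_3$ avoids $D_0$, hence is branching in $D$ iff branching in $Z_3$). As cases (ii)--(iv) of Proposition \ref{Proposition 2.6} all have $r=3$, we are forced into case (i), where the $\BP^2$-type inequality of Lemma \ref{Lemma 1.3} reads $\tfrac1{|G_0|}+\tfrac1{|G_{W_0}|}+\sum_j\tfrac1{|G_j|}\ge r_{D_1}-2$; bounding $|G_0|\ge 8$, $|G_{W_0}|=4(t-1)\ge 4$ and the other $|G_j|\ge 2$ makes the left side at most $\tfrac38+\tfrac{r_{D_1}-1}{2}$, which is $<r_{D_1}-2$ once $r_{D_1}\ge 4$, a contradiction. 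Thus $D_1$ has branching number $\le 2$ in $Z_3$. If it is $1$ then $D_1$ is a tip and (1a) holds; if it is $2$ then $D_1$ is a branching component of $D$ with branches $W_0,W_1,W_2$ ($W_1,W_2\subset Z_3$), and I apply Proposition \ref{Proposition 2.6} to $D_1$. Case (iii) is impossible, since it requires two branches to be single $(-2)$-curves and the third not to contract to a quotient singularity, whereas the only branch not forced to be a single $(-2)$-curve is $W_0=[2,t,2]$, and $W_0$ does contract, to a cyclic quotient. Case (ii) is excluded by Lemma \ref{Lemma 2.8}, the branches being $\MNC$ as above. Case (iv) would force the unique linear branch to be $W_0$ (it is the only a priori linear one) and $E+C+W_0$ to contract to a smooth point; this I exclude by a determinant computation showing $E+C+W_0$ cannot be unimodular, or by the minimality of $\#D$ exactly as at the end of Lemma \ref{Lemma 2.3}. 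Finally, in case (i) the same inequality gives $\tfrac1{|G_{W_1}|}+\tfrac1{|G_{W_2}|}\ge\tfrac58$, so some $W_i$ has $|G_i|\le 3$ and is therefore cyclic, hence a linear chain ending in a tip; this places $D_1$ on a pendant twig of $Z_3$, proving (1a).

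For (1b) assume $Z_3$ is $\MNC$. It remains to exclude $D_1$ lying in the interior of its twig (branching number $2$) and to compute $D_0^2=D_1^2=-2$. I would pin these down by combining (a) the integrality of the determinant relation $|d(D)|=d(E)$ of Lemma \ref{Lemma 1.7}, expanded through the branches of $D_0$ and $D_1$ via Lemma \ref{Lemma 2.7.1} and \cite[2.1.1]{KR}, which forces $t=2$ exactly as the non-integrality computations did in the final cases of Lemma \ref{Lemma 2.8}; and (b) explicit $\C^*$-fibrations constructed from the two $(-2)$-curves $Z_1,Z_2$ on $D_0$ together with the pendant $(-2)$-chain hanging on $D_1$, whose existence would yield $\lkd(S_0)\le 1$ against $\lkd(S_0)=2$ (Corollary \ref{Corollary 1.2.0.1}). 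The statement that $D_1$ is not itself a maximal twig is of the same nature: if the $(-2)$-tip $D_1$ met a branching component of $Z_3$ directly, that component would carry two $(-2)$-branches, and a $\C^*$-fibration (or a direct appeal to Lemma \ref{Lemma 2.7}) would again contradict $\lkd(S_0)=2$.

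For (2), two mutually meeting $(-1)$-curves satisfy $(B_1+B_2)^2=0$, so $|B_1+B_2|$ defines a $\BP^1$-fibration of $\ol S$ with $B_1+B_2$ as a fibre. Because $\lkd(S_0)=2$, $S_0$ admits neither a $\C$- nor a $\C^*$-fibration, so the general fibre must meet $D+E$ in at least three points; reading off the horizontal components through $Z_1,Z_2,D_0$ with the help of Lemma \ref{Lemma 1.1} and of part (1), I would deduce that $D_1\cdot(B_1+B_2)=0$, that $D_0$ and $D_1$ are $(-2)$-curves, and that $Z_3$ is $\MNC$. The main obstacle throughout is (1b) together with the fibre bookkeeping in (2): the interlocking determinant identities and $\C^*$-fibration constructions needed to force the exact value $-2$ for $D_0^2$ and $D_1^2$ and to separate the twig configuration from the connecting-chain one, while correctly tracking the single way in which $Z_3$ can fail to be $\MNC$ at $D_1$.
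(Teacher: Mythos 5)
Your proposal takes a genuinely different route from the paper, but its one completed step rests on an inapplicable use of the BMY inequality, and the remaining steps are deferred rather than proved. Your strategy is to build a second pair out of $D-D_1$ and run Proposition \ref{Proposition 2.6} plus BMY at $D_1$. But Lemma \ref{Lemma 1.3} requires $\lkd\geq 0$ of the open part, and you have no control over the logarithmic Kodaira dimension of the complement of (the minimalization of) $(D-D_1)+E$: that open surface strictly contains $S_0$, so $\lkd(S_0)=2$ gives no lower bound for it. Case (i) of Proposition \ref{Proposition 2.6} is not tied to $\lkd\geq 0$: it also arises, via Lemma \ref{Lemma 1.13}~(a), when the new open part has $\lkd=-\infty$, is not affine ruled and the pair is almost minimal, and in that branch the inequality $\frac{1}{|G_0|}+\sum_i\frac{1}{|G_i|}\geq r-2$ is simply unavailable. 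Hence both your contradiction for $r_{D_1}\geq 4$ and your estimate $\frac{1}{|G_{W_1}|}+\frac{1}{|G_{W_2}|}\geq\frac{5}{8}$ in the branching-number-two case are unjustified. (A smaller gap in the same step: knowing that some $W_i$ contracts to a cyclic singularity makes $W_i$ a chain, but does not ensure that $D_1$ meets that chain in a tip, which is what you need to place $D_1$ in a twig of $Z_3$.) The paper never forms a new pair at $D_1$; it exploits that $D_0$ itself is of type (iii), so that $\lkd(Y)\geq 0$ and almost minimality of $(\ol{Y},Q)$ are already hypotheses, BMY bounds both $P^2$ and $P_Y^2$ by $\frac{3}{8}$, and the identity $(K_{\ol{Y}}+Q)^2-(K_{\ol{S}}+D+E)^2=-2+D_0^2$, expanded through barks and inductances, yields $D_0^2\geq -1-\frac{3}{8}-\cdots$, which Lemma \ref{Lemma 2.7} converts into the desired contradictions.

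Second, parts (1b) and (2) are not proved in your proposal: you explicitly defer the decisive computations (``I would pin these down\ldots'', ``I would deduce\ldots''), and these are exactly where the content of the lemma lies. In the paper, the precise values $D_0^2=D_1^2=-2$, the fact that $D_1$ is a tip but not a maximal twig, and, in (2), the exclusion of $D_1^2=-1$ (which is what forces $Z_3$ to be MNC) all come from inductance inequalities: for instance $-2+D_0^2=P_Y^2-P^2-3-e(T_1)$ with $e(T_1)<1$ gives $D_0^2\geq -2$, while $e(T_1)\leq\frac{1}{2}$ (which holds precisely when $D_1^2\leq -3$ or $D_1$ is itself a maximal twig) gives $D_0^2>-2$, contradicting Lemma \ref{Lemma 2.7}; the case $D_1^2=-1$ in (2) is killed by the parallel computation comparing $e(T_0)$ with $e(T_1')$. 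Your proposed substitutes are unlikely to work as stated: the integrality relation $a=-d(D)=d(E)$ ties together two quantities neither of which is pinned down at this stage (nothing is yet known about $E$ beyond its being a fork with $d(E)=|d(D)|$), and the $\C^*$-fibrations you invoke need the very conclusions ($(-2)$-values, tip position) you are trying to establish in order to produce fibers supported in $D$. So the proposal is a plan whose hard steps are missing and whose completed step contains a genuine logical gap.
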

\begin{proof}
Let $P=(K_{\ol{S}}+D+E)^+$ and $P_Y=(K_{\ol{Y}}+Q)^+$ be respectively the nef parts in the Zariski-Fujita
decomposition of $K_{\ol{S}}+D+E$ and $K_{\ol{Y}}+Q$. Here we note that $Q$ contains $Q_0=E$. We have
\[
0\leq P^2 \leq \frac{3}{|\Gamma|} \leq \frac{3}{8}, \quad
0\leq P_Y^2\leq 3(\frac{1}{2}+\frac{1}{2}+\frac{1}{|\Gamma|}-1)\leq \frac{3}{8}
\]
by the BMY inequality (not Lemma 1.6,
but the original estimation involving $P^2$ or $P_Y^2$
 (cf. [Mi, Chapter
2, Theorem 6.6.2])) where we recall that $\Gamma$ is the local fundamental group of $S'$ at the point $q$. Let $T_1,\dots, T_\ell$ be the maximal
twigs of $Z_3$.

(1)\   Suppose that $D_1$ is not contained in any $T_i$. Then the $T_i$ are untouched under the process $Z_3 \to Q_3$ of making $Z_3$
an MNC divisor since $D_1$ is the only possible nonbranching $(-1)$-component in $Z_3$ and is separated from any $T_i$ by a branching component of $Z_3$. Note that $(K_{\ol{Y}}+Q)^2=(K_{\ol{S}}+D-D_0+E)^2$ since we successively contract only $(-1)$-components meeting exactly
two other components of $Z_3$ or its image. Since $(\ol{S},D+E)$ and $(\ol{Y},Q)$ are almost minimal,
we have $P=K_{\ol{Y}}+(D+E)^\#$ and $P_Y=K_{\ol{Y}}+Q^\#$. Computing barks we find

\begin{eqnarray}
\label{eqn 2.7}(K_{\ol{S}}+D+E)^2&=&P^2-1-\sum_{i=1}^{\ell}e(T_i)+(\Bk E)^2 \\
\label{eqn 2.8}(K_{\ol{Y}}+Q)^2&=&P_Y^2-4-\sum_{i=1}^{\ell}e(T_i)+(\Bk E)^2 .
\end{eqnarray}
See \cite{Mi} and \cite{F} where $e(T_i)$ denotes the inductance  of $T_i$.\\
Subtract the first line from the second one. Since
\begin{eqnarray*}
\lefteqn{(K_{\ol{Y}}+Q)^2-(K_{\ol{S}}+D+E)^2} \\
& &= (K_{\ol{S}}+D-D_0+E)^2-(K_{\ol{S}}+D+E)^2 = -2+D_0^2\ ,
\end{eqnarray*}
we obtain $-2+D_0^2=P_Y^2-P^2-3$, whence
\[
D_0^2=P_Y^2-P^2-1 \geq -1-\frac{3}{8}.
\]
Hence $D_0^2\geq -1$. This is ruled out by Lemma \ref{Lemma 2.7}~(ii).

Suppose that $Z_3$ is MNC. Note that then \ref{Setup 2.1}~(c) holds. We may assume  that $D_1$ is contained in the twig $T_1$ of $Z_3.$
Suppose that $D_1$ is not a tip of $Z_3$. Let $T_1'$ be the maximal twig of $D$ which is contained in $T_1$.
Then the maximal twigs of $D$ are $Z_1,Z_2,T_1',T_2,\dots, T_\ell$. We have

\begin{equation}\label{eqn 2.9}
(K_{\ol{S}}+D+E)^2=P^2-1-e(T_1')-\sum_{i=2}^{\ell}e(T_i)+(\Bk E)^2 \ .
\end{equation}

Subtracting (\ref{eqn 2.9}) from (\ref{eqn 2.8}), we obtain
\[
-2+D_0^2=P_Y^2-P^2-3+e(T_1')-e(T_1).
\]
and hence
\[
D_0^2=P_Y^2-P^2-1+e(T_1')-e(T_1).
\]
Let the common tip of $T_1$ and $T_1'$
be a $(-k)$-curve. Then $e(T_1') \geq \frac{1}{k}$ and (since $T_1$ has at least two components)
$e(T_1)<\frac{1}{k-1}$ . We obtain
\[
D_0^2>-P^2-1-\frac{1}{k(k-1)} \geq -\frac{3}{8}-1-\frac{1}{2}>-2.
\]
We obtain $D_0^2\geq -1$, and this is a contradiction by Lemma \ref{Lemma 2.7}~(ii).

We have shown therefore that $D_1$ is a tip of $Z_3$. Hence the maximal twigs of $D$ are $T_2,\cdots, T_\ell$ and we obtain

\begin{eqnarray}
\label{eqn 2.10} (K_{\ol{S}}+D+E)^2&=&P^2-1-\sum_{i=2}^{\ell}e(T_i)+(\Bk E)^2 \\
\label{eqn 2.11} (K_{\ol{Y}}+Q)^2&=&P_Y^2-4-\sum_{i=1}^{\ell}e(T_i)+(\Bk E)^2\ .
\end{eqnarray}

Subtracting (\ref{eqn 2.10}) from (\ref{eqn 2.11}), we obtain
\[
-2+D_0^2=P_Y^2-P^2-3-e(T_1).
\]
and then
\[
D_0^2\geq -\frac{3}{8}-1-e(T_1).
\]
Since $e(T_1)<1$ we obtain $D_0^2 \geq -2$. Hence $D_0^2=-2$ in view of Lemma \ref{Lemma 2.7}.
If $D_1^2 \leq -3$ or if $D_1$ is a maximal twig of $Z_3$, then $e(T_1) \leq \frac{1}{2}$ and we obtain
$D_0^2>-2$ in contradiction to Lemma \ref{Lemma 2.7}.
\svskip

(2)\  By (1), we can assume that $D_1$ is contained in the maximal twig $T_1$ of $Z_3$. Since $D$ is MNC, no component of
$T_1$ other than $D_1$ is a $(-1)$-curve and $D_1$ is a branching component of $D$ if $D_1^2=-1$.

Suppose first that $D_1^2=-1$. Suppose further
that $D_1 \neq B_1,B_2$. We consider the $\BP^1$-fibration $f$
defined by $|B_1+B_2|$. If $D_1 \cdot(B_1+B_2)=0$, let $F$ be the fiber of $f$ containing $D_1$. Either all components of $D$ meeting $D_1$ are in $F$, or precisely one of them meets $B _1+B _2$ and is a $1$-section of $f$. Both possibilities are ruled out by Lemma \ref{Lemma 1.1}~(f).  Hence
$D_1\cdot (B_1+B_2) > 0$. We may assume that $D_1\cdot B_1=1$. Then $B_1$ is a branching component of
$Z_3$ to which the twig $T_1$ is attached. This is also the case if $D_1$ coincides with $B_2$.

We shall show that this situation leads to a contradiction. The process
$Z_3 \to Q_3$ of making $Z_3$ an MNC divisor involves only contractions of $T_1$-components since the
image of $B_1$ is a curve with non-negative self-intersection number.  Let $T_1'$ be the image of $T_1$. The maximal twigs of $Q_3$ different from $T_1'$ are precisely
$T_2,\dots, T_\ell$. Let $T_0$ be the maximal twig of $D$ contained in $T_1$. We have
\begin{eqnarray}
\label{eqn 2.12}\qquad (K_{\ol{S}}+D+E)^2 &=& P^2-1-e(T_0)-\sum_{i=2}^{\ell}e(T_i)+(\Bk E)^2 \\
\label{eqn 2.13}\qquad (K_{\ol{Y}}+Q)^2 &=& P_Y^2-4-e(T_1')-\sum_{i=2}^{\ell}e(T_i)+(\Bk E)^2\ .
\end{eqnarray}
Subtracting (\ref{eqn 2.12}) from (\ref{eqn 2.13}), we obtain
\[
-2+D_0^2=P_Y^2-P^2-3-e(T_1')+e(T_0).
\]
 If $T_1$ is contracted, $T_1'$ is a point and $e(T_1')=0$.
Then $D_0^2> -1-P^2$, which implies $D_0^2 \geq -1$ in contradiction to Lemma
\ref{Lemma 2.7}. So let $D_3$ be the tip of $T_1$ and let $D_3^2=-k$. $D_3$ is not contracted. Suppose that $D_3$ is untouched in the minimalization. Then $e(T_0) \geq \frac{1}{k}$
and $e(T_1')<\frac{1}{k-1}$. We obtain $D_0^2> -1-\frac{3}{8}-\frac{1}{k(k-1)} > -2$. This is again
a contradiction by Lemma \ref{Lemma 2.7}. Suppose that $D_3$ is touched. Then $T_1'$ is a single curve
with self-intersection number $\le -2$, whence $e(T_1')\leq \frac{1}2$. We obtain again $D_0^2\geq -1$, a contradiction. So the case $D_1^2=-1$ is impossible.

In particular $Z_3$ is an MNC divisor.  By (1), $D_1$ is a tip of $Z_3$ and $ D_1\neq T_1$.
Clearly $B_i$ is not contained in $T_1$. Hence $D_1\cdot (B_1+B_2)=0$. Further, $D_0^2=D_1^2=-2$
by (1).
\end{proof}

Lemma \ref{Lemma 2.9} implies the following result.

\begin{cor}\label{Corollary 2.10}
Suppose that $D-D_0$ is of type (iii) in Proposition \ref{Proposition 2.6} and that $Z_1, Z_2$
are $(-2)$-curves. Then $Z_3$ does not contain two $(-1)$-curves which meet each other.
\end{cor}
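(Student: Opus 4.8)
The strategy is to argue by contradiction: first use Lemma \ref{Lemma 2.9} to force a rigid local picture around $D_0$, and then build a fibration of $S_0=\ol{S}\setminus(D+E)$ that is incompatible with $\lkd(S_0)=2$.

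First I would suppose that $Z_3$ contains two $(-1)$-curves $B_1,B_2$ with $B_1\cdot B_2=1$. Since $Z_1,Z_2$ are single $(-2)$-curves they are in particular MNC, so the hypotheses of Lemma \ref{Lemma 2.9} are satisfied. Lemma \ref{Lemma 2.9}(2) then gives that $Z_3$ is MNC, that $D_0$ and $D_1$ are $(-2)$-curves, and that $D_1\cdot(B_1+B_2)=0$. Thus $D_0$ is a branching component of $D$ whose three branches begin with the $(-2)$-curves $Z_1,Z_2,D_1$; moreover $Z_1,Z_2$ are single $(-2)$-curves meeting $D$ only in $D_0$ and, being disjoint from $E$, meeting $D+E$ only in $D_0$.

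Next, since $(B_1+B_2)^2=0$, the pencil $|B_1+B_2|$ defines a $\BP^1$-fibration $f:\ol{S}\to\BP^1$ with $F_0:=B_1+B_2$ as a fibre. Because $D_0$ meets $Z_3$ only through $D_1$ and $D_1\cdot(B_1+B_2)=0$, each of $Z_1,Z_2,D_0,D_1$ is disjoint from $F_0$; being connected through $D_0$ they all lie in one fibre $F_1\neq F_0$, so $F_1$ is a singular fibre carrying the $(-2)$-fork at $D_0$ with tips $Z_1,Z_2$. On the other hand, as $Z_3$ is MNC the $(-1)$-curves $B_1,B_2$ are both branching in $Z_3$, hence each meets a further component $A$ of $Z_3$ with $A\neq B_1,B_2$; by the tree property $A$ does not meet the other $B_j$, so $A\cdot F_0=A\cdot B_i=1$ and $A$ is a $1$-section of $f$. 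In particular $f$ admits a $1$-section.

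The contradiction should now come from confronting such a section with $F_1$. A $1$-section meets every fibre, in particular $F_1$, in a single point lying on a multiplicity-one component. Analysing $F_1$ by Lemma \ref{Lemma 1.1}, the two $(-2)$-tips $Z_1,Z_2$ are multiplicity-one components, while the chain issuing from the branch point $D_0$ toward $D_1$ carries multiplicity $\ge 2$, so the only components a $1$-section could meet are $Z_1$ and $Z_2$; but no component of $D+E$ other than $D_0$ meets $Z_1$ or $Z_2$, whence $f$ can have no $1$-section, a contradiction. I expect the main obstacle to be precisely this multiplicity analysis: one must rule out any extra multiplicity-one components of $F_1$, arising from $S_0$-components or from additional $(-1)$-curves of $F_1$, on which a section might otherwise land. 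This is exactly where the hypothesis that $Z_1,Z_2$ are genuine $(-2)$-curves (not merely MNC) is decisive, and it explains why Lemma \ref{Lemma 2.7}, which requires a central $(-1)$-curve, cannot be applied directly to $D_0$ whose self-intersection is $-2$. An equivalent packaging of the argument would be to perform elementary transformations over the fork, in the spirit of Lemma \ref{Lemma 1.2.0} and Lemma \ref{Lemma 2.7}, so as to exhibit a $\C^*$-fibration of $S_0$ outright, again contradicting $\lkd(S_0)=2$.
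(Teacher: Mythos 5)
Your setup coincides with the paper's: you invoke Lemma \ref{Lemma 2.9}(2) to get the rigid picture, pass to the $\BP^1$-fibration $f$ defined by $|B_1+B_2|$, place the $(-2)$-fork $T=Z_1+Z_2+D_0+D_1$ in a single fiber $F_1$, and use the fact that $B_1,B_2$ are branching in the MNC divisor $Z_3$ to produce $1$-sections of $f$ among the components of $Z_3$. The gap is in your final multiplicity analysis. You assert that ``the chain issuing from the branch point $D_0$ toward $D_1$ carries multiplicity $\ge 2$'', so that $Z_1,Z_2$ are the only multiplicity-one components of $F_1$ available to a section. That is precisely the unproved point. The structure of a fiber containing a $(-2)$-fork (this is Lemma \ref{Lemma 2.11}, which you never invoke) is that $F_1$ consists of $D_0$, two of the three tips kept as multiplicity-one tips of the fiber, and exactly \emph{one} branch extended by further fiber components, this extended branch together with $D_0$ carrying multiplicity $\ge 2$. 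Nothing forces the extended branch to be the one through $D_1$: it can perfectly well be the branch through $Z_1$ (say), extended by components not in $D$, in which case $D_1$ is a multiplicity-one tip of $F_1$, the neighbour $D_2$ of $D_1$ in $Z_3$ is horizontal, and a $1$-section --- namely $D_2$ --- can legitimately land on $D_1$. In that case the mere existence of a $1$-section yields no contradiction and your argument stops. Note also that the obstacle is not, as you suggest, ``extra'' multiplicity-one components coming from $S_0$-components (Lemma \ref{Lemma 2.11} rules those out), and the hypothesis that $Z_1,Z_2$ are genuine $(-2)$-curves does not repair it: the dangerous multiplicity-one component is $D_1$ itself.

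What repairs it is not a finer multiplicity analysis but a count, and this is how the paper concludes. Since both $B_1$ and $B_2$ are branching in $Z_3$, and since by the tree property no component of $Z_3$ can meet both, there are at least \emph{four} distinct components of $Z_3-(B_1+B_2)$ meeting $B_1+B_2$, each of them a $1$-section. By Lemma \ref{Lemma 2.11} every $1$-section meets $T$; a component of $Z_3$ can meet $T$ only in $D_1$ (it cannot meet $Z_1$ or $Z_2$, which lie in other connected components of $D-D_0$, nor $D_0$, which meets $Z_3$ only in $D_1$); and since $D_1$ is a tip of $Z_3$, at most one component of $Z_3$ meets $D_1$. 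Four distinct $1$-sections cannot all be that one component, and this contradiction works uniformly, whichever branch of the fork happens to be the extended one. So your proposal becomes correct once the $\ge 4$ count (which you mention in passing but never use) is substituted for the unjustified claim about where the multiplicity sits.
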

\begin{proof}
Suppose that two such $(-1)$-curves $B_1,B_2$ exist. We keep the notation of Lemma \ref{Lemma 2.9}.
Consider the $\BP^1$-fibration defined by $|B_1+B_2|$. By (2) in Lemma \ref{Lemma 2.9}, $T=Z_1+Z_2+D_0+D_1$
is contained in a fiber and consists of $(-2)$-curves. Since any component $C$ of $Z_3-(B_1+B_2)$ meeting
$B_1+B_2$ is a $1$-section, $C$ meets $D_0+Z_1+Z_2+D_1$, hence $D_1$, by Lemma \ref{Lemma 2.11} below.
But both $B_1, B_2$ are branching components of $Z_3$, so there are at least four such $C$.
On the other hand, only one can meet $D_1$ since $D_1$ is a tip of $Z_3$. This is a contradiction.
\end{proof}

The following easy remark on a singular fiber of a $\BP^1$-fibration is used in the proof of
Corollary \ref{Corollary 2.10}.

\begin{lem}\label{Lemma 2.11}
Suppose that $f : \ol{M} \to B$ is a $\BP^1$-fibration on a smooth projective surface $\ol{M}$.
Suppose that $T=H+L_1+L_2+L_3$ is a $(-2)$-fork contained in a fiber $F_T$, where $H^2=-2$, $L^2_i=-2\
(i=1,2,3)$ and $H$ is a branching component. Then every $1$-section of $f$ meets $T$.
\end{lem}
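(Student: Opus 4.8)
The plan is to translate the statement into one about multiplicities of components of $F_T$. If $S$ is a $1$-section it is horizontal, so $S\cdot F_T=S\cdot F=1$ for a general fibre $F$ (all fibres being numerically equivalent). Writing $F_T=\sum_C m_C\,C$ with all $m_C\ge 1$ and all $S\cdot C\ge 0$, the relation $\sum_C m_C(S\cdot C)=1$ forces $S$ to meet a single component $C^{*}$, transversally in one point, with $m_{C^{*}}=1$. Hence it suffices to prove that \emph{every multiplicity-one component of $F_T$ is one of the tips $L_1,L_2,L_3$}; then automatically every $1$-section meets $T$.

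Next I would pin down the multiplicities inside $T$. From $F_T\cdot L_i=0$ and $F_T\cdot H=0$ one gets $2m(L_i)=m(H)+a_i$ and $2m(H)=m(L_1)+m(L_2)+m(L_3)+b$, where $a_i,b\ge 0$ record the incidences of the $L_i$ and of $H$ with components of $F_T$ outside $T$; hence $m(H)=a_1+a_2+a_3+2b$ is even and $\ge 2$, and $m(L_i)=(m(H)+a_i)/2$. Now contract $F_T$ to the reduced smooth $(0)$-fibre as in Lemma \ref{Lemma 1.1}(c). Since each $L_i$ is a $(-2)$-tip of $T$, it can be contracted only after its branch (if any) has been cleared and it has become a $(-1)$-tip meeting $H$, and contracting such an $L_i$ raises $H^{2}$ by $1$. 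A short bookkeeping using Lemma \ref{Lemma 1.1}(e) (a $(-1)$-curve of multiplicity $>1$ meets at most two components, one of multiplicity $1$ meets exactly one) shows that \emph{exactly one} $L_i$ is contracted before $H$: none would leave $H$ a $(-1)$-curve meeting three tips, and two would make $H$ a $(0)$- or $(+1)$-curve still meeting a fibre component, both impossible. Thus $b=0$, and when $H$ is contracted it is a $(-1)$-curve meeting exactly the two remaining tips; these are bare (unbranched) $(-2)$-curves of multiplicity $m(H)/2$ which, after $H$ is contracted, merge and contract to the reduced smooth fibre, forcing $m(H)/2=1$. So $m(H)=2$, exactly one tip (say $L_1$) carries a branch $\Theta$ and has multiplicity $2$, while $L_2,L_3$ are multiplicity-one tips.

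Finally I would rule out multiplicity-one components inside $\Theta$. Because every component of $T$ is a $(-2)$-curve, the $(-1)$-curve supplied by Lemma \ref{Lemma 1.1}(a) must lie in $\Theta$, which is attached to $L_1$ along a multiplicity-$2$ curve $G$. Suppose $\Theta$ contained a multiplicity-one component. Propagating the fibre relations $F_T\cdot C=0$ outward from $G$ — on a $(-2)$-curve the multiplicity is the average of its neighbours', and a multiplicity-one $(-2)$-tip forces its unique neighbour to have multiplicity $2$ — one sees that a multiplicity-one locus can be sustained only by $(-2)$-curves terminating in $(-2)$-tips, i.e.\ by an all-$(-2)$ extended Dynkin configuration, which contains no $(-1)$-curve. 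This contradicts the presence of a $(-1)$-curve in $F_T$. Hence $\Theta$ has no multiplicity-one component, the only multiplicity-one components of $F_T$ are $L_2,L_3\subset T$, and every $1$-section meets $T$.

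I expect the last paragraph to be the main obstacle. A priori a branch hanging off a multiplicity-$2$ curve could harbour a multiplicity-one component that a $1$-section meets without touching $T$; the only thing excluding this is the forced existence of a $(-1)$-curve in the fibre (Lemma \ref{Lemma 1.1}(a)) together with the rigidity of multiplicities along chains of $(-2)$-curves. Turning the ``propagation toward an extended Dynkin configuration'' heuristic into a clean, case-free argument is the delicate point; the multiplicity computation of the second paragraph, which is what actually pins $m(H)=2$, is the other place where Lemma \ref{Lemma 1.1} must be used with care.
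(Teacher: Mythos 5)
Your strategy --- reduce to showing that the only multiplicity-one components of $F_T$ lie in $T$, and pin down the structure of $F_T$ by following its contraction to a $(0)$-curve --- is exactly the paper's, and your first two paragraphs are essentially sound. Two claims there are, however, asserted rather than proved. The parity of $m(H)$ does \emph{not} follow from the linear relations you wrote: the all-$(-2)$ configuration of type $\wt{E}_6$ satisfies all of them with $m(H)=3$, and is excluded only because it contains no $(-1)$-curve. More importantly, the bareness of the two surviving tips needs an argument; a short one is available: nothing can be contracted onto $L_2$ or $L_3$ before $H$ is contracted (otherwise, after contracting $H$, a component of a still-reducible fiber would have self-intersection $\geq 0$), and once $H$ is contracted the fiber relations become $-m(L_2)+m(L_3)+a_2=0$ and $-m(L_3)+m(L_2)+a_3=0$, whose sum gives $a_2=a_3=0$.

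The genuine gap is your third paragraph, as you yourself suspected. The propagation principle you invoke is false: the fiber relations, even together with the presence of a $(-1)$-curve, do not force an all-$(-2)$ configuration. Concretely, let $F=L_2+L_3+2H+2L_1+2M+N_1+N_2+R$, where $M$ meets $L_1$, where $N_1$ and $N_2$ meet $M$, and where $R$ meets $N_1$; take all curves to be $(-2)$-curves except $N_1^2=-3$ and $R^2=-1$. Every relation $F\cdot C=0$ holds, your local rules at $(-2)$-curves are satisfied, $T=H+L_1+L_2+L_3$ is a $(-2)$-fork, the configuration contains a $(-1)$-curve, and yet the branch $\Theta$ contains multiplicity-one components. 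Your argument does not exclude this; it is excluded only because after contracting $R$ one is left with the all-$(-2)$ configuration $\wt{D}_6$, in which no further contraction is possible, contradicting Lemma \ref{Lemma 1.1}~(a),(c). So the last step must also run through the contraction process, which is what the paper's (terse) proof does. Since each neighbour contracted onto $L_1$ raises $L_1^2$ by one, only one neighbour of $L_1$ may be contracted before $L_1$ itself; and any remnant of $\Theta$ surviving the contraction of $L_1$ would become attached to $H$, so that either it is later contracted onto $H$ (making $H^2\geq 0$) or it is still present when $H$ is contracted (so that $H$ meets three components), both impossible. Hence all of $\Theta$ is contracted before $L_1$, entirely inside $\Theta\cup L_1$. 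Consequently each component $C$ of $\Theta$, at the moment of its contraction, has all its neighbours in $\Theta\cup\{L_1\}$, and $m(C)$ equals the sum of their multiplicities; downward induction on the contraction order (the last $\Theta$-component contracted meets only $L_1$, hence has multiplicity $2$) gives $m(C)\geq 2$ for every component of $\Theta$, which is what your proof needs.
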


\begin{proof}
Let $L_i'$ be the  branch at $H$ of $F_T$ containing $L_i$. Considering the process of shrinking $F_T$ to a
$(0)$-curve we see that for precisely one $i$, say for $i=1$, we  have $L_1\neq L_1'$ and that $L_1'$ shrinks to a point on $H$ in such a way that $H$ is turned into a $(-1)$-curve. It follows that $F_T=L_1'+H+L_2+L_3$ with each component of
$L_1'+H$ of multiplicity at least $2$ in $F_T$. Hence any $1$-section meets $L_2$ or $L_3$.

\end{proof}

The following result will determine the shape of $D$ when $D$ contains two $(-1)$-curves meeting
each other transversally in one point.

\begin{lem}\label{Lemma 2.12}
Suppose that $B_1, B_2$ are two $(-1)$-curves in $D$ such that $B_1\cdot B_2=1$. Then
$D=B_1+B_2+T_1+T_2+U_1+U_2$ where $T_i,U_j$, $i,j=1,2$ are linear chains, $B_1$ meets $T_1,T_2$ in tips,
$B_2$ meets $U_1,U_2$ in tips. Moreover $B_1+T_1+T_2$ and $B_2+U_1+U_2$ are contractible (possibly to smooth points).
\end{lem}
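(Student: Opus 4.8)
The plan is to study the $\BP^1$-fibration carried by $B_1+B_2$. Since $B_1^2=B_2^2=-1$ and $B_1\cdot B_2=1$ we have $(B_1+B_2)^2=0$, and $B_1+B_2$ is a connected tree of two rational curves; contracting the $(-1)$-curve $B_1$ turns $B_2$ into a $(0)$-curve on a smooth rational surface, so $B_1+B_2$ becomes a fibre of a $\BP^1$-fibration, and pulling back gives $f:\ol S\to\BP^1$ with $F_0:=B_1+B_2$ a fibre in which both components occur with multiplicity one. As $D$ is $\MNC$, each of $B_1,B_2$ is a branching component, so each meets at least two further components of $D$.

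First I would sort the components of $D+E$ into horizontal and vertical ones for $f$. Here $E\cap D=\emptyset$ (the exceptional locus lies in $S$), and since $D$ is a tree no curve can meet both $B_1$ and $B_2$. A horizontal curve meets every fibre, hence meets $F_0$ and so meets $B_1$ or $B_2$; thus it is a component of $D$, and because $D$ is a tree it meets that $B_i$ in a single point. As both components of $F_0$ have multiplicity one this gives $H\cdot F_0=1$, so $H$ is a $1$-section attached to a tip of $D$. Consequently $E$ has no horizontal component and, being connected and negative definite, lies in a single fibre $F_E\neq F_0$; moreover each branch of $D$ at $B_i$ (other than the branch containing the other $B_j$) starts with a $1$-section, while its deeper components, meeting neither $B_1$ nor $B_2$, are vertical.

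Next I would fix the number of sections. Since $\lkd(S_0)=2$, the surface $S_0$ is neither $\C$- nor $\C^*$-ruled, so the general fibre, meeting $S_0$ in $\BP^1$ minus $h$ points (with $h$ the number of $1$-sections), forces $h\ge 3$; as the sections split disjointly, at least two at each $B_i$, in fact $h\ge 4$. To show $h=4$, i.e.\ that the branching number is exactly three at each $B_i$, and that the branches are linear, I would play three constraints against one another: Fujita's formula (Lemma \ref{Lemma 1.2.1}) for $(\ol S,D+E,f)$, which with $\wt b_1(S_0)=\wh b_2(S_0)=0$ reads $h-\Sigma+\nu-2=0$; the Euler-characteristic identity $\chi(S_0)=0$; and the cyclic-covering device of Lemma \ref{Lemma 2.5}, whereby any extra singular fibre, or any branching interior to a twig, forces a fibre with an isolated multiple component and hence an unramified abelian cover of $S$, contradicting $H_1(S;\Z)=0$ (Lemma \ref{Lemma 1.4}). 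This should pin down $D=B_1+B_2+T_1+T_2+U_1+U_2$ with the $T_i,U_j$ linear and $B_1,B_2$ the only branching components, $B_i$ meeting the $T$'s, resp.\ $U$'s, in their tips.

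Finally, each $T_i,U_j$ is a maximal twig of $D$, hence negative definite and contractible by Corollary \ref{Corollary 1.2.0.1}, since a non-contractible twig would give a $\C$- or $\C^*$-fibration of $S_0$. For the whole halves $B_1+T_1+T_2$ and $B_2+U_1+U_2$—each a linear chain through the $(-1)$-curve $B_i$—I would argue that non-contractibility produces a pencil of type $|T_1+2B_1+T_2|$ defining a $\C^*$-fibration of $S_0$, which is impossible, the borderline case in which two twigs at $B_i$ are single $(-2)$-curves being excluded by Lemma \ref{Lemma 2.7}(ii). The main obstacle is precisely this middle step: the careful fibre-by-fibre accounting, balancing Fujita's formula, $\chi(S_0)=0$ and the covering/$H_1$ constraints, needed to rule out extra sections, extra singular fibres and branching inside the twigs, and thereby force the branching number at each $B_i$ to be exactly three.
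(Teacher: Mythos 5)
Your setup — the $\BP^1$-fibration $f$ induced by $|B_1+B_2|$, the sorting of $D+E$ into a vertical part and $1$-sections adjacent to $B_1,B_2$, and the count $h\geq 4$, $\nu=1$ via Lemma \ref{Lemma 1.2.1} — is correct and coincides with the auxiliary tool the paper also uses. But the core of the lemma (each $B_i$ has branching number exactly three, the branches other than the one through the other $B_j$ are linear chains which $B_i$ meets in tips, and no other component of $D$ is branching) is precisely the step you do not carry out: you assert that Fujita's formula, $\chi(S_0)=0$ and the covering trick "should pin down" the shape, and you yourself flag this as the main obstacle. In the paper this step is not done by fiber accounting at all; it is done by applying Proposition \ref{Proposition 2.6} (whose proof rests on the BMY inequality) to the branching components of $D$, combined with Lemmas \ref{Lemma 2.7}, \ref{Lemma 2.8}, \ref{Lemma 2.9} and Corollary \ref{Corollary 2.10} (Claims 1--3 of the paper's proof, and the $r\geq 4$ exclusion). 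The numerical constraints you list are too weak to replace this: for instance a configuration in which $B_1$ carries three twigs, with their non-horizontal parts distributed over several singular fibers of multiplicity one, satisfies $\Sigma=h+\nu-2$, $\chi(S_0)=0$ and creates no isolated multiple fiber component, so neither Fujita's formula nor the abelian-covering argument excludes it.

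The final step, as stated, is actually wrong rather than merely incomplete. Non-contractibility of $B_1+T_1+T_2$ does not yield a $\C^*$-fibration of $S_0$. Take $T_1$ a single $(-2)$-curve, $T_2=T_2'+T_2''$ a chain of two $(-2)$-curves with $T_2'\cdot B_1=1$, and $B_1^2=-1$: here $T_1$ and $T_2$ are contractible, but $d(T_1+B_1+T_2)=-1<0$, so $B_1+T_1+T_2$ is not contractible. The only pencil supported in this chain is $|T_1+2B_1+T_2'|$, and for it $T_2''$ is a $1$-section and $B_2$ a $2$-section, so a general fiber meets $D+E$ in three points; the induced fibration on $S_0$ has general fiber $\BP^1$ minus three points, and easy addition gives only $\lkd(S_0)\leq 2$ — no contradiction. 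Your parenthetical remark inverts the logic: the case where both twigs at $B_i$ are single $(-2)$-curves is not a borderline exception needing separate treatment; it is the \emph{only} case in which the pencil $|T_1+2B_1+T_2|$ has square zero and produces a genuine $\C^*$-fibration (this is exactly the argument of Lemma \ref{Lemma 2.7}~(ii)). Configurations such as the example above can only be excluded the way the paper does it, by applying Proposition \ref{Proposition 2.6} to $B_2$ and ruling out each of the types (i)--(iv), i.e.\ again by BMY-type input, not by a pencil supported on the chain.
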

\begin{proof}
Let $Z_1,\dots, Z_r$ be the connected components of $D-B_1$, where we assume that $B_2\subset Z_r$.
Let $Z_i^{(1)}$ be the component of $Z_i$ meeting $B_1$. We denote by $f$ the $\BP^1$-fibration induced by $|B_1+B_2|$. Our proof consists of verifying several claims.
\svskip

\begin{claim}{1}
$Z_i^{(1)} \ (1 \le i \le r-1)$ is not a $(-1)$-curve. In particular, $Z_1,\dots, Z_{r-1}$ are MNC
divisors.
\end{claim}
\begin{proof}
Suppose that $Z_1^{(1)}$ is a $(-1)$-curve. We apply Proposition \ref{Proposition 2.6} with
$D_0=Z_1^{(1)}$. Let $R_1,\ldots,R_s$ be the connected components of $D-Z_1^{(1)}$, where we assume
that $B_1+B_2\subset R_s$. Suppose that $R_1^{(1)}$ is a $(-1)$-curve. It meets the $1$-section $Z_1^{(1)}$ of $f$. Since $R_1^{(1)}$ is branching in $D$ this gives a contradiction
to Lemma \ref{Lemma 1.1}~(e) as before.
Hence $R_1,\dots, R_{s-1}$ are MNC divisors. Note that $R_s$ is not contractible since it contains
$B_1+B_2$. In particular, \ref{Setup 2.1}~(c) is satisfied with respect to $D_0=Z_1^{(1)}$ and we can apply \ref{Proposition 2.6}.
By what we said, $D-Z_1^{(1)}$ is not of type (i). Suppose that $D-Z_1^{(1)}$ is  of type (iv).
Then one of $R_1, \ldots, R_s$, say $R_1$, is a linear chain, $s=3$, and there exists a $(-1)$-curve $C$
meeting $E$ and $R_1$ so that $C \cdot R_2=C\cdot R_3=0$. Hence $E+C+R_1$ is contained in a fiber of $f$. Since $Z_1^{(1)}$ is a $1$-section of $f$ and meets $R_1$, we have
$C\cdot Z_1^{(1)}=0$. Therefore $C\cdot D=1$ and we reach a contradiction because $C$ is then a simple
curve, which does not exist by Lemma \ref{Lemma 1.5}. By Lemma \ref{Lemma 2.7}~(ii), $D-Z_1^{(1)}$
is not of type (iii). Therefore $D-Z_1^{(1)}$ is of type (ii). Let $C$ be as specified in Proposition
\ref{Proposition 2.6}~(ii). Then $C$ connects $R_1$ and $R_2$, and $C+R_1+R_2$ is in a fiber of the
fibration $f$ for which $Z_1^{(1)}$ is a $1$-section. But $Z_1^{(1)}$ meets two
components of the fiber, which is a contradiction.
\end{proof}

\begin{claim}{2}
$B_1$ and $B_2$ are the only $(-1)$-curves contained in $D$.
\end{claim}
\begin{proof}
Suppose that $B_3$ be a $(-1)$-curve in $D$ different from $B_1,B_2$. By Claim 1, $B_3$ does not meet
$B_1$ and, by symmetry, it does not meet $B_2$. Hence $B_3$ is in a fiber $F$ of $f$.  As a (-1)-component of D, $B_3$ meets
at least three components of D. Among them, at most two are
in F. Hence one meets $B_1+B_2$ and is a $1$-section of $f$. This leads to a
contradiction with Lemma \ref{Lemma 1.1} ~(e).
\end{proof}

\begin{claim}{3}
$Z_i\ (1 \le i \le r-1)$ is a linear chain and $B_1$ meets the $Z_i$ in their tips.
\end{claim}
\begin{proof}
Suppose that $D_0\subset Z_1$ is a branching component of $Z_1$. Then it is a branching component of $D$.
One of the connected components of $D-D_0$ contains $B_1+B_2$ hence is not contractible. No other connected component is contractible to a point by Claim 2. So we can apply \ref{Proposition 2.6}. $D_0$
is not of type 2.6~(i). $D_0$ is not of type 2.6~(iv) or 2.6~(ii) by the same argument as in the proof
of Claim 1. So $D_0$ is of type 2.6~(iii) and this contradicts Corollary \ref{Corollary 2.10}. This proves the first assertion.

Now suppose that $Z_1^{(1)}$ is not a tip of $Z_1$. Then $Z_1^{(1)}$ is branching in $D$ and as above Proposition \ref{Proposition 2.6} applies. $Z_1^{(1)}$ is not of type 2.6~(i), 2.6~(iv).
It is not of type 2.6~(iii) by Corollary \ref{Corollary 2.10}. Hence it is of type 2.6~(ii).
Now we argue as at the end of the proof of Claim 1.
\end{proof}

By symmetry on $B_1$ and $B_2$, all the connected components of $D-B_2$ except the component $B_1'$ containing $B_1$
are linear chains and MNC.  Suppose $r\geq 4$. Then $B_1'$ is MNC since $B_1$ is branching and the only $(-1)$-component. We can therefore apply Proposition \ref{Proposition 2.6}. By Lemma \ref{Lemma 2.8}, $B_2$ is not of type 2.6~(ii). Since $B_1$ is a branching
$(-1)$-component, $B_1'$ is not contractible  and $B_2$ is not of type 2.6~(i).
Clearly $B_2$ is not of type 2.6~(iv). Hence $B_2$ is of type 2.6~(iii).
However, this is a contradiction by Lemma \ref{Lemma 2.7}~(ii). So $r=3$. Now we change notations.
Put $Z_1=T_1,Z_2=T_2$. By symmetry, let $U_1,U_2$ be the two twigs of $D$ sprouting out of $B_2$. Suppose
that $B_1+T_1+T_2$ is not contractible. We can then again apply Proposition \ref{Proposition 2.6} to $B_2$. $B_2$ is not of type 2.6~(i) or 2.6~(iv).
It is not of type 2.6~(iii) by Lemma \ref{Lemma 2.7}~(ii). It is not of type 2.6~(ii) since none of the connected
components $T_1+B_1+T_2, U_1$ and $U_2$ is a fork. This is a contradiction. Hence $B_1+T_1+T_2$ is
contractible. By symmetry, $B_2+U_1+U_2$ is contractible. This completes the proof of
Lemma \ref{Lemma 2.12}.
\end{proof}

We recall the notation $P=(K_{\ol{S}}+D+E)^+ =K_{\ol{S}}+(D+E)^\#=K_{\ol{S}}+D+E-Bk(D+E)$. Let $B$ be
a branching component of $D$. We say that a connected component $Z$ of $D-B$ is {\em $P$-quasi-orthogonal}
if $D_i\cdot P=0$ for every irreducible component $D_i \subset Z$ such that $D_i\cdot B=0$. We make a few straightforward observations that we will use below.

\begin{obs}\label{Observation 2.12.2}
(i) If $D$ is a fork and $B$ its branching component, then each connected component of $D-B$ is $P$-quasi-orthogonal.\\
(ii) Suppose $D$ is not a fork. Let $T$ be an irreducible  component of $D$ and $\beta_T$ its coefficient in $(D+E)^\#$.\\
(a) If $T$ is not in a maximal twig of $D$, then $\beta_T=1$.\\
(b) If $T$ does not meet a maximal twig of $D$, then $T\cdot P =T\cdot (K_{\ol{S}}+D+E)=-2+\alpha_T$, where $\alpha_T$ is the branching number of $T$. If $T$ is also non-branching, then $T\cdot P =0$.\\
(c) If $T$ is in a maximal twig $T'$ of $D$, then $T$ is in $Supp(Bk(D+E))$ and $T\cdot P =0$. If $T$ is the tip of $T'$ meeting $D-T'$, then $\beta_T=1-\frac{1}{d(T')}$.\\
(d) Let $B$ be a branching component of $D$ and $Z$ a connected component of $D-B$. Then $Z$ is $P$-quasi-orthogonal if $T\cdot P = 0$ for every component $T$ of $Z$ that does not meet $B$ and is branching in $D$.
\end{obs}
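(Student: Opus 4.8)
The plan is to read the whole statement off the Zariski--Fujita decomposition $K_{\ol S}+D+E=P+\Bk(D+E)$ together with two elementary facts. The first is the defining property of the bark: $P\cdot C=0$ for every component $C\subset\Supp\Bk(D+E)$. The second is adjunction: since each component $T$ of $D$ is a smooth rational curve, $T\cdot(K_{\ol S}+D+E)=(K_{\ol S}+T)\cdot T+(D+E-T)\cdot T=-2+\alpha_T$, where $\alpha_T$ is the branching number of $T$ in $D$ (as $E$ is disjoint from $D$, the components of $D+E$ meeting $T$ are exactly those of $D$). Because $D$ and $E$ are disjoint, $\Bk(D+E)=\Bk(D)+\Bk(E)$ and the $E$-part never meets a component of $D$. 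Finally I would record the structural input from the peeling theory of \cite[Chapter 2, \S3]{Mi} and \cite{F}: since $(\ol S,D+E)$ is almost minimal, $\Bk(D)$ is supported on the maximal twigs of $D$, with strictly positive coefficient on every component of each such twig. With these in hand I would treat the items in the order (ii)(a), (ii)(c), (ii)(b), then (i), then (ii)(d).

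For (ii)(a), if $T$ lies in no maximal twig then $T\notin\Supp\Bk(D+E)$, so its coefficient in $(D+E)^\#=D+E-\Bk(D+E)$ is $\beta_T=1-0=1$. For (ii)(c), each component $T$ of a maximal twig $T'$ lies in $\Supp\Bk(D+E)$, whence $T\cdot P=0$. The value of $\beta_T$ for $T=T_1$, the component of $T'$ meeting $D-T'$, is the one genuine computation. Writing $T'=T_1+\cdots+T_k$ with $T_1$ adjacent to the branching component and $T_k$ the free tip, the equations $P\cdot T_i=0$ combined with the adjunction values ($-2+\alpha_{T_i}=0$ for $i<k$ and $=-1$ for $i=k$) say that the bark coefficient vector $b$ solves $N'b=e_k$, where $N'=(-(T_i\cdot T_j))$ is the tridiagonal matrix with $\det N'=d(T')$. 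By Cramer's rule the coefficient of $T_1$ is $b_1=C_{k1}/d(T')$, and deleting the last row and first column of the tridiagonal $N'$ leaves a lower triangular matrix whose diagonal entries are all $-1$, so $C_{k1}=(-1)^{k+1}(-1)^{k-1}=1$; hence $b_1=\tfrac1{d(T')}$ and $\beta_{T_1}=1-\tfrac1{d(T')}$. For (ii)(b), if $T$ neither lies in nor meets a maximal twig then $T\cdot\Bk(D+E)=0$, so $T\cdot P=T\cdot(K_{\ol S}+D+E)=-2+\alpha_T$; if $T$ is moreover non-branching then, not being a tip (tips lie in maximal twigs), it has $\alpha_T=2$ and $T\cdot P=0$.

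For (i), when $D$ is a fork with branching component $B$, its three branches are precisely the maximal twigs of $D$, so every branch component lies in $\Supp\Bk(D+E)$ and is $P$-orthogonal by the peeling argument used in (c); hence each branch, consisting entirely of $P$-orthogonal components, is trivially $P$-quasi-orthogonal. For (ii)(d), let $Z$ be a connected component of $D-B$ and let $D_i\subset Z$ satisfy $D_i\cdot B=0$; I would distinguish three cases. If $D_i$ lies in a maximal twig of $D$, then $D_i\cdot P=0$ by (c). If $D_i$ is branching in $D$, then $D_i\cdot P=0$ by the hypothesis of (d). Otherwise $D_i$ is non-branching and lies in no maximal twig; since the component carrying a maximal twig is always branching, $D_i$ does not meet a maximal twig either, and it is not a tip, so (b) gives $\alpha_{D_i}=2$ and $D_i\cdot P=0$. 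Thus every such $D_i$ is $P$-orthogonal and $Z$ is $P$-quasi-orthogonal.

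The only non-formal step is the evaluation $b_1=1/d(T')$ in (ii)(c); everything else is bookkeeping with the support of the bark, adjunction, and the case split in (d). The one point that must be checked against the running hypotheses, rather than argued formally, is the structural claim that $\Supp\Bk(D)$ consists only of the maximal twigs of $D$ --- that is, that $D$ carries no terminal admissible sub-fork whose thick bark would meet an interior non-twig component --- but this is guaranteed by the shape of $D$ established in the preceding lemmas.
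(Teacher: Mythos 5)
Your proof is correct, and it is essentially the argument the paper intends: Observation~\ref{Observation 2.12.2} is stated without proof as ``straightforward,'' and your verification --- $\Bk(D+E)$ supported on $E$ and on the maximal twigs of $D$ with positive coefficients, adjunction $T\cdot(K_{\ol{S}}+D+E)=-2+\alpha_T$ on the rational tree, and the Cramer's-rule evaluation giving bark coefficient $1/d(T')$ on the twig component meeting the branching curve --- is exactly the peeling-theory bookkeeping the paper itself relies on (the same identity $C_i\cdot \Bk(D)=\sum_j 1/d(T_j)$ appears explicitly in the proof of Lemma~\ref{Lemma 3.4}). One small remark: your closing concern about a ``terminal admissible sub-fork'' is vacuous, since thick barks are attached only to entire connected components of the boundary that are admissible rods or forks, and $D$, being connected and supporting an ample divisor, can never be such a component, so $\Bk(D)$ is by definition the sum of the barks of its maximal (admissible, by Corollary~\ref{Corollary 1.2.0.1}) twigs.
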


\begin{lem}\label{Lemma 2.13}
Assume that there are no two $(-1)$-curves in $D$ which meet each other. Suppose that $B$ is
a $(-1)$-curve in $D$. Then all connected components of $D-B$ are MNC divisors, contractible
and $P$-quasi-orthogonal.
\end{lem}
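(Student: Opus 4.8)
The plan is to put $D_0=B$ and run the configuration through Setup \ref{Setup 2.1} and Proposition \ref{Proposition 2.6}, aiming to land in case 2.6(i). Since $D$ is $\MNC$, the $(-1)$-curve $B$ is automatically a branching component, so writing $D-B=Z_1+\cdots+Z_r$ with $r\ge 3$ is legitimate. My first step is to observe that each $Z_i$ is \emph{already} $\MNC$: the only component that could violate the $\MNC$ condition is $Z_i^{(1)}$, the one meeting $B$, but $Z_i^{(1)}$ meets the $(-1)$-curve $B$ and so, by hypothesis, is not a $(-1)$-curve; every other $(-1)$-curve of $Z_i$ is branching in $D$ and keeps all its branches inside $Z_i$, hence stays branching. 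Thus $D-B$ is $\MNC$, and I take $\ol{Y}=\ol{S}$, $Q_0=E$, $Q_i=Z_i$. Using $\MNC$-ness and the minimality of $\#D$ to exclude the contraction of some $Z_i$ to a smooth point, Setup \ref{Setup 2.1}, including (c), holds, so Proposition \ref{Proposition 2.6} applies to $B$.

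I would then eliminate all cases of \ref{Proposition 2.6} except (i). Case (ii) is ruled out by Lemma \ref{Lemma 2.8}, since all $Z_i$ are $\MNC$. Case (iii) is ruled out by Lemma \ref{Lemma 2.7}(ii): there $r=3$, $B^2=-1$, and $|G_1|=|G_2|=2$ would force $Q_1,Q_2$ to be single $(-2)$-curves, i.e. $B$ would be a $(-1)$ branching component with two single-$(-2)$ branches, which is forbidden.

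The genuine obstacle is case (iv), where a $(-1)$-curve $C\not\subset D$ satisfies $C\cdot E=C\cdot Q_1=1$, $C\cdot Q_2=C\cdot Q_3=0$, and $E+C+Q_1$ contracts to a smooth point. I would split on $C\cdot B$. If $C\cdot B=0$, then $C\cdot D=1$ and $C\cdot E=1$, so $C$ is a simple curve, contradicting Lemma \ref{Lemma 1.5} (equivalently, $C$ would witness that $(\ol{S},D+E)$ is not almost minimal, against Corollary \ref{Corollary 1.6}). If $C\cdot B\ge 1$, I would contract $E+C+Q_1$ to the smooth point; the image $B_1$ of $B$ then has $B_1^2\ge 0$, while $B_1\cdot\ol{Q_2}=B_1\cdot\ol{Q_3}=1$ because $Q_2,Q_3$ are disjoint from the contracted locus. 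Blowing up interior points of $B_1$ to reach a $(0)$-curve produces a $\C^*$-fibration of a surface birational to $S_0$, whence $\lkd(S_0)\le 1$, contradicting $\lkd(S_0)=2$. Hence we are in case (i): every $Z_i$ contracts to a quotient singularity and is therefore contractible.

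For $P$-quasi-orthogonality I would invoke Observation \ref{Observation 2.12.2}(d), reducing the claim to showing $T\cdot P=0$ for each branching component $T\subset Z_i$ with $T\cdot B=0$. A quotient-singularity resolution has at most one branching component, its star centre $T^*$; if $T^*=Z_i^{(1)}$ meets $B$ there is nothing to check, and if $Z_i$ is a chain there is no such $T$. Otherwise $T^*$ is a branching component of $D$ not meeting $B$, and I would apply Proposition \ref{Proposition 2.6} a second time with $D_0=T^*$: its branches are the two free legs $L_1,L_2$ of the fork and the non-contractible branch containing $B$, so cases (i) and (iv) are impossible and Lemma \ref{Lemma 2.8} excludes (ii), forcing type (iii) and hence $L_1,L_2$ to be single $(-2)$-curves (Lemma \ref{Lemma 2.9} is consistent with this and the no-meeting-$(-1)$-curves hypothesis makes its part (2) vacuous). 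Their capacities are each $\tfrac12$, the trunk towards $B$ contributes nothing to the bark, so $T^*\cdot P=(\alpha_{T^*}-2)-T^*\cdot\Bk(D+E)=1-(\tfrac12+\tfrac12)=0$. The two steps I expect to require real care are the $C\cdot B\ge 1$ branch of case (iv), where the Kodaira-dimension contradiction must be transported back to $S_0$, and the orthogonality of the fork centre $T^*$, which is exactly what makes the secondary use of Proposition \ref{Proposition 2.6} necessary.
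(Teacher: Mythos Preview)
Your handling of $\MNC$-ness of the $Z_i$, the elimination of cases (ii) and (iii), and the $P$-quasi-orthogonality argument via a second pass through Proposition~\ref{Proposition 2.6} at the fork centre are all correct and in line with the paper. The gap is in your elimination of case~(iv) when $C\cdot B\ge 1$. After contracting $E+C+Q_1$ to a smooth point $p$, the image $B_1$ of $B$ is in general \emph{singular} at $p$: $B$ meets the contracted tree in at least two distinct points lying on different components --- once on $Z_1^{(1)}\subset Q_1$ (from $B\cdot Q_1=1$) and at least once on $C$ (from $C\cdot B\ge 1$) --- and these are identified at $p$, so $B_1$ acquires two or more analytic branches there. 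Thus $B_1$ is not a smooth rational curve, and ``blowing up interior points to reach a $(0)$-curve'' does not yield a $\BP^1$-fibration; resolving the singularity of $B_1$ at $p$ simply undoes part of the contraction and drops the self-intersection back down. (Your inequality $B_1^2\ge 0$ is fine, and the contradiction could in principle be rescued --- one would get $\lkd(S_0\setminus C)\le 1$, contradicting $\lkd(S_0\setminus C)\ge\lkd(S_0)=2$, rather than ``$\lkd(S_0)\le 1$'', since $\lkd$ is not a birational invariant of open surfaces --- but only if the fibration actually existed.)

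The paper disposes of case~(iv) by an entirely different mechanism. Assuming $B=H_0$ is of type~(iv), it locates a branching component $H_1$ inside the unimodular non-chain branch $Z_2$ and, using that the branch containing $B$ is never contractible and never a chain, shows that $H_1$ is again forced into type~(iv). Iterating produces an infinite sequence $H_0,H_1,H_2,\ldots$ of distinct branching components of $D$, which is absurd since $D$ has finitely many components. This recursive descent is the step your argument is missing.
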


\begin{proof}
Let $D-B=Z_1\cup\cdots\cup Z_r$. Then the $Z_i$ are MNC divisors, i.e., $Q_i=Z_i$,
because no $Z_i^{(1)}$ is a $(-1)$-curve by the assumption that no two $(-1)$-curves meet each other. In particular Proposition \ref{Proposition 2.6} applies to any $(-1)$-curve in $D$.
\svskip

\begin{claim}{\hspace{-1mm}}
$B$ is not of type 2.6~(iv).
\end{claim}

\begin{proof} Suppose the contrary. We assume that $Z_1$ is the unique chain in $D-B$ and that $Z_3$ is not contractible. We will construct a sequence $H_0, H_1,\dots$ of branching
components of $D$ which satisfies the following conditions.
\begin{enumerate}
\item[(1)]
{\em $H_0=B$. For $n > 0, H_0,\ldots, H_n$ are contained in a connected component of $D-H_{n+1}$. This component is
a non-contractible unimodular tree. In particular $H_i\neq H_j$ for $i\neq j$.}
\item[(2)]
{\em Each $H_i$ is of type 2.6~(iv).}
\item[(3)]
{\em The connected components of $D-H_n$ are MNC divisors.}
\end{enumerate}
The construction proceeds as follows.
If $n >0$, let $T_1,T_2,T_3$ be the connected components of $D-H_n$ with $T_1$ linear and $T_3$ containing $H_0, \ldots, H_{n-1}$. If $n=0$, we put $T_i=Z_i$. $T_2, T_3$ are non-linear unimodular trees, $T_3$ is non-contractible.
Let $H$ be a branching component of $T_2$. Suppose that there exists
a $(-1)$-component $H'$ of $T_2$ which meets $H$. Since no other $(-1)$-component of $D$ meets $H'$,
all connected components of $D-H'$ are MNC. In particular, $H'$ is not of type (ii) by Lemma
\ref{Lemma 2.8}. Since the connected component of $D-H'$ containing $H_n+T_3$ is not contractible,
$H'$ is not of type (i). It is not of type (iii) by Lemma \ref{Lemma 2.9}. Hence $H'$ is
of type (iv), and we may put $H_{n+1}=H'$. Suppose that there is no $(-1)$-component in $T_2$
which meets $H$. Since the connected component of $D-H$ containing $H_n+T_3$ is not contractible, no connected component contracts to a smooth point and we can apply
Proposition \ref {Proposition 2.6} to $H$.
$H$ is not of type (i) and it follows from Proposition \ref{Proposition 2.6} that
$H$ has three branches in $D$. This implies that $H \cdot H_n=0$, for otherwise $H$ has four branches
since it is a branching in $T_2$. Hence no $(-1)$-curve in $D$ meets $H$ and all connected components of $D-H$ are
MNC divisors. In particular, $H$ is not of type (ii) by Lemma \ref{Lemma 2.8}. Furthermore,
$H$ is not of type (iii), for otherwise $2$ divides $d(Z_2)$ which must be $\pm 1$. Hence $H$ is of
type (iv), and we may put $H_{n+1}=H$.

It is clear that such a sequence cannot exist since the number of irreducible components
of $D$ is finite. The claim is proved.
\end{proof}

$B$ is not of type 2.6~(ii) by Lemma \ref{Lemma 2.8}. It is not of type (iii) by Lemma \ref{Lemma 2.7}.
Hence $B$ is of type (i). A further $(-1)$-curve does not exist since it would be branching in $D-B$.
Suppose that $Z_1$ is not $P$-quasi-orthogonal. By \ref{Observation 2.12.2}~(d),  $Z_1$ is not a chain. Hence $Z_1$ is a fork, and again by \ref{Observation 2.12.2}~(d)
the branching component $B_1$ of $Z_1$ is disjoint from $B$.
 The connected components of $D-B_1$ are MNC
divisors since $B$ is the unique $(-1)$-component of $D$. We can apply Proposition \ref{Proposition 2.6} with  $D_0=B_1$. Clearly, $B_1$ is not of type (i) since $B$
is a branching $(-1)$-component in a connected component of $D-B_1$. Two of the connected components
of $D-B_1$ are also connected components of $Z_1-B$ and hence linear chains. So $B_1$ is not of type (iv).  By Lemma \ref{Lemma 2.8}, $B_1$ is not
of type (ii). Thus $B_1$ is of type (iii). We have $Z_1 = B_1+Q_1+Q_2+Q_3$, where $Q_1, Q_2$ are $(-2)$-curves contracting to quotient singularities and $Q_3$ is a linear chain that is part of the non-contractible connected component of $D- B_1$. Now $P=K_{\ol{S}}+
B_1+\frac{1}{2}Q_1+\frac{1}{2}Q_2+Q_3'+\cdots$ where $Q_3'$ is the component of $Q_3$ meeting $B_1$. We calculate $P\cdot B_1=-2+1+\frac{1}{2}+\frac{1}{2}=0$.
By \ref{Observation 2.12.2}~(d)
$Z_1$ is $P$-quasi-orthogonal and we have a contradiction.
\end{proof}

The following result determines more precisely the shape of $D$.

\begin{prop}\label{Proposition 2.14}
One of the following two cases takes place.
\begin{enumerate}
\item[{\rm (a)}]
There exists a branching curve $B$ in $D$ such that all connected components of $D-B$ are MNC divisors,
contractible and $P$-quasi-orthogonal.
\item[{\rm (b)}]
$D$ has exactly two branching components $B_1,B_2$ such that $D=T_1+T_2+B_1+T+B_2+U_1+U_2$,
where $T$ is a linear chain, possibly empty, connecting $B_1$ and $B_2$, and $T_1,T_2$ (resp. $U_1,U_2$)
are maximal twigs of $D$ sprouting out of $B_1$ (resp. $B_2$). Moreover, $B_1+T_1+T_2+T$ and
$B_2+U_1+U_2+T$ are contractible.
\end{enumerate}
\end{prop}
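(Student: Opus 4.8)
The plan is to run a case analysis governed by the $(-1)$-curves contained in $D$, using the preceding lemmas to settle the cases where such curves are present and isolating the case in which $D$ has none as the essential one. Throughout I will use that, since $\lkd(S'\setminus\Sing S')=2$, Corollary \ref{Corollary 1.2.0.1} forces every maximal twig of $D$ and every linear chain joining two branching components of $D$ to be contractible; thus the only possible obstruction to the shapes in (a) and (b) is the \emph{placement} of the branching components and the $P$-orthogonality bookkeeping, not the contractibility of the individual pieces. I will also use that $D$ is an $\MNC$ tree (our standing assumption together with Lemma \ref{Lemma 1.7}), so that any $(-1)$-curve occurring in $D$ is automatically a branching component.

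First I dispose of the case in which $D$ contains a $(-1)$-curve. If $D$ contains two $(-1)$-curves $B_1,B_2$ with $B_1\cdot B_2=1$, then Lemma \ref{Lemma 2.12} produces exactly the divisor of (b) with the connecting chain $T$ empty, the two contractibility statements being part of that lemma. If $D$ contains a $(-1)$-curve but no two of them meet, pick such a curve $B$; it is branching, and Lemma \ref{Lemma 2.13} asserts directly that all components of $D-B$ are $\MNC$, contractible and $P$-quasi-orthogonal, which is (a).

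There remains the case that $D$ has no $(-1)$-curve. For any branching component $B$ the connected components $Z_1,\dots,Z_r$ of $D-B$ are then already $\MNC$ (so $Q_i=Z_i$), whence Lemma \ref{Lemma 2.8} rules out case (ii) of Proposition \ref{Proposition 2.6} and only types (i), (iii), (iv) remain for $B$. If $D$ has a single branching component, then $D$ is a fork, the $Z_i$ are maximal twigs, hence contractible, and each is $P$-quasi-orthogonal by Observation \ref{Observation 2.12.2}(i); this is (a). So assume $D$ has at least two branching components. The mechanism for choosing $B$ in (a) is the identity $T\cdot P=\alpha_T-2-\sum_L 1/d(L)$ that follows from Observation \ref{Observation 2.12.2} (the sum over maximal twigs $L$ of $D$ meeting $T$): a branching component $T$ violates $P$-orthogonality precisely when $\sum_L 1/d(L)<\alpha_T-2$, and by Observation \ref{Observation 2.12.2}(d) such a $T$ is harmless for the component of $D-B$ containing it as soon as $T$ meets $B$. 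Hence (a) holds as soon as there is a branching component $B$, with $D-B$ contractible, adjacent to every branching component that fails $P$-orthogonality.

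Finally, if no such $B$ exists, I will show we are in (b). The obstruction is the presence of two $P$-non-orthogonal branching components $B_1,B_2$ that are neither adjacent nor share a common branching neighbour. Applying Proposition \ref{Proposition 2.6} at each of $B_1,B_2$ and using the local rigidity supplied by Lemma \ref{Lemma 2.7} (self-intersection bounds) and Lemma \ref{Lemma 2.9} (in type (iii) the component of the non-contractible branch meeting $B_i$ is a non-maximal $(-2)$-tip and $B_i^2=-2$), I will show that each $B_i$ carries exactly two maximal twigs together with a single chain running towards the other, that no third branching component can be inserted, and that $B_1$ and $B_2$ are joined by a (possibly empty) chain $T$. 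This gives the dumbbell $D=T_1+T_2+B_1+T+B_2+U_1+U_2$ of (b), the contractibility of $B_1+T_1+T_2+T$ and of $B_2+U_1+U_2+T$ following from the contractibility of the twigs and of $T$. The main obstacle is exactly this last rigidity step: ruling out a third branching component and pinning down the two ends as the admissible type (iii)/(iv) configurations requires assembling the $P$-computations of Observation \ref{Observation 2.12.2} with the determinant and BMY constraints of Lemma \ref{Lemma 2.7} and Proposition \ref{Proposition 2.6}, and is the technical heart of the argument.
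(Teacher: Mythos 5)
Your handling of the $(-1)$-curve cases is correct and coincides with the paper's: Lemma \ref{Lemma 2.12} gives case (b) with $T$ empty, and Lemma \ref{Lemma 2.13} gives case (a). Your setup of the remaining case also matches the paper (no $(-1)$-curves, so every subdivisor of $D$ is MNC, Proposition \ref{Proposition 2.6} applies to every branching component, and type (ii) is excluded by Lemma \ref{Lemma 2.8}; a single branching component gives (a) at once). But the substance of the proposition lies in the case of no $(-1)$-curves and at least two branching components, and there your text stops at announcements ("I will show that each $B_i$ carries exactly two maximal twigs\ldots", "\ldots is the technical heart of the argument"). That unexecuted step \emph{is} the proof: the paper first produces, by induction on $\#D$, a branching component $B_1$ such that all but one component $Z_r$ of $D-B_1$ are maximal twigs, then argues separately according to whether $Z_r$ is contractible, using Lemma \ref{Lemma 2.9} to locate where $B_1$ attaches to $Z_r$ in type (iii), proving that the fork $B_1+Z_1+Z_2+R$ is contractible of type $(2,2,m)$ (which needs Lemma \ref{Lemma 2.7} and the type-(iii) fact that $Z_1,Z_2$ are $(-2)$-curves), classifying the second branching component $B_2$ by Proposition \ref{Proposition 2.6}, and verifying $P$-quasi-orthogonality by the explicit computation $B_1\cdot P=-2+1+\frac{1}{2}+\frac{1}{2}=0$. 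None of this is carried out in your proposal.

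Beyond incompleteness, the strategy you sketch has two concrete flaws. First, your opening claim that Corollary \ref{Corollary 1.2.0.1} settles "the contractibility of the individual pieces" is false: the relevant pieces (the components of $D-B$ in (a), and $B_1+T_1+T_2+T$, $B_2+U_1+U_2+T$ in (b)) are forks, and a fork contracts to a quotient singularity only if it is of one of the admissible types $(2,2,n)$, $(2,3,3)$, $(2,3,4)$, $(2,3,5)$; contractibility of its twigs is nowhere near sufficient. Indeed an entire branch of the paper's argument treats the case where the non-twig component $Z_r$ of $D-B_1$ is \emph{not} contractible, and the conclusion there is case (a) with a different branching curve, not case (b). Second, your description of the obstruction to (a) --- "two $P$-non-orthogonal branching components that are neither adjacent nor share a common branching neighbour" --- ignores precisely this contractibility failure mode: the negation of "there exists $B$ with $D-B$ contractible and adjacent to every $P$-non-orthogonal branching component" can hold, for instance, with a single $P$-non-orthogonal branching component $T_0$ whose complement $D-T_0$ has a non-contractible component, a configuration your case split never reaches. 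So the dichotomy you propose does not exhaust the possibilities, and the passage from "no such $B$ exists" to the dumbbell shape of (b) is both logically unjustified and unproven.
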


\begin{proof}
Lemmas \ref{Lemma 2.12} and \ref{Lemma 2.13} give the result if there is a $(-1)$-curve in $D$. We assume therefore that $D$ does not contain
a $(-1)$-curve. Then all divisors $D'\subset D$ are MNC. In particular we can apply Proposition \ref{Proposition 2.6} to any branching component $D_0$ of $D$. By Lemma \ref{Lemma 2.8}, the case 2.6~(ii) can never occur. These observation will be used repeatedly in the proof below.

It is straightforward to prove
by induction on $\#D$ that there exists a branching curve $B_1$ in $D$ such that, if $Z_1,\ldots, Z_r$
denote the connected components of $D-B_1$, $Z_1,\dots, Z_{r-1}$ are linear chains and $B_1$ meets them
in tips. Hence $Z_1,\cdots,Z_{r-1}$ are maximal twigs of $D$ and contractible Corollary 1.4.
\svskip

Suppose that $Z_r$ is not contractible. Then $B_1$ is not of type 2.6~(i). If $B_1$ were of
type 2.6~(iv) we have $r=3$ and only one of $Z_1, Z_2$ is a linear chain. So this does not occur. Suppose that $B_1$ is of type (iii), hence
that $r=3$ and $Z_1,Z_2$ are $(-2)$-curves. By Lemma \ref{Lemma 2.9}, $B_1$ meets a tip of a maximal twig
$R$ of $Z_3$. Since $Z_3$ is not contractible it is not a linear
chain for otherwise every component of $Z_3$ has self-intersection
number $\leq -2$ by Corollary \ref{Corollary 1.2.0.1} and $Z_3$ would be contractible.

Hence $R$ sprouts from a branching component $B_2$ of $Z_3$. Let $R_1,\ldots, R_s$ be
the connected components of $D-B_2$ with $R \subset R_1$. Then $R_1=B_1+Z_1+Z_2+R$ and we claim it is contractible. First, $B_1^2\leq -2$ by Lemma \ref{Lemma 2.7}. Next, $R$ is a contractible linear chain by
 Corollary \ref{Corollary 1.2.0.1}. Hence $R_1$ is a contractible fork of type $(2,2,m)$ for some $m$.
Since $R_1$ is apparently not a $(-2)$-curve, $B_2$ is not of type (iii).
 Further, $B_2$ is not of type (iv) since the intersection matrix
of $R_1$ is not unimodular as the determinant is divisible by $2$. So $B_2$ is of type (i), i.e.,
$R_i$ is contractible for $i=1,\ldots, s$.

Now $R_1$ is $P$-quasi-orthogonal. In fact, by  \ref{Observation 2.12.2}~(d),
we have to check only $B_1\cdot P =0$, and this is done as it was for $B _1$ in the proof of \ref{Lemma 2.13}.
Suppose that $R_2$ is not $P$-quasi-orthogonal.
This implies as before that $R_2$ is a fork and that $B_3 \cdot B_2=0$, where $B_3$ is the branching component of $R_2$. The connected component $U$ of $D-B_3$ containing $B_2$ is not
contractible since $B_1$ and $B_2$ are two branching components in it. Hence $B_3$ is not of type (i). Furthermore,
the connected component $U$ has two twigs consisting of single $(-2)$-curves, hence the intersection matrix
of $U$ is not unimodular as the determinant is divisible by $2$. Hence $B_3$ is not of type (iv)
as $U$ is not a linear chain. $B_3$ is not of type (iii) for otherwise we see that $R_2$ is
$P$-quasi-orthogonal as before. The case of type (ii) does not occur with $B_3$ by the remark at the beginning of
the proof. So, we are led to a contradiction. Hence $R_2$ must be $P$-quasiorthogonal, and similarly $R_i, i \geq 2$. So we have the case (a) with $B=B_2$.
\svskip

We may assume then that $Z_r$ is contractible. If $Z_r$ is $P$-quasi-orthogonal, we are done. Suppose that $Z_r$
is not $P$-quasi-orthogonal. Then again $Z_r$ is not a linear chain, hence a fork, and the branching
component $B_2$ of $Z_r$ is disjoint from $B_1$. Let
$R_1,R_2,R_3$ be the maximal twigs of $Z_3$, where we assume that $R_1$ meets $B_1$. Let $D_1$ be the component of
$R_1$ such that $D_1\cdot B_1=1$. Let $R_1'=B_1+Z_1+\dots+Z_{r-1}+R_1$. Then $R_1',R_2,R_3$ are
the connected components of $D-B_2$. Suppose that $r>3$ or $D_1$ is not a tip of $R_1$. Then $R_1'$
is not contractible, hence $B_2$ is not of type (i). $B_2$ is not of type (iii), for otherwise $Z_r$ is
$P$-quasi-orthogonal. So, $B_2$ is of type (iv), but two of the connected components of $D-B_2$ are
linear chains, a contradiction. So, $r=3$ and $D_1$ is a tip of $R_1$.  Suppose further that
$R_1'=B_1+Z_1+Z_2+R_1$ is not contractible. Then $B_2$ is not of type (i). It is not of type (iii),
for otherwise $Z_3$ is $P$-quasi-orthogonal. Hence $B_2$ is of type (iv), but $R_2, R_3$ are chains, a
contradiction. Thus $B_1+Z_1+Z_2+R_1$ is contractible. So we have the case (b) with $T=R_1, T_1=Z_1, T_2=Z_2, U_1=R_2, U_2=R_3$.
\end{proof}

We have a remark to add on the singular point $q$ of $S'$ in the case (a).

\begin{lem}\label{Lemma 2.15}
In the case (a) of Proposition \ref{Proposition 2.14}, $K_{\ol{S}}\cdot E>0$. This implies that
$q$ is not a rational double point.
\end{lem}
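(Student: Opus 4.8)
The plan is to reduce the statement to a numerical fact about $E$ and then to exclude one remaining configuration. Since $\Sing S'=\{q\}$ by Lemma \ref{Lemma 1.4}, the exceptional locus $E$ is a connected tree of smooth rational curves, so its number of double points is $\#E-1$. Adjunction on each component gives $K_{\ol S}\cdot E_i=-2-E_i^2$, and summing,
\[
K_{\ol S}\cdot E=\sum_i(-2-E_i^2)=-2\#E-\sum_iE_i^2=-E^2-2,
\]
where the last equality uses $E^2=\sum_iE_i^2+2(\#E-1)$. As $E$ is the exceptional locus of a \emph{minimal} resolution, no $E_i$ is a $(-1)$-curve, so $E_i^2\le-2$; hence $K_{\ol S}\cdot E\ge0$, with equality if and only if every $E_i$ is a $(-2)$-curve, i.e. if and only if $E^2=-2$, i.e. if and only if $q$ is a rational double point. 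This gives the second assertion as a formal consequence of the first, and reduces the proof of $K_{\ol S}\cdot E>0$ to showing that $E$ is not a $(-2)$-fork.

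Suppose then, for contradiction, that $E$ is a $(-2)$-fork. Since $q$ is non-cyclic (we are analysing a counterexample), $E$ is of type $D_n$ $(n\ge4)$ or $E_6,E_7,E_8$, with central component $E_0$. Then the form on $E$ is negative definite and $K_{\ol S}\cdot E_i=0$ for all $i$; as all of $E$ lies in the negative part, $P\cdot E_i=0$, and since $D^\#$ is disjoint from $E$ the defining equations $(K_{\ol S}+E^\#)\cdot E_i=0$ force $E^\#=0$, i.e. $\Bk E=E$. Thus $E$ contributes nothing to the positive part and sits inside $S_0=S\setminus E$ as a maximal negative-definite $(-2)$-configuration, disjoint from $D$. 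I will contradict the standing hypothesis $\lkd(S_0)=2$ using the case (a) data: the branching component $B$ of $D$, all components of $D-B$ being contractible, MNC and $P$-quasi-orthogonal, which by Proposition \ref{Proposition 2.6} applied to $D_0=B$ places us in case (i), so that contracting $E,Z_1,\dots,Z_r$ produces on $S'$, besides $q$, the quotient points $q_1,\dots,q_r$.

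The main work, and the principal obstacle, is to manufacture from the $(-2)$-fork a $\C$- or $\C^*$-fibration of $S_0$. The plan is to blow up only points lying on $E$ (these lie in $E\subset S$, hence outside $S_0$, so that $S_0$ and the value $\lkd(S_0)=2$ are unaffected) so as to complete $E$ to a singular fibre $F_E$ of a $\BP^1$-fibration $f$, exploiting the short $(-2)$-twigs of $E_0$ in the manner of Lemma \ref{Lemma 2.11}. Once $E$ lies in a fibre, the combinatorics of Lemma \ref{Lemma 1.1} together with Lemma \ref{Lemma 2.11} should pin down the horizontal curves and the multiple interior components of the remaining fibres, and tracking the intersection of a general fibre with the total boundary $D\cup E$ then forces it to meet $D\cup E$ in at most two points, so that $f$ induces a $\C$- or $\C^*$-fibration on $S_0$; by the easy addition theorem this gives $\lkd(S_0)\le1$, contradicting $\lkd(S_0)=2$ (Corollary \ref{Corollary 1.2.0.1}). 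I expect the delicate point to be exactly this fibre-theoretic construction: a finite $(-2)$-Dynkin fork is negative definite and cannot itself be a fibre, so the completion of $E$ to $F_E$ and the verification that the generic fibre is $\C$ or $\C^*$ (rather than leaving a higher multisection or a non-rational remainder) must be done with care, and it is here that the case (a) structure must be brought to bear; an alternative, should the fibration be recalcitrant, is to show that the $(-2)$-fork permits a contraction lowering $\#D$, contradicting its assumed minimality. Granting the exclusion, $E^2\le-3$ and hence $K_{\ol S}\cdot E=-E^2-2>0$.
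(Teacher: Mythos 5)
Your opening reduction is correct and is essentially how one should read the lemma: by adjunction and minimality of the resolution, $K_{\ol S}\cdot E=-E^2-2\ge 0$, with equality exactly when $E$ is a $(-2)$-fork, i.e.\ exactly when $q$ is a rational double point; so everything hinges on excluding the $(-2)$-fork. But that exclusion is the entire content of the lemma, and your proposal does not contain it --- it is a plan whose crucial steps you yourself flag as unverified, and the plan cannot be repaired as stated. The central construction is impossible: if you blow up only points lying on $E$, then at every stage the configuration consisting of the proper transforms of the components of $E$ together with all exceptional curves generates a \emph{negative definite} lattice (each blow-up replaces the lattice spanned by the total transforms, which is isometric to the previous one, by its orthogonal sum with $\langle -1\rangle$), and a nonzero effective divisor supported on a negative definite configuration has strictly negative self-intersection; hence no sequence of blow-ups on $E$ can complete $E$ into a fibre of a $\BP^1$-fibration. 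This is consistent with Lemma \ref{Lemma 2.11}: a fibre containing a $(-2)$-fork must contain further components of multiplicity $\ge 2$, among them a $(-1)$-curve, and these are necessarily curves of $S_0$ whose existence is precisely what would have to be proved. Moreover, even granting a fibration with $E$ vertical, you give no argument bounding by $2$ the intersection of a general fibre with the horizontal part of $D$; in case (a) of Proposition \ref{Proposition 2.14} the divisor $D$ is a star with $r\ge 3$ contractible branches and nothing forces it to contribute at most a $2$-section, which is why every fibration argument in the paper (Lemmas \ref{Lemma 2.7}, \ref{Lemma 4.2}, \ref{Lemma 5.5}) builds the fibre out of $D$-components so that only one or two components of the boundary remain horizontal. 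The fallback you mention (a contraction lowering $\#D$) is likewise unsubstantiated: minimality of $\#D$ refers to counterexamples to Theorem 1, and no smaller counterexample is produced.

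For comparison, the paper's proof uses no fibration at all. If $q$ is a rational double point, $K_{\ol S}$ is trivial on a small punctured neighborhood $U$ of $q$; since $S'$ is a $\Z$-homology plane, $H_1(U;\Z)\to H_1(S_0;\Z)$ is an isomorphism, hence so is the restriction $\Pic(S_0)\to\Pic(U)$, so $K_{\ol S}$ is trivial on $S_0$ and some canonical divisor is supported on $D+E$; since $K_{\ol S}\cdot E_j=0$ for every component $E_j$ and $E$ is negative definite, it may be taken supported on $D$ alone. Writing $K_{\ol S}=a_0B+\sum a_iD_i$, the contractibility and MNC property of the branches $Z_i$ --- this is where case (a) actually enters --- allow an application of \cite[Lemma 4.1]{GS}: either all coefficients are $\ge 0$, forcing $p_g(\ol S)>0$ against rationality of $S'$, or all are negative, forcing $K_{\ol S}+D\le 0$ and hence $\lkd(S_0)\le\kappa(E)=0$, against $\lkd(S_0)=2$. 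Your first paragraph could serve as a preamble to such an argument, but as submitted the main step of your proof is missing.
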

\begin{proof}
Suppose that $q$ is a rational double point. Then $K_{\ov{S}}$ is trivial when restricted to a small punctured neighborhood
$U$ of $q$. Since $S'$ is a $\Z$-homology plane, it is known that the natural homomorphism
$H_1(U,\Z) \to H_1(S_0,\Z)$ is an isomorphism, and hence the restriction map $\Pic(S_0) \to \Pic(U)$
is an isomorphism, see \cite[Proposition 2.6~(ii)]{KR}. Hence $K_{\ov{S}}$ is trivial when restricted to $S_0$ and $K_{\ov{S}}$ is supported on $D+E$. Since $E$ has negative definite intersection matrix and $E_j\cdot K_{\ov{S}}=0$ for every component $E_j$ of $E$, there actually is a canonical divisor
$K_{\ol{S}}$ supported by $D$. Write $K_{\ol{S}}=a_0B+\sum a_iD_i$. Since all the $Z_i$ are MNC divisors
and contractible, we can use \cite[Lemma 4.1]{GS} to conclude that either all $a_j\geq 0$ or $a_0<0$ and $a_j \leq 0$ for $j \geq 1$. In the first case the geometric genus $p_g(\ol{S})>0$. This is a contradiction, for $S'$
is a $\Z$-homology plane and hence rational. In the second case, if $D_i$ is the component
of some $Z_j$ meeting $B$ and $a_i=0$, then $D_i\cdot K_{\ov{S}}<0$, which is not the case as $D_i^2\leq -2$. Since the intersection form on $Z_j$ is negative definite, $a_s < 0$ for every component $D_s$ of $Z_j$. Hence $a_j<0$ for all $j$ and $K_{\ov{S}}+D \leq 0$. It follows that $\lkd(S_0)= \kappa(K_{\ov{S}}+D+E) \leq \kappa(E)$. Since $E$ contracts to a normal point we have $h^0(nE)=1$ for every $n$ so $\kappa(E)=0$ and reach a contradiction.
\end{proof}

\section{Determination of the nef part $P$}
We provide further details about the situation in \ref{Proposition 2.14}.

\begin{lem}\label{Lemma 3.5}
$d(D)<0$ and hence $a=d(E)=-d(D)$.
\end{lem}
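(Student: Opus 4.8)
The plan is to reduce the statement to the Sylvester-criterion argument already carried out in Lemma \ref{Lemma 2.7.1}. By Lemma \ref{Lemma 1.7} we have $|d(D)|=d(E)=a>0$, so $d(D)=\pm a$ and it suffices to rule out $d(D)>0$; the asserted equality $a=d(E)=-d(D)$ then follows immediately. The geometric input I would extract from Proposition \ref{Proposition 2.14} is a single branching component $B$ of $D$ with the property that every connected component of $D-B$ is contractible, hence has negative definite intersection matrix.

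First I would locate such a $B$ in each of the two cases of Proposition \ref{Proposition 2.14}. In case (a) one simply takes $B$ to be the branching curve provided there, since by hypothesis all connected components of $D-B$ are contractible. In case (b), writing $D=T_1+T_2+B_1+T+B_2+U_1+U_2$, I would take $B=B_1$: the connected components of $D-B_1$ are then $T_1$, $T_2$ and $T+B_2+U_1+U_2$. The last of these is exactly $B_2+U_1+U_2+T$, which is contractible by the final assertion of Proposition \ref{Proposition 2.14}(b) (and this remains correct when $T$ is empty), while $T_1$ and $T_2$ are contractible because they are subchains of the contractible divisor $B_1+T_1+T_2+T$ and principal submatrices of a negative definite matrix are negative definite. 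Thus in both cases every connected component of $D-B$ is contractible, so $I(D-B)$ is block diagonal with negative definite blocks and $-I(D-B)$ is positive definite.

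With such a $B$ in hand I would run the argument of Lemma \ref{Lemma 2.7.1} verbatim. Ordering the irreducible components of $D$ so that those of $D-B$ come first and $B$ comes last, the first $\#D-1$ leading principal minors of $-I(D)$ are precisely the leading principal minors of the positive definite matrix $-I(D-B)$, hence all positive, and the final one is $\det(-I(D))=d(D)$. If $d(D)>0$ held, all leading principal minors of $-I(D)$ would be positive, so by Sylvester's criterion $-I(D)$ would be positive definite, i.e.\ $I(D)$ negative definite. This is impossible because $D$ supports an ample divisor $A$, for which $A^2>0$. Hence $d(D)\le 0$, and combined with $|d(D)|=a>0$ we conclude $d(D)=-a<0$, so $a=d(E)=-d(D)$.

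The only step requiring genuine care is the verification in case (b) that removing $B_1$ leaves only contractible pieces; once this is granted the remainder is an exact repetition of Lemma \ref{Lemma 2.7.1}, and I do not expect a serious obstacle. The single external fact imported is that the boundary $D$ of the quasi-affine surface $S=\ol{S}\setminus D$ supports an ample divisor, exactly as invoked in Lemma \ref{Lemma 2.7.1}.
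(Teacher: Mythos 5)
Your proof is correct and follows exactly the paper's route: the paper's own proof is the one-line instruction to apply Lemma \ref{Lemma 2.7.1} with $D_0=B$ in case (a) and $D_0=B_1$ in case (b) of Proposition \ref{Proposition 2.14}, which is precisely what you do. Your verification that in case (b) the connected components of $D-B_1$ (namely $T_1$, $T_2$ and $B_2+U_1+U_2+T$) are negative definite, so that the Sylvester argument of Lemma \ref{Lemma 2.7.1} goes through, is just the detail the paper leaves implicit.
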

\begin{proof}
Apply Lemma \ref{Lemma 2.7.1} with $D_0=B$ in the case (a) and $D_0=B_1$
in the case (b) of Proposition \ref{Proposition 2.14}.
\end{proof}

Lemmas \ref{Lemma 3.1} through \ref{Lemma 3.4} refer to the situation of Proposition \ref{Proposition 2.14} (a). Let $Z_1,\ldots, Z_r$ be the connected
components of $D-B$. Let $C_i$ be the irreducible component of $Z_i$ which meets $B$. We recall that
$\Pic(\ol{S})\otimes\Q \simeq H^2(\ol{S};\Q)$ has a basis consisting of the classes of the irreducible
components of $D+E$ and the intersection form on it is non-degenerate. Then we can determine uniquely
a $\Q$-divisor $P_i\ (1 \le i \le r)$ such that $P_i$ is supported by $D$, $C_i\cdot P_i=1$ and
$D_j\cdot P_i=0$ for every $D_j\subset D$ different from $C_i$ and also a $\Q$-divisor $P_0$ supported
by $D$ such that $P_0\cdot B=1$ and $P_0\cdot D_j=0$ for every $D_j$ different from $B$. Let
$\ol{Z}_i=Z_i-C_i\ (1 \le i \le r)$ and let $e_i=\frac{d(\ol{Z}_i)}{d(Z_i)}$ be the {\em capacity}
. Let $a_{ij}$ denote the coefficient of $C_i$ in $P_j$
with $1 \le i \le r$ and $0 \le j \le r$. Let $c_i\ ( 0 \le i \le r)$ denote the coefficient of $B$
in $P_i$. Finally, let $\beta=B\cdot P$ and $\beta_i=C_i\cdot P$. We recall that $\beta \geq 0$ and $\beta_i \geq 0$.

\begin{lem}\label{Lemma 3.1}
The following equalities hold.
\begin{enumerate}
\item[{\rm (i)}]
$P=\beta P_0+\beta_1P_1+\dots+\beta_rP_r$.
\svskip
\item[{\rm (ii)}]
$a_{i0}=-\frac{d(\ol{Z}_i)}{d(D)}\prod\limits_{j\neq i}d(Z_j)
= \frac{d(\ol{Z}_i)}{a}\prod\limits_{j\neq i}d(Z_j)=\frac{e_i}{a}\prod\limits_{j=1}^r d(Z_j)$.
\svskip
\item[{\rm (iii)}]
$c_0 = -\frac{1}{d(D)}\prod\limits_{j=1}^rd(Z_j)=\frac{1}{a}
\prod\limits_{j=1}^r d(Z_j)$, \\
\svskip

\noindent
$c_i = -\frac{d(\ol{Z}_i)}{d(D)}\prod\limits_{j\neq i}d(Z_j)=a_{i0}$.
\item[{\rm (iv)}]
$a_{ij}=-\frac{d(\ol{Z}_i)d(\ol{Z}_j)}{d(D)}\prod\limits_{k\neq i,j}d(Z_k)=\frac{e_ie_j}{a}\prod\limits_{k=1}^rd(Z_k)$.
\end{enumerate}
\end{lem}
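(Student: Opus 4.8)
The plan is to split the statement: equality (i) is a uniqueness assertion coming from non-degeneracy of the intersection form, while (ii)--(iv) amount to inverting the intersection matrix of the tree $D$, exploiting that $D-B=Z_1\sqcup\cdots\sqcup Z_r$ is a disjoint union of the branches.

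First I would prove (i) by testing $P':=\beta P_0+\beta_1P_1+\cdots+\beta_rP_r$ against the basis of $\Pic(\ol S)\otimes\Q$ formed by the components of $D+E$. Each $P_0,P_i$ is supported on $D$, and since $D\cap E=\emptyset$ they meet every component of $E$ trivially; likewise $P\cdot E_j=0$, because the fork $E$ is contracted in the minimal resolution and so lies in $\Supp\Bk(D+E)$, the pair being almost minimal by Corollary~\ref{Corollary 1.6}. On the components of $D$, the defining properties of $P_0$ and the $P_i$ give $P'\cdot B=\beta$, $P'\cdot C_i=\beta_i$, and $P'\cdot D_j=0$ for every other component. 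These coincide with the intersection numbers of $P$: by definition $\beta=P\cdot B$ and $\beta_i=P\cdot C_i$, while $P\cdot D_j=0$ for the remaining $D_j$ is exactly the $P$-quasi-orthogonality of the $Z_i$ (each such $D_j$ satisfies $D_j\cdot B=0$). Since $P$ and $P'$ agree against a basis and the form is non-degenerate, $P=P'$.

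For (ii)--(iv) I would set $M=-I(D)$, so $\det M=d(D)=-a$ by Lemma~\ref{Lemma 3.5}. The coefficient vector of $P_0$ (resp.\ $P_i$) is the negative of the $B$-th (resp.\ $C_i$-th) column of $M^{-1}$, since it solves $I(D)x$ equal to the coordinate vector at $B$ (resp.\ $C_i$); hence $c_0=-(M^{-1})_{BB}$, $a_{i0}=c_i=-(M^{-1})_{C_iB}$ (using symmetry of $M$) and $a_{ij}=-(M^{-1})_{C_iC_j}$. Ordering the basis with $B$ first, $M$ is arrow-shaped: the corner entry is $-B^2$, the diagonal blocks are $M_i=-I(Z_i)$, the blocks between distinct $Z_i$ vanish because the $Z_i$ are disjoint, and the only coupling of $B$ to the $i$-th block is a single $-1$ in the $C_i$ slot. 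Block inversion through the Schur complement $s=\det M/\prod_j d(Z_j)$ then reduces every entry I need to the single scalar $(M_i^{-1})_{C_iC_i}=d(Z_i-C_i)/d(Z_i)=d(\ol Z_i)/d(Z_i)=e_i$. Reading off the blocks yields $(M^{-1})_{BB}=(\prod_j d(Z_j))/d(D)$, then $(M^{-1})_{C_iB}=e_i(\prod_j d(Z_j))/d(D)$ and, for $i\neq j$, $(M^{-1})_{C_iC_j}=e_ie_j(\prod_k d(Z_k))/d(D)$; substituting $d(D)=-a$ and $e_i\prod_j d(Z_j)=d(\ol Z_i)\prod_{j\neq i}d(Z_j)$ gives the four displayed formulas.

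The inversion itself is routine; the points needing care are the identification of the block minor $(M_i^{-1})_{C_iC_i}$ with the capacity $e_i$ and the sign bookkeeping, for which Lemma~\ref{Lemma 3.5} ($d(D)<0$) is exactly what lets one rewrite $-1/d(D)$ as $1/a$. I would also note that (iv) is intended for $i\neq j$: when $i=j$ the diagonal contribution $(M_i^{-1})_{C_iC_i}$ adds an extra term that the stated formula omits.
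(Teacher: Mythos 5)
Your proof is correct and takes essentially the same route as the paper's: part (i) is proved there exactly as you do it (the difference $P-(\beta P_0+\beta_1P_1+\cdots+\beta_rP_r)$ has zero intersection with every component of $D+E$, which form a basis of $\Pic(\ol{S})\otimes\Q$), and parts (ii)--(iv) are obtained by solving the linear system defining $P_0$ and the $P_i$ via Cramer's rule with the determinant identities of [KR, 2.1.1] -- your Schur-complement block inversion, with the cofactor identity $(M_i^{-1})_{C_iC_i}=d(\ol{Z}_i)/d(Z_i)=e_i$, is just an organized version of that same computation. Your closing caveat is also well taken: (iv) does hold only for $i\neq j$ (for $i=j$ the two displayed expressions in the lemma are inconsistent with one another, and the actual coefficient is $\frac{e_i^2}{a}\prod_k d(Z_k)-e_i$), a restriction the paper leaves implicit.
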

\begin{proof}
(i)\ Since the intersection of $P-(\beta P_0+\beta_1P_1+\cdots+\beta_rP_r)$ with every component of $D+E$
is zero, we have $P=\beta P_0+\beta_1P_1+\cdots+\beta_rP_r$.

(ii)\ Write
\[
P_0=c_0B+\sum_{i=1}^ra_{i0}C_i+\sum_{i=1}^r\sum_{j=1}^{n_i}d_{ij}Z_{ij}\ ,
\]
where $\ol{Z}_i=\sum_{j=1}^{n_i}Z_{ij}$ is the decomposition into irreducible components. The condition that $P_0\cdot B=1$
and $P_0\cdot D_j=0$ for any other component $D_j$ of $D$ gives rise to a system of linear
equations in the variables $c_0, c_i$ and $d_{ij}$. Solving it by means of Cramer's rule we
obtain the equalities. For the computation of the determinant of an intersection matrix we refer
to \cite[2.1.1]{KR}.

(iii) and (iv)\ Consider $P_i$ written as a linear combination of $B, C_i$ and the $Z_{ij}$ and
obtain a system of linear equations from the given condition that $P_i\cdot C_i=1$ and
$P_i\cdot D_j=0$ for any other component $D_j$. The coefficients are obtained by solving it.
The computation is similar to the case of $P_0$.
\end{proof}

\begin{lem}\label{Lemma 3.2}
Let $\Pi=\prod\limits_{i=1}^rd(Z_i)$. We have the following equalities.
\begin{enumerate}
\item[{\rm (i)}]
$P\cdot P_0=\dps{\frac{\Pi}{a}(\beta +\sum\limits_{i=1}^r\beta_ie_i)}$.
\item[{\rm (ii)}]
$P\cdot P_j=\dps{\frac{\Pi}{a}(\beta+\sum\limits_{i=1}^r\beta_ie_i)e_j}$.
\item[{\rm (iii)}]
$P^2=\dps{\frac{\Pi}{a}(\beta+\sum\limits_{i=1}^{r}\beta_ie_i)^2}$.
\item[{\rm (iv)}]
$\dps{\Pi(\beta+\sum\limits_{i=1}^r\beta_ie_i)^2\leq\frac{3a}{|\Gamma|}\leq\frac{3}{2}}$.
\end{enumerate}
\end{lem}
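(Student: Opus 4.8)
The plan is to derive (i)--(iii) as direct intersection-number computations resting on Lemma \ref{Lemma 3.1} and on the $P$-quasi-orthogonality supplied by Proposition \ref{Proposition 2.14}~(a), and to reserve the substantive work for the inequality (iv).

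First I would use the explicit shape of $P_0$. Writing
\[
P_0=c_0B+\sum_{i=1}^r a_{i0}C_i+\sum_{i=1}^r\sum_{j=1}^{n_i} d_{ij}Z_{ij},
\]
with $\ol{Z}_i=\sum_j Z_{ij}$, I compute $P\cdot P_0$ term by term. By definition $P\cdot B=\beta$ and $P\cdot C_i=\beta_i$, while every $Z_{ij}$ is a component of $Z_i$ not meeting $B$, so (as $C_i$ is the only component of $Z_i$ meeting $B$) $P$-quasi-orthogonality of $Z_i$ gives $P\cdot Z_{ij}=0$. Hence $P\cdot P_0=c_0\beta+\sum_i a_{i0}\beta_i$, and substituting $c_0=\Pi/a$ and $a_{i0}=e_i\Pi/a$ from Lemma \ref{Lemma 3.1}~(ii),(iii) yields (i). The identical computation for $P_j$, now using $c_j=a_{j0}=e_j\Pi/a$ and $a_{ij}=e_ie_j\Pi/a$ from Lemma \ref{Lemma 3.1}~(iii),(iv), gives $P\cdot P_j=\frac{e_j\Pi}{a}\bigl(\beta+\sum_i\beta_ie_i\bigr)$, which is (ii). For (iii) I expand $P^2$ through the expansion $P=\beta P_0+\sum_j\beta_jP_j$ of Lemma \ref{Lemma 3.1}~(i) and insert (i) and (ii); the common factor $(\beta+\sum_i\beta_ie_i)$ factors out of both terms, and the remaining sum $\beta+\sum_j\beta_je_j$ reproduces it, giving the square.

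For (iv) I would first recast (iii) as $\Pi\bigl(\beta+\sum_i\beta_ie_i\bigr)^2=a\,P^2$, so the claim splits into the two estimates $P^2\le 3/|\Gamma|$ and $a\le|\Gamma|/2$. The first is exactly the log-BMY bound already used in the proof of Lemma \ref{Lemma 2.9} (the form bounding $P^2$ directly, cf. \cite[Chapter 2, Theorem 6.6.2]{Mi}); it gives at once $\Pi(\beta+\sum_i\beta_ie_i)^2=aP^2\le 3a/|\Gamma|$.

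The main obstacle is the second estimate $a\le|\Gamma|/2$, equivalently $3a/|\Gamma|\le 3/2$, and this is where a genuinely non-formal input is needed. Here I would identify $a=d(E)$ with the order of $H_1$ of the link of $q$: the determinant of the exceptional intersection matrix computes the order of its cokernel, which is $H_1$ of the boundary $S^3/\Gamma$ of a tubular neighborhood of $E$; since $\Gamma$ acts freely on $S^3$ one has $\pi_1(S^3/\Gamma)=\Gamma$, hence $a=|\Gamma^{\mathrm{ab}}|$. Because $q$ is a \emph{non-cyclic} quotient singularity and every small abelian subgroup of $GL(2,\C)$ is cyclic (simultaneous diagonalization makes both coordinate projections injective on a group with no pseudo-reflections), $\Gamma$ is non-abelian, so $|[\Gamma,\Gamma]|\ge 2$ and $a=|\Gamma|/|[\Gamma,\Gamma]|\le|\Gamma|/2$. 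Combining the two estimates gives $\Pi(\beta+\sum_i\beta_ie_i)^2\le 3a/|\Gamma|\le 3/2$, which is (iv).
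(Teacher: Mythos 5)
Your proposal is correct and follows essentially the same route as the paper: (i)--(iii) are obtained by intersecting the explicit expansions from Lemma \ref{Lemma 3.1} with $P$ and using $P$-quasi-orthogonality to kill the $Z_{ij}$ terms, and (iv) combines the BMY bound $P^2\le 3/|\Gamma|$ (via $\chi(S')=1$) with $a=d(E)=|\Gamma_{\mathrm{ab}}|\le|\Gamma|/2$, which holds because the non-cyclic hypothesis forces $\Gamma$ to be non-abelian. The only difference is that you spell out the justifications (link of the singularity, small abelian subgroups of $\GL(2,\C)$ being cyclic) that the paper invokes without comment.
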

\begin{proof}
(i)\ Writing
\[
P_0=c_0B+\sum_{i=1}^ra_{i0}C_i+\sum_{i=1}^r\sum_{j=1}^{n_i}d_{ij}Z_{ij}\ ,
\]
take the intersection with $P$, where we note that $P\cdot Z_{ij}=0$ because $Z_i$ is
$P$-quasi-orthogonal. Then we obtain the equality by Lemma \ref{Lemma 3.1}.

(ii)\ If we use the expression of $P_j$ in terms of $B, C_i$ and $Z_{ij}$, the same argument as above
applies.

(iii)\ Use the expression $P=\beta P_0+\beta_1P_1+\cdots+\beta_rP_r$ and the equalities (i) and (ii).

(iv)\ The BMY-inequality in the form in \cite[Theorem 6.6.2]{Mi} is stated as
\[
0 < P^2=(K_{\ol{S}}+(D+E)^\#)^2 \le 3\left\{\chi(S')+\frac{1}{|\Gamma|}-1\right\}\ .
\]
Since we assume that the singular point $q$ is not cyclic, $\Gamma$ is not abelian. Hence
$a=d(E)=|\Gamma_{\rm ab}| \le \frac{|\Gamma|}{2}$, where $\Gamma_{\rm ab}$ is the abelianization
of $\Gamma$. Since $\chi(S')=1$, we obtain the
stated inequality.
\end{proof}

\begin{lem}\label{Lemma 3.4}
With the notations of Lemma \ref{Lemma 3.1}, the following assertions hold.
\begin{enumerate}
\item[{\rm (i)}]
$\beta_id(\ol{Z}_i)=\beta_ie_id(Z_i)$ is an integer.
\item[{\rm (ii)}]
$\beta_i=0$ if and only if either $Z_i$ is a linear chain and $C_i$ is a tip of $Z_i$, or $Z_i$ is a
linear chain $C_i+Z_{i1}+Z_{i2}$ with $Z_{i1}^2=Z_{i2}^2=-2$ or $Z_i$ is a fork of type $(2,2,n)$ and
$C_i$ is the tip of the $n$-twig.
\end{enumerate}
\end{lem}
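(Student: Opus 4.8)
The plan is to compute $\beta_i=C_i\cdot P$ by a single local calculation on the tree $Z_i$ and then read off both assertions. Throughout I use that we are in the situation of Proposition~\ref{Proposition 2.14}~(a), so each $Z_i$ is contractible, MNC and $P$-quasi-orthogonal; in particular $P\cdot D_j=0$ for every component $D_j$ of $\overline Z_i$, and (by Setup~\ref{Setup 2.1}~(c)) no $Z_i$ contracts to a smooth point, whence $Z_i$ is the resolution of a genuine quotient singularity and has at most one branching component. Writing $P=K_{\overline S}+(D+E)^\#=K_{\overline S}+D+E-\mathrm{Bk}(D+E)$ and applying adjunction to the smooth rational curve $C_i$ (recalling $C_i\cdot E=0$) gives $C_i\cdot(K_{\overline S}+D+E)=-2+b_i$, where $b_i$ is the number of components of $D$ meeting $C_i$. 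Everything therefore reduces to evaluating $C_i\cdot\mathrm{Bk}(D+E)$.

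The only input needed is the bark coefficient of the root $T_1$ (the component meeting the branch point) of a maximal twig $T$ of $D$. Solving the defining system $I(T)\gamma=(0,\dots,0,-1)^{t}$ for a linear chain — the relevant off-diagonal minor of the tridiagonal matrix being $\pm1$ — shows this coefficient equals $1/d(T)$. Since $\mathrm{Bk}(D+E)$ is supported on the maximal twigs of $D$ and on $E$, and $C_i$ meets only $B$ (bark coefficient $0$) and components of $\overline Z_i$, I split into two cases. If $C_i$ is non-branching in $D$ it is either the root of the maximal twig $Z_i$, so $P\cdot C_i=0$ by the Zariski-Fujita decomposition, or it lies on a linking chain from $B$ to the branch point of $Z_i$, so it and its neighbour both carry bark coefficient $0$; either way $\beta_i=0$. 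If $C_i$ is branching then its own bark coefficient is $0$ and only the twig-branches of $Z_i$ at $C_i$ contribute, giving
\[
\beta_i=-2+b_i-\sum_{T}\frac{1}{d(T)},
\]
the sum taken over the maximal twigs $T$ of $D$ rooted at $C_i$.

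Assertion (i) now follows from multiplicativity: deleting $C_i$ from the tree $Z_i$ disconnects it into the branches at $C_i$, so $d(\overline Z_i)=\prod_{\text{branches}}d(\cdot)$, and every twig-branch $T$ occurring in the sum is one of these factors, whence $d(T)\mid d(\overline Z_i)$; multiplying the displayed formula by $d(\overline Z_i)$ clears all denominators (and in the non-branching case $\beta_i=0$). For assertion (ii), in the branching case put $\delta=b_i-1\ge2$ for the degree of $C_i$ in $Z_i$, so that $\beta_i=0$ reads $\sum_T 1/d(T)=\delta-1$; since each $d(T)\ge2$, the sum is at most $\delta/2$, forcing $\delta=2$ and the two branches at $C_i$ to be twigs with $1/d_1+1/d_2=1$, i.e. two single $(-2)$-curves — the chain $Z_i=Z_{i1}+C_i+Z_{i2}$. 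In the non-branching case $C_i$ is a tip of $Z_i$: if $Z_i$ is a chain it is a maximal twig (the first listed shape), while if $Z_i$ is a fork with branch point $W$, then $W\cdot P=0$ by $P$-quasi-orthogonality (Observation~\ref{Observation 2.12.2}); because the arm through $C_i$ is now a linking chain to $B$ rather than a maximal twig, only the other two arms count as twigs at $W$, so $1-1/d_1-1/d_2=0$, giving $d_1=d_2=2$ and a $(2,2,n)$-fork with $C_i$ the tip of the $n$-twig. The converse directions are immediate from the same formula.

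The main difficulty is organizational rather than deep: one must track, for the distinguished component $C_i$ adjacent to $B$, whether it is a twig root, an interior point of a linking chain, or a true branch point of $D$, and — decisively — recognize in the fork case that the arm joining $Z_i$ to $B$ has become a linking chain, so that $P$-quasi-orthogonality at $W$ sees only two twigs and pins the type down to $(2,2,n)$. The two clean facts that make the bookkeeping transparent are the bark identity $\gamma_{T_1}=1/d(T)$ and the multiplicativity $d(\overline Z_i)=\prod_{\text{branches}}d(\cdot)$.
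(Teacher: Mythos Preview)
Your argument is correct and follows essentially the same route as the paper: compute $\beta_i$ via $C_i\cdot(K_{\ol S}+D)-C_i\cdot\Bk(D)$, identify the bark contribution as $\sum_T 1/d(T)$ over the maximal twigs of $D$ rooted at $C_i$, and use multiplicativity of $d(\ol Z_i)$ over the branches at $C_i$ for (i). For (ii) both you and the paper split according to the position of $C_i$ and invoke $P$-quasi-orthogonality at the fork vertex $W$ to pin down the $(2,2,n)$ type. Your inequality $\delta-1\le\delta/2$ is a slightly slicker way to dispose of the case $C_i=W$ than the paper's direct computation, but the substance is identical.

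One small quibble: you cite Setup~\ref{Setup 2.1}~(c) for the assertion that no $Z_i$ contracts to a smooth point, but Setup~\ref{Setup 2.1}~(c) \emph{assumes} this rather than proving it. The fact does hold in the situation of Proposition~\ref{Proposition 2.14}~(a)---either $D$ has no $(-1)$-curve (so each MNC $Z_i$ has none and cannot contract to a smooth point), or Lemma~\ref{Lemma 2.13} applies and $B$ is of type~2.6~(i), which by definition means each $Z_i$ contracts to a genuine quotient singularity---but the justification should point there rather than to Setup~\ref{Setup 2.1}~(c).
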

\begin{proof}
We freely use Observation \ref{Observation 2.12.2}.\\
(i)\ If $C_i$ is in $\Supp(Bk(D))$, then $Z_i$ is a linear chain with $C_i$ as a tip and $\beta_i=0$. So suppose $C_i$ is not in $\Supp(Bk(D))$. Then among the components of $D$ meeting $C_i$ only those in a maximal twig of $D$ are in $\Supp(Bk(D))$. So if $T_1,\cdots,T_\ell$ are the maximal twigs of $D$ meeting $C_i$
we have
\begin{eqnarray*}
\beta_i=C_i\cdot P &=& C_i\cdot(K_{\ol{S}}+D+E)-C_i\cdot \Bk(D) \\
&=&C_i\cdot (K_{\ol{S}}+D)-\sum\limits_{j=1}^{l}\frac{1}{d(T_j)}\ .
\end{eqnarray*}
Every $T_i$ is a connected component
of $\ol{Z_i}$ and
\[d(\ol{Z_i}) = d(\ol{Z_i} - (T_1 + \dots + T_l))\prod\limits_{j=1}^ld(T_j).
\]
Hence  $\prod\limits_{j=1}^{l}d(T_j)$ divides $d(\ol{Z}_i)$. It follows that $\beta_id(\ol{Z}_i)$ is an integer.

(ii)\ Note that $Z_i$ is a contractible linear chain or a contractible fork. If $C_i$ is in $\Supp(Bk(D))$, then $C_i$ is a tip of a linear chain and $\beta_i=0$. Suppose that $C_i$ is not in $\Supp(Bk(D))$. Then there are three
possibilities.\\
(a) $Z_i$ is a linear chain and $C_i$ meets two maximal twigs $T_1, T_2$ of $D$. We then have $\beta_i=0$ if and only if $d(T_1)=d(T_2)=\frac{1}{2}$, i.e., if $T_1, T_2$ are $(-2)$-curves.\\
(b) $Z_i$ is a fork with $C_i$ as branching component and branches $T_1, T_2, T_3$ such that $\sum\limits_{j=1}^{3}\frac{1}{d(T_j)}>1$. In this case $\beta_i=-1+\sum\limits_{j=1}^{3}\frac{1}{d(T_j)}\neq 0$.\\
(c) $Z_i$ is a fork with branching component $B_i\neq C_i$. In this case $C_i$ is a tip of one of the branches, $T_3$ say, of the fork $Z_i$. Since $Z_i$ is $P$-quasi-orthogonal, we have $P\cdot B_i=0$. This again implies that the branches $T_1, T_2$ are $(-2)$-curves, i.e. that $Z_i$ is a $(2,2,n)$-fork with $C_i$ the tip of the n-twig.
\end{proof}

We consider next the case (b) in Proposition \ref{Proposition 2.14}. Let $P_i\ (i=1,2)$ denote
a $\Q$-divisor supported by $D$ such that $B_i\cdot P_i=1$ and $D_j\cdot P_i=0$ for every $D_j\neq B_i$.
Let $b_{ij}$ denote the coefficient of $B_i$ in $P_j$ and let $\beta_i=B_i\cdot P$.

\begin{lem}\label{Lemma 3.3}
Set $L=B_1+T_1+T_2+T$ and $R=B_2+T+U_1+U_2$ with the notations in the case (b) of Proposition
\ref{Proposition 2.14}. Then we have the following equalities.
\begin{enumerate}
\item[{\rm (i)}]
$P=\beta_1P_1+\beta_2P_2$. \svskip
\item[{\rm (ii)}]
$b_{11}=\dps{\frac{d(T_1)d(T_2)d(R)}{a}}$, \quad $b_{21}=b_{12}=\dps{\frac{d(T_1)d(T_2)d(U_1)d(U_2)}{a}}$, \\
\par\noindent
$b_{22}=\dps{\frac{d(U_1)d(U_2)d(L)}{a}}$. \svskip
\item[{\rm (iii)}]
$P\cdot P_i=b_{1i}\beta_1+b_{2i}\beta_2$. \svskip
\item[{\rm (iv)}]
$P^2=b_{11}\beta_1^2+2b_{21}\beta_1\beta_2+b_{22}\beta_2^2$
\end{enumerate}
\end{lem}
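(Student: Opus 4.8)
The plan is to follow the pattern of Lemmas \ref{Lemma 3.1} and \ref{Lemma 3.2}: establish (i) first, then read off (ii) by Cramer's rule, and finally deduce (iii) and (iv) formally from (i) together with the symmetry of the intersection pairing.

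For (i), I would first record that $P=K_{\ol S}+(D+E)^\#$ meets trivially every component of $D+E$ other than $B_1$ and $B_2$. Indeed $P\cdot E_k=0$ for each component of $E$, since $E\subset\Supp\Bk(D+E)$; and for a component $C$ of $D$ distinct from $B_1,B_2$ one checks $C\cdot P=0$ using Observation \ref{Observation 2.12.2}: the components lying in the maximal twigs $T_1,T_2,U_1,U_2$ are in $\Supp\Bk(D+E)$ with $C\cdot P=0$ by (c), while the components of the connecting chain $T$ are non-branching and meet no maximal twig, so $C\cdot P=-2+\alpha_C=0$ by (b). Since $B_1\cdot P=\beta_1$ and $B_2\cdot P=\beta_2$ by definition, and since $P_1,P_2$ are supported on $D$ (hence disjoint from $E$) with $B_i\cdot P_j=\delta_{ij}$ and $D_k\cdot P_j=0$ for the remaining components, the $\Q$-divisor $P-(\beta_1P_1+\beta_2P_2)$ pairs to zero with every member of the basis of $\Pic(\ol S)\otimes\Q$ given by the components of $D+E$. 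As that intersection form is non-degenerate, (i) follows. (The degenerate case $T=\emptyset$, i.e. $B_1\cdot B_2=1$, is covered verbatim.)

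For (ii) I would compute the $b_{ij}$ by Cramer's rule exactly as in Lemma \ref{Lemma 3.1}. Writing $A=(D_k\cdot D_l)$, the defining condition for $P_j$ is $Ax^{(j)}=e_{B_j}$, so $b_{ij}=(A^{-1})_{B_iB_j}$; the symmetry of $A$ already forces $b_{21}=b_{12}$. Using $\det A=(-1)^{\#D}d(D)$ with $d(D)=-a$ from Lemma \ref{Lemma 3.5}, the diagonal cofactor at $B_1$ is the determinant of the intersection matrix of $D-B_1=T_1\sqcup T_2\sqcup R$, so the powers of $-1$ cancel and $b_{11}=d(T_1)d(T_2)d(R)/a$; symmetrically $b_{22}=d(U_1)d(U_2)d(L)/a$ from $D-B_2=U_1\sqcup U_2\sqcup L$. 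The off-diagonal $b_{21}$ is the mixed cofactor obtained by deleting the $B_1$-row and $B_2$-column; evaluating it via the tree-determinant formula of \cite[2.1.1]{KR} along the unique path $B_1+T+B_2$ joining the two branching components removes precisely that path and leaves the four twigs, giving $b_{21}=d(T_1)d(T_2)d(U_1)d(U_2)/a$. This mixed-cofactor identity, and the careful tracking of signs so that the $(-1)^{\#D}$ factors cancel into the stated positive expressions, is the one delicate point of the proof.

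Finally, (iii) and (iv) are purely formal. Since $P_j$ is supported on $D$ and $D_k\cdot P_i=\delta_{k,B_i}$, pairing $P_j$ with $P_i$ extracts the coefficient of $B_i$ in $P_j$, so $P_j\cdot P_i=b_{ij}$ (symmetric in $i,j$ as it must be). Combining with (i), $P\cdot P_i=\beta_1\,P_1\cdot P_i+\beta_2\,P_2\cdot P_i=\beta_1 b_{i1}+\beta_2 b_{i2}=b_{1i}\beta_1+b_{2i}\beta_2$ by symmetry, which is (iii). Substituting (iii) into $P^2=\beta_1(P\cdot P_1)+\beta_2(P\cdot P_2)$ and using $b_{12}=b_{21}$ gives the quadratic form in (iv). Thus the whole statement reduces, once (i) is in place, to the determinant bookkeeping of (ii), which is the main obstacle.
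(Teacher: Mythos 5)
Your proposal is correct and takes essentially the same route as the paper, whose own proof of this lemma is just the remark that it is ``similar to the one of Lemma \ref{Lemma 3.1}'' --- i.e.\ (i) by pairing $P-(\beta_1P_1+\beta_2P_2)$ against the component basis of $\Pic(\ol{S})\otimes\Q$, (ii) by Cramer's rule and matrix calculation, and (iii), (iv) formally from (i) and (ii). The details you supply that the paper leaves implicit --- the check $C\cdot P=0$ on the connecting chain $T$ via Observation \ref{Observation 2.12.2}, the identity $P_j\cdot P_i=b_{ij}$, and the sign bookkeeping in the cofactor computation --- are all sound.
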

\begin{proof}
The proof is similar to the one of Lemma \ref{Lemma 3.1}. (ii) requires Cramer's rule and some matrix
calculation. The rest is clear.
\end{proof}

\section{Further determination of $D$}

In this section, we prove that $D$ has exactly one branching component, i.e., the dual graph of $D$ is
star-shaped. We then call $D$ a {\em comb}. The following lemma shows that we are reduced to the case (a)
in Proposition \ref{Proposition 2.14}.

\begin{lem}\label{Lemma 4.1}
If $D$ is as in the case (b) in Proposition \ref{Proposition 2.14}, then it is also as in the case (a).
\end{lem}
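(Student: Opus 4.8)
The plan is to show that one of the two branching curves, $B_1$ or $B_2$, can play the role of the central curve $B$ in Proposition \ref{Proposition 2.14}(a). Write $L=B_1+T_1+T_2+T$ and $R=B_2+T+U_1+U_2$. The connected components of $D-B_1$ are the two maximal twigs $T_1,T_2$ and the large piece $R$ (symmetrically for $D-B_2$). Each maximal twig is a contractible linear chain carrying no branching curve, hence automatically $\MNC$, contractible and $P$-quasi-orthogonal. The piece $R$ is contractible by hypothesis, so by Observation \ref{Observation 2.12.2}(d) the only remaining requirement is that $R$ be $P$-quasi-orthogonal; the unique component of $R$ that is branching in $D$ and disjoint from $B_1$ is $B_2$, so this requirement is exactly $\beta_2:=B_2\cdot P=0$. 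By symmetry, $D-B_2$ realizes case (a) iff $\beta_1:=B_1\cdot P=0$. Thus the whole lemma reduces to the single assertion $\beta_1\beta_2=0$ (the $\MNC$ point being deferred to the last paragraph).

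Next I would compute the two numbers. Since each $B_i$ is a branching curve of branching number $3$ and is disjoint from $E$, adjunction gives $B_i\cdot(K_{\ol S}+D+E)=1$, while only the two maximal twigs at $B_i$ meet $\Supp\Bk(D+E)$ (the connecting chain $T$ is interior, not a twig). Peeling (Observation \ref{Observation 2.12.2}) therefore yields
\[
\beta_1=1-\frac{1}{d(T_1)}-\frac{1}{d(T_2)},\qquad \beta_2=1-\frac{1}{d(U_1)}-\frac{1}{d(U_2)},
\]
both $\ge 0$ as $P$ is nef. Moreover $T$ is contractible: a non-contractible chain joining the two branching curves $B_1,B_2$ would, by Lemma \ref{Lemma 1.2.0}(b) and Corollary \ref{Corollary 1.2.0.1}, produce a $\C^*$-fibration of $S'\setminus\Sing S'$, against $\lkd=2$. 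Hence $L,R$ are genuine contractible forks and the fork inequality $1/d(T_1)+1/d(T_2)+1/d(T)>1$ (and its analogue for $R$) gives the crucial bounds $0\le\beta_1,\beta_2<1/d(T)$.

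The heart of the proof is to exclude $\beta_1>0$ and $\beta_2>0$ simultaneously. Writing $\beta_1=m_1/(d(T_1)d(T_2))$ and $\beta_2=m_2/(d(U_1)d(U_2))$ with $m_1,m_2$ non-negative integers, the assumption forces $m_1,m_2\ge 1$. Substituting the closed forms of Lemma \ref{Lemma 3.3} into $P^2=b_{11}\beta_1^2+2b_{12}\beta_1\beta_2+b_{22}\beta_2^2$ turns it into
\[
P^2=\frac{1}{a}\left(\frac{d(R)\,m_1^2}{d(T_1)d(T_2)}+2m_1m_2+\frac{d(L)\,m_2^2}{d(U_1)d(U_2)}\right),
\]
and I would confront this with the BMY ceiling $0<P^2\le 3a/|\Gamma|\le 3/2$ (Lemma \ref{Lemma 3.2}, using $\Gamma$ non-abelian so $|\Gamma|\ge 2a$) together with the identity $a=-d(D)$ (Lemma \ref{Lemma 3.5}). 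The remaining, and main, obstacle is to expand $a=-d(D)$ through the dumbbell determinant formula of \cite[2.1.1]{KR} in terms of $d(L),d(R),d(T)$ and the branch determinants, and to verify that with both $m_i\ge 1$ the right-hand side already overshoots $3/2$. This is the genuinely computational core of the lemma; the bounds $\beta_i<1/d(T)$ from the previous paragraph are what keep the estimate tight enough to close.

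Granting $\beta_1\beta_2=0$, say $\beta_2=0$, I take $B=B_1$. When $T\neq\emptyset$ the curve $B_2$ still has branching number $3$ inside $R$, so $R$ is $\MNC$ even if $B_2^2=-1$, and case (a) follows. It remains to rule out the degenerate possibility $T=\emptyset$ (two $(-1)$-curves meeting, as produced by Lemma \ref{Lemma 2.12}): here $\beta_2=0$ forces $d(U_1)=d(U_2)=2$, so $R=U_1+B_2+U_2$ is a $(-2,-1,-2)$-chain, i.e. a degenerate $\BP^1$-fibre; the pencil $|R|$ then makes $B_1$ a $1$-section and all other boundary components vertical, exhibiting a $\C$-fibration of $S'\setminus\Sing S'$ and contradicting $\lkd=2$. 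Hence $T=\emptyset$ does not occur, and every configuration of type (b) is also of type (a).
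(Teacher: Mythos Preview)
Your strategy matches the paper's, but you misstate the BMY bound and thereby miss the one-line finish. The correct inequality is $P^2\le 3/|\Gamma|$, hence $aP^2\le 3a/|\Gamma|\le 3/2$; your ``$P^2\le 3a/|\Gamma|\le 3/2$'' is off by a factor of $a$. With the right bound there is no ``computational core'': in your own formula
\[
aP^2=\frac{d(R)\,m_1^2}{d(T_1)d(T_2)}+2m_1m_2+\frac{d(L)\,m_2^2}{d(U_1)d(U_2)}
\]
the outer terms are non-negative (since $d(L),d(R)>0$), so $aP^2\ge 2m_1m_2$. If $m_1,m_2\ge 1$ this gives $2\le 3/2$, a contradiction. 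No dumbbell expansion of $d(D)$ is needed, and your fork bounds $\beta_i<1/d(T)$ play no role. This is exactly the paper's argument: drop the diagonal terms, keep the cross term $2ab_{21}\beta_1\beta_2=2m_1m_2$.

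Your closing paragraph on $T=\emptyset$ addresses a point the paper does not make explicit (the MNC requirement on $R$ when $B_2$ becomes non-branching in $R$), but the details are slightly off: the fibre is $U_1+2B_2+U_2$, so $B_1$ is a $2$-section, not a $1$-section, and the induced ruling of $S_0$ is a $\C^*$-fibration, not a $\C$-fibration. The contradiction with $\lkd(S_0)=2$ survives. Alternatively, Lemma \ref{Lemma 2.12} already asserts that $B_2+U_1+U_2$ is contractible when $T=\emptyset$; with $B_2^2=-1$ and $d(U_1)=d(U_2)=2$ the determinant of this chain vanishes, so that case is excluded outright.
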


\begin{proof}
We retain the notations of Lemma \ref{Lemma 3.3}. Since $d(R)>0$ and $d(L)>0$, $b_{11}$ and $b_{22}$ are
positive. As in the proof of Lemma \ref{Lemma 3.2}~(iv), we can also show in the present case
that $P^2\leq \frac{3}{|\Gamma|} \leq\frac{3}{2a}$. By Lemma \ref{Lemma 3.3} we obtain
\[
2ab_{21}\beta_1\beta_2 \le aP^2 \le \frac{3}{2}
\]
and hence
\begin{equation}\label{eqn 4.1}
2d(T_1)d(T_2)d(U_1)d(U_2)\beta_1\beta_2 \leq \frac{3}{2}\ .
\end{equation}
Since $\beta_i=P\cdot B_i$ and $P=K_{\ol{S}}+(D+E)^\#$, we have
\[
\beta_1=1-\frac{1}{d(T_1)}-\frac{1}{d(T_2)}\geq 0\ \ \mbox{and}\ \ \beta_2=1-\frac{1}{d(U_1)}-\frac{1}{d(U_2)}\geq 0\ .
\]
This implies that
$d(T_1)d(T_2)\beta_1$ and
$d(U_1)d(U_2)\beta_2$
are non-negative integers. If both are positive, we have a contradiction by (\ref{eqn 4.1}).
Hence we may assume that $\beta_2 = 0$, i.e., $d(U_1)=d(U_2)=2$. This means that $R=T+B_2+U_1+U_2$ is
$P$-quasi-orthogonal  and contractible. So, we are in the case (a).
\end{proof}

The following result will be used in the proof of Lemma \ref{Lemma 4.3} below.

\begin{lem}\label{Lemma 4.2}
If $a>4$, then there is no divisor $F$ with $\Supp(F) \subset D$ such that the condition {\rm (a)}
and one of the conditions {\rm (b)} and {\rm (c)} specified below are satisfied.
\begin{enumerate}
\item[{\rm (a)}]
$F$ is a fiber of a $\BP^1$-fibration $f : \ol{S} \to \BP^1$.
\item[{\rm (b)}]
$D-\Supp(F)=T_1 + T_2$ is a union of two disjoint  twigs of $D$, where $T_1$ is a single curve and
$T_2=T_2'+T_2''$ with a single curve $T_2'$ meeting $\Supp(F)$ and a possibly empty linear chain $T_2''$
which is a twig of $D$. Further, $T_1$ and $T_2'$ are $2$-sections of the fibration.
\item[{\rm (c)}]
$D-\Supp(F)=U_1+U_2$, where $U_1$ is a single curve, $U_2$ is a possible empty linear chain, $U_1+U_2$ is
a twig of $D$ and $U_1$ is a $4$-section of the fibration.
\end{enumerate}
\end{lem}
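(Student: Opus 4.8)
The plan is to assume such a divisor $F$ exists, form the $\BP^1$-fibration $f : \ol{S} \to \BP^1$ it defines, and force a contradiction with either $\lkd(S'\setminus\Sing S')=2$ or with $H_1(S;\Z)=0$ (Lemma \ref{Lemma 1.4}). First I would record the horizontal and vertical parts of the boundary $D+E$ for $f$. Since the irreducible components of $D\cup E$ are numerically independent and every complete fiber is numerically equivalent to $F$, the divisor $F$ is the only fiber contained in $D+E$, so $\nu=1$ in the notation of Lemma \ref{Lemma 1.2.1}. In case (b) the horizontal components of $D$ are exactly the two $2$-sections $T_1$ and $T_2'$, and in case (c) the single $4$-section $U_1$; in both cases the horizontal degree coming from $D$ equals $4$. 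Because $E$ has negative definite intersection matrix it contains no fiber, so the remaining boundary curves ($T_2''$, respectively $U_2$, together with any vertical part of $E$) lie in fibers. Each multisection meets a fiber only along a component of the appropriate multiplicity, so that $D$ remains a tree (Lemma \ref{Lemma 1.7}); this local feature is what I will exploit.

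Next I would use Fujita's formula. Since $\wt{b}_1(S_0)=\wh{b}_2(S_0)=0$, Lemma \ref{Lemma 1.2.1} applied to $D+E$ and $f$ gives $h+\nu=\Sigma+2$, which together with $\nu=1$ pins the excess $\Sigma$ down to a very small value and hence limits both the number of singular fibers and the number of their $S_0$-components. Using Lemma \ref{Lemma 1.1} I would then describe these few singular fibers; in particular the fiber $F_E$ carrying $E$ is essentially the only source of nontrivial vertical structure, and away from it the fibration is a product. At this point the generic fiber of $f$ over $S_0$ is $\BP^1$ minus $4$ points, a hyperbolic curve, so I do not expect a direct $\C$- or $\C^*$-ruling of $S_0$; the force of $\lkd=2$ instead enters through Corollary \ref{Corollary 1.2.0.1}, to exclude twigs that would produce a $\C^*$-ruling in the degenerate sub-cases.

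The decisive step is a covering argument applied to the multisections, in the spirit of the covering technique already used for Lemma \ref{Lemma 2.5}. Each irreducible $2$-section (case (b)) or the $4$-section (case (c)) restricts to a finite map to the base $\BP^1$ whose branch points lie over fibers where the multisection meets a component of the matching multiplicity. I would take the cyclic base change of $\BP^1$ of that order, form the normalized fibre product with $f$, and check that over each branch point the ramification of the base change is cancelled exactly by the multiplicity of the fiber component carrying the multisection, so that the resulting finite morphism is \emph{unramified over} $S$. Since the multisection is irreducible with nontrivial monodromy, this cover of $S$ is connected and nontrivial, contradicting $H_1(S;\Z)=0$. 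The hypothesis $a>4$ is what makes this work: through $d(D)=-a$ (Lemma \ref{Lemma 3.5}) and the constraints on the multiplicities occurring in $F$ and in $F_E$, it guarantees that the branch points of the multisections sit only on boundary components of the correct multiplicity and not on $S_0$-components, which is precisely the input needed for the fibre product to be \'etale over $S$.

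The main obstacle I anticipate is exactly this ramification bookkeeping: I must show that every component of a singular fiber meeting $S$ has multiplicity divisible by the order of the base change, so that no ramification is introduced over the open surface. This requires a careful analysis of the few singular fibers permitted by the Fujita count, carried out with Lemma \ref{Lemma 1.1}, and it is here that the numerical weight of $a>4$ is used to exclude the configurations in which the base change would ramify over $S$. The two cases (b) and (c) are parallel, with case (c) requiring the degree-$4$ base change and a somewhat longer analysis of the monodromy of the $4$-section.
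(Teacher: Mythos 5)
Your opening is sound and matches the paper's first step: with $\nu=1$ and $h=2$ (case (b)), resp.\ $h=1$ (case (c)), Fujita's formula (Lemma \ref{Lemma 1.2.1}) gives $\Sigma=1$, resp.\ $\Sigma=0$, which bounds the number of $S_0$-components of the singular fibers. But the step you yourself call decisive --- the cyclic base change keyed to the multisections --- has a genuine gap, and not one that can be patched. For the normalized fibre product to be \'etale over $S$ you need that, over the auxiliary branch point of $T_1$ (resp.\ $U_1$), \emph{every} fiber component lying in $S$ --- the $E$-components as well as the $S_0$-components --- has multiplicity divisible by the covering order. Nothing forces this. The second branch point of the $2$-section $T_1$ need not lie under $F_E$ at all: it can be a point of simple tangency of $T_1$ with a multiplicity-one $S_0$-component (the SNC condition constrains only the boundary). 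Worse, in the configurations that actually have to be excluded, the fiber $F_E$ typically contains multiplicity-one $(-1)$-curves meeting the $2$-sections, so no cyclic cover of any order is unramified over $S$ along that fiber. And even in the configuration where $F_E$ has a unique $(-1)$-component of multiplicity $2$, the components of $E$ sit inside $F_E$ with odd multiplicities (the two $(-2)$-tips of the fork have multiplicity $1$, cf.\ Lemma \ref{Lemma 2.11}), so the double cover ramifies along $E\subset S$; retreating to $S_0$ instead of $S$ does not help, because $H_1(S_0;\Z)$ has order $a$, which may be even, so a connected \'etale double cover of $S_0$ is not obstructed.

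The misattribution of the role of $a>4$ is the clearest symptom that the strategy cannot work. In the paper's proof that hypothesis enters through a single, purely numerical mechanism: when $F_E$ has a unique $(-1)$-component $C$, necessarily of multiplicity $\geq 2$, contracting $F_E$ shows that if $\mult(C)=2$ then all remaining components, in particular all of $E$, are $(-2)$-curves; $E$ is then a $(2,2,n)$-fork of $(-2)$-curves, whence $d(E)=a=4$, contradicting $a>4$. This configuration is a genuine geometric possibility at $a=4$ (the lemma is sharp there), so it can never be eliminated by a covering obstruction --- a topological argument cannot distinguish $a=4$ from $a=6$. All the remaining configurations are disposed of in the paper not by coverings but by the structure theory of singular fibers: Lemma \ref{Lemma 1.5} forces every $S_0$-component of $F_E$ to meet $D+E$ at least twice, and Lemma \ref{Lemma 1.1}~(f) then forces $F_E$ to be a linear chain, contradicting the fact that $F_E$ contains the fork $E$. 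Once your covering step fails, you would be thrown back on exactly this case analysis, so the proposal as written does not constitute a proof.
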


\begin{proof}
Suppose that a divisor $F$ exists. Let $F_E$ be the fiber containing $E$. Consider first the case (b).
With notation as in Lemma \ref{Lemma 1.2.1} (applied to $\ov{S}\setminus (D\cup E)$, we have $\Sigma=1$
since $\nu=1, h=2$ and $\wt{b}_1(S_0)=\wh{b}_2(S_0)=0$.

Let $C$ be an $S_0$-component $C$ of $F_E$. Then $C \cdot E \le 1$ since $C, E$ are both part of the fiber $F_E$. We have $C \cdot D > 0$ since $S'$ is affine.  Since a simple curve
does not exist by Lemma \ref{Lemma 1.5}, we have $C\cdot D>1$. If $C\cdot T_2'' > 0$, then $C, T_2''$ are part
of $F_E$ and $C\cdot T_2''=1$.

Suppose that $C$ has multiplicity $m>1$ in $F_E$. Then $C\cdot T_1\leq 1$ and  $C\cdot T_2 \leq 1$ since $T_1, T_2'$ are $2$-sections. We now conclude  $C\cdot T_1= 1$ and  $C\cdot T_2 = 1$ and $m=2$. Moreover $F_E\cdot T_1=C\cdot T_1$ and $C$ is the only component of $F_E$ meeting $T_1$. If $C\cdot T_2'=1$, then $C$ is the only component of $F_E$ meeting $T_2$, and if $C\cdot T_2''=1$, then $C\cdot T_2'=0$.
If $C$ is the only $S_0$-component of
$F_E$, then it is the unique $(-1)$-component. By induction on the number of components it follows then from $m=2$ that $F_E-C$
consists of $(-2)$-curves. More precisely, $F_E-C = E$  is a fork of type $(2,2,n)$, with branching component a $(-2)$-curve and $C$ meeting
the tip of the $n$-twig, which consists of $(-2)$ curves well.  This implies $d(E)=a =4$, contrary to our
hypothesis. Hence $F_E$ contains another $S_0$-component $C'$. Since $\Sigma=1$, $C$ and $C'$ exhaust
the $S_0$-components of $F_E$.
By Lemma \ref{Lemma 1.5} and what we said above, we have $C'\cdot T_2>1$. Since $C'\cdot T_2'' \leq 1$ we have $C'\cdot T_2' > 0$ and $C\cdot T_2'=0$.   This means that $C$
meets $T_2''$, in particular that $T_2''\neq \emptyset$. It now follows that $C'$ has multiplicity $m'=1$ in $F_E$ since otherwise $T_2'\cdot F_E>2$. For the same reason, $C'\cdot T_2' =1$. This gives $C'\cdot T_2''=1$.
If $C'$ is not a $(-1)$-curve then $C$ is the unique $(-1)$-component in $F_E$, and we reach
a contradiction to the hypothesis $a > 4$ as above. So $C'$ is a $(-1)$-curve and it is a tip of $F_E$
since $m'=1$. It follows that $C'\cdot E=0$.  We now contract $C'$ and consecutively contractible components of $C'+T_2''$. Then $E$
is not touched in this process. At some stage of the process, $C$ becomes the unique $(-1)$-component of
the fiber. Since $m=2$, the remaining components of the fiber, in particular those of $E$, are
$(-2)$-curves. This leads to a contradiction to the hypothesis as above.

Thus any $S_0$-component of $F_E$ has multiplicity $1$. At least one of them is a $(-1)$-curve. By Lemma \ref{Lemma 1.1}~(a) there is another $(-1)$-curve, necessarily an $S_0$-component. Since $\Sigma=1$ these two
exhaust the $S_0$-components of $F_E$, i.e., $F_E$ has precisely two $(-1)$-components, both of multiplicity $1$. By \ref{Lemma 1.1}~(f), $F_E$ is a linear chain, but it contains $E$, which we
assumed is not a linear chain.

Consider next the case (c). We have $\Sigma=0$ in Lemma \ref{Lemma 1.2.1} since now $h=1$. So $F_E$ has a unique $S_0$-component $C$, which is also a unique $(-1)$-component of $F_E$. If $\mult(C)=2$,
then we have a contradiction as above. So, $\mult(C) \geq 3$. By Lemma \ref{Lemma 1.5}, $C\cdot (U_1+U_2)>1$.
Hence $C$ meets $U_1$ and $C\cdot U_1=1$ since $U_1$ is a $4$-section, and $C$ must also meet $U_2$. It follows that $\mult(C)=3$ and that
the component of $U_2$ which meets $U_1$ has multiplicity $1$ in $F_E$. Now Lemma \ref{Lemma 1.1}~(f) implies
that the connected component of $F_E \setminus C$ containing $E$ is a linear chain. In particular,
$E$ is a linear chain. This is a contradiction.
\end{proof}

The following result shows that the divisor $D$ (or its dual graph) is {\em star-shaped}.

\begin{lem}\label{Lemma 4.3}
$D$ has exactly one branching component.
\end{lem}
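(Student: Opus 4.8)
The plan is to reduce at once, via Lemma \ref{Lemma 4.1}, to the situation of Proposition \ref{Proposition 2.14}~(a): there is a branching component $B$ of $D$ with $D-B=Z_1+\cdots+Z_r$ ($r\ge 3$), each $Z_i$ an MNC, contractible, $P$-quasi-orthogonal divisor, and $C_i$ the component of $Z_i$ meeting $B$. Observe that ``$D$ has exactly one branching component'' is equivalent to ``every $Z_i$ is a linear chain'', so I would assume for contradiction that some $Z_i$ is a fork, i.e. that $D$ carries a second branching component $B'$ lying inside $Z_i$.

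By Lemma \ref{Lemma 3.4}~(ii) a fork $Z_i$ is of exactly one of two shapes: either $C_i$ is itself the branching component of $Z_i$, so that $\beta_i=-1+\sum_{j=1}^{3}\frac{1}{d(T_j)}>0$, or $Z_i$ is a $(2,2,n)$-fork whose branching component $B_i\ne C_i$ carries two $(-2)$-curve branches $N_1,N_2$, with $C_i$ the tip of the $n$-twig and $\beta_i=0$. The first shape makes $C_i$ a branching component of branching number $4$ in $D$ (three branches inside $Z_i$, plus the edge to $B$); applying Proposition \ref{Proposition 2.6} with center $C_i$ then forces $r_{C_i}=4$, hence case (i), and the connected component $B+\sum_{j\ne i}Z_j$ of $D-C_i$ cannot be an $A_1$-singularity, so the BMY estimate in the proof of Proposition \ref{Proposition 2.6} is violated. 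Thus $Z_i$ is the interior $(2,2,n)$-fork. I would then apply Proposition \ref{Proposition 2.6} with center $B_i$: this is legitimate since $D$ (hence $D-B_i$) is MNC and the component of $D-B_i$ containing $B$ and $E$ is non-contractible; types (i) and (ii) are excluded (the latter by Lemma \ref{Lemma 2.8}) and type (iv) is excluded because two of the three branches at $B_i$, namely $N_1,N_2$, are single curves. So $B_i$ is of type \ref{Proposition 2.6}~(iii), and Lemma \ref{Lemma 2.9} forces $B_i^2=-2$ and shows the component $D_1$ of the long branch meeting $B_i$ to be a $(-2)$-curve. Hence $B_i+N_1+N_2+D_1$ is a $(-2)$-fork.

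Next I would produce a $\BP^1$-fibration from this $(-2)$-fork. Together with the $(-2)$-structure forced along the long branch and at $B$ (by contractibility of twigs, Corollary \ref{Corollary 1.2.0.1}, and Lemma \ref{Lemma 2.9}), the fork $B_i+N_1+N_2+D_1$ sits inside a fiber $F$ of a $\BP^1$-fibration $f:\ol{S}\to\BP^1$ with $\Supp F\subset D$, exactly the setup of Lemma \ref{Lemma 2.11}; its proof describes $F$ and shows every $1$-section meets the fork. Tracking the remaining components of $D$ (the other branches $Z_j$, $j\ne i$, and the part of the long chain running through $C_i$ to $B$), these are horizontal for $f$; because simple curves do not exist (Lemma \ref{Lemma 1.5}) and by the fibre structure of Lemma \ref{Lemma 1.1}, they organize into the twigs and $2$-sections, respectively the single $4$-section, demanded by Lemma \ref{Lemma 4.2}~(b), respectively (c). Lemma \ref{Lemma 4.2} then yields a contradiction whenever $a>4$.

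Finally I would dispose of the low values $a=d(E)\le 4$ separately, since Lemma \ref{Lemma 4.2} requires $a>4$. For these I would use the sharpened inequality of Lemma \ref{Lemma 3.2}~(iv), $\Pi\,(\beta+\sum_i\beta_ie_i)^2\le 3a/|\Gamma|$, which is extremely restrictive when $|\Gamma|$ is large relative to $a$; combined with $\beta_i=0$, the $P$-quasi-orthogonality of the $(2,2,n)$-fork, and the fact that $P^2>0$ by strictness of BMY (Lemma \ref{Lemma 1.3}~(ii), as $\lkd(S_0)=2$), this should leave no room for the heavy second fork. I expect the genuine obstacle to be the third paragraph: extracting the fibration cleanly from the $(-2)$-fork and verifying that the surviving twigs of $D$ meet the fibres precisely as the $2$-section/$4$-section hypotheses of Lemma \ref{Lemma 4.2} require, together with the finite bookkeeping needed for the cases $a\le 4$.
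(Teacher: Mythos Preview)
Your opening reduction is wrong, and this is the principal gap. ``$D$ has exactly one branching component'' is equivalent to ``each $Z_i$ is a linear chain \emph{and} $B$ meets $Z_i$ in a tip'', not merely to ``each $Z_i$ is a linear chain''. If $Z_i$ is a linear chain but $C_i$ is an interior component, then $C_i$ meets two components of $Z_i$ as well as $B$, hence is a second branching component of $D$. Your fork dichotomy does not cover this; Lemma~\ref{Lemma 3.4}~(ii) in fact lists it explicitly as one of the three $\beta_i=0$ configurations (the chain $C_i+Z_{i1}+Z_{i2}$ with $Z_{i1}^2=Z_{i2}^2=-2$), and the paper spends real effort eliminating precisely this case.

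Several further steps are too loose to stand. In your first fork sub-case you quote the conclusion $|G_j|=2$ for all four branches from the \emph{proof} of Proposition~\ref{Proposition 2.6}, but that bound was derived there under the hypotheses $\lkd(Y)\ge 0$ and almost-minimality of $(\ol{Y},Q)$, neither of which you check for the new center $C_i$; you also do not verify Setup~\ref{Setup 2.1}~(c) (if $r=3$ and $B^2=-1$, then $B$ is a non-branching $(-1)$-curve in $D-C_i$, so MNC-ness and the non-smooth-contraction hypothesis both need work). In the fibration paragraph you assert that the $(-2)$-fork $B_i+N_1+N_2+D_1$ sits in a fibre $F$ with $\Supp F\subset D$ without constructing $F$; Lemma~\ref{Lemma 2.11} analyses such a fibre but does not produce it, and you do not verify that the remaining pieces of $D$ satisfy the precise $2$-section or $4$-section hypotheses of Lemma~\ref{Lemma 4.2}. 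That verification is the heart of the matter and cannot be waved away, nor can the residual case $a\le 4$.

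For comparison, the paper's argument is organised quite differently: it makes the inequality of Lemma~\ref{Lemma 3.2}~(iv) do the heavy lifting. Writing $s$ for the number of $Z_i$ that are linear chains met by $B$ in a tip, and using that $\beta\prod_{i\le s}d(Z_i)$ and each $\beta_k\, d(\ol{Z}_k)$ are integers (Lemma~\ref{Lemma 3.4}~(i)), the inequality $\Pi\bigl(\beta+\sum_i\beta_ie_i\bigr)^2\le\frac{3}{2}$ forces first $\beta_k=0$ for $k>s$, then $r=3$, $s=2$, and finally pins $(d_1,d_2,d_3)$ down to a short explicit list. Only after this drastic numerical reduction does the paper construct explicit pencils, tailored to each surviving configuration (for instance $|2(B+T+B_1)+U_1+U_2|$ or $|2Z_1+4B+2T+2B_1+U_1+U_2|$), and invoke Lemma~\ref{Lemma 4.2}, or else compute $d(D)$ directly to reach a contradiction. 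The structural re-centerings through Proposition~\ref{Proposition 2.6} that your proposal relies on play no role in this step.
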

\begin{proof}
In view of Lemma \ref{Lemma 4.1}, we may assume that $D$ is as in the case (a) of Proposition
\ref{Proposition 2.14}. Let $D-B=Z_1\cup\dots\cup Z_r$ be the decomposition into connected components.
Let $Z_1,\dots Z_s$ exhaust all connected components $Z_i$ such that $Z_i$ is a linear chain and $B$
meets $Z_i$ in a tip. Let $C_j$ be the component of $Z_j$ such that $C_j\cdot B=1$. Then
$C_j\not\subset Supp(Bk(D))$ for $j>s$. Hence we have
\[
\beta=B\cdot P=B\cdot (K_{\ol{S}}+D+E)-B\cdot \Bk(D)=r-2-\sum\limits_{i=1}^s\frac{1}{d(Z_i)}\ .
\]
Suppose that $s<r$. If $s=r$ then $D$ is star-shaped. \\
(i) Suppose further that $\beta>0$. Take an index $k$ with $r \ge k > s$. Looking at one term in Lemma
\ref{Lemma 3.2}~(iv), we obtain
\[
2\beta\beta_ke_k\Pi < \frac{3}{2}\ ,
\]
where $\Pi=\prod\limits_{i=1}^rd(Z_i)$. We group this as
\[
(2\beta\prod\limits_{i=1}^sd(Z_i))\cdot(\beta_kd(\ol{Z}_k))\cdot (\prod\limits_{j>s,j\neq k}d(Z_j))<\frac{3}{2}\ .
\]
Since
\[
\beta\prod\limits_{i=1}^sd(Z_i)=(r-2-\sum\limits_{i=1}^s\frac{1}{d(Z_i)})\prod\limits_{i=1}^sd(Z_i)
\]
is a positive integer and since $d(\ol{Z}_k)\beta_k$ is a non-negative integer by Lemma \ref{Lemma 3.4}~(i), we must have $\beta_k=0$. Since $\beta_i=0$ for $i=1,\ldots, s$, we obtain from Lemma \ref{Lemma 3.2}~(iv) that
\[
\left(r-2-\sum\limits_{i=1}^s\frac{1}{d(Z_i)}\right)^2\prod\limits_{i=1}^s d(Z_i)
\prod\limits_{j>s}d(Z_j)\leq\frac{3}{2}\ .
\]
Since  $d(Z_i)\geq 2$ for $i=1,\ldots,s$ we have
\begin{equation}\label{eqn 4.2}
\left(r-2-\frac{s}{2}\right)^22^s\prod\limits_{j>s}d(Z_j)\leq\frac{3}{2}\ .
\end{equation}
Since $s\leq r-1$ we obtain
\begin{equation}\label{eqn 4.3}
\left(\frac{r-3}{2}\right)^22^s\prod\limits_{j>s}d(Z_j)\leq\frac{3}{2}\ .
\end{equation}
Since $\beta_r=0$, Lemma \ref{Lemma 3.4}~(ii) implies that either $Z_r=C_r+U_1+U_2$ is a linear chain
composed of $3$ components with $U_1^2=U_2^2=-2$, or $Z_r$ is a $(2,2,n)$-fork and $C_r$ is the tip of the $n$-twig.

In the first case, we have
$C_r^2 \le -2$ by Lemma \ref{Lemma 2.7}, whence $d(Z_r) \geq 4$. In the second case, $d(Z_r)$ is divisible
by $4$, whence $d(Z_r)\geq 4$ again. Now (\ref{eqn 4.3}) implies that $r=3$ and (\ref{eqn 4.2}) then
yields $s=2$. Lemma \ref{Lemma 3.2}~(iv) again gives
\begin{equation}\label{eqn 4.4}
d(Z_1)d(Z_2)d(Z_3)\left(1-\frac{1}{d(Z_1)}-\frac{1}{d(Z_2)}\right)^2\leq \frac{3}{2}\ .
\end{equation}
We may assume that $d(Z_1)\leq d(Z_2)$. If $d(Z_1)=d(Z_2)=2$ we obtain $\beta =0$ contrary to our assumption.
Hence $d(Z_1)d(Z_2)\geq 6$ and $\frac{1}{d(Z_1)}+\frac{1}{d(Z_2)}\leq \frac{5}{6}$. Now (\ref{eqn 4.4})
gives $d(Z_3)\leq 9$, $d(Z_1)=2$ and $d(Z_2)=3$. Also $\beta =\frac{1}{6}$, and $Z_2$ is a $(-3)$-curve or a chain of two $(-2)$-curves.

(i.1) We assume that $Z_3$ is a $(2,2,n)$-fork. Then $d(Z_3)$ is divisible by $4$. Hence $d(Z_3)=4$ or
$d(Z_3)=8$. Let $B_1$ be the branching component
of $Z_3$ and let $U_1,U_2$ be the $(-2)$-twigs of $Z_3$. Let $T$ be the third twig (the $n$-twig) of $Z_3$
which meets $B$ in the tip $C_3$. Let $b=-B^2, b_1=-B_1^2$.

Suppose that $Z_2$ is a single $(-3)$-curve. Suppose first that $d(Z_3)=4$. Then $Z_3$ is a fork composed only of $(-2)$-curves and $b_1=2$. Let $\ol{Z}_3=Z_3-C_3$. Then $d(\ol{Z_3})=4$ as well. We have $d(D)=d(Z_3)(6b-5)-6d(\ol{Z}_3)$.
Hence $d(D)=24b-44$. Since $d(D)<0$ and since $b > 0$ by Lemma \ref{Lemma 2.7}, we obtain $b=1$,
i.e., $B$ is a $(-1)$-curve, and $a=-d(D)=20$. The divisor $F=2(B+T+B_1)+U_1+U_2$ induces
a $\BP^1$-fibration $f : \ol{S} \to \BP^1$, for which $Z_1$ and $Z_2$ are $2$-sections. We reach
a contradiction by Lemma \ref{Lemma 4.2}. Suppose next that $d(Z_3)=8$. This implies that $C_3^2=-3$ and all other
components of $Z_3$ are $(-2)$-curves. We compute $d(D)=48b-64$, whence $b=1$ as above. We consider
the $\BP^1$-fibration given by $2Z_1+4B+2T+2B_1+U_1+U_2$, for which $Z_2$ is a $4$-section. Again we have a contradiction by Lemma \ref{Lemma 4.2}.

Now suppose that $Z_2=C_2+Z_2'$ consists of two $(-2)$-curves with $C_2$ meeting $B$. Suppose first
that $d(Z_3)=4$. As above we find $d(D)=24b-52$. Hence $b=1$ or $2$ since $d(D) < 0$. Suppose that $b=2$.
Then $D$ consists of $(-2)$-curves and $d(D)=-4$. This implies that $a=4$ and hence $E$ is a $(2,2,n)$-fork
consisting only of $(-2)$-curves. Thus the generators of $\Pic(\ol{S})\otimes \Q$ are $(-2)$-curves,
which implies that $K_{\ol{S}}$ is numerically equivalent to $0$. This is a contradiction because $\ol{S}$
is a rational surface. Therefore $B^2=-1$ and $d(D)=-28$. Consider again the $\BP^1$-fibration
defined by $2(B+T+B_1)+U_1+U_2$. Now $C_2$ and $Z_1$ are $2$-sections and  $Z_2'$ is contained in a fiber.
This is in contradiction to Lemma \ref{Lemma 4.2}. Suppose next that $d(Z_3)=8$. Then, as seen above, $C_3^2=-3$
and all other components of $Z_3$ are $(-2)$-curves. We compute $d(D)=48b-80$, whence $b=1$ as $d(D) < 0$.
We consider the $\BP^1$-fribration defined by $2Z_1+4B+2T+2B_1+U_1+U_2$, for which $C_2$ is a $4$-section.
This is again in contradiction to Lemma \ref{Lemma 4.2}.

(i.2) We assume next that $Z_3=C_3+U_1+U_2$ is a linear chain, where $U_1^2=U_2^2=-2$ and $B\cdot C_3=1$.
Suppose first that  $Z_2=C_2+Z_2'$. We compute $d(D)=(6b-7)(4b_1-4)-24<0$, where $b_1=-C_3^2$. By Lemma \ref{Lemma 2.7}, $b_1\geq 2$ and $b\geq 1$. Hence either $b=b_1=2$ or $b=1$. In the first case, $d(D)=-4$ and hence $a=4$.
So $E$ is a $(-2)$-fork of type $(2,2,n)$, $D+E$ consists of $(-2)$-curves and we reach a
contradiction as above. In the second case, consider the branching component $C_3$. The branch containing $B$ is a non-contractible linear chain. We find that $C_3$ only fits case
(iii) in Proposition \ref{Proposition 2.6}, and hence $\lkd(\ol{S}-(D-C_3+E)) \geq 0$. However,
$Z_1+2B+C_2$ defines a $\BP^1$-fibration on $\ol{S}$ and an $\A^1$-fibration on $\ol{S}-(D-C_3+E)$,
whence $\lkd(\ol{S}-(D-C_3+E))=-\infty$. This is a contradiction. Suppose next that $Z_2$ is a single
$(-3)$-curve. We compute $d(D)=24bb_1-24b-20b_1-4$. Since $d(D)<0$, we obtain $6bb_1-6b-5b_1-1<0$,
i.e., $(6b-5)(b_1-1)<6$. Since $b_1\geq 2$ and $b\geq 1$, we have $b=1$. Since $\Pi=d(Z_1)d(Z_2)d(Z_3)
=24(b_1-1)$ and $\beta=\frac{1}{6}$ and since $\beta_i=0$ for $i=1,2,3$, Lemma \ref{Lemma 3.2}~(iv) yields
\[
\Pi\cdot \beta^2=\frac{1}{36}\cdot 24(b_1-1) \leq \frac{3}{2}\ .
\]
This implies that $b_1\leq 3$. Hence $b_1=2$ or $b_1=3$. We consider a $\BP^1$-fibration defined by
$2B+2B_1+U_1+U_2$ in the case $b_1=2$ and by $2Z_1+4B+2B_1+U_1+U_2$ in the case $b_1=3$ and obtain
a contradiction by Lemma \ref{Lemma 4.2}.

(ii) Suppose now that $\beta=0$. Then
\begin{equation}\label{eqn 4.5}
\beta= r-2-\sum\limits_{i=1}^s\frac{1}{d(Z_i)}=0\ .
\end{equation}
Since $d(Z_i) \ge 2$ for $1 \le i \le s$ and $s < r$, it follows from (\ref{eqn 4.5}) that $r=3$, $s=2$
and $d(Z_1)=d(Z_2)=2$. Since $\beta_1=\beta_2=0$, Lemma \ref{Lemma 3.2}~(iii) yields
\[
P^2=\frac{4d(Z_3)}{a}\beta_3^2e_3^2=\frac{4}{a}\beta_3^2\frac{d(\ov Z_3)^2}{d(Z_3)}\ .
\]
Since $d(D)=4d(Z_3)(-B^2-1-e_3)<0$, we have $e_3>-B^2-1$. In view of Lemma \ref{Lemma 2.7}, $B^2\leq -2$.
Hence $e_3 > 1$ and $d(\ol{Z}_3)>d(Z_3)$. Note that $\beta_3 >0$, for otherwise $P^2=0$ and
$\lkd(S_0)<2$.
Let $U_1,\dots, U_{\ell}$ be the maximal twigs of $D$ which meet $C_3$ and let $d_i = d(U_i)$. Then we have
\[
\beta_3=P\cdot C_3=-2+\alpha-\sum\limits_{i=1}^\ell\frac{1}{d_i}\ ,
\]
where $\alpha$ = $C_3\cdot (D-C_3)$ is the branching number of $C_3$ in $D$. Since $\beta_3>0$ we have $\alpha \geq 3$. We deduce from  Lemma
\ref{Lemma 3.2}~(iv) that
\[
4\left(-2+\alpha -\sum\limits_{i=1}^{\ell}\frac{1}{d_i}\right)^2\frac{d(\ol{Z}_3)^2}{d(Z_3)} \leq\frac{3}{2}\ .
\]
Since $d(\ov{Z_3}) > d(Z_3)$ and $\beta_3 > 0$ we find
\[
4\left(-2+\alpha -\sum\limits_{i=1}^{\ell}\frac{1}{d_i}\right)^2d(\ol{Z}_3) < \frac{3}{2}\ .
\]
Since $Z_3$ is a linear chain or a fork the case $\ell =0$ is ruled out and we have the following possibilities.\\
(a) $\alpha =3$ and $\ell=1$.\\
(b) $\alpha =3$ and $\ell=2$.\\
(c) $\alpha =4$ and $\ell=2$.\\
Suppose we have (a). Then $Z_3$ is a contractible fork and $\ov{Z}_3$ has two connected components, $U_1$ and another linear chain $U$. The component in $U$ that meets $C_3$  is the branching component of $Z_3$. We have $d(\ov{Z_3})=d_1d(U)$ and find
$4(1-\frac{1}{d_1})(d_1-1)d(U) <\frac{3}{2}$. This is not possible.\\
Suppose we have (b). Then $Z_3=U_1+C_3+U_2$ is a linear chain and $d(\ov{Z_3})=d_1d_2$. We obtain $4(1-\frac{1}{d_1}-\frac{1}{d_2})^2d_1d_2<\frac{3}{2}$.
We may assume $d_1 \leq d_2$. Since  $d_1 =2= d_2$ is ruled out by $\beta_3 >0$ we find $d_1=2$ and $d_2=3$. The configuration we obtain, with the roles of $B$ and $C_3$ reversed, has been ruled out as a possibility in (i) above.\\
Suppose we have (c). Then $Z_3$ is a fork with $C_3$ as branching component and $d(\ov{Z_3})=d_1d_2d_3$. We obtain $4(1-\frac{1}{d_1}-\frac{1}{d_2}-\frac{1}{d_3})^2d_1d_2d_3<\frac{3}{2}$. This is not possible.\\
\end{proof}

\section{First proof of Theorem 1}

We shall finish the proof of Theorem 1 in two different ways. The approach in this section is a continuation of the foregoing arguments and the detailed determination of $D$ and
$E$. The second approach to be given in the next section is a more conceptual one based on the topology of surfaces with
a good $\C^*$-action.  Both approaches start with Lemma \ref{Lemma 4.3}, where it was proved that the divisor $D$ (or its dual graph) is
{\em star-shaped}.\\

In the first approach we now proceed to narrow down the possibilities of the numerical data
for $D$ and $E$. We give precise data on $B^2$ and the number $r$ of maximal twigs $Z_i$
sprouting from $B$, and give bounds for the
determinants $d(Z_i)$. This leads to a finite list of configurations for $D$, which, when matched with the corresponding possibilities for $E$, are shown not to exist. \\

We begin by summarizing some information about $E$.

Proof of the following results owes much to \cite[6.18]{F}. We note again that the
thicker bark $\Bk^*E$ in \cite{F} for a fork $E$ is denoted here by $\Bk E$.

\begin{lem}\label{Lemma 5.3}
Let $H$ be the branching component of $E$ and let $H^2=-h$. Let $R_1,R_2,R_3$ be the maximal twigs of $E$.
We assume that $2=d(R_1)\leq d(R_2)\leq d(R_3)$. Let $e(R_i)$ denotes the inductance of $R_i$. Let
$\delta=\sum\frac{1}{d(R_i)}$. Then the following assertions hold.
\begin{enumerate}
\item[{\rm (i)}]
If $E$ is of type $(2,3,3)$, then $a=d(E)$ is divisible by $3$.
\item[{\rm (ii)}]
If $E$ is of type $(2,3,4)$, then $a$ is even and not divisible by $4$.
\item[{\rm (iii)}]
If $E$ is of type $(2,3,5)$, then $a$ is odd.
\item[{\rm (iv)}]
If $E$ is of type $(2,2,n)$, then $a=d(D)=4n(h-1)-4\ol{n}$, where
$\ol{n}$ denotes the determinant of the chain which is obtained from the $n$-chain of $E$ by deleting
the component meeting  $H$.
\item[{\rm (v)}]
If $a\leq 4$, then $E$ is a $(-2)$-fork, i.e., every component is a $(-2)$-curve.
\item[{\rm (vi)}]
$a\neq 5,6,18,25,35$.
\item[{\rm (vii)}]
$(\Bk E)^2=-\dfrac{d(R_1)d(R_2)d(R_3)(\delta-1)^2}{a}-\sum e(R_i)$.\\
If $E$ is of type $(2,2,n)$, this gives
\[
(\Bk E)^2=-1-\frac{4}{na}-\frac{\wt{ n}}{n}\ ,
\]
where $\wt{n}$ denotes the determinant of the chain which is obtained from the $n$-chain of $E$ by
deleting the tip of $E$. In particular, $(\Bk E)^2<-1$.
\end{enumerate}
\end{lem}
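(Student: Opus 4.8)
The plan is to run all seven parts off a single master formula, the determinant expansion of \cite[2.1.1]{KR} for the fork $E$ with branching component $H$ ($H^2=-h$) and maximal twigs $R_1,R_2,R_3$:
\[
a=d(E)=\Big(\prod_{i=1}^{3}d(R_i)\Big)\Big(h-\sum_{i=1}^{3}\frac{\ol{d}(R_i)}{d(R_i)}\Big)
=h\prod_{i=1}^{3}d(R_i)-\sum_{i=1}^{3}\ol{d}(R_i)\prod_{j\neq i}d(R_j).
\]
I will use freely that $E$ is the exceptional divisor of a minimal resolution, so $h\geq 2$ and every component of $E$ is a $(\leq -2)$-curve; that contractibility forces $(d(R_1),d(R_2),d(R_3))$ to be one of the Platonic types $(2,2,n),(2,3,3),(2,3,4),(2,3,5)$ (the solutions of $\sum 1/d(R_i)>1$); that $d(R_1)=2$ makes $R_1$ a single $(-2)$-curve with $\ol{d}(R_1)=1$; and that $\gcd(\ol{d}(R_i),d(R_i))=1$ for every twig (coprimality of consecutive Hirzebruch--Jung determinants).

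For (i)--(iii) I would just reduce the master formula modulo the relevant integer. In type $(2,3,3)$ every summand carries a factor $3$, so $3\mid a$. In type $(2,3,4)$ reduction mod $2$ gives $a$ even, while mod $4$ only $6\,\ol{d}(R_3)$ survives; since $\gcd(\ol{d}(R_3),4)=1$ forces $\ol{d}(R_3)$ odd, we get $a\equiv 2\pmod 4$. In type $(2,3,5)$ only $15\,\ol{d}(R_1)=15$ is odd, so $a$ is odd. Part (iv) is the substitution $d(R_1)=d(R_2)=2$, $d(R_3)=n$, $\ol{d}(R_3)=\ol{n}$, giving $a=4n(h-1)-4\ol{n}$; in particular $4\mid a$ for type $(2,2,n)$, which I will reuse.

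Parts (v) and (vi) become finite Diophantine checks organized by type. For (v): in type $(2,2,n)$ the identity $a=4\big(n(h-1)-\ol{n}\big)$ together with $\ol{n}\leq n-1$ (equality iff the $n$-twig is all $(-2)$) forces $a>4$ unless $h=2$ and $E$ is a $(-2)$-fork; in the three sporadic types the congruences (i)--(iii) pin $a$ to a single small residue, and solving $a\in\{1,2,3\}$ in $(h,\ol{d}(R_2),\ol{d}(R_3))$, using $\ol{d}(R_i)<d(R_i)$ and coprimality, leaves only the $(-2)$-forks $E_6,E_7,E_8$. For (vi): since $4\nmid 5,6,18,25,35$, type $(2,2,n)$ is excluded outright, and the congruences restrict each forbidden value to at most two sporadic types; writing $a$ as the corresponding linear form in $(h,\ol{d}(R_2),\ol{d}(R_3))$ and running through the allowed values of the $\ol{d}(R_i)$ with $h\geq 2$, one checks no triple solves $a=5,6,18,25,35$.

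Part (vii) is the delicate one, and the bark bookkeeping is the main obstacle. Since $E\subset\Supp N$ in the Zariski--Fujita decomposition and $E\cap D=\emptyset$, the nef part satisfies $P\cdot E_j=0$, which yields the characterization $\Bk E\cdot E_j=(K+E)\cdot E_j=\beta_j-2$, where $\beta_j$ is the branching number of $E_j$ (so the value is $+1$ at $H$, $-1$ at each tip, $0$ internally). Hence $(\Bk E)^2=\Bk E\cdot(K+E)=c_H-\sum_i c_{\mathrm{tip},i}$. Solving the bark recurrence along each twig (the computation of \cite[6.18]{F}) gives $c_{\mathrm{tip},i}=\tfrac{1}{d(R_i)}c_H+e(R_i)$ and the coefficient $c^{(1)}_i$ of the $R_i$-component meeting $H$ equal to $\tfrac{\ol{d}(R_i)}{d(R_i)}c_H+\tfrac{1}{d(R_i)}$; the equation $\Bk E\cdot H=-hc_H+\sum_i c^{(1)}_i=1$, combined with $h-\sum\ol{d}(R_i)/d(R_i)=a/\prod d(R_i)$, gives $c_H=\prod d(R_i)\,(\delta-1)/a$. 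Substituting,
\[
(\Bk E)^2=-c_H(\delta-1)-\sum_i e(R_i)=-\frac{d(R_1)d(R_2)d(R_3)(\delta-1)^2}{a}-\sum_i e(R_i),
\]
which is the asserted formula. Specializing to type $(2,2,n)$, where $\delta-1=1/n$, $\prod d(R_i)=4n$, $e(R_1)+e(R_2)=1$ and $e(R_3)=\wt{n}/n$, collapses this to $(\Bk E)^2=-1-\tfrac{4}{na}-\tfrac{\wt{n}}{n}$, and positivity of the last two terms gives $(\Bk E)^2<-1$.
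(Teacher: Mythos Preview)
Your proof is correct and follows essentially the same approach as the paper: the paper also writes down the master formula $a=2r_2r_3h-r_2r_3-2\ol r_2 r_3-2r_2\ol r_3$, declares (i)--(vi) elementary (working one sample case, $a\neq 6$, exactly as your Diophantine checks do), and for (vii) simply cites \cite[6.18]{F}. Your derivation of (vii) via the intersection equations $\Bk E\cdot E_j=\beta_j-2$, the twig recursion giving $c^{(1)}_i$ and $c_{\mathrm{tip},i}$, and the $H$-equation solving for $c_H$ is a correct and more self-contained unpacking of what Fujita's computation provides; this is the only place your write-up goes beyond the paper's, and it buys independence from the external reference at no real cost.
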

\begin{proof}
 Let $r_i=d(R_i)$. The possible $\ol{r_i}$ are determined by
$1 \leq \ol{r_i}<r_i$ and $\gcd(r_i, \ol{r}_i)=1$. We have
\begin{eqnarray*}
a &=& r_1r_2r_3h-\ol{r}_1r_2r_3-r_1\ol{r}_2r_3-r_1r_2\ol{r}_3 \\
&=& 2r_2r_3h-r_2r_3-2\ol{r}_2r_3-2r_2\ol{r}_3\ .
\end{eqnarray*}
The proof of (i) through (vi) is now elementary. As a sample we show $a \neq 6$ and then omit further details.
Suppose that $a=6$.  By (i) through (iv), $E$ is not of type $(2,2,n)$ or $(2,3,5)$. Suppose that $E$ is of type $(2,3,3)$. We have
$6=18h-9-6\ol{r}_2-6\ol{r}_3$, which is a contradiction. Suppose that $E$ is of type $(2,3,4)$. Then
$6=24h-12-8\ol{r}_2-6\ol{r}_3$, i.e., $9=12h-4\ol{r}_2-3\ol{r}_3$. Here $\ol{r}_2$ = 1 or 2, and $\ol{r_3}$ = 1 or 3. This equality
does not hold. \\
The calculation of $Bk(E)^2$ is given in \cite[6.18]{F}.
\end{proof}

\begin{lem}\label{Lemma 5.0.1}
We have $a=d(E) \geq 7$ and $a
\neq 18, 25, 35$.
\end{lem}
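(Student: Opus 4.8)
The plan is to combine the structural result that $D$ is star-shaped with the numerical restrictions on $a$ already assembled in Lemma \ref{Lemma 5.3}. The exclusions $a \neq 18, 25, 35$ are immediate, since they are literally part of the conclusion of Lemma \ref{Lemma 5.3}~(vi), proved there by the elementary Diophantine analysis of $a = 2r_2r_3h - r_2r_3 - 2\ol{r}_2r_3 - 2r_2\ol{r}_3$ under the divisibility constraints (i)--(iv). Thus the only genuine content of the present statement is the lower bound $a \geq 7$.

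First I would invoke Lemma \ref{Lemma 4.3}, together with Lemma \ref{Lemma 4.1} (which reduces case (b) of Proposition \ref{Proposition 2.14} to case (a)), to conclude that we are in case (a) of Proposition \ref{Proposition 2.14}: the divisor $D$ has a single branching component $B$, and all connected components of $D-B$ are contractible, MNC, and $P$-quasi-orthogonal. In precisely this situation Lemma \ref{Lemma 2.15} applies and gives $K_{\ol{S}}\cdot E > 0$, so that $q$ is not a rational double point.

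Since $E$ is a fork by Setup \ref{Setup 2.1}~(a), the condition that $q$ be a rational double point is equivalent to $E$ being a $(-2)$-fork (i.e.\ one of the types $D_n, E_6, E_7, E_8$). As $q$ is \emph{not} a rational double point, $E$ is not a $(-2)$-fork, and the contrapositive of Lemma \ref{Lemma 5.3}~(v) yields $a > 4$, that is $a \geq 5$. Combining this with the exclusions $a \neq 5$ and $a \neq 6$ from Lemma \ref{Lemma 5.3}~(vi) forces $a \geq 7$, which completes the argument. I do not expect a real obstacle here: the proof is a short synthesis of the star-shapedness established in Section 4 with the case analysis of $E$ in Lemma \ref{Lemma 5.3}. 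The only point requiring any care is to verify that the hypotheses of Lemma \ref{Lemma 2.15} are met, i.e.\ that we are indeed in case (a) of Proposition \ref{Proposition 2.14}, so that the exclusion of rational double points is available.
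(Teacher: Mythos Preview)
Your proposal is correct and follows essentially the same route as the paper's proof, which simply cites Lemma \ref{Lemma 2.15}, Lemma \ref{Lemma 4.1}, and parts (v) and (vi) of Lemma \ref{Lemma 5.3}. The only minor difference is that you also invoke Lemma \ref{Lemma 4.3}; this is harmless but not actually needed, since Lemma \ref{Lemma 2.15} requires only that we are in case (a) of Proposition \ref{Proposition 2.14}, which already follows from Proposition \ref{Proposition 2.14} together with Lemma \ref{Lemma 4.1}.
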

\begin{proof} This follows from Lemma \ref{Lemma 2.15}, Lemma \ref{Lemma 4.1} and (v) and (vi) in Lemma \ref{Lemma 5.3}.
\end{proof}

\begin{lem}\label{Lemma 5.1}
Let the notations be the same as in Lemma \ref{Lemma 4.3}. Then we have $r=3$.
\end{lem}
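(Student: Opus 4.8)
The plan is to use the star\-shapedness established in Lemma \ref{Lemma 4.3} to make the numerics of the central curve $B$ completely explicit, and then to eliminate $r\ge 4$ by combining the BMY estimate of Lemma \ref{Lemma 3.2} with a single fibration argument. Keep the notation $D-B=Z_1+\cdots+Z_r$, with $C_i$ the component of $Z_i$ meeting $B$, $d_i=d(Z_i)$, $\ol{Z}_i=Z_i-C_i$, $e_i=d(\ol{Z}_i)/d_i$, and $\Pi=\prod_{i=1}^r d_i$. Because $D$ is now a comb, every $Z_i$ is a full maximal twig, so each $C_i\in\Supp(\Bk D)$ and hence $\beta_i=C_i\cdot P=0$ by Observation \ref{Observation 2.12.2}(c); moreover each $Z_i$ is a contractible chain containing no $(-1)$-curve, so $d_i\ge 2$. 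With all $\beta_i=0$, Lemma \ref{Lemma 3.2}(iii),(iv) collapse to
\[
P^2=\frac{\Pi}{a}\,\beta^2,\qquad \Pi\,\beta^2\le\frac{3a}{|\Gamma|}\le\frac32,
\]
while peeling gives $\beta=B\cdot P=r-2-\sum_{i=1}^r\frac1{d_i}$. Since $\lkd(S_0)=2$ forces $P^2>0$, we get $\beta>0$.

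The case $r\ge 5$ is immediate. From $d_i\ge2$ we obtain $\beta\ge (r-4)/2>0$ and $\Pi\ge 2^r$, whence $\Pi\,\beta^2\ge 2^{r-2}(r-4)^2\ge 8>\frac32$, contradicting the displayed bound. Thus $r\le 4$, and I must rule out $r=4$.

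For $r=4$ we have $\beta=2-\sum_{i=1}^4\frac1{d_i}>0$, and $\Pi\,\beta^2\le\frac32$ still leaves exactly one configuration. Raising any $d_i$ increases both $\Pi$ and $\beta$, hence $\Pi\,\beta^2$; checking the minimal cases shows that two or more of the $d_i$ being $\ge 3$ already forces $\Pi\,\beta^2\ge 4$, while $(d_i)=(2,2,2,d)$ gives $\Pi\,\beta^2=2(d-2)^2/d$, which exceeds $\frac32$ for $d\ge4$. The sole survivor is $(2,2,2,3)$: the curves $Z_1,Z_2,Z_3$ are $(-2)$-curves and $Z_4$ (with $d_4=3$) is either a $(-3)$-curve or a chain of two $(-2)$-curves. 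Writing $b=-B^2$ and using $d(D)=\Pi\bigl(b-\sum e_i\bigr)$ together with $d(D)<0$ (Lemma \ref{Lemma 3.5}) and $a=-d(D)\ge 7$ (Lemma \ref{Lemma 5.0.1}), both sub\-cases force $b=1$, since in the chain sub\-case $b=2$ would give $a=4$, which is excluded.

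Finally, with $B^2=-1$ the divisor $F=Z_1+2B+Z_2$ has $F^2=0$ and induces a $\BP^1$-fibration $f:\ol S\to\BP^1$ having $F$ as a fiber; then $Z_3$ and the component $C_4$ of $Z_4$ meeting $B$ are $2$-sections, so $D-\Supp(F)=Z_3+Z_4$ is precisely the configuration of Lemma \ref{Lemma 4.2}(b), with $T_1=Z_3$, $T_2'=C_4$, and $T_2''$ either empty (when $Z_4$ is a $(-3)$-curve) or the remaining $(-2)$-curve of $Z_4$. Since $a>4$, Lemma \ref{Lemma 4.2} yields a contradiction, ruling out $r=4$ and leaving $r=3$. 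The main obstacle is exactly this $r=4$ case: the BMY inequality alone does not eliminate it, because $(2,2,2,3)$ satisfies $\Pi\,\beta^2\le\frac32$, so one must extract the exact shape of $B$ and its twigs and then manufacture the forbidden pencil. The delicate points are verifying $B^2=-1$ through the determinant identity (where the bound $a\ge7$ is indispensable) and checking that both sub\-types of $Z_4$ fit the hypotheses of Lemma \ref{Lemma 4.2}.
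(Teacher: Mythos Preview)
Your argument follows the paper's route almost exactly: use the star shape to get $\beta_i=0$ and $\beta=r-2-\sum 1/d_i$, apply $\Pi\beta^2\le 3/2$ to eliminate $r\ge 5$ and reduce $r=4$ to $(d_1,d_2,d_3,d_4)=(2,2,2,3)$, then finish with the pencil $|Z_1+2B+Z_2|$ and Lemma~\ref{Lemma 4.2}(b). The numerics and the final fibration check are correct.

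There is one gap. You assert that ``both sub-cases force $b=1$,'' but the constraints $d(D)<0$ and $a\ge 7$ only yield $b\le 1$; nothing in your argument gives a lower bound on $b=-B^2$. Lemma~\ref{Lemma 2.7}, which would give $B^2<0$, is stated and proved only for branching components with \emph{precisely three} branches, so it does not apply here. Thus $B^2\ge 0$ is not excluded, and your pencil $F=Z_1+2B+Z_2$ requires $B^2=-1$ exactly (if $B^2=0$ one computes $F^2=4$, not $0$). The paper closes this gap by blowing up the point $B\cap Z_4$ as many times as needed to bring $B^2$ down to $-1$; the surface $S_0$ is unchanged, the new component of $Z_4$ meeting $B$ is still a $2$-section, and Lemma~\ref{Lemma 4.2}(b) applies with $T_2''$ enlarged by the exceptional curves. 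Adding this one blow-up step repairs your proof.
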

\begin{proof}
Let $d_i=d(Z_i)$ for $1 \le i \le r$. We assume that $d_1 \leq \ldots \leq d_r$. We have
$\beta=r-2-\sum\limits_{i=1}^r\frac{1}{d_i}$ and $\beta_i=0$ for $1 \le i \le r$.
>From Lemma \ref{Lemma 3.2}, (iv), we obtain
\begin{equation}\label{eqn 5.1}
\left(r-2-\sum\limits_{i=1}^r\frac{1}{d_i}\right)^2\prod\limits_{i=1}^rd_i \leq \frac{3}{2}\ .
\end{equation}
Suppose that $r\geq 4$. Then
\[
r-2-\sum\limits_{i=1}^r\frac{1}{d_i} \geq r-2-\frac{r}{d_1} \geq r-2-\frac{r}{2} \geq 0\ .
\]
So, we obtain from (\ref{eqn 5.1})
\[
\left(r-2-\frac{r}{d_1}\right)^2d_1^r \leq \frac{3}{2}\ .
\]
or equivalently
\[
\left(d_1(r-2)-r\right)^2d_1^{r-2} \leq \frac{3}{2}\ .
\]
Since $d_1^{r-2} \geq 4$, it follows that $d_1(r-2)-r=0$. This entails $r=4$ and $d_1=2$. Hence
(\ref{eqn 5.1}) reads as
\begin{equation}\label{eqn 5.2}
\left(\frac{3}{2}-\sum\limits_{i=2}^4\frac{1}{d_i}\right)^2d_2d_3d_4\leq\frac{3}{4}\ .
\end{equation}
Since $\frac{3}{2}-\sum\limits_{i=2}^4\frac{1}{d_i} \geq \frac{3}{2}-\frac{3}{d_2}\geq 0$,
(\ref{eqn 5.2}) entails
\[
\left(\frac{3}{2}-\frac{3}{d_2}\right)^2d_2^3 \leq \frac{3}{4}\ .
\]
It follows that $d_2=2$. In a similar way, we obtain $d_3=2$. Hence (\ref{eqn 5.1}) becomes
\[
\left(\frac{1}{2}-\frac{1}{d_4}\right)^2d_4 \leq \frac{3}{16}\ .
\]
This implies that $d_4=2$ or $3$. If $d_4=2$, then $\beta=0$ and $P^2=0$, which contradicts the
hypothesis $\lkd(S_0)=2$. It remains to consider the case $d_4=3$. Let $B^2=-b$. We compute
$d(D)=24b-36-8\ol{d}_4$, where $\ol{d}_4=1$ or $2$.  Thus $d(D)$ is divisible by $4$ and not divisible by $3$.
It follows from Lemma \ref{Lemma 5.3} that $E$ is a fork of type $(2,2,n)$. Suppose that $\ol{d}_4=2$. Since $d(D)<0$
by Lemma \ref{Lemma 3.5}, we obtain $b \leq 2$. If $b=2$, then $a=-d(D)=4$, which is ruled out by Lemma \ref{Lemma 5.0.1}.
 If $\ol{d}_4=1$,
then $d(D)<0$ implies $b\leq 1$, i.e., $B^2\geq -1 $. Blowing up points on $B\cap Z_4$ if necessary,
we may assume that $B^2=-1$. Let $Z_4=T_1+T_2$, where $T_2$ is a possibly empty linear chain and $T_1$ is
the component meeting $B$. The divisor $F=Z_1+2B+Z_2$ induces a $\BP^1$-fibration on $\ol{S}$ for
which $Z_3$ and $T_1$ are $2$-sections. But this is a contradiction to Lemma \ref{Lemma 4.2}.
\end{proof}

The following result narrows down further the possibilities for $D$.

\begin{lem}\label{Lemma 5.2}
With the above notations, the following assertions hold.
\begin{enumerate}
\item[{\rm (a)}]
$B^2=-1$ or $-2$.
\item[{\rm (b)}]
\[
P^2=\frac{d_1d_2d_3}{a}\left(1-\frac{1}{d_1}-\frac{1}{d_2}-\frac{1}{d_3}\right)^2\ ,
\]
where
\[
0 < d_1d_2d_3\left(1-\frac{1}{d_1}-\frac{1}{d_2}-\frac{1}{d_3}\right)^2 \leq \frac{3}{2}.
\]
\item[{\rm (c)}]
$d_2>2$.
\item[{\rm (d)}]
$d_1\leq 3,\ d_2\leq 5,\ d_3\leq 19$.
\end{enumerate}
\end{lem}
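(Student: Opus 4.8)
The plan is to exploit that by Lemma \ref{Lemma 5.1} we have $r=3$, and that in the star-shaped situation of Lemma \ref{Lemma 4.3} each connected component $Z_i$ of $D-B$ is a linear chain meeting $B$ in a tip $C_i$. Hence each $C_i$ lies in a maximal twig of $D$, so by Observation \ref{Observation 2.12.2}~(c) we have $\beta_i=C_i\cdot P=0$ for $i=1,2,3$, and therefore $\beta=B\cdot P=r-2-\sum_{i=1}^3\frac1{d_i}=1-\frac1{d_1}-\frac1{d_2}-\frac1{d_3}$, where $d_i=d(Z_i)$. With this, assertion (b) is immediate: Lemma \ref{Lemma 3.2}~(iii) with all $\beta_i=0$ gives $P^2=\frac{\Pi}{a}\beta^2=\frac{d_1d_2d_3}{a}\bigl(1-\frac1{d_1}-\frac1{d_2}-\frac1{d_3}\bigr)^2$, the upper bound $d_1d_2d_3\beta^2\le\frac32$ is Lemma \ref{Lemma 3.2}~(iv), and $P^2>0$ (equivalently $\Pi\beta^2>0$) holds because $\lkd(S_0)=2$. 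In particular $\beta>0$, a fact I will use throughout. Recall also that $d_i\ge2$, since each $Z_i$ is a contractible chain of $(\le-2)$-curves, and order $d_1\le d_2\le d_3$.

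For (a) I would use the determinant formula for a fork (\cite[2.1.1]{KR}): writing $b=-B^2$ and letting $e_i=d(\ol{Z}_i)/d(Z_i)$ be the capacities, one has $d(D)=\Pi\bigl(b-\sum_{i=1}^3 e_i\bigr)$, where $\Pi=\prod_{i=1}^3 d(Z_i)$. By Lemma \ref{Lemma 3.5} we have $d(D)<0$, so $b<\sum e_i$; since each $e_i<1$ (removing the tip $C_i$ of the chain $Z_i$ strictly decreases its determinant), this forces $b<3$, i.e. $b\le2$. On the other hand Lemma \ref{Lemma 2.7}~(i) gives $B^2\le-1$, i.e. $b\ge1$. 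Hence $b\in\{1,2\}$ and $B^2\in\{-1,-2\}$.

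Part (c) is then a short positivity argument: if $d_2=2$ then $d_1=2$ as well, whence $\beta=1-\frac12-\frac12-\frac1{d_3}=-\frac1{d_3}<0$, contradicting $\beta>0$; thus $d_2\ge3$. For (d) the engine is the single inequality
\[
d_1d_2d_3\Bigl(1-\tfrac1{d_1}-\tfrac1{d_2}-\tfrac1{d_3}\Bigr)^2\le\tfrac32,\qquad \beta=1-\tfrac1{d_1}-\tfrac1{d_2}-\tfrac1{d_3}>0.
\]
To bound $d_1$: if $d_1\ge4$ then $d_2,d_3\ge4$, so $\beta\ge1-\frac34=\frac14$ and $\Pi\ge64$, giving $\Pi\beta^2\ge4>\frac32$, a contradiction; hence $d_1\le3$. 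To bound $d_2$: if $d_2\ge6$ then, using $d_3\ge d_2$, $\beta\ge1-\frac1{d_1}-\frac2{d_2}\ge\frac16$ while $\Pi\ge2\cdot6\cdot6=72$, so $\Pi\beta^2\ge2>\frac32$; hence $d_2\le5$. Finally, to bound $d_3$ I would combine the two elementary estimates $d_1d_2\ge6$ and $\frac1{d_1}+\frac1{d_2}\le\frac56$ (both from $d_1\ge2,\ d_2\ge3$): the first gives $d_3\beta^2\le\frac{3/2}{d_1d_2}\le\frac14$, and the second gives $\beta\ge\frac16-\frac1{d_3}$. For $d_3\ge6$ these combine into $d_3\bigl(\frac16-\frac1{d_3}\bigr)^2\le d_3\beta^2\le\frac14$; writing $u=1/d_3$ this reads $(\frac16-u)^2\le\frac u4$, whose smaller root is $u=\frac{7-\sqrt{33}}{24}\approx0.0523$, so $u\ge\frac{7-\sqrt{33}}{24}$ and $d_3\le19$.

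Everything rests on the already-established facts $\beta_i=0$, $d(D)<0$, and the BMY estimate $\Pi\beta^2\le\frac32$, so no genuinely hard step arises. The one place to be careful is the $d_3$-bound in (d), where the estimate $d_3\beta^2\le\frac14$ (via $d_1d_2\ge6$) and the estimate $\beta\ge\frac16-\frac1{d_3}$ (via $\frac1{d_1}+\frac1{d_2}\le\frac56$) must be invoked simultaneously; both are saturated precisely at the extremal configuration $(d_1,d_2)=(2,3)$, which is exactly why the resulting bound $d_3\le19$ is sharp and cannot be improved by this argument.
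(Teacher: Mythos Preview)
Your argument is correct, and for parts (a), (b), and (d) it matches the paper's approach essentially verbatim: the paper also deduces $B^2\in\{-1,-2\}$ from $d(D)<0$ together with $\ol d_i<d_i$ (equivalently $e_i<1$), obtains (b) from Lemma \ref{Lemma 3.2} (iii) and (iv), and states that (d) follows from (b) by an elementary calculation, which you have spelled out.

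Part (c), however, is handled differently. You derive it in one line from $\beta>0$: if $d_1=d_2=2$ then $\beta=1-\tfrac12-\tfrac12-\tfrac1{d_3}<0$, which is impossible since $P$ is nef and $P^2>0$. The paper instead argues geometrically: with $d_1=d_2=2$ the twigs $Z_1,Z_2$ are single $(-2)$-curves, so if $B^2=-1$ the pencil $|Z_1+2B+Z_2|$ defines a $\BP^1$-fibration for which $C_3$ is a $2$-section, forcing a $\C^*$-fibration on $S_0$ and hence $\lkd(S_0)\le1$; if $B^2=-2$ one computes $d(D)=4(d_3-\ol d_3)>0$, contradicting Lemma \ref{Lemma 3.5}. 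Your route is cleaner and purely numerical, relying only on facts already in hand. The paper's route, though longer here, is in the spirit of the fibration constructions (as in Lemma \ref{Lemma 4.2}) that become indispensable later in \S5, so it fits more naturally into the surrounding narrative; but as a self-contained proof of (c), your version is preferable.
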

\begin{proof}
(a)\ We have $B^2\leq -1$ by Lemma \ref{Lemma 2.7}. If $B^2 \leq -3$, then $d(D)>0$ because
\begin{eqnarray*}
d(D)&=&-(B^2)d_1d_2d_3-\ol{d}_1d_2d_3-d_1\ol{d}_2d_3-d_1d_2\ol{d}_3 \\
&\geq& (d_1-\ol{d}_1)d_2d_3+d_1(d_2-\ol{d}_2)d_3+d_1d_2(d_3-\ol{d}_3) > 0\ ,
\end{eqnarray*}
This is a contradiction to Lemma \ref{Lemma 3.5}.

(b)\ The assertion follows from Lemma \ref{Lemma 3.2}, (iii) and (iv).

(c)\ Suppose $d_2=2$. Then $d_1=2$ since we assumed that $d_1 \le d_2 \le d_3$, and $Z_1$ and $Z_2$
consist of single $(-2)$-curves. If $B^2=-1$, then $Z_1+2B+Z_2$ defines a $\BP^1$-fibration on $\ol{S}$
with a $2$-section $C_3$. Hence a $\C^*$-fibration is induced on $S_0$ and accordingly $\lkd(S_0) \le 1$.
This is a contradiction. Hence $B^2=-2 $ by (a). But then $d(D)=4(d_3-\ol{d}_3)>0$. This is again a
contradiction by Lemma \ref{Lemma 3.5}.

(d)\ The assertion is obtained from the assertion (b) by an elementary calculation.
\end{proof}

\begin{lem}\label{Lemma 5.4}
If $-2\leq \Bk(D)^2\leq -\frac{3}{14}$, then $(K+D+E)^2=-2$ or $-3$.
\end{lem}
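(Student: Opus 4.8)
The plan is to split $(K_{\ol S}+D+E)^2$ via the Zariski--Fujita decomposition into three pieces whose sizes I control separately, and then invoke integrality. Since $(\ol S,D+E)$ is almost minimal by Corollary \ref{Corollary 1.6}, the negative part of $K_{\ol S}+D+E$ is the bark $N=\Bk(D+E)$, and as $D$ and $E$ are disjoint this splits as $N=\Bk(D)+\Bk(E)$ with orthogonal supports. Writing $P=(K_{\ol S}+D+E)^+=K_{\ol S}+(D+E)^\#$ for the nef part, and using $P\cdot N=0$ together with $\Bk(D)\cdot\Bk(E)=0$, I obtain the key identity
\[
(K_{\ol S}+D+E)^2=P^2+\Bk(D)^2+\Bk(E)^2 .
\]
The left side is an integer because $K_{\ol S}+D+E$ is an integral divisor, so the whole proof reduces to confining the right side to the open interval $(-4,-1)$, whose only integers are $-2$ and $-3$.

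First I would bound $P^2$. By the BMY inequality in the form used for Lemma \ref{Lemma 3.2}(iv), $0<P^2\le 3\bigl(\chi(S')+\tfrac1{|\Gamma|}-1\bigr)=\tfrac{3}{|\Gamma|}$, and since $q$ is non-cyclic, $a=d(E)=|\Gamma_{\rm ab}|\le |\Gamma|/2$, so $P^2\le 3/(2a)$. By Lemma \ref{Lemma 5.0.1}, $a\ge 7$, hence $0<P^2\le 3/14$. Combined with the hypothesis $-2\le \Bk(D)^2\le -3/14$, this yields
\[
-2<P^2+\Bk(D)^2\le 0 .
\]

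The heart of the matter is the two-sided bound $-2<\Bk(E)^2<-1$, extracted from the formula of Lemma \ref{Lemma 5.3}(vii),
\[
\Bk(E)^2=-\frac{d(R_1)d(R_2)d(R_3)(\delta-1)^2}{a}-\sum_i e(R_i),\qquad \delta=\sum_i\frac1{d(R_i)} .
\]
Writing $r_i=d(R_i)$ and using that the inductances satisfy $\tfrac1{r_i}\le e(R_i)\le \tfrac{r_i-1}{r_i}$ (the convergents $\wt{r}_i,r_i$ are coprime with $\wt r_i<r_i$), the upper bound is easy: the first term is negative and $\sum_i e(R_i)\ge\delta>1$, so $\Bk(E)^2<-\sum_i e(R_i)\le-\delta<-1$. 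For the lower bound I use $\sum_i e(R_i)\le\sum_i(1-\tfrac1{r_i})=3-\delta$, which reduces $\Bk(E)^2>-2$ to the cleaner inequality $r_1r_2r_3(\delta-1)<a$. I verify this from $a=r_1r_2r_3h-\sum_i\ol{r}_i\prod_{j\ne i}r_j$ (with $h=-H^2\ge 2$, as $E$ is a minimal resolution) and $\ol{r}_i\le r_i-1$: a direct computation gives
\[
a-r_1r_2r_3(\delta-1)\ge r_1r_2r_3(h-2)\ge 0,
\]
with equality throughout only if $h=2$ and every twig is a $(-2)$-chain, i.e. $E$ is an ADE fork with $a\le 4$, contradicting $a\ge7$. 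Hence the inequality is strict and $\Bk(E)^2>-2$.

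Finally I assemble: from $-2<P^2+\Bk(D)^2\le 0$ and $-2<\Bk(E)^2<-1$,
\[
(K_{\ol S}+D+E)^2=\bigl(P^2+\Bk(D)^2\bigr)+\Bk(E)^2\in(-4,-1),
\]
so, being an integer, it equals $-2$ or $-3$. The main obstacle is the lower bound $\Bk(E)^2>-2$: the upper estimates are slack, but $\Bk(E)^2$ genuinely tends to $-2$ as the third twig of $E$ becomes a long chain of $(-2)$-curves, so the argument must use $a\ge 7$ (equivalently, the exclusion of the ADE forks) in an essential way to keep the value strictly above $-2$.
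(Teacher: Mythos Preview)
Your proof is correct and follows essentially the same approach as the paper: both use the Zariski--Fujita decomposition identity $(K+D+E)^2=P^2+\Bk(D)^2+\Bk(E)^2$, bound $P^2$ via BMY and $|\Gamma|\ge 2a\ge 14$, and trap the sum in $(-4,-1)$ using $\Bk(E)^2<-1$ and $\Bk(E)^2\ge -2$. The only notable difference is that the paper simply asserts ``$(\Bk E)^2\ge -2$ holds in every case'' without justification, whereas you supply a complete argument for the (strict) lower bound via the inequality $r_1r_2r_3(\delta-1)<a$ and the exclusion of $(-2)$-forks from Lemma~\ref{Lemma 5.0.1}; this extra detail is sound and in fact clarifies a point the paper leaves implicit.
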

\begin{proof}
Lemma \ref{Lemma 5.3}~(vii) implies $(\Bk E)^2<-1$ because $\sum e(R_i) \geq \sum\frac{1}{r_i}=\delta>1$.
Hence, in view of the evaluation of $P^2$ in the proof of Lemma \ref{Lemma 4.1}, we have
\[
(K+D+E)^2=P^2+(\Bk D)^2+(\Bk E)^2 < \frac{3}{|\Gamma|}+(\Bk D)^2  -1.
\]
Since  $|\Gamma|\geq 2a\geq 14$ by Lemma \ref{Lemma 5.0.1} \footnote{$\Gamma$ is not abelian because $E$ is a fork. Then ƒ$\Gamma/[\Gamma,\Gamma]$ has order equal to $d(E)$.
Hence $|\Gamma|\geq 2\cdot|\Gamma/[\Gamma,\Gamma]|\geq 2d(E)$.} and $(\Bk D )^2 \leq -\frac{3}{14}$ by assumption we get $(K+D+E)^2<-1$. On the other hand, noting that
$(\Bk E)^2 \ge -2$ holds in every case and $(\Bk D)^2 \ge -2$ by the assumption, we have
$(K+D+E)^2> (\Bk D)^2+(\Bk E)^2 \ge -2-2=-4$. Hence the statement  follows.
\end{proof}

\begin{lem}\label{Lemma 5.5}
With the notations of Lemma \ref{Lemma 5.1}, let $Z_i=Z_{i1}+Z_{i2}+\dots+Z_{ik_i}$ for $1 \le i \le 3$,
where $Z_{ik_i}$ is a tip of $D$ and $Z_{i1}$ meets $B$. Suppose that $B^2=-1$ and that
$k_1=\#Z_1\leq 2$. Suppose further that $Z_{11}$ is a $(-2)$-curve. Then $Z_{21}^2<-2$ and $Z_{31}^2<-2$.
\end{lem}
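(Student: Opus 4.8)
The plan is to argue by contradiction, exploiting the $(-2),(-1),(-2)$ configuration that $Z_{21}^2=-2$ would create together with $Z_{11}$ and $B$. Since $D$ is star-shaped (Lemma \ref{Lemma 4.3}) and $r=3$ (Lemma \ref{Lemma 5.1}), each $Z_i$ is a contractible linear chain, so every component has self-intersection $\le -2$ and in particular $Z_{21}^2\le -2$, $Z_{31}^2\le -2$. The hypotheses on $Z_1$ are symmetric in $Z_2$ and $Z_3$, so it suffices to prove $Z_{21}^2<-2$; relabelling $Z_2\leftrightarrow Z_3$ then gives $Z_{31}^2<-2$. Suppose then that $Z_{21}^2=-2$. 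Because $Z_{11}^2=Z_{21}^2=-2$, $B^2=-1$, and $Z_{11},Z_{21}$ meet $B$ transversally in one point each and are disjoint, the divisor $F=Z_{11}+2B+Z_{21}$ satisfies $F^2=0$ and is blown down (contract $B$, then one of the two images) to a smooth $(0)$-curve; hence $|F|$ defines a $\BP^1$-fibration $f\colon\ol S\to\BP^1$ having $F$ as a fiber.

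First I would read off the horizontal curves. As $B$ has multiplicity $2$ in $F$ and meets $Z_{31}$, the component $Z_{31}$ is a $2$-section, while all deeper components of $Z_3$ are vertical; $E$ is vertical since it is disjoint from $\Supp(F)\subset D$. The only further boundary curves that can meet a general fiber are the tips $Z_{12}$ (present exactly when $k_1=2$) and $Z_{22}$ (present exactly when $k_2\ge 2$), each of which is then a $1$-section. In the clean case $k_1=k_2=1$ the curve $Z_{31}$ is the sole horizontal component of $D+E$, so a general fiber meets $D\cup E$ in exactly two points and $f$ restricts to a $\C^*$-fibration of $S_0=\ol S\setminus(D\cup E)$. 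By the easy addition theorem this forces $\lkd(S_0)\le 1$, contradicting $\lkd(S_0)=2$.

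For the remaining cases ($k_1=2$ or $k_2\ge 2$) I would combine Lemma \ref{Lemma 1.2.1} with the covering trick of Lemma \ref{Lemma 2.5}. Applying Lemma \ref{Lemma 1.2.1} to $D+E$ and using $\wt b_1(S_0)=\wh b_2(S_0)=0$ gives $\Sigma=h-1$ (with $\nu=1$, since $F$ is the only fiber contained in $D+E$), where $h\in\{2,3\}$ is the number of horizontal components. Any singular fiber other than $F$ with a single $S_0$-component has that component as its unique $(-1)$-curve, the rest of the fiber lying in $D+E$ and hence being made of $(\le-2)$-curves; so by Lemma \ref{Lemma 1.1}(b) the component has multiplicity $m>1$, and the degree-$m$ cyclic cover of the base branched at its image and at $f(F)$ — the latter harmless since $F\cap S=\emptyset$ — yields, after normalising the fiber product, a nontrivial connected unramified abelian cover of $S=S_0\cup E$, contradicting $H_1(S;\Z)=0$ (Lemma \ref{Lemma 1.4}). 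Hence every singular fiber $\ne F$ must have $\sigma\ge 2$, which together with $\Sigma\le 2$ leaves essentially only the fiber $F_E$ containing $E$, carrying $\sigma(F_E)=2$ (with at most one further $\sigma=2$ fiber when $h=3$). The main obstacle is this residual configuration: I would analyse $F_E$ directly, using that $E$ is a fork of $(\le-2)$-curves with at least four components and that the $2$-section $Z_{31}$ and the surviving $1$-section must meet $F_E$, to show via Lemma \ref{Lemma 1.1}(e),(f) that $F_E$ cannot resolve the branch point of $E$ with so few $S_0$-components unless $E$ is a $(-2)$-fork — forcing $a\le 4$, which is excluded by Lemma \ref{Lemma 5.0.1}. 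This fork-in-a-fiber analysis, rather than the fibration setup, is where the real work lies.
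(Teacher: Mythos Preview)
Your setup is exactly the paper's: the fibration $|Z_{11}+2B+Z_{21}|$, the identification of $Z_{31}$ as the $2$-section and $Z_{12},Z_{22}$ (when present) as $1$-sections, and the reduction to analysing $F_E$. Two points, one minor and one substantive.

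\emph{Minor.} The covering trick does not apply to $F_E$ itself. If $F_E$ has a single $S_0$-component $C$ of multiplicity $m$, the cyclic cover of the base branched at $f(F_E)$ and $f(F)$ is unramified over $S=S_0\cup E$ only if every component of $E$ also has multiplicity divisible by $m$ in $F_E$; that is not automatic. So your conclusion ``every singular fiber $\ne F$ must have $\sigma\ge 2$'' is justified for fibers $\ne F, F_E$ (and this is a clean shortcut the paper does not bother to isolate), but not for $F_E$. The case $\sigma(F_E)=1$ has to be handled inside the fiber analysis.

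\emph{Substantive.} The $F_E$ analysis, which you rightly flag as ``where the real work lies'', is not carried out, and your anticipated endpoint is too optimistic. The paper runs through a page of casework: one $(-1)$-component versus two or more, which of them meet $E$, how they hit the sections $Z_{31},Z_{12},Z_{22}$, with the constraint from Lemma~\ref{Lemma 1.5} that every $S_0$-component meets $D$ at least twice. The contradictions are varied: sometimes a loop appears in $F_E$, most often $E$ is forced to be a linear chain via Lemma~\ref{Lemma 1.1}(f) (after contracting the extraneous $(-1)$-components so that only one remains and a multiplicity-$1$ tip survives outside $E$), and only in the single subcase where the unique $(-1)$-component has multiplicity exactly $2$ does one get the ``$E$ is a $(-2)$-fork'' conclusion you predict (the paper invokes Lemma~\ref{Lemma 2.15} there, which is equivalent to your appeal to $a>4$). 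So the slogan ``$F_E$ cannot accommodate the fork $E$ with so few $S_0$-components unless $E$ is a $(-2)$-fork'' captures only one branch; the full argument needs the repeated use of Lemma~\ref{Lemma 1.1}(e),(f) together with the no-simple-curve constraint to close off each configuration.
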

\begin{proof}
Suppose that $Z_{21}^2=-2$. Let $F=Z_{11}+2B+Z_{21}$. Then the pencil $|F|$ induces a $\BP^1$-fibration
$f$ on $\ol{S}$ such that $Z_{31}$ is a $2$-section and that $Z_{12}$ and $Z_{22}$, if they exist, are
$1$-sections. In the  notation of Lemma \ref{Lemma 1.2.1} we have\\
$(\ast)$ \ \ $\Sigma=0$,$1$ or $2$ depending on whether none, one or two of $Z_{12}$ and $Z_{22}$ exist.\\
We put $Z_2'=Z_2-(Z_{21}+Z_{22})$ and $Z_3'=Z_3-Z_{31}$ and note that they consist of fiber components (if non-empty), and these are the only fiber components not in $F$.

Let $F_E$ be the fiber containing $E$. We recall the following consequence of Lemma \ref{Lemma 1.5}.\\
$(\ast \ast)$ \ \
An $S_0$-component of $F_E$ meets $D$ at least twice.\\
Suppose that $F_E$ contains exactly one $(-1)$-component, say $C$.
Then $C$ is multiple in $F_E$ and does not meet $Z_{12}$ and $Z_{22}$. In fact, $\mult(C)>2$, for if $\mult(C)=2$, all
other components in $F_E$, including those of $E$, are $(-2)$-curves, which contradicts Lemma \ref{Lemma 2.15}.
Hence $C\cdot Z_{31}=0$. By $(\ast \ast)$ we must have $Z_2'\neq 0, Z_3'\neq 0$ and $C\cdot Z_2'=C\cdot Z_3'=1$. As a (-1)-component  in $F_E$, $C$ does not meet any further component of $F_E$, and in particular $C\cdot E=0$. So, there must exist an $S_0$-component $C'$ of $F_E$ connecting $Z_2'+C+Z_3'$ to some other component of $F_E$. Hence $C'\cdot Z_2'=1$ or $C'\cdot Z_3'=1$, but not both, since otherwise there is a loop in $F_E$. In a fiber with exactly one $(-1)$-component the only components of multiplicity $1$ are tips. Since $C'$ meets at least two components of $F_E$, $\mult(C')>1$. Hence $C'$ does not meet $Z_{12}$ and $Z_{22}$ and, since $Z_{31}$ is already met by $Z_3'$, also does not meet $Z_{31}$. So we have $C'\cdot D=1$, in contradiction to $(\ast \ast)$.
So, $F_E$ must contain at least two $(-1)$-curves, $C_1, C_2$ say.

Suppose that none of $C_1, C_2$ nor any other $(-1)$-component meets $E$. Then $F_E$ contains a third
$S_0$-component $C_3$ which meets $E$, for otherwise $F_E$ is disconnected. By $(\ast)$, the
$1$-sections $Z_{12}$, $Z_{22}$ exist and $C_3$ is unique. We can therefore contract all $(-1)$-components and consecutively contractible components
of $F_E-(E+C_3)$, without touching $E$, until $C_3$ becomes the unique $(-1)$-component in the fiber. Now the $1$-sections do not
meet $E$ and hence there exists a component of multiplicity $1$ not contained in $E+C_3$. By Lemma
\ref{Lemma 1.1}~(f), $E$ is a linear chain. This is a contradiction.

Suppose now that $C_1$ meets $E$. Suppose also that $C_1$ meets a component of $F_E\cap D$. Then $C_1$ is
a multiple component, whence $C_1\cdot Z_{12}=C_1\cdot Z_{22}=0$. Since $C_1$ is a $(-1)$-component, it meets at most one component
of $F_E\cap D$ and $(\ast \ast)$ implies that $C_1\cdot Z_{31}=1$ and
$\mult(C_1)=2$. Then $C_2\cdot Z_{31}=0$ because $Z_{31}\cdot F_E=2$. It also follows that no components
of $Z_3'$ are contained in $F_E$. Hence $C_1$ meets a component of $Z_2'$. We now have $C_2\cdot Z_{21}=0$ and
$C_2\cdot Z_3=0$. By $(\ast \ast)$ we must have $C_2\cdot Z_2'=1$ and $C_2\cdot Z_{12}=1$. In particular,
it follows that $\mult(C_2)=1$. Suppose that $C_1$ and $C_2$ are the only $S_0$-components of $F_E$.
We contract $C_2$ and consecutively contractible components of $C_2+Z_2'$ until $C_1$ is
the only $(-1)$-curve in the fiber. The image of $Z_{12}$ meets a component of the image of
$Z_2'$, hence this component has multiplicity $1$. By Lemma \ref{Lemma 1.1}~(f), $E$
is a linear chain. This is not the case. Hence $F_E$ contains a third $S_0$-component $C_3$. But now
$C_3\cdot Z_{12}=C_3\cdot Z_{22}=C_3\cdot Z_3=0$. By $(\ast \ast)$, $C_3$ meets $Z_2'$ at least twice.
This is impossible.

So, $C_1$ as well as any $(-1)$-component in $F_E$ meeting $E$ does not meet a component of $F_E\cap D$.
If $C_1$ meets a $1$-section, $\mult(C_1)=1$. If $C_1$ does not meet the $1$-sections, then it must meet
$Z_{31}$ at least twice. In particular, $\mult(C_1)=1$ in any case. Suppose that $C_2$ meets $E$. Then
$\mult(C_2)=1$ as well. Suppose that $C_2\cdot Z_{31}=0$. Then  it follows from $(\ast \ast)$ that $C_2\cdot Z_{12}=C_2\cdot Z_{22}=1$ and $C_1\cdot Z_{31}=2$. As a consequence,
$Z_2'$ and $Z_3'$ are not part of $F_E$. If $C_1, C_2$ are the only
$(-1)$-curves in $F_E$, then $F_E$, and hence $E$, is a linear chain, which is not the case. So, there
exists a third $(-1)$-curve $C_3$ in $F_E$.  $C_3$, like $C_1$ and $C_2$, meets only sections of the fibration contained in $D$ since $Z_2'$ and $Z_3'$ are not part of $F_E$.
Each $C_i$ meets $L=Z_{12}+Z_{22}+Z_{31}$ at least twice. Thus $L\cdot F_E\geq 6$, while $L\cdot F_E=4$,
a contradiction. So $C_2\cdot Z_{31}>0$ and, by symmetry, $C_1\cdot Z_{31}>0$. Therefore $C_1\cdot Z_{12}=C_1\cdot Z_{31}=C_2\cdot Z_{22}=C_2\cdot Z_{31}=1$ and $Z_2'$ and $Z_3'$ are not part of $F_E$. By the argument above there exists a $C_3$. But now $C_3\cdot D=0$; a contradiction.       So  $C_2\cdot E=0$. There must exist a third $S_0$-component $C_3$ in $F_E$ such that
$C_3\cdot E=1$, for otherwise the fiber is disconnected. If $C_3^2=-1$, then we reach a contradiction
by the above argument where we replace $C_2$ by $C_3$. Hence the only $(-1)$-curves in $F_E$ are $C_1$ and
$C_2$. Therefore $\mult(C_2)>1$, for otherwise $F_E$ is a chain, and so $C_2\cdot Z_{12}=0$. Clearly, $C_2\cdot Z_2=C_2\cdot Z_2'\leq 1$, and by $(\ast \ast)$, $C_2$ meets $Z_3$. It is clear that
$C_2\cdot Z_3 \leq 1$, so $C_2\cdot Z_3 = 1$ and $C_2\cdot Z_2'=1$. It follows that $C_1\cdot Z_2=0$.
If $C_1$ meets $Z_{31}$ twice, then $Z_3'\nsubseteq F_E$ and $C_2\cdot Z_3=0$, a contradiction.
Hence $C_1\cdot Z_{31}=C_1\cdot Z_{12}=1$. Also $C_2\cdot Z_{31}=0$. Hence $C_2$ connects
$Z_2'$ and $Z_3'$. Now $C_3$ does not meet $Z_{12}$ and $Z_{22}$. Also
$C_3\cdot Z_{31}=0$ since $Z_{31}$ meets $C_1$ and $Z_{32}$ which is in the fiber. Thus $C_3$ must meet
$Z_2'$ and $Z_3'$. We now have a loop in $F_E$, which is a contradiction.
\end{proof}

\begin{lem}\label{Lemma 5.6}
$B^2=-1$.
\end{lem}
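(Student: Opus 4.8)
The plan is to assume $B^2=-2$, the only alternative to $B^2=-1$ left open by Lemma \ref{Lemma 5.2}(a), and derive a contradiction. Write $d_i=d(Z_i)$, let $C_i$ be the tip of the arm $Z_i$ meeting $B$, and let $e_i=d(\ol Z_i)/d_i<1$ be the capacity. Since, after Lemma \ref{Lemma 4.3} and Lemma \ref{Lemma 5.1}, $D$ is the star with centre $B$ and three linear-chain arms $Z_1,Z_2,Z_3$, the determinant formula used in the proof of Lemma \ref{Lemma 5.2}(a) gives
\[
d(D)=\Bigl(\prod_i d_i\Bigr)(-B^2)-\Bigl(\prod_i d_i\Bigr)\sum_i e_i=\Bigl(\prod_i d_i\Bigr)\bigl(2-\textstyle\sum_i e_i\bigr),
\]
using $B^2=-2$. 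By Lemma \ref{Lemma 3.5} we have $d(D)<0$, so the fundamental inequality $e_1+e_2+e_3>2$ must hold.

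First I would combine this with the bounds of Lemma \ref{Lemma 5.2}. Since $\ol d_i\le d_i-1$ we have $e_i\le (d_i-1)/d_i$, hence $e_1\le 2/3$ and $e_2\le 4/5$ because $d_1\le 3$ and $d_2\le 5$; also, for an arm whose tip is a $(-k)$-curve one has $e_i<1/(k-1)$. If some $e_i\le 1/2$ the three capacities would sum to at most $2/3+4/5+1/2<2$, a contradiction; so every $e_i>1/2$ and, in particular, the tip of every arm is a $(-2)$-curve. Thus $\sum_i e_i>2$ forces the arms to be $(-2)$-chains near $B$ to such an extent that $D$ contains a connected subdivisor $F$ of $(-2)$-curves whose dual graph is an affine Dynkin star ($\tilde E_6$, $\tilde E_7$ or $\tilde E_8$), i.e. $F^2=0$. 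Concretely, writing $p_i\le d_i$ for the determinant of the maximal $(-2)$-initial segment of $Z_i$, the high capacities guarantee one of the triples $(p_1,p_2,p_3)\in\{(3,3,3),(2,4,4),(2,3,6)\}$ inside the $(-2)$-segments; such an $F$ is a fibre of the $\BP^1$-fibration $f:\ol S\to\BP^1$ defined by $|F|$.

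Finally I would analyse this fibration. The components of $D$ obtained by truncating the arms, together with $E$, are horizontal or vertical for $f$; tracking the (known) multiplicities of the affine fibre $F$ shows that the leftover arm segment provides a $1$- or $2$-section, while $E$ is contained in a single singular fibre $F_E$. Applying Lemma \ref{Lemma 4.2}, which is legitimate since $a\ge 7>4$ by Lemma \ref{Lemma 5.0.1}, or, when a genuine $1$-section occurs, observing directly that $f$ then induces a $\C$- or $\C^*$-fibration of $S_0=\ol S\setminus(D\cup E)$, we obtain $\lkd(S_0)\le 1$, contradicting $\lkd(S_0)=2$. This contradiction rules out $B^2=-2$, leaving $B^2=-1$.

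The main obstacle is the middle step: organising the capacity inequality $\sum_i e_i>2$ so that the affine subconfiguration $F$ is produced uniformly, and disposing of the finitely many residual numerical types (allowed by the bounds of Lemma \ref{Lemma 5.2} and by $a\ge 7$) in which a $(-k)$-curve with $k\ge 3$ sits close to $B$ and shortens a $(-2)$-segment. In those types the clean affine-star fibre need not be available inside $D$, and one must instead exhibit an explicit fibre supported on $D$ with the appropriate $2$- or $4$-section and invoke Lemma \ref{Lemma 4.2}, exactly as in the case analyses of Lemmas \ref{Lemma 4.3} and \ref{Lemma 5.1}.
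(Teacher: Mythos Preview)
Your approach is genuinely different from the paper's and, once the middle step is executed carefully, it works. The paper instead applies a second BMY inequality: it first shows $\lkd(\ol S\setminus Q)\ge 0$ (via $K_{\ol S}+Q^\#=\alpha P$), then uses Lemma \ref{Lemma 2.2} to see that $(\ol S,Q)$ is almost minimal, and then BMY gives $\sum 1/d_i \ge 13/14$; combined with $\sum e_i>2$ this leaves a handful of triples, each dispatched by computing $a=-d(D)$ explicitly and invoking Lemma \ref{Lemma 5.0.1}. Your route bypasses that second BMY entirely: the capacity inequality alone forces enough $(-2)$-curves near $B$ to produce an affine $\tilde E_k$ fibre in $D$, and the induced fibration has only $1$-sections in $D-F$, so $S_0$ carries a $\C$- or $\C^*$-fibration. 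This is cleaner and more geometric; the paper's method, on the other hand, generalises more readily to the detailed numerical classification pursued in \S5.

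Two points need tightening. First, your claim ``every $e_i>1/2$'' is not literally true: when $d_1=2$ one has $e_1=1/2$ exactly. What you actually need is a short case split using Lemma \ref{Lemma 5.2}(c,d). If $d_1=2$ then $Z_1$ is a single $(-2)$-curve; if $d_1=3$ then $e_1=1/3$ forces $e_2+e_3>5/3$, which is impossible for the admissible $(d_2,d_3)$, so $e_1=2/3$ and $Z_1$ is two $(-2)$-curves. A similar check on $e_2$ (using $d_2\le 5$) shows $Z_2$ is a $(-2)$-chain of length $d_2-1$, and then the sharp bound $e_3>1-e_1-e_2+1$ together with the elementary fact that $e_3>\frac{m}{m+1}$ forces at least $m$ initial $(-2)$-curves in $Z_3$ gives exactly the $(m_1,m_2,m_3)$ needed to embed $\tilde E_6$, $\tilde E_7$ or $\tilde E_8$ in $D$ in every surviving case. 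Second, Lemma \ref{Lemma 4.2} is not the right tool here: its hypotheses require $2$- or $4$-sections, whereas the tips of the affine $\tilde E_k$ arms that protrude from $F$ all have multiplicity $1$, so the leftover components of $D$ are $1$-sections. That is exactly what you want: with at most two $1$-sections and $E$ vertical, the general fibre of $S_0$ is $\C$ or $\C^*$, contradicting $\lkd(S_0)=2$ directly.
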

\begin{proof}
Suppose this is not the case. By Lemma \ref{Lemma 5.2} we have $B^2=-2$.  We use the notations of Setup \ref{Setup 2.1}. We have $Q_i=Z_i$ for $i=1,2,3$ and $Q_0=E$.
 Consider the decomposition
\[
K_{\ol{S}}+Q=K_{\ol{S}}+Q^\#+\Bk(Q)\ .
\]
We compare $K_{\ol{S}}+Q^\#$ with $P=K_{\ol{S}}+(D+E)^\#$. Since $(K_{\ol{S}}+Q^\#)\cdot C_j=P\cdot C_j=0$
for every component $C_j$ of $D+E-B$, we have $K_{\ol{S}}+Q^\#=\alpha P$ for some $\alpha\in \Q$. Note that $P$
is a nef $\Q$-divisor. Suppose that $\lkd(Y)=-\infty$. Then $\alpha<0$, for otherwise $K_{\ol{S}}+Q^\#$ is nef, which would imply
$\lkd(Y)\geq 0$. Hence we have
\[
(K_{\ol{S}}+Q)\cdot B=K_{\ol{S}}\cdot B+3=\alpha(P\cdot B)+(\Bk(Q)\cdot B) <\sum_{i=1}^3\wt{c}_i\ ,
\]
where $P\cdot B > 0$ for otherwise $P$ is numerically equivalent to zero and $\wt{c}_i$ denotes the
coefficient of $Z_{i1}$ in $\Bk(Z_i)$. Since $\wt{c}_i\leq 1$, we obtain $K_{\ol{S}}\cdot B<0$. This is
a contradiction because $K_{\ol{S}}\cdot B=0$ as $B \simeq \BP^1$. Thus $\lkd(Y) \geq 0$. The pair $(\ol{S},Q)$ is almost minimal by Lemma \ref{Lemma 2.2}, where one uses the condition
(i). We apply
the BMY-inequality to $(\ol{S},Q)$. Note that every $Z_i$ contracts to a cyclic quotient singularity
whose local fundamental group has order $d_i$. Since $\chi(Y)=-1$, we have
\[
\frac{1}{d_1}+\frac{1}{d_2}+\frac{1}{d_3}+\frac{1}{|\Gamma|}\geq 1\ .
\]
By Lemma \ref{Lemma 5.0.1}, we have  $|\Gamma|\geq 2a \geq 14$. Hence we obtain
\begin{equation}\label{eqn 5.3}
\frac{1}{d_1}+\frac{1}{d_2}+\frac{1}{d_3}\geq \frac{13}{14}\ ,
\end{equation}
where $2 \leq d_1\leq 3$ and $3 \leq d_2\leq 5$ by Lemma \ref{Lemma 5.2}~(c). Now (\ref{eqn 5.3}) gives
$d_3 \leq 10$. Suppose $d_1=2$. If $d_2=5$ then (\ref{eqn 5.3}) gives $d_3\leq 4$, which is a
contradiction to the hypothesis $d_2 \leq d_3$. Suppose $d_2=4$. Then $d_3 \leq 5$ by (\ref{eqn 5.3}).
Let $\ol{d}_i$ be the determinant of the chain $Z_{i2}+Z_{i3}+\ldots+Z_{ik_i}$. By Lemma \ref{Lemma 3.5},
we have
\begin{equation}\label{eqn 5.4}
d(D)=d_1d_2d_3(-B^2-\sum \frac{\ol{d}_i}{d_i}) < 0\ ,
\end{equation}
whence $\sum\frac{\ol{d}_i}{d_i}>2$ by the assumption $B^2=-2$. Since $d_2=4$, $Z_2$ consists of
three $(-2)$-curves or a single $(-4)$-curve. Accordingly, we have $\frac{\ol{d}_2}{d_2}=\frac{3}{4}$ or
$\frac{1}{4}$. In the latter case, we have $d(D)>0$, which is not the case. Hence (\ref{eqn 5.4}) gives
$\frac{\ol{d}_3}{d_3}>\frac{3}{4}$. Hence $d_3 \neq 4$, whence $d_3=5$. Further, $\ol{d}_3=4$ follows
as $1 \le \ol{d}_3 \le 4$. Now all these values give $d(D)=-2$, which contradicts Lemma \ref{Lemma 5.0.1}
as $a=-d(D)$. Suppose $d_2=3$. Then $\ol{d}_2=2$, for otherwise $d(D)>0$. We find
$\frac{\ol{d}_3}{d_3}>\frac{5}{6}$. This implies that $Z_3$ begins with at least five $(-2)$-curves and
that $\#Z_3>5$.  From (\ref{eqn 5.3}), we obtain $d_3\leq 10$. If $Z_3$ contains a $(\leq -3)$-curve,
then $d_3\geq d(R)=13$, where $R$ is the linear chain

\raisebox{-15mm}{

\begin{picture}(85,20)(-40,-10)
\unitlength=0.9mm
\put(5,10){\circle{1.8}}
\put(6,10){\line(1,0){13}}
\put(20,10){\circle{1.8}}
\put(21,10){\line(1,0){13}}
\put(35,10){\circle{1.8}}
\put(36,10){\line(1,0){13}}
\put(50,10){\circle{1.8}}
\put(51,10){\line(1,0){13}}
\put(65,10){\circle{1.8}}
\put(66,10){\line(1,0){13}}
\put(80,10){\circle*{1.8}}
\put(3,3){$-2$}
\put(18,3){$-2$}
\put(33,3){$-2$}
\put(48,3){$-2$}
\put(63,3){$-2$}
\put(78,3){$-3$}
\end{picture}}

\noindent
Hence $Z_3$ is a linear chain of $(-2)$-curves of length $k$ with $6\leq k\leq 9$. We compute $d(D)=-k+5$.
Hence $a=k-5\leq 4$, which contradicts Lemma \ref{Lemma 5.0.1}. Now assume that $d_1=3$. (\ref{eqn 5.3})
gives $d_2=d_3=3$. None of the $Z_i$ is a single $(-3)$-curve, for otherwise $d(D)\geq 0$. Hence each $Z_i$
consists of two $(-2)$-curves. We compute $d(D)=0$, which is not the case.\end{proof}
Lemma \ref{Lemma 5.6} and Lemma \ref{Lemma 5.2}~(d) allow us to list all possible configurations for $D$, with
the self-intersection numbers of all components included. Once we determine $D$, we can compute $a=-d(D)$
and then find finitely many possibilities for $E$ with the help of Lemma \ref{Lemma 5.3}. Then we know
$b_2(\ol{S})$ and hence $K_{\ol{S}}^2$ by Noether's formula, in fact
$$b _2(\ol S)=\# D +\# E =10-K_{\ol{S}}^2.$$
Further, we can compute $(K_{\ol{S}}+D+E)^2$ since we also know
$ K_{\ol{S}}\cdot D$, $K_{\ol{S}}\cdot E$,
$P^2$ by using (b) of Lemma \ref{Lemma 5.2}, $\Bk(D)^2$ and $\Bk(E)^2$. Most of the cases are ruled out
by Lemma \ref{Lemma 5.4}. In the remaining cases, we check if all these numbers satisfy the equality
in the Zariski-decomposition formula
\[
(K_{\ol{S}}+D+E)^2=P^2+\Bk(D)^2+\Bk(E)^2\ .
\]
We find that the equality is not satisfied in each of the remaining listed cases.

We begin with listing the cases with $d_2=2, d_3=3$. We do not present the dual graphs of $E$ which can be
easily determined by the use of Lemma \ref{Lemma 5.3}. In the main table below, we give the columns
$(-Z_{31}^2,-Z_{32}^2,\ldots)$, $d_3$, $-d(D)=a$, $\#E$,  $b_2(\ol{S})$ and
$(K+D+E)^2$. In the more detailed table we add $Bk(D)^2, \Bk E^2$, $P^2$. We will omit the cases which can be excluded by Lemma \ref{Lemma 5.0.1}. Further,
we abbreviate a sequence like $(3,3,2,2)$ as $(3^2,2^2)$.

\begin{lem}\label{Lemma 5.7}
Suppose that $d_1=2$ and $d_2=3$. Then the following cases in {\bf Table $1$} exhaust all possibilities. Here $k$ denotes a non-negative integer.

\[\begin{array}{l|c|c|c|c|c|c}
(-Z_{31}^2,\dots) & d_3  & -d(D) & \#E & b_2(\ol{S})& K\cdot E & (K+D+E)^2 \\ \hline
(3,3) & 8 &10 & 6 & 11 & 1& -2 \\
(3,4) & 11 & 13 & 6 & 11 & 1 & -2 \\
(3,5) & 14 & 16 & 5 & 10 & 1 &  1 \\
(3,5) & 14 & 16 &k+4& k+9& 3 & 3-k \\
(3,5) & 14 & 16 & k+6 & k+11 & 1 & -1-k \\
(3,6) & 17 & 19 & 5 & 10 & 3 & 4 \\
(4,3) & 11 & 7 & 6 & 11 & 1& -1 \\
(4,4) & 15 & 9 & 5 & 10 & 1 & 1 \\
(4,5) & 19 &11 & 7 & 12 & 1 & 0 \\
(3,2^2) & 7 & 11 & 7 & 13 & 1 & -5 \\
(3^2,2) & 13 & 17 & 5 & 11 & 2 & -1 \\
(3,4,2) & 19 & 23 & 5 & 11 & 2 & 0 \\
(4,2^2) &10 & 8 & k+4    & k+10   & 1  &-k-1   \\
(4,2,3) & 17 & 13 & 6  &12   & 1  & -2 \\
(4,3,2) & 18 & 12 & 4  & 10   & 1  & 0\\
(4,3,2) & 18 & 12 & k+4  & k+10   & 2  & -k+1\\
(4,3,2) & 18 & 12 & k+5  & k+11   & 1  & -k-1\\
(3,2^3) & 9 & 15 & 4 & 11 & 2 & -2 \\
(3,2^2,3) & 16 & 26 & 7 & 14 & 1 & -5 \\
(3,2,3,2) & 19 & 29 & 4 & 11 & 4 & 1 \\
(3^2,2^2) & 18 & 24 & 7 & 14 & 1 & -5 \\
(3^2,2^2) & 18 & 24 & k+4 & k+11 & 5 & -k-2 \\
(3^2,2^2) & 18 & 24 & k+8 & k+15 & 1 & -k-6 \\
(4,2^3)   & 13 & 11 &  7  &  14  & 1 & -5 \\
(5,2^3)   & 17 &  7 &  6  &  13  & 1 & -3 \\
(3,2^4)   & 11 & 19 & 5 & 13 & 3 & -3 \\
(4,2^4)   & 16& 14 & 5 & 13 & 2 & -3 \\
(3,2^5)   & 13 & 23 & 5 & 14 & 2 & -5 \\
(4,2^5)   & 19 & 17 & 5 & 14 & 2 & -4 \\
(3,2^6)   & 15 & 27 & 5 & 15 & 2 & -6 \\
(3,2^7)   & 17 & 31 & 8 & 19 & 1 & -11
\end{array}
\]
In  the  cases where $(K+D+E)^2=-2$ or -3, the additional data of $P^2, (\Bk D)^2$ and $(\Bk E)^2$ are given in the
{\bf Table $1_{\rm bis}$} below.
\[
\begin{array}{l|c|c|c|c|c|c}
(-Z_{31}^2,\dots) & d_3  & -d(D) & \#E & P^2           & (\Bk D)^2       & (\Bk E)^2 \\ \hline
(3,3)             &   8  &   10  &  6  & \frac{1}{120} & -\frac{29}{24}  & -\frac{8}{5} \\
(3,4)             &   11 &   13  &  6  & \frac{25}{858}& -\frac{73}{66}  & -\frac{23}{13} \\
(3,5)             &   14 &   16  & k+4 & \frac{1}{21}  &  -\frac{22}{21} & -\frac{5}{4} \\
(3,5)             &  14  &   16  & k+6 & \frac{1}{21}  &  -\frac{22}{21} & -\frac{7}{4} \\
(4,2^2)           &   10 &     8 & k+4 &  \frac{1}{30} & -\frac{23}{15}   &  -\frac{3}{2}\\
(4,2,3)           &   17 &    13 & 6   & \frac{121}{1326}& -\frac{127}{102}&  -\frac{23}{13} \\
(4,3,2)            &   18 &    12 & k+4 &\frac{1}{9}     &-\frac{13}{9}    &  -\frac{4}{3}\\
(4,3,2)            &   18 &    12 & k+5 & \frac{1}{9}   & -\frac{13}{9}    & -\frac{5}{3}\\
(3,2^3)           &    9 & 15    & 4   & \frac{1}{90}  & -\frac{29}{18}   & -\frac{6}{5}\\
(3^2,2^2)         &  18  &   24  & k+4 & \frac{1}{18}  &  -\frac{14}{9}  & -\frac{7}{6} \\
(3^2,2^2)         &  18  &   24  & k+8 & \frac{1}{18}  &  -\frac{14}{9}  & -\frac{11}{6} \\
(5,2^3)           &  17  &   7   &  6  & \frac{11}{714}& -\frac{163}{102}& -\frac{11}{7} \\
(3,2^4)           & 11   & 19    & 5   &\frac{25}{1254}&-\frac{109}{66}  & -\frac{26}{19} \\
(4,2^4)           & 16   & 14    & 5   & \frac{25}{336}&-\frac{79}{48}    & -\frac{10}{7} \\
\end{array}
\]
None of the above cases exists.
\end{lem}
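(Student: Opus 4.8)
The plan is to reduce the statement to a finite computation governed by the Zariski--Fujita decomposition. By Lemmas \ref{Lemma 5.1} and \ref{Lemma 5.6} together with the hypotheses $d_1=2$, $d_2=3$, the divisor $D$ is the star-shaped divisor with central $(-1)$-curve $B$ and three linear twigs: $Z_1$ a single $(-2)$-curve, $Z_2$ a single $(-3)$-curve (the value $d_2=3$ combined with Lemma \ref{Lemma 5.5}, which forces $Z_{21}^2\le -3$, leaves only the chain $[3]$), and $Z_3$ a linear chain whose component $Z_{31}$ meeting $B$ satisfies $Z_{31}^2\le -3$, again by Lemma \ref{Lemma 5.5}. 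First I would enumerate all admissible $Z_3$: using the bound $d_3\le 19$ from Lemma \ref{Lemma 5.2}~(d), the ordering $d(Z_3)\ge d_2=3$, and the sign condition of Lemma \ref{Lemma 3.5}, which here reads $d(D)=d_3-6\,\overline{d}_3<0$, one obtains exactly the chains appearing in the first two columns of Table $1$. For each of them $a=-d(D)$ is determined.

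Next, for each value of $a$ I would determine the admissible forks $E$ with $d(E)=a$. The divisibility restrictions of Lemma \ref{Lemma 5.3}~(i)--(iii) select the possible type $(2,2,n)$, $(2,3,3)$, $(2,3,4)$ or $(2,3,5)$, and the defining determinant equation then produces the finitely many forks of that type together with their number of components $\#E$; the families indexed by $k$ arise from inserting strings of $(-2)$-curves, which leave $d(E)$, the quantity $K_{\overline S}\cdot E$ (each $(-2)$-curve contributing $0$), and the bark invariants unchanged while increasing $\#E$. This fills the columns $\#E$, $b_2(\overline S)=\#D+\#E$ and $K_{\overline S}\cdot E$. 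Since $\overline S$ is rational, Noether's formula gives $K_{\overline S}^2=10-b_2(\overline S)$, and together with the self-intersections and adjacencies read off the dual graph (so that $(K_{\overline S}+D+E)\cdot C=\deg_C-2$ on each component $C$) this determines $(K_{\overline S}+D+E)^2$, the last column of Table $1$.

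With the table assembled, the exclusion proceeds in two steps. For every row with $(K_{\overline S}+D+E)^2\neq -2,-3$ I would compute $\Bk(D)^2$ from the twig inductances, verify $-2\le \Bk(D)^2\le -\tfrac{3}{14}$, and invoke Lemma \ref{Lemma 5.4} to reach a contradiction; for the $k$-families this leaves at most the two values of $k$ giving $(K_{\overline S}+D+E)^2\in\{-2,-3\}$. For the finitely many surviving rows, which are precisely those recorded in Table $1_{\mathrm{bis}}$, I would use the identity $(K_{\overline S}+D+E)^2=P^2+\Bk(D)^2+\Bk(E)^2$, valid because $(\overline S,D+E)$ is almost minimal and $D,E$ are disjoint, so $N=\Bk(D)+\Bk(E)$ with $\Bk(D)\cdot\Bk(E)=0$ and $P\cdot N=0$. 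Here $P^2$ comes from Lemma \ref{Lemma 5.2}~(b), $\Bk(E)^2$ from Lemma \ref{Lemma 5.3}~(vii), and $\Bk(D)^2$ from the twig data. In each case the value of $P^2+\Bk(D)^2+\Bk(E)^2$ disagrees with the directly computed $(K_{\overline S}+D+E)^2$ (for a $k$-family the identity forces $k$ to equal a non-integral rational), so no such surface exists.

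The main obstacle I anticipate is not conceptual but the bookkeeping: enumerating every chain $Z_3$ and every compatible fork $E$, including the $(-2)$-insertion families, and then carrying out the determinant and bark computations without error, since a single omitted configuration or a miscomputed inductance would leave a gap. The delicate points are establishing that the list of forks $E$ is exhaustive for each $a$, and checking that within a $k$-family the invariants $P^2$, $\Bk(D)^2$ and $\Bk(E)^2$ are genuinely independent of $k$, so that a single application of the Zariski identity disposes of the entire family at once.
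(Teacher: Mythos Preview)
Your plan is the paper's own: fix $B^2=-1$, $Z_1=[-2]$, $Z_2=[-3]$ via Lemmas \ref{Lemma 5.5} and \ref{Lemma 5.6}, enumerate $Z_3$, then $E$, apply Lemma \ref{Lemma 5.4}, and finish the survivors with the Zariski identity $(K+D+E)^2=P^2+\Bk(D)^2+\Bk(E)^2$.

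There is one small but genuine gap in your enumeration of $Z_3$. The constraints you list --- $d_3\le 19$, $d_3\ge d_2=3$, $d(D)=d_3-6\ol d_3<0$, and $Z_{31}^2\le -3$ --- do \emph{not} cut down to exactly the chains in Table~1. For instance $Z_3=[3,2]$ has $d_3=5$, $\ol d_3=2$, $d(D)=-7$, so $a=7$, and passes all your filters (including Lemma \ref{Lemma 5.0.1}), yet it is not in the table. What is missing is the lower bound $d_3\ge 7$, which the paper obtains from $\beta=P\cdot B=\tfrac16-\tfrac{1}{d_3}>0$; this positivity is forced because $P$ is nef and $P^2>0$ by Lemma \ref{Lemma 5.2}~(b). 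Once you insert this, your outline coincides with the paper's proof.
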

\begin{proof}
By Lemma \ref{Lemma 5.2}~(b), we have
\[
6d_3\left(\frac{1}{6}-\frac{1}{d_3}\right)^2 \leq \frac{3}{2}\ .
\]
>From this follows $d_3\leq 19$. Now we have
\[
\beta=B\cdot P=1-\frac{1}{d_1}-\frac{1}{d_2}-\frac{1}{d_3} > 0
\]
which implies $d_3>6$. So, $7\leq d_3\leq 19$.  By Lemma \ref{Lemma 5.6}, $B^2=-1$. Then $Z_2$ is a single $(-3)$-curve by
Lemma \ref{Lemma 5.5}. We find $d(D)=d_3-6\ol{d}_3$. It follows that $\ol{d}_3>1$, for otherwise
$d(D)>0$ as $d_3\geq 7$. By Lemma \ref{Lemma 5.5}, $Z_{31}^2 \leq -3$. We have also $Z_{31}^2>-7$ for
otherwise $\frac{\ol{d}_3}{d_3} \leq \frac{1}{6}$ which leads to $d(D)\geq 0$.

The first column of Table $1$ above  gives all the cases for $Z_3$. We omit most of the details, which are elementary and similar to the arguments given further on for other values of $d_1, d_2$. In each case the assumptions of
Lemma \ref{Lemma 5.4} are satisfied since $\Bk(D)^2=-\frac{1}{2}-\frac{1}{3}-\frac{\ol{d_3}}{d_3}=-\frac{5}{6}-\ol{ e_3}$ where $\ol{e_3}$ is the capacity of $Z_3$ and $0<\ol{e_3}<1$. In some cases, we have $(K+D+E)^2=-2$ or $-3$. We then compute $P^2, (\Bk(E))^2, \Bk(D)^2$, etc. in Table $1_{\rm bis}$. For all possibilities
$E$ is unique, except for $a=12$, $a=16$ and  $a=24$. These cases are explained in the graphs below, where a circle without assigned weight
corresponds to a $(-2)$-curve.

\raisebox{-85mm}{
\begin{picture}(140,85)(5,-30)
\unitlength=0.9mm
\put(5,60){\circle{1.8}}
\put(6,60){\line(1,0){18}}
\put(25,60){\circle*{1.8}}
\put(26,60){\line(1,0){18}}
\put(45,60){\circle{1.8}}
\put(25,59){\line(0,-1){8}}
\put(25,50){\circle{1.8}}
\put(22,66){$-3$}
\put(5,75){$a=12,\ (\Bk E)^2=-\frac{4}{3}$}

\put(70,60){\circle*{1.8}}
\put(71,60){\line(1,0){8}}
\put(80,60){\circle{1.8}}
\put(81,60){\line(1,0){4}}
\multiput(86,60)(1.3,0){6}{\circle*{0.2}}
\put(95,60){\line(1,0){4}}
\put(100,60){\circle{1.8}}
\put(101,60){\line(1,0){13}}
\put(115,60){\circle{1.8}}
\put(115,59){\line(0,-1){8}}
\put(115,50){\circle{1.8}}
\put(116,60){\line(1,0){13}}
\put(130,60){\circle{1.8}}
\put(67,66){$-4$}
\put(80,55){$\underbrace{\hspace{18mm}}_{k \ge 0}$}
\put(80,75){$a=12,\ (\Bk E)^2=-\frac{4}{3}$}

\put(50,20){\circle{1.8}}
\put(51,20){\line(1,0){8}}
\put(60,20){\circle*{1.8}}
\put(61,20){\line(1,0){8}}
\put(70,20){\circle{1.8}}
\put(71,20){\line(1,0){4}}
\multiput(76,20)(1.3,0){10}{\circle*{0.2}}
\put(90,20){\line(1,0){4}}
\put(95,20){\circle{1.8}}
\put(96,20){\line(1,0){8}}
\put(105,20){\circle{1.8}}
\put(105,19){\line(0,-1){8}}
\put(105,10){\circle{1.8}}
\put(106,20){\line(1,0){8}}
\put(115,20){\circle{1.8}}
\put(57,26){$-3$}
\put(70,15){$\underbrace{\hspace{23mm}}_{k \ge 0}$}
\put(55,35){$a=12,\ (\Bk E)^2=-\frac{5}{3}$}
\end{picture}}

\raisebox{-85mm}{
\begin{picture}(140,85)(5,-30)
\unitlength=0.9mm
\put(5,60){\circle{1.8}}
\put(6,60){\line(1,0){18}}
\put(25,60){\circle*{1.8}}
\put(26,60){\line(1,0){18}}
\put(45,60){\circle{1.8}}
\put(25,59){\line(0,-1){8}}
\put(25,50){\circle{1.8}}
\put(25,49){\line(0,-1){8}}
\put(25,40){\circle{1.8}}
\put(22,66){$-3$}
\put(5,75){$a=16,\ (\Bk E)^2=-\frac{19}{12}$}

\put(70,60){\circle*{1.8}}
\put(71,60){\line(1,0){8}}
\put(80,60){\circle{1.8}}
\put(81,60){\line(1,0){4}}
\multiput(86,60)(1.3,0){6}{\circle*{0.2}}
\put(95,60){\line(1,0){4}}
\put(100,60){\circle{1.8}}
\put(101,60){\line(1,0){13}}
\put(115,60){\circle{1.8}}
\put(115,59){\line(0,-1){8}}
\put(115,50){\circle{1.8}}
\put(116,60){\line(1,0){13}}
\put(130,60){\circle{1.8}}
\put(67,66){$-5$}
\put(80,55){$\underbrace{\hspace{18mm}}_{k \ge 0}$}
\put(80,75){$a=16,\ (\Bk E)^2=-\frac{5}{4}$}

\put(40,20){\circle{1.8}}
\put(41,20){\line(1,0){8}}
\put(50,20){\circle{1.8}}
\put(51,20){\line(1,0){8}}
\put(60,20){\circle*{1.8}}
\put(61,20){\line(1,0){8}}
\put(70,20){\circle{1.8}}
\put(71,20){\line(1,0){4}}
\multiput(76,20)(1.3,0){10}{\circle*{0.2}}
\put(90,20){\line(1,0){4}}
\put(95,20){\circle{1.8}}
\put(96,20){\line(1,0){8}}
\put(105,20){\circle{1.8}}
\put(105,19){\line(0,-1){8}}
\put(105,10){\circle{1.8}}
\put(106,20){\line(1,0){8}}
\put(115,20){\circle{1.8}}
\put(57,26){$-3$}
\put(70,15){$\underbrace{\hspace{23mm}}_{k \ge 0}$}
\put(55,35){$a=16,\ (\Bk E)^2=-\frac{7}{4}$}
\end{picture}}

\raisebox{-65mm}{
\begin{picture}(135,65)(0,0)
\unitlength=0.9mm

\put(5,45){\circle{1.8}}
\put(6,45){\line(1,0){8}}
\put(15,45){\circle{1.8}}
\put(16,45){\line(1,0){8}}
\put(25,45){\circle{1.8}}
\put(26,45){\line(1,0){8}}
\put(35,45){\circle{1.8}}
\put(36,45){\line(1,0){8}}
\put(45,45){\circle*{1.8}}
\put(46,45){\line(1,0){8}}
\put(55,45){\circle{1.8}}
\put(45,44){\line(0,-1){8}}
\put(45,35){\circle{1.8}}
\put(42,51){$-3$}
\put(25,60){$a=24$}

\put(70,45){\circle*{1.8}}
\put(71,45){\line(1,0){8}}
\put(80,45){\circle{1.8}}
\put(81,45){\line(1,0){4}}
\multiput(86,45)(1.3,0){6}{\circle*{0.2}}
\put(95,45){\line(1,0){4}}
\put(100,45){\circle{1.8}}
\put(101,45){\line(1,0){13}}
\put(115,45){\circle{1.8}}
\put(115,44){\line(0,-1){8}}
\put(115,35){\circle{1.8}}
\put(116,45){\line(1,0){13}}
\put(130,45){\circle{1.8}}
\put(67,51){$-7$}
\put(80,40){$\underbrace{\hspace{18mm}}_{k \ge 0}$}
\put(95,60){$a=24,\ (\Bk E)^2=-\frac{7}{6}$}

\put(20,15){\circle{1.8}}
\put(21,15){\line(1,0){8}}
\put(30,15){\circle{1.8}}
\put(31,15){\line(1,0){8}}
\put(40,15){\circle{1.8}}
\put(41,15){\line(1,0){8}}
\put(50,15){\circle{1.8}}
\put(51,15){\line(1,0){8}}
\put(60,15){\circle*{1.8}}
\put(61,15){\line(1,0){8}}
\put(70,15){\circle{1.8}}
\put(71,15){\line(1,0){4}}
\multiput(76,15)(1.3,0){6}{\circle*{0.2}}
\put(85,15){\line(1,0){4}}
\put(90,15){\circle{1.8}}
\put(91,15){\line(1,0){13}}
\put(105,15){\circle{1.8}}
\put(105,14){\line(0,-1){8}}
\put(105,5){\circle{1.8}}
\put(106,15){\line(1,0){13}}
\put(120,15){\circle{1.8}}
\put(57,21){$-3$}
\put(70,10){$\underbrace{\hspace{18mm}}_{k \ge 0}$}
\put(65,30){$a=24 \ (\Bk E)^2=-\frac{11}{6}$}
\end{picture}}

With these data,  we can check if the formula $(K+D+E)^2=P^2+(\Bk D)^2+(\Bk E)^2$ is satisfied and find
out that none of the cases satisfies the equality.
\end{proof}

\begin{lem}\label{Lemma 5.8}
{\rm (1)}\ Suppose that $d_1=2$ and $d_2=4$. Then the following cases in {\bf Table $2$} exhaust all
possibilities. Here $k$ denotes a non-negative integer.
\[
\begin{array}{l|c|c|c|c|c|c|c}
(-Z_{31}^2,\ldots) & d_3 &-d(D) & b_2(\ol{S}) & (\Bk D)^2 & (\Bk E)^2 & P^2 & (K+D+E)^2 \\ \hline
(3,3) & 8 & 8 & k+9 & -\frac{9}{8} & -\frac{3}{2} & \frac{1}{8} & 1-k \\
(3,2^2) & 7 & 10 & 12 & -\frac{41}{28} & -\frac{8}{5} & \frac{9}{140} & -3 \\
(3,2^3) & 9 & 14 & 12 & -\frac{55}{36} &- \frac{10}{7} & \frac{25}{252} &-2
\end{array}
\]
{\rm (2)}\ Suppose that $d_1=2$ and $d_2=5$. Then the following cases exhaust all possibilities.
\begin{enumerate}
\item[{\rm (i)}]
$Z_2$ is a $(-5)$-curve and $Z_3=Z_{31}+Z_{32}$ with $Z_{31}^2=-3$ and $Z_{32}^2=-2$.
\item[{\rm (ii)}]
$Z_2=Z_{21}+Z_{22}$ with $Z_{21}^2=-3$ and $Z_{22}^2=-2$. $Z_3$ is a $(-6)$-curve or $Z_3=Z_{31}
+Z_{32}$ with $Z_{31}^2=-3$ and $Z_{32}^2=-2$.
\end{enumerate}
None of the above cases exists.
\end{lem}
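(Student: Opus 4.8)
The plan is to specialize, in each of the two cases $(d_1,d_2)=(2,4)$ and $(2,5)$, the elimination scheme already carried out in Lemma \ref{Lemma 5.7}. I would first reduce the possibilities for the twigs to a short explicit list by means of the quadratic estimate of Lemma \ref{Lemma 5.2}(b), and then delete every entry either by the determinant bound of Lemma \ref{Lemma 5.0.1} or by a numerical inconsistency in the Zariski--Fujita decomposition.

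First I would bound $d_3$. Since $B^2=-1$ by Lemma \ref{Lemma 5.6} and $\beta=1-\frac1{d_1}-\frac1{d_2}-\frac1{d_3}>0$, the inequality $d_1d_2d_3\beta^2\le\frac32$ of Lemma \ref{Lemma 5.2}(b) reads $8d_3(\frac14-\frac1{d_3})^2\le\frac32$ when $d_2=4$, forcing $5\le d_3\le 9$, and $10d_3(\frac3{10}-\frac1{d_3})^2\le\frac32$ when $d_2=5$, forcing (with $d_3\ge d_2$) $d_3\in\{5,6\}$. Because $Z_1$ is a single $(-2)$-curve, Lemma \ref{Lemma 5.5} gives $Z_{21}^2\le -3$ and $Z_{31}^2\le -3$; hence $Z_2$ is a single $(-4)$-curve when $d_2=4$, and is either a $(-5)$-curve or the chain $(-3,-2)$ when $d_2=5$. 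Running through the finitely many chains $Z_3$ of determinant $d_3$ whose component meeting $B$ has self-intersection $\le -3$ and keeping only those with $d(D)<0$ (Lemma \ref{Lemma 3.5}) produces exactly the entries of the two tables.

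For each surviving pair I would compute $a=-d(D)$ from the determinant formula of Lemma \ref{Lemma 5.2}(a) and discard at once any configuration with $a\le 6$ by Lemma \ref{Lemma 5.0.1}; this removes case (2)(i), where $a=5$, and the $(-6)$-subcase of (2)(ii), where $a=4$. For the remaining configurations the possible forks $E$ with $d(E)=a$ are pinned down by Lemma \ref{Lemma 5.3} (bearing in mind that $E$ can occur in an infinite $(2,2,n)$-family whose $(-2)$-twig has length $k$, the source of the parameter $k$ in the tables); from $\#D$ and $\#E$ one gets $b_2(\ol S)=\#D+\#E$ and hence $K_{\ol S}^2=10-b_2(\ol S)$ by Noether's formula, so that $(K_{\ol S}+D+E)^2$ is computed from $K_{\ol S}\cdot D$, $K_{\ol S}\cdot E$ and the intersection matrix. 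Independently I would evaluate $P^2$ from Lemma \ref{Lemma 5.2}(b), $(\Bk D)^2$ as minus the sum of the inductances of the twigs, and $(\Bk E)^2$ from Lemma \ref{Lemma 5.3}(vii).

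The contradiction then comes from the identity $(K_{\ol S}+D+E)^2=P^2+(\Bk D)^2+(\Bk E)^2$, which the Zariski--Fujita decomposition forces for the counterexample. Since $K_{\ol S}+D+E$ is an integral divisor, its self-intersection must be an integer, yet in the listed cases the combinatorially computed right-hand side is either a non-integer or disagrees with the topological value of the left-hand side: for instance, in the $(3,2^3)$ row with $d_2=4$ one finds $P^2+(\Bk D)^2+(\Bk E)^2=\frac{25}{252}-\frac{55}{36}-\frac{10}7=-\frac{20}7\notin\Z$, and in the $(3,3)$ row $\frac18-\frac98-\frac32=-\frac52\notin\Z$, while the left-hand side equals $-2$ and $1-k$ respectively. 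Part (2) is disposed of the same way once the unique surviving configuration there ($Z_2=Z_3=(-3,-2)$, $a=15$) is tested against the admissible forks $E$ of types $(2,3,3)$ and $(2,3,5)$. I expect the main obstacle to be neither a deep step nor a single hard estimate, but the exhaustive and error-free bookkeeping: one must be certain that the enumeration of chains of given determinant with a prescribed end-component is complete, that Lemma \ref{Lemma 5.3} supplies every admissible fork $E$ (including the infinite families) for each value of $a$, and that any borderline configuration whose Zariski sum happens to come out an integer equal to the topological $(K_{\ol S}+D+E)^2$ is removed by a supplementary argument, for instance by producing a $\C$- or $\C^*$-fibration of $S_0$ and invoking Corollary \ref{Corollary 1.2.0.1} against $\lkd(S_0)=2$.
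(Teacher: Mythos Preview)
Your overall strategy is exactly the paper's: bound $d_3$ via Lemma \ref{Lemma 5.2}(b), force $Z_2$ (and the first component of $Z_3$) via Lemma \ref{Lemma 5.5}, list the finitely many chains, discard small $a$ by Lemma \ref{Lemma 5.0.1}, and then kill the rest by comparing the topological value of $(K+D+E)^2$ with $P^2+(\Bk D)^2+(\Bk E)^2$. For part (2) you also match the paper: case (i) gives $a=5$, the $(-6)$-subcase of (ii) gives $a=4$, and the surviving configuration $Z_2=Z_3=(-3,-2)$ with $a=15$ is eliminated (the paper uses Lemma \ref{Lemma 5.4}, since $(K+D+E)^2=-1\notin\{-2,-3\}$; your direct Zariski check gives $\tfrac{1}{30}-\tfrac{17}{10}-\tfrac{6}{5}=-\tfrac{43}{15}\neq -1$, which works equally well).

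There is, however, one concrete snag you should be aware of in part (1): for the row $(3,2^2)$ the Zariski--Fujita identity is \emph{satisfied}. With $d_3=7$, $a=10$, the unique admissible $E$ is the $(2,3,4)$-fork with $H^2=-2$, $R_2=(-3)$, $R_3=(-2,-2,-2)$, and one computes
\[
P^2+(\Bk D)^2+(\Bk E)^2=\tfrac{9}{140}-\tfrac{41}{28}-\tfrac{8}{5}=-3=(K+D+E)^2.
\]
So neither Lemma \ref{Lemma 5.4} (since $-3\in\{-2,-3\}$) nor the numerical Zariski comparison disposes of this case, and the paper's terse proof does not supply an alternative argument here either. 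Your fallback suggestion of exhibiting a $\C$- or $\C^*$-fibration on $S_0$ does not obviously apply: with $Z_1^2=-2$, $Z_2^2=-4$, $Z_{31}^2=-3$ there is no pair of $(-2)$-curves adjacent to $B$ to form a pencil of the $|Z_i+2B+Z_j|$ type, and the configurations of Lemma \ref{Lemma 4.2} are not visibly available. You should flag this row explicitly and either find a genuine supplementary argument (e.g.\ a more delicate fiber analysis, or a constraint coming from $|\Gamma|$ and the strict BMY inequality beyond what is tabulated) or note that the case remains open by the method of this section and is only settled by the independent argument of Section 6.
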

\begin{proof}
(1)\ In the case $d_1=2$ and $d_2=4$, we have
\[
8d_3\left(\frac{1}{4}-\frac{1}{d_3}\right)^2 \leq \frac{3}{2}\quad \mbox{and}\quad
\beta=P\cdot B=\frac{1}{4}-\frac{1}{d_3} > 0\ .
\]
Hence we have $5 \leq d_3 \leq 9$. By Lemma \ref{Lemma 5.5}, $Z_2$ consists of a single $(-4)$-curve.
Since $B^2=-1$ by Lemma \ref{Lemma 5.6}, we find $d(D)=2d_3-8\ol{d}_3$. This implies $\ol{d}_3>1$, for
otherwise $d(D)>0$ as $d_3\geq 5$.

(2)\ In the case $d_1=2$ and $d_2=5$ we have
\[
10d_3\left(\frac{3}{10}-\frac{1}{d_3}\right)^2 \leq \frac{3}{2}\ ,
\]
whence $d_3=5$ or $6$ as $d_2 \le d_3$. Suppose $\#Z_2=1$. Then $\#Z_3>1$, for otherwise $d(D)>0$.
In view of Lemma \ref{Lemma 5.5} we have only one case to consider, which is $Z_{31}^2=-3$ and $Z_{32}^2=-2$.
Then $d_3=5$ and $a=-d(D)=5$. This is a contradiction by Lemma \ref{Lemma 5.0.1}. Suppose that $\#Z_2=2$.
Then $Z_{21}^2 < -2$ by Lemma \ref{Lemma 5.5}. Hence $Z_{21}^2=-3$ and $Z_{22}^2=-2$. If $Z_3$ is a single
$(-5)$-curve, then this is the case (i) with $Z_2$ and $Z_3$ interchanged. By Lemma \ref{Lemma 5.5},
$Z_{31}^2<-2$. So, either $Z_{31}^2=-3, Z_{32}^2=-2$ or $Z_3$ is a single $(-6)$-curve. In the first case,
we have $a=15$. So, $\#E=4$ and $K\cdot E=2$. We compute $b_2(\ol{S})=10, K^2=0$ and $(K+D+E)^2=-1$. This
is a contradiction by Lemma \ref{Lemma 5.4}. In the second case, $a=4$ which is a contradiction again
by Lemma \ref{Lemma 5.0.1}.
\end{proof}

The remaining case is $d_1=3$. In this case, we make a case by case argument.
\svskip
\begin{lem}\label{Lemma 5.9}
No case exists with $d_1=3$.
\end{lem}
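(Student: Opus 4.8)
The plan is to run the same machine as in the proofs of Lemmas \ref{Lemma 5.7} and \ref{Lemma 5.8}, now with $d_1=3$. By Lemmas \ref{Lemma 5.2} and \ref{Lemma 5.6} we have $B^2=-1$, $2<d_2\le 5$, $d_3\le 19$, and $\beta=B\cdot P=1-\tfrac13-\tfrac1{d_2}-\tfrac1{d_3}>0$. Substituting $d_1=3$ into the bound of Lemma \ref{Lemma 5.2}~(b),
\[
3d_2d_3\Bigl(1-\tfrac13-\tfrac1{d_2}-\tfrac1{d_3}\Bigr)^2\le \tfrac32,
\]
and estimating term by term as in Lemma \ref{Lemma 5.1}, I would first show that only the triples
\[
(d_1,d_2,d_3)\in\{(3,3,4),\,(3,3,5),\,(3,3,6),\,(3,4,4)\}
\]
survive: $d_2=5$ is ruled out because the inequality then forces $d_3\le 3<d_2$, while for $d_2\le 4$ the inequality together with $\beta>0$ pins $d_3$ to the listed values.

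Next, for each surviving triple I would determine the admissible shapes of the twigs $Z_1,Z_2,Z_3$. Each $Z_i$ is a contractible linear chain all of whose components have self-intersection $\le -2$ (it is a maximal twig of the MNC divisor $D$, contains no $(-1)$-curve since $B$ is the only branching $(-1)$-curve, and carries no $(0)$-curve by Corollary \ref{Corollary 1.2.0.1}); hence the Hirzebruch--Jung expansions with $d(Z_i)=d_i$ give finitely many chains, each with a known capacity $e_i=d(Z_i-C_i)/d_i$. I then apply three filters: the sign condition $\sum_i e_i>1$, equivalent to $d(D)<0$ (Lemma \ref{Lemma 3.5}), which also gives $a=-d(D)=3d_2d_3\bigl(\sum_i e_i-1\bigr)$; Lemma \ref{Lemma 5.5}, i.e.\ that if one twig with at most two components begins with a $(-2)$-curve then neither of the other two twigs may begin with a $(-2)$-curve; and, whenever two twigs begin with $(-2)$-curves so that a divisor supported on $D$ defines a $\BP^1$-fibration with a $2$- or $4$-section, the non-existence statement of Lemma \ref{Lemma 4.2}. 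Cases with $a\le 6$ or $a\in\{18,25,35\}$ are discarded immediately by Lemma \ref{Lemma 5.0.1}.

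For the short list that remains I would, exactly as in Tables $1$ and $2$, reconstruct $E$ from $a$ using the type/divisibility rules of Lemma \ref{Lemma 5.3}~(i)--(iv), then compute $\#E$, $b_2(\ol S)=\#D+\#E$, $K_{\ol S}^2=10-b_2(\ol S)$, $K_{\ol S}\cdot E$ and $(K_{\ol S}+D+E)^2$ from the dual graph, and independently $P^2$ (Lemma \ref{Lemma 5.2}~(b)), $\Bk(D)^2$ and $\Bk(E)^2$ (Lemma \ref{Lemma 5.3}~(vii)). The Zariski--Fujita decomposition forces the identity $(K_{\ol S}+D+E)^2=P^2+\Bk(D)^2+\Bk(E)^2$, and the goal is to show it fails in every case. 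Here Lemma \ref{Lemma 5.4} is the decisive shortcut: whenever $\Bk(D)^2$ lies in its admissible window (which holds in the surviving cases), it confines $(K_{\ol S}+D+E)^2$ to $\{-2,-3\}$, eliminating most configurations outright and leaving only a handful to be killed by direct substitution.

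The main obstacle I anticipate is the heavier bookkeeping compared with the $d_1=2$ lemmas: because now two of the twigs can be nontrivial chains, the enumeration under the capacity and Lemma \ref{Lemma 5.5} filters splits into more subcases, and each value of $a$ of type $(2,2,n)$ produces an infinite family of candidate boundaries $E$ indexed by $n$ (the integer $k$ in the tables). The delicate point is to treat these families uniformly: along such a family $(K_{\ol S}+D+E)^2$ and the bark contribution $\Bk(E)^2$ vary in a controlled, essentially linear, fashion, so the decomposition identity can hold for at most finitely many $n$, each of which is then excluded by inspection. I expect the rest of the verification to be routine arithmetic of the kind already carried out in Lemmas \ref{Lemma 5.7} and \ref{Lemma 5.8}.
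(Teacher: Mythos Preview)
Your plan is correct and mirrors the paper's proof almost exactly: the paper also reduces to the triples $(3,3,4),(3,3,5),(3,3,6),(3,4,4)$ via Lemma \ref{Lemma 5.2}~(b) (discarding $(3,3,3)$ since $\beta=0$), then enumerates the finitely many twig configurations using Lemma \ref{Lemma 5.5} and Lemma \ref{Lemma 5.0.1}, and kills each surviving case with Lemma \ref{Lemma 5.4} or the Zariski decomposition identity. One small correction: your trigger for invoking Lemma \ref{Lemma 4.2} is stated too narrowly---in the paper's treatment of $(3,4,4)$ the relevant $\BP^1$-fibration is $Z_2+4B+3Z_{31}+2Z_{32}+Z_{33}$ with $Z_2$ a single $(-4)$-curve and $Z_3$ a $(-2)$-chain, producing a $4$-section $Z_1$, so the fibration need not come from two twigs beginning with $(-2)$-curves.
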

\begin{proof}
Assume $d_1=3$.\\

{\sc Step 1.}\ By Lemma \ref{Lemma 5.2}~(b), we have
\begin{equation}\label{eqn 5.5}
3d_2d_3(\frac{2}{3}-\frac{1}{d_2}-\frac{1}{d_3})^2 \leq \frac{3}{2}\ .
\end{equation}
Since
\[
\beta=B\cdot P= \frac{2}{3}-\frac{1}{d_2}-\frac{1}{d_3} \geq \frac{2}{3}-\frac{2}{d_2} \geq 0\ ,
\]
(\ref{eqn 5.5}) yields
\[
3d_2^2(\frac{2}{3}-\frac{2}{d_2})^2\leq\frac{3}{2}\ .
\]
Hence $(d_2-3)^2 \leq \frac{9}{8}$ and we find $d_2=3$ or $4$. Now it is easy to determine $d_3$ and we
finally have to consider the following cases for the triple $(d_1,d_2,d_3)$ : $(3,3,3), (3,3,4),
(3,3,5), (3,3,6), (3,4,4)$. In the case $(3,3,3)$, $P^2=0$ by Lemma \ref{Lemma 5.2}~(b). Hence we can
exclude it.
\svskip

{\sc Step 2.}\ Suppose that $(d_1,d_2,d_3)=(3,3,4)$. Suppose that $Z_3$ is a single $(-4)$-curve.
Then one of $Z_1$ and $Z_2$, say $Z_1$, consists of two $(-2)$-curves, for otherwise $d(D)>0$. Then $Z_2$
is a single $(-3)$-curve by Lemma \ref{Lemma 5.5}. We find $d(D)=-9, b_2(\ol{S})=10, K^2=0, K\cdot D=2$
and $K\cdot E=1$. Hence $(K+D+E)^2=-1$. Since $(\Bk D)^2=-5/4$, this is a contradiction by Lemma \ref{Lemma 5.4}. Suppose that
$Z_3$ consists of $(-2)$-curves. Then $Z_1$ and $Z_2$ are single $(-3)$-curves by Lemma \ref{Lemma 5.5}.
We find $d(D)=-15, b_2(\ol{S})=10, K^2=0, K\cdot E=2$ and $K\cdot D=1$. Thus $(K+D+E)^2=-1$. Since $(\Bk D)^2=-17/12$, we come to a
contradiction as above.
\svskip

{\sc Step 3.}\ Suppose that $(d_1,d_2,d_3)=(3,3,5)$.  Suppose that $Z_3$ is a single $(-5)$-curve. Then
one of $Z_1$ and $Z_2$, say $Z_1$, consists of $(-2)$-curves, for otherwise $d(D)>0$. Then $Z_2$ is
a single $(-3)$-curve by Lemma \ref{Lemma 5.5}. We find $d(D)=-9$. Now $b_2(\ol{S})=10, K^2=0, K\cdot E=1$
and $K\cdot D=3$. Hence $(K+D+E)^2=0$, a contradiction by Lemma \ref{Lemma 5.4} since $(\Bk D)^2=-6/5$. Suppose that
$Z_{31}^2=-2$ and $Z_{32}^2=-3$. Then $Z_1$ and $Z_2$ are single $(-3)$-curves by Lemma \ref{Lemma 5.5}.
We find $d(D)=-12$. We have $3$ possibilities of $E$ in case $a=12$ whose dual graphs are listed in the proof of Lemma \ref{Lemma 5.8}.

In the first case, $\#E=4$, $K\cdot E=1$ and $(\Bk D)^2=-\frac{16}{15}$. We find $(K+D+E)^2=0$, which is
a contradiction by Lemma \ref{Lemma 5.4}. In the second case, $\#E=k+4$ and $K\cdot E=2$. Thus
$(K+D+E)^2=1-k$. We find $(\Bk E)^2=-\frac{4}{3}, (\Bk D)^2=-\frac{16}{15}$ and $P^2=\frac{1}{15}$. These data do not satisfy the formula for the Zariski decomposition. In the third case, $\#E=k+5, K\cdot E=1,
(\Bk E)^2=-\frac{5}{3}, (\Bk D)^2=-\frac{16}{15}$ and $P^2=\frac{1}{15}$.  This again gives a contradiction.
Suppose that $Z_{31}^2=-3$ and $Z_{32}^2=-2$. If $Z_1$ and $Z_2$ are single $(-3)$-curves, then $a=3$
which contradicts Lemma \ref{Lemma 5.0.1}. So, one of $Z_1$ and $Z_2$, say $Z_1$, is a single
$(-3)$-curve and the second one consists of $(-2)$-curves. Then $a=18$ and this again contradicts
Lemma \ref{Lemma 5.0.1}. Suppose finally that $Z_3$ consists of $(-2)$-curves. Then $Z_1$ and $Z_2$ are single $(-3)$-curves by Lemma \ref{Lemma 5.5}. We find $a=21$. In this case, the corresponding $E$ is of
type $(2,3,3)$ with $h=3$ and two $3$-twigs consisting of single $(-3)$-curves. Then we have $b_2(\ov S)=11$, $K\cdot E=3$, $K\cdot D=1$. Hence $(K+D+E)^2=-1$. Since $(\Bk E)^2=-\frac{22}{15}$ we get a contradiction by Lemma \ref{Lemma 5.4}.

\svskip

{\sc Step 4.}\ Suppose that $(d_1,d_2,d_3)=(3,3,6)$.  Suppose that $Z_3$ is a single $(-6)$-curve. Then
one of $Z_1, Z_2$ is a single $(-3)$-curve and the second one consists of $(-2)$-curves. We find $a=9$, $K\cdot E=1$, $K\cdot D=4$, $b_2(\ov S)=10$. We find $(K+D+E)^2=1$;  a contradiction since $(\Bk E)^2=-\frac{7}{6}$. Suppose that $Z_3$ consists of $(-2)$-curves. Then both $Z_1,Z_2$ are single
$(-3)$-curves by Lemma \ref{Lemma 5.5}. We find $a=27$. Then $E$ is of type $(2,3,3)$ with $h=3$ and two
$3$-twigs consisting of a single $(-3)$-curve and a chain of two $(-2)$-curves. Hence $b_2(\ov S)=13$, $K\cdot E=1$, $K\cdot D=1$. We find $(K+D+E)^2=-4$; a contradiction by Lemma \ref{Lemma 5.4} since $(\Bk E)^2=-\frac{3}{2}$.

\svskip

{\sc Step 5.}\ Suppose that $(d_1,d_2,d_3)=(3,4,4)$. Suppose that $Z_1$ is a single $(-3)$-curve.
If $Z_2$ and $Z_4$ are $(-4)$-curves, then $d(D)=8$, which contradicts Lemma \ref{Lemma 3.5}. Suppose
that $Z_2$ and $Z_3$ are chains of $(-2)$-curves of length $3$. Then $a=40$, and $E$ is of type $(2,2,n)$
by Lemma \ref{Lemma 5.3}. Then we compute
\begin{eqnarray*}
(K+D+E)^2 &=& P^2+(\Bk D)^2+(\Bk E)^2 \\
&=&\frac{1}{30}-\frac{11}{6}-\left(1+\frac{1}{10n}+\frac{\wt{n}}{n}\right) = -\frac{28n+1+\wt{n}}{10n}\ .
\end{eqnarray*}
Since $(K+D+E)^2$ is an integer, $n$ divides $\wt{n}+1$. Since $n \ge \wt{n}+1$, we have $n=\wt{n}+1$,
which implies that $n$-chain of $E$ consists of $(-2)$-curves. Then $(K+D+E)^2=-\frac{29}{10}$, a
contradiction. Hence one of $Z_2$ and $Z_3$, say $Z_2$, is a single $(-4)$-curve and the second one consists
of $(-2)$-curves. Let $F=Z_2+4B+3Z_{31}+2Z_{32}+Z_{33}$. $F$ gives rise to a $\PP^1$-ruling of $\ov S$. $D-\Supp (F)=Z_1$ and $Z_1$ is a 4-section of the ruling. We get contradiction with Lemma \ref{Lemma 4.2}~(c).

Suppose next that $Z_1$ consists of $(-2)$-curves. Then $Z_2$ and $Z_3$ are single $(-4)$-curves
by Lemma \ref{Lemma 5.5}. We find $a=8$. By Lemma \ref{Lemma 5.3}, the corresponding $E$ is of type
$(2,2,n)$ with $\ol{n}+2=n(h-1)$. This implies that $h=2$ and $n=\ol{n}+2$. Hence $E$ has the dual
graph as given below.

\raisebox{-35mm}{
\begin{picture}(70,35)(-60,-15)
\unitlength=0.9mm
\put(5,15){\circle*{1.8}}
\put(6,15){\line(1,0){8}}
\put(15,15){\circle{1.8}}
\put(16,15){\line(1,0){4}}
\multiput(26,15)(1.3,0){6}{\circle*{0.2}}
\put(30,15){\line(1,0){4}}
\put(35,15){\circle{1.8}}
\put(36,15){\line(1,0){13}}
\put(50,15){\circle{1.8}}
\put(50,14){\line(0,-1){8}}
\put(50,5){\circle{1.8}}
\put(51,15){\line(1,0){13}}
\put(65,15){\circle{1.8}}
\put(2,21){$-3$}
\put(15,10){$\underbrace{\hspace{18mm}}_{k \ge 0}$}
\put(30,27){$a=8$}
\end{picture}}

Then we have
\[
P^2+(\Bk E)^2+(\Bk D)^2=\frac{1}{6}-\frac{3}{2}-\frac{7}{6}
\]
which is not an integer. Hence this case does not take place. This completes the first proof
of Theorem 1.
\end{proof}

\section{Second proof of Theorem 1}

It follows from Lemma \ref{Lemma 4.3} that $D$ is {\em star-shaped}. We now use some results from the
theory of normal affine surfaces with a $\C^*$-action such that there is a unique fixed point and
the closure of any orbit passes through the fixed point. We say that the $\C^*$-action is {\em good}.

Now we can find a normal affine surface $V$ with a good $\C^*$-action satisfying the following
conditions:
\begin{enumerate}
\item[(1)]
The $\C^*$-action has a unique fixed point $p$.
\item[(2)]
$V$ has an MNC-completion $\ol{V}$ such that the boundary divisor $\Delta:= \ol{V}\setminus V$ is a star-shaped
divisor whose weighted dual graph is the same as that of $D$.
\item[(3)]
The $\C^*$-action on $V$ extends to $\ol{V}$.
\end{enumerate}
For the existence of such a surface $V$, see \cite[Thm. 2.1]{P}. Recall, with our initial set-up and
notations, that $H_1(S'\setminus \{q\},\Z)$ is finite and isomorphic to the divisor class group of $S'$.
We use $V$ to analyze the maximal abelian unramified covering $T \to S'\setminus \{q\}$. Let $U$ be a small
contractible neighborhood of $q$ in $S'$. From the isomorphism $H_1(U\setminus \{q\},\Z) \to H_1(S'\setminus \{q\},\Z)$, we deduce,
by making use of covering space theory, that the normalization $T'$ of $S'$ in the function field
of $T$ is obtained by adding a point $p$ to $T$ which lies over $q$. Since $K_{S'}$ is a torsion divisor
in the divisor class group of $S'$, we see that $K_T$ is a trivial line bundle and $p$ is a rational
double point. Let $\ol{T}$ be an MNC-completion of $T'$ which is smooth outside $p$. Set
$\wt{D}:=\ol{T}\setminus T'$. Let $N$ be a suitable small tubular neighborhood of $D$ in $\ol{S}$. Its inverse
image $\wt{N}$ is a small tubular neighborhood of $\wt{D}$ in $\ol{T}$. The natural map
$\wt{N}\setminus \wt{D} \to N\setminus D$ is a topological covering of degree $a$, which is equal to the order of
$H_1(S'\setminus \{q\},\Z)$. It is well-known that $N$ is obtained by the process of plumbing of disc-bundles on the
irreducible components of $D$, and the $4$-manifold with boundary, $N$, is uniquely determined by
the weighted dual graph of $D$. It follows that $\Delta$ has a tubular neighborhood $N_{\infty}$ in
$\ol{V}$ which is $C^{\infty}$-diffeomorphic to $N$. The first homology groups $H_1(S'\setminus \{q\},\Z), H_1(N\setminus D,\Z),
H_1(V\setminus \{p\},\Z), H_1(N_{\infty}\setminus \Delta,\Z)$ are all naturally isomorphic. We consider the maximal abelian
unramified covering $W' \to V\setminus \{p\}$. Let $W$ be the normalization of $V$ in the function field of $W'$.
It is easy to see that $W$ admits a good $\C^*$-action such that $W \to V$ is an equivariant morphism.
$W$ has an MNC-completion $X$ such that $X$ is smooth outside $W$ and $\wt{\Delta}:=X\setminus W$ is a star-shaped
divisor. This implies that $\wt{D}$ is also star-shaped. To see this, we may use a result about plumbed
$4$-manifolds $M$ which says that if $M$ is not a lens space, the boundary $3$-manifold $\partial M$
determines the weighted divisor \cite[Theorem 5.1]{Ne}. In our situation, even the fundamental group of
the $3$-manifold $\partial \wt{N}$ is infinite, hence it is certainly not a lens space.

\begin{remark}\label{Remark 6.1}{\em
It is possible to give an elementary proof that $\wt{D}$ is star-shaped without using Neumann's result.}
\end{remark}

As in the proof of Lemma \ref{Lemma 2.15}, there is a canonical divisor $K_{\ol{T}}$ supported on
$\wt{D}$. Let $\wt{D}_0$ be the unique branch curve in $\wt{D}$. Since $\chi(S_0)=0$, we have $\chi(T)=0$.
If $\wt{D}_0$ is a rational curve, then we see easily that $H_1(\wt{N}\setminus \wt{D},\Z)$ is finite and
hence by a suitable Lefschetz theorem $H_1(T',\Z)$ is also finite \cite[Corollary 2.3]{N}. This easily
implies that $T'$ is a $\Q$-homology plane which is Gorenstein. Then we arrive at a contradiction as in
the proof of Lemma \ref{Lemma 2.15}.
\svskip

Now we assume that $\wt{D}_0$ is not rational. As in the proof of Lemma \ref{Lemma 2.15}, we see that
there is an effective canonical divisor supported on $\wt{D}$. Since $\wt{D}$ does not contain
a $(-1)$-curve, we see that a minimal resolution of singularities $\wt{T}$ of $\ol{T}$ has
no $(-1)$-curve. In fact, if $C$ is a $(-1)$-curve, $C$ is not contained in $\wt{D}$ or in the the
exceptional locus $\wt{E}$. So, $C\cdot \wt{D} > 0$, which contradicts $C\cdot K_{\wt{T}} =-1$. Hence
$\wt{T}$ is a smooth minimal projective algebraic surface. By a similar reason, it also follows that
$\wt{T}$ is not ruled or rational. First we prove the following result.

\begin{lem}\label{Lemma 6.2}
If $p$ is a smooth point of $T'$, then Theorem 1 is true.
\end{lem}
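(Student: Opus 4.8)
The plan is to show that the smoothness of $p$ forces a contradiction, so that no counterexample with $p$ smooth can exist. Since $p$ is smooth we have $\wt{T}=\ol{T}$, and $T'=\ol{T}\setminus\wt{D}$ is a smooth affine surface; because $K_T$ is trivial and $p$ is a codimension-two smooth point, $K_{T'}\cong\mathcal{O}_{T'}$. Moreover $\lkd(T')=\lkd(T)=\lkd(S_0)=2$, since $T\to S_0$ is finite étale and deleting the interior smooth point $p$ does not change the logarithmic Kodaira dimension. As recorded just above the statement, $\ol{T}$ is minimal and neither ruled nor rational, so $K_{\ol{T}}$ is nef, and by the argument of Lemma \ref{Lemma 2.15} together with \cite[Lemma 4.1]{GS} there is an effective canonical divisor $K_{\ol{T}}=\sum_i a_i\wt{D}_i$ with all $a_i\ge 0$, supported on $\wt{D}$.

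First I would analyze the central curve $\wt{D}_0$, which is smooth of some genus $g_0\ge 1$ (we are in the case where $\wt{D}_0$ is not rational). Adjunction gives $K_{\ol{T}}\cdot\wt{D}_0=2g_0-2-\wt{D}_0^2$, and nefness of $K_{\ol{T}}$ yields $\wt{D}_0^2\le 2g_0-2$. On the other hand the maximal twigs of $\wt{D}$ are contractible chains of rational curves, carrying the same weighted arms as those of $D$, so their intersection form is negative definite; since $T'$ is affine, $\wt{D}$ supports an ample divisor and hence cannot be negative definite. As in Lemma \ref{Lemma 2.7.1} this forces $d(\wt{D})<0$, which in the present star-shaped configuration with negative-definite arms is only possible when $\wt{D}_0^2\ge 0$.

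Combining the two inequalities, for $g_0=1$ we get $\wt{D}_0^2=0$: then $\wt{D}_0$ is a nef curve of arithmetic genus one and self-intersection zero on the non-rational surface $\ol{T}$, hence a fibre of an elliptic fibration $\ol{T}\to B$. The arms of $\wt{D}$ meet $\wt{D}_0$ and are negative definite, so they are vertical, and all of $\wt{D}$ lies in fibres; a general fibre is then a complete curve disjoint from $\wt{D}$, i.e. a complete curve inside the affine surface $T'$, which is absurd. Equivalently, the fibration restricts to a $\C^*$- or $\A^1$-fibration on $T'$, whence $\lkd(T')\le 1$ by the easy addition theorem, contradicting $\lkd(T')=2$. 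The case $\wt{D}_0^2<0$ is impossible, as it would make $\wt{D}$ negative definite and $T'$ non-affine.

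The remaining and hardest case is $g_0\ge 2$ with $\wt{D}_0^2>0$, where $\ol{T}$ is forced to be of general type and the elementary fibration argument is unavailable. Here I would argue numerically: compute $\chi(T')=\chi(T)+1=a\,\chi(S_0)+1=1$, use the BMY inequality of Lemma \ref{Lemma 1.3} in its $P^2$-form to bound $P^2\le 3\chi(T')=3$ for $P$ the nef part of $K_{\ol{T}}+\wt{D}$, and combine this with Noether's formula $\chi_{\mathrm{top}}(\ol{T})=12\chi(\mathcal{O}_{\ol{T}})-K_{\ol{T}}^2$ and the topological identity $\chi_{\mathrm{top}}(\ol{T})=\chi(T')+\chi(\wt{D})=\#\wt{D}+2-2g_0$ to pin down the invariants of $\ol{T}$ and reach an impossibility. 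Alternatively, a Riemann--Hurwitz count for the abelian covering $\wt{D}_0\to B\cong\BP^1$ branched at the $r$ points where the arms are attached should already force $g_0=1$, eliminating this case outright. I expect this final step -- excluding a general-type $\ol{T}$ whose central curve has positive genus and positive self-intersection -- to be the main obstacle.
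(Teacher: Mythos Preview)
You have missed the point of the lemma entirely. Recall how $T'$ was constructed: $T\to S_0=S'\setminus\{q\}$ is the \emph{maximal abelian} unramified covering, so its Galois group is $H_1(S_0,\Z)\cong\Gamma_{\mathrm{ab}}$, where $\Gamma$ is the local fundamental group at $q$. Over a small punctured neighbourhood $U\setminus\{q\}$ the map $T\to S_0$ restricts to the cover corresponding to the commutator subgroup $[\Gamma,\Gamma]\subset\Gamma=\pi_1(U\setminus\{q\})$. If $p$ is a smooth point of $T'$, a punctured ball around $p$ is simply connected; hence $[\Gamma,\Gamma]=1$ and $\Gamma$ is abelian. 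But $\Gamma$ is a small finite subgroup of $\GL(2,\C)$, and a small abelian such subgroup is cyclic (diagonalize; any noncyclic abelian group would contain a pseudoreflection). Thus $q$ is a cyclic quotient singularity and Theorem~1 holds. That is the paper's proof, and it is two sentences long.

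Your approach, by contrast, tries to reach a contradiction by analysing the global geometry of $\ol{T}$; this is the strategy the paper reserves for the case where $p$ is \emph{not} smooth (Lemma~6.3 and after). Beyond being unnecessary here, your argument has a genuine error: from $d(\wt D)<0$ with negative-definite arms you conclude $\wt D_0^2\ge 0$, but the determinant formula for a star only gives
\[
-\wt D_0^2 \;<\; \sum_i \frac{\ol d(Z_i)}{d(Z_i)} \;<\; r,
\]
so merely $\wt D_0^2>-r$. Your elliptic-fibration step for $g_0=1$ then rests on an inequality you do not actually have, and the $g_0\ge 2$ case is, as you yourself note, left open. None of this machinery is needed once you remember what the covering $T\to S_0$ was designed to do.
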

\begin{proof}
In this case the local $\pi_1$ at $q$ is a finite abelian group. This implies that the local
$\pi_1$ is actually finite cyclic by the well-known results about small finite subgroups of $\GL(2,\C)$.
Hence Theorem 1 is proved if $p$ is a smooth point.
\end{proof}

In view of this result, we will assume that $p$ is not a smooth point.

\begin{lem}\label{Lemma 6.3}
$\wt{T}$ is a surface of general type.
\end{lem}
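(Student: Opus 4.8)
The plan is to invoke the Enriques--Kodaira classification: since $\wt T$ is minimal and, as just noted, neither rational nor ruled, one has $\kappa(\wt T)\in\{0,1,2\}$, and it suffices to exclude $\kappa(\wt T)=0$ and $\kappa(\wt T)=1$. Three inputs would be used throughout. First, minimality and non-ruledness make $K_{\wt T}$ nef, while the effective canonical divisor supported on $\wt D$ gives $p_g(\wt T)\ge 1$; write $K_{\wt T}=\Delta\ge 0$ with $\Delta$ supported on $\wt D$. Second, $T'$ is finite over the affine surface $S'$, hence affine, so $\wt T\setminus\wt D$ is a resolution of an affine surface; consequently $\wt D$ supports an ample divisor on $\ol T$ and is big on $\wt T$. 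Third, since $\wt T\setminus(\wt D\cup\wt E)\cong T$ and $T\to S_0$ is \'etale of degree $a$ with $\chi(S_0)=\chi(S')-1=0$, we get $\chi(T)=0$ and therefore
\[
\chi(\wt T)=\chi(\wt D)+\chi(\wt E)=\#\wt D+\#\wt E+2-2g ,
\]
where $g=p_a(\wt D_0)\ge 1$ and I have used that $\wt D,\wt E$ are trees of rational curves apart from the non-rational branch curve $\wt D_0$.

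To rule out $\kappa(\wt T)=0$, note that then $K_{\wt T}\equiv 0$ and $\wt T$ is a K3, Enriques, abelian, or bielliptic surface. Because $p$ is assumed non-smooth (Lemma~\ref{Lemma 6.2}), $\wt E\ne\emptyset$ consists of rational $(-2)$-curves; an abelian or bielliptic surface contains no rational curve, and an Enriques surface has no effective canonical divisor ($p_g=0$), so all three are excluded. In the remaining K3 case $\chi(\wt T)=24$ while $\rank\Pic(\wt T)\le 20$. Here I would check that the intersection matrix of $\wt D\cup\wt E$ is non-degenerate --- $\wt E$ is negative definite, and since $\wt D$ supports an ample divisor the form on the span of $\wt D$ has signature $(1,\#\wt D-1)$ --- so the components of $\wt D\cup\wt E$ are numerically independent (the analogue of the statement for $S'$ used in the main text) and $\rank\Pic(\wt T)\ge\#\wt D+\#\wt E$. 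Combining with the displayed identity yields $\#\wt D+\#\wt E=22+2g\ge 24>20$, a contradiction.

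The case $\kappa(\wt T)=1$ is the main obstacle. Now $K_{\wt T}$ is nef with $K_{\wt T}^2=0$; writing $K_{\wt T}=\Delta=\sum a_iC_i$ and using $0=K_{\wt T}^2=\sum a_i(K_{\wt T}\cdot C_i)$ with each $K_{\wt T}\cdot C_i\ge 0$ forces $K_{\wt T}\cdot C_i=0$ for every component of $\Delta$. Let $g:\wt T\to B$ be the (unique, hence canonical) elliptic fibration. By the canonical bundle formula $K_{\wt T}$ is vertical, so $\Delta$ lies in fibres; since on a relatively minimal elliptic surface every fibre component is a $(-2)$-curve, the non-rational curve $\wt D_0$ cannot lie in a fibre and must be horizontal for $g$, and it is this horizontal part that carries the bigness of $\wt D$. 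The plan is then to exploit that $g$ is preserved by the deck group $G\cong H_1(S_0;\Z)$ (it acts on $T$, extends to $\wt T$, and the pencil is unique), so that $g$ descends to a fibration of $S_0$; analysing its general fibre --- an elliptic curve punctured along $\wt D\cup\wt E$, taken modulo $G$ --- I would show it becomes a curve of log-Kodaira dimension $\le 0$, whence $\lkd(S_0)\le 1$, contradicting $\lkd(S_0)=2$. I expect the delicate point to be pinning down how the horizontal boundary and the exceptional curves $\wt E$ meet the fibres, i.e. locating the punctures on the descended fibre; the good $\C^*$-action on the auxiliary cover $W$, together with Neumann's plumbing calculus already invoked, should supply the structure needed to control this.
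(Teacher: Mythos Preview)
Your treatment of $\kappa(\wt T)=0$ is broadly reasonable, though the independence claim for the components of $\wt D$ on a putative K3 needs more care than ``the form on $\wt D$ has signature $(1,\#\wt D-1)$'': supporting an ample divisor does not by itself force the intersection matrix of $\wt D$ to be non-degenerate, and ``I would check'' is not a proof. The paper sidesteps this by a different route: it argues that on a K3 the components of $\wt D$ are independent in homology, whence $T'$ is a $\Q$-homology plane and therefore rational, contradicting the K3 hypothesis.

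The genuine gap is your $\kappa(\wt T)=1$ case, which you yourself flag as incomplete. You do not need to descend the elliptic fibration to $S_0$, invoke the deck group, or appeal to the auxiliary $\C^*$-surface at all. The paper's argument is two lines. For a general elliptic fibre $F$ one has $K_{\wt T}\cdot F=0$ by adjunction. On the other hand, once the $\kappa=0$ cases are excluded, $K_{\wt T}$ is not numerically trivial; combined with the Lemma~\ref{Lemma 2.15}/\cite[Lemma~4.1]{GS} dichotomy for an effective canonical divisor supported on the star-shaped $\wt D$, this forces $K_{\wt T}$ to be a \emph{strictly positive} combination of all components of $\wt D$. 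Since $T'$ is affine, $\wt D$ supports an ample divisor and hence $F$ meets $\wt D$; as $F$ is nef and every component of $\wt D$ appears in $K_{\wt T}$ with positive coefficient, $K_{\wt T}\cdot F>0$. Contradiction. Note that your own observation that $K_{\wt T}$ is vertical while $\wt D_0$ is horizontal is already almost there: it says the coefficient of $\wt D_0$ in $K_{\wt T}$ vanishes, and strict positivity immediately contradicts this --- no descent needed.
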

\begin{proof}
We use the classification of minimal algebraic surfaces. $\wt{T}$ cannot be an abelian surface since
$p$ is not a smooth point and the exceptional divisor obtained by resolving $p$ is a tree of rational
curves contained in $\wt{T}$. Suppose that $\wt{T}$ is a $K$-3 surface. Then $\wt{T}$ is simply-connected.
It is not difficult to see that the components of $\wt{D}$ are rationally independent in the homology of
$\wt{T}$. From this we see that $T'$ is a $\Q$-homology plane, and arrive at a contradiction as $\wt{T}$
is rational. Now it follows that $K_{\wt{T}}$ is not trivial, and hence a strictly positive divisor
supported on $\wt{D}$. Suppose that $\wt{T}$ is an elliptic surface. Let $F$ be a general fiber of
the elliptic fibration. Then $F^2+K\cdot F=0$ and $F^2=0$. Hence $K\cdot F=0$. But $\wt{D}$ being ample,
$F$ has to meet $\wt{D}$, so that $K\cdot F>0$. This proves the result.
\end{proof}

Now $K_{\wt{T}}^2>0$. We will now use the BMY-inequality. Using $\chi(T')=0$ and the arguments
so far, we can see that the pair $(\wt{T},\wt{D})$ is strongly minimal in the sense of \cite[4.9]{Mi}.
This is proved in the same way as Lemma \ref{Lemma 1.3} (see also the proof of Lemma \ref{Lemma 7.2}
below). Note that $p$ is a rational double point and accordingly $\wt{E}^\#=0$. Now the BMY-inequality
gives
\[
(K_{\wt{T}}+\wt D^{\#})^2 \leq \frac{3}{|G|}\ ,
\]
where $G$ is the local $\pi_1$ at $p$ and $|G|$ is at least $2$. Note that $K_{\wt{T}}+\wt{D}^\#$ is
written as $K_{\wt{T}}+\wt{D}_0+\wt{D}'$, where $\wt{D}_0$ is the branching component of $\wt{D}$
and $\wt D'$ is an effective $\Q$-divisor supported on the maximal twigs of $\wt{D}$. Hence
$(K_{\wt{T}}+\wt{D}^\#)\cdot \wt{D}'=0$. Expanding the LHS, we have
\begin{eqnarray*}
{\rm LHS} &=& (K_{\wt{T}}+\wt{D}^\#)\cdot(K_{\wt{T}}+\wt{D}_0) \\
&=& K_{\wt{T}}^2+2K_{\wt{T}}\cdot\wt{D}_0+\wt{D}_0^2+\wt{D}_0\cdot \wt{D}'\ ,
\end{eqnarray*}
where $\wt{D}_0\cdot \wt{D}' \geq 0$. Clearly $K_{\wt{T}}^2>0$ is an integer by Lemma \ref{Lemma 6.3}.
Since $K_{\wt{T}}$ is nef, $K_{\wt{T}}\cdot \wt{D}_0\geq 0$ and  $K_{\wt{T}}\cdot \wt{D}'$. But $\wt{T}$ is a minimal surface of
general type, whence $K_{\wt{T}}\cdot\wt{D}_0=0$ would imply that $\wt{D}_0$ is a $(-2)$-curve.
But this is not true. Therefore, $K_{\wt{T}}\cdot\wt{D}_0>0$. Thus, the LHS is at least $2$. This
contradicts the BMY-inequality. Hence $p$ is a smooth point and Theorem 1 is proved.

\section{Proof of Theorem 2}

We shall deduce Theorem 2 from Theorem 1 and the following result, where the assumptions are slightly more general than in Theorem 2, but which otherwise is essentially a restatement of Theorem 2 without the conclusion that $\Aut(V)$ is cyclic.

\begin{thm}\label{Theorem 7.1}
Let $X$ be a smooth homology plane of general type. Let $G$ be a nontrivial subgroup of
$\Aut(X)$. Then the following assertions hold.
\begin{enumerate}
\item[{\rm (1)}]
There exists a unique fixed point $P$ of $X$ under the $G$-action and $G$ acts freely on $X\setminus \{P\}$.
\item[{\rm (2)}]
$G$ is a small finite subgroup of $\GL(2,\C)$ (in the induced action on the tangent space at $P$).
\item[{\rm (3)}]
The quotient surface $X'=X/G$ has a unique singular point $Q$ and the local fundamental group at $Q$ is $G$.
\item[{\rm (4)}] If $X$ is contractible, equivalently if $X$ is simply connected, then  $\pi_1(X_0)=G$, where $X_0 = X'\setminus \{Q\}$.
\end{enumerate}
\end{thm}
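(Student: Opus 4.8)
The plan is to reduce everything to assertion (1) and then prove (1) by a fixed-point analysis. First I would record the reductions. Since $\lkd(X)=2$, $X$ is of log general type, so $\Aut(X)$ is finite and hence $G$ is a finite group. The quotient $X':=X/G$ is a normal affine surface which, $X$ being smooth, carries at worst quotient singularities, and since $H_i(X';\Q)\cong H_i(X;\Q)^{G}=0$ for $i>0$ it is a $\Q$-homology plane. I claim assertions (2)--(4) are formal consequences of (1). Indeed, if $G$ fixes a unique point $P$ and acts freely on $X\setminus\{P\}$, then $G=G_P$, the action is linearizable at $P$ by Cartan's lemma, the induced representation $G\hookrightarrow\GL(2,\C)$ is faithful, and freeness off $P$ forbids pseudo-reflections, so $G$ is small (assertion (2)). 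Near $Q:=\pi(P)$ the surface $X'$ is analytically $\C^2/G$, so $Q$ is its only singular point, with local fundamental group $G$ (assertion (3)); and if $X$ is simply connected, then $X\setminus\{P\}$ is again simply connected (a point has real codimension $4$), so the free quotient $X\setminus\{P\}\to X_0$ is a universal covering and $\pi_1(X_0)=G$ (assertion (4)).

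For the core assertion (1), I would work under the no--pseudo-reflection hypothesis (no nontrivial element of $G$ fixes a curve), addressed below. Then every point with nontrivial stabilizer maps to a genuine, hence singular, quotient point of $X'$, and the complement $X\setminus F$ of the finite fixed locus $F$ maps \emph{étale} onto $X_0:=X'\setminus\Sing X'$. Removing finitely many points leaves $\lkd$ unchanged, so $\lkd(X\setminus F)=\lkd(X)=2$, and since $\lkd$ is invariant under finite étale covers I obtain $\lkd(X_0)=2$; by \cite[Lemma 3.2]{GKMR} the surface $X'$ then has at most one singular point, a quotient singularity. Next I invoke Smith theory: any $\Z/p\le G$ acts with fixed set that is $\F_p$-acyclic, in particular nonempty and connected, and (containing no curve) a single point; hence $\Sing X'\neq\emptyset$, so it is exactly one point $Q$. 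Now fix a prime $p\mid|G|$, a Sylow $p$-subgroup $P_p$, and a central $\Z/p\le P_p$; as $P_p$ centralizes this subgroup it preserves its unique fixed point, so $P_p$ itself fixes a point $x_p\in\pi^{-1}(Q)$. All stabilizers of points of the single orbit $\pi^{-1}(Q)$ are conjugate, so, writing $H$ for this common stabilizer, $|P_p|\mid|H|$ for every $p$, whence $|G|\mid|H|$ and $H=G$. Therefore $\pi^{-1}(Q)$ is a single point $P$, fixed by all of $G$, and $G$ acts freely on $X\setminus\{P\}$, which is assertion (1); (2)--(4) follow as above.

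The hard part will be the no--pseudo-reflection property, together with the resulting identity $\lkd(X_0)=2$. If some $g$ of prime order $p$ fixed a curve $C$ pointwise, Smith theory would force $X^{\langle g\rangle}=C\cong\A^1$ and the quotient $X/\langle g\rangle$ to be a smooth $\Q$-homology plane, with $X\to X/\langle g\rangle$ a cyclic degree-$p$ cover branched exactly along the image of $C$. I would rule this out by combining $\Pic(X)=0$ (from $\Z$-acyclicity) with the finiteness of $\Pic(X/\langle g\rangle)$ and the general-type hypothesis, which together obstruct such a branched covering. The finiteness of $\Aut(X)$ used at the outset is the statement that a smooth affine surface of log general type has finite automorphism group, obtained from the action on its logarithmic pluricanonical model. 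I expect the exclusion of pseudo-reflections to be the principal difficulty, since the purely homological data ($\chi$ and Smith theory) are consistent with a fixed $\A^1$ and only the general-type geometry eliminates it.
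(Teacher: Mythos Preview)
Your reductions of (2)--(4) to (1) match the paper's Step~4 closely, and your identification of the pseudo-reflection exclusion as the crux is exactly right. However, the argument you sketch for that exclusion is not the paper's and is not obviously complete. You propose to obstruct a cyclic $p$-cover $X\to X/\langle g\rangle$ branched along the image of $C\cong\A^1$ by combining $\Pic(X)=0$ with finiteness of $\Pic(X/\langle g\rangle)$; but a cyclic $p$-cover branched along a divisor $C'$ only requires $[C']$ to be $p$-divisible in $\Pic$, and a finite Picard group can certainly contain $p$-divisible classes, so this is not a contradiction without further input. The paper instead proves directly (Lemma~7.2) that $X$ contains no contractible curve at all: if $C\cong\A^1\subset X$, then $Y=X\setminus C$ is affine of general type with $\chi(Y)=0$, and passing to an almost minimal model $Z$ (which differs from $Y$ by removing disjoint affine lines, so $\chi(Z)\le 0$) contradicts the strict BMY inequality $\chi(Z)>0$. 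This is the missing ingredient in your outline; once you have it, Lemma~7.3 (your Smith-theory step) immediately rules out pseudo-reflections.

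For the uniqueness of the fixed point you take a genuinely different route from the paper. You invoke \cite[Lemma~3.2]{GKMR} to bound $\#\Sing X'\le 1$ and then run a Sylow/centre argument to force the common stabiliser to be all of $G$. This works, but it imports a nontrivial external result. The paper's Step~3 is more elementary and self-contained: once pseudo-reflections are excluded, the map $X\setminus F\to X'\setminus\{Q_1,\dots,Q_r\}$ is \'etale, so $\chi(X)-\sum n_i=|G|(\chi(X')-r)$; with $\chi(X)=\chi(X')=1$ and $n_im_i=|G|$ this gives $r-1=\sum 1/m_i-1/|G|<r/2$, forcing $r=1$ and $m_1=|G|$ in one line. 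Your approach buys nothing extra here and costs the dependence on \cite{GKMR}; the Euler-characteristic count is both shorter and avoids that dependence.
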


We need the following lemmas.

\begin{lem}\label{Lemma 7.0}
Let $X$ and $G$ be as above. Then the quotient $X'=X//G$ is $\Z$-acyclic.
\end{lem}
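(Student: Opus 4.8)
The plan is to prove $\Z$-acyclicity by establishing acyclicity over every field of coefficients and then assembling via the universal coefficient theorem. First I would record that $G$ is finite: since $X$ is of general type the group $\Aut(X)$ is finite (the automorphism group of a variety of log general type is finite), so $X'=X//G$ exists as a normal affine surface, and being a finite quotient of a smooth surface it has at worst (isolated) quotient singularities. Before the homological work it is worth noting the geometric reason acyclicity can survive: as $X$ is $\Z$-acyclic we have $\chi(X)=1$, so the topological Lefschetz fixed point formula gives $L(g)=\sum_i(-1)^i\,\mathrm{tr}(g^\ast\mid H^i(X;\Q))=1$ for every $g\in G$; hence $\chi(X^g)=1$ and every $g$ has nonempty fixed locus, so the $G$-action is very far from free.

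The core is purely topological and combines the transfer with Smith theory. Over $\Q$ the averaging isomorphism $H^\ast(X';\Q)\cong H^\ast(X;\Q)^G$ shows $X'$ is $\Q$-acyclic, and the identical transfer argument gives $\F_p$-acyclicity whenever $p\nmid|G|$. For a prime $p\mid|G|$ I would proceed as follows. Choose a $p$-Sylow subgroup $P\le G$. Since $X$ is $\F_p$-acyclic and $P$ is a $p$-group acting on the finite-dimensional (finitistic) space $X$, Smith theory yields that $X//P$ is $\F_p$-acyclic: passing to a central $\Z/p\le P$ one reduces by induction on $|P|$ to the classical statement that the orbit space of a $\Z/p$-action on an $\F_p$-acyclic finitistic space is again $\F_p$-acyclic (note the hypothesis itself forces the fixed set to be nonempty, so no free $\Z/p$ can occur). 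Finally the natural finite map $q\colon X//P\to X//G$ carries a transfer $\tau$ with $\tau\circ q^\ast=[G:P]\cdot\mathrm{id}$ on $H^\ast(X';\F_p)$; as $[G:P]$ is prime to $p$ this makes $q^\ast$ split injective, exhibiting $H^\ast(X';\F_p)$ as a direct summand of the $\F_p$-acyclic $H^\ast(X//P;\F_p)$. Hence $X'$ is $\F_p$-acyclic for this $p$ as well.

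Putting these together, $H_\ast(X';\Z)$ is finitely generated (the affine surface $X'$ has the homotopy type of a finite complex), and $H_n(X';\F_p)=0$ for every prime $p$ and every $n>0$ forces, via the universal coefficient theorem, that $H_n(X';\Z)$ has rank zero and no $p$-torsion for any $p$, whence $\widetilde H_\ast(X';\Z)=0$. Thus $X'$ is $\Z$-acyclic. Equivalently, one may phrase the conclusion through Lemma \ref{Lemma 1.12}: the rational transfer already presents $X'$ as a $\Q$-homology plane with only quotient singularities, so all that remains is $H_1(X';\Z)=0$, and the Smith-theoretic input above is precisely what annihilates the (a priori $|G|$-torsion) group $H_1(X';\Z)$.

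I expect the genuinely delicate step to be the case $p\mid|G|$: the transfer is useless there, and one must invoke Smith theory for $p$-group actions on $\F_p$-acyclic spaces, together with a check that $X$ satisfies the mild finiteness (finitistic, finite cohomological dimension) hypotheses — both immediate for a smooth affine surface. Everything else, namely the finiteness of $G$, the rational and prime-to-$|G|$ transfer, and the final assembly by universal coefficients, is routine.
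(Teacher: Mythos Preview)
Your proposal is correct. The paper's own proof is a one-line citation: it observes that $X$ is triangulable and finite-dimensional and then invokes \cite[Chapter~III, Theorem~5.4]{Br}, which states directly that the orbit space of a finite group action on a $\Z$-acyclic finitistic space is again $\Z$-acyclic. What you have done is essentially reproduce the proof of that theorem: the transfer argument over $\Q$ and over $\F_p$ for $p\nmid|G|$, the reduction via a $p$-Sylow subgroup and Smith theory for $p\mid|G|$, and the final assembly by universal coefficients is precisely the strategy behind Bredon's result. So the approaches are the same in substance; the paper treats the statement as a black box while you have unpacked it. Your version has the advantage of being self-contained and of isolating exactly where Smith theory enters (only at primes dividing $|G|$), at the cost of some length; the paper's version is terse but relies on the reader locating the right theorem in \cite{Br}.
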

\begin{proof} This is a standard result in Smith Theory of finite group actions on simplicial complexes. It is
well-known that any complex algebraic variety is triangulable \cite{Gi}. Clearly the corresponding simplicial complex
is finite dimensional. Since $X$ is $\Z$-acyclic, by (\cite{Br}, Chapter 3, Theorem 5.4) $X/G$ is also $\Z$-acyclic.
\end{proof}

\begin{lem}\label{Lemma 7.2}
Let $X$ be as above. Then there are no contractible curves on $X$.
\end{lem}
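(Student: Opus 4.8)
The plan is to remove $C$ from $X$ and play the resulting Euler characteristic against the strict Bogomolov--Miyaoka--Yau inequality. Here a \emph{contractible curve} means a curve $C\subset X$ that is contractible as a topological space (the instance arising in the proof of Theorem~\ref{Theorem 7.1} via Smith theory is a smooth rational curve $C\cong\A^1$ fixed by a nontrivial element of $G$); in particular $\chi(C)=1$. Set $U:=X\setminus C$.

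First I would record the two Euler characteristics. Since $X$ is a $\Z$-homology plane it is $\Z$-acyclic, so $\chi(X)=1$. By additivity of the topological Euler characteristic for complex algebraic varieties, $\chi(U)=\chi(X)-\chi(C)=1-1=0$.

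Next I would control the Kodaira dimension of $U$. As the complement of a closed curve in the smooth affine surface $X$, the surface $U$ is again smooth and affine. Logarithmic Kodaira dimension does not decrease when one passes to an open subset, so $\lkd(U)\ge\lkd(X)=2$ and hence $\lkd(U)=2$. Choosing an almost-minimal SNC completion of $U$ (which exists by the peeling theory of \cite{Mi}), I would apply the strict BMY inequality of Lemma~\ref{Lemma 1.3}(ii). Because $U$ is smooth there are no singular points in the open part, so the correction term $\sum_p 1/|G_p|$ is empty and the inequality reads $\chi(U)>0$. This contradicts $\chi(U)=0$, so no contractible curve can exist.

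The computation of $\chi(U)$ and the monotonicity $\lkd(U)\ge\lkd(X)$ are routine; the step that needs care is verifying that the hypotheses of Lemma~\ref{Lemma 1.3} genuinely apply to $U$ --- namely that $U$ is quasi-affine with $\lkd(U)\ge 0$ and admits an almost-minimal model with no interior singularities --- so that one is entitled to the \emph{strict} form of the inequality in case~(ii). This is precisely where the argument uses that $X$ is of general type rather than merely $\Z$-acyclic, as the example $\A^1\subset\C^2$ (with $\C^2\setminus\A^1\cong\A^1\times\C^*$, $\chi=0$ but $\lkd=-\infty$) shows the conclusion fails without the hypothesis $\lkd(X)=2$.
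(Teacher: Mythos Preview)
Your approach is the same as the paper's: remove $C$, observe $\chi(X\setminus C)=0$ and $\lkd(X\setminus C)=2$, and contradict the strict BMY inequality. There is, however, one genuine gap you have not quite closed. Lemma~\ref{Lemma 1.3} is stated for an \emph{almost minimal} pair, and the Euler characteristic appearing in the inequality is that of the open part of the almost minimal model, not of $U$ itself. When you pass from a smooth SNC compactification $(\ol{U},T)$ of $U$ to an almost minimal model $(\ol{U}',T')$, the open part $\ol{U}'\setminus T'$ may differ from $U$, and by Lemma~\ref{Lemma 1.9} its Euler characteristic can in principle go up as well as down. So the inequality you are entitled to is $\chi(\ol{U}'\setminus T')>0$, not $\chi(U)>0$.

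The paper closes this gap by invoking \cite{GM}: since $Y=X\setminus C$ is affine, the almost minimal model $Z$ is an \emph{open subset} of $Y$ and $Y\setminus Z$ is a disjoint union of affine lines; hence $\chi(Z)\le\chi(Y)=0$, and then BMY on $Z$ gives $\chi(Z)>0$, the desired contradiction. Your final paragraph correctly flags the passage to the almost minimal model as the delicate point, but the issue is not merely verifying the hypotheses of Lemma~\ref{Lemma 1.3}; it is controlling how $\chi$ changes under that passage, and that requires the cited structural result (or an equivalent argument) rather than just the existence of an almost minimal model.
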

\begin{proof}
Suppose that there exists a contractible curve $C$ on $X$. Set $Y=X\setminus C$. Then $Y$ is a smooth affine
surface of general type. Let $Z$ be an almost minimal model of $Y$. Since $Y$ is affine, $Z$ is
an open set of $Y$ and $Y\setminus Z$ is a disjoint union of affine lines \cite{GM}. Then $\chi(Z) \le
\chi(Y) = 0$. Meanwhile, the BMY-inequality implies $\chi(Z) > 0$, which is a contradiction.
\end{proof}

\begin{lem}\label{Lemma 7.3}
Let $X$ be a homology plane with an effective action of a cyclic group $H$ of prime order.
Then we have :
\begin{enumerate}
\item[{\rm (1)}]
The fixed point locus $X^H$ is connected.
\item[{\rm (2)}]
If $\dim X^H=1$ then $X^H$ is an affine line.
\end{enumerate}
\end{lem}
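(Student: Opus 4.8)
The plan is to deduce both statements from Smith theory for actions of a group of prime order, exactly as in the proof of Lemma \ref{Lemma 7.0}. Write $p=|H|$, so $H\cong \Z/p$. Since $X$ is a smooth homology plane it is $\Z$-acyclic, hence $\widetilde{H}_*(X;\Z/p)=0$, and being a complex algebraic variety it is triangulable \cite{Gi} and finite-dimensional. First I would apply the basic theorem of Smith theory (\cite[Chapter 3, Theorem 5.4]{Br}, already used for Lemma \ref{Lemma 7.0}) to the $\Z/p$-action on $X$: it yields that the fixed-point set $X^H$ is mod-$p$ acyclic, i.e. $\widetilde{H}_*(X^H;\Z/p)=0$. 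In particular $\widetilde{H}_0(X^H;\Z/p)=0$, so $X^H$ is nonempty and connected, which is exactly assertion (1).

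For assertion (2), I would first record two structural facts about $X^H$. It is smooth: since $H$ is finite and we work over $\C$, the action can be linearized near any fixed point, so $X^H$ is locally the fixed subspace of a linear action and hence smooth. It is also closed in the affine surface $X$, so it is affine. Being connected by (1) and smooth, $X^H$ is irreducible, hence of pure dimension; thus if $\dim X^H=1$ it is a smooth irreducible affine curve $C$. I would then convert the mod-$p$ acyclicity of $C$ into its identification with the affine line. Writing $C=\overline{C}\setminus\{q_1,\dots,q_n\}$ with $\overline{C}$ its smooth projective model of genus $g$ and $n\ge 1$, the curve $C$ is homotopy equivalent to a wedge of $2g+n-1$ circles, so $H_1(C;\Z/p)\cong(\Z/p)^{2g+n-1}$. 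The vanishing $\widetilde{H}_*(C;\Z/p)=0$ forces $2g+n-1=0$, whence $g=0$ and $n=1$; therefore $C\cong\A^1$, proving (2).

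The only points needing care are the two structural facts used in the second step: that $X^H$ is smooth (so that a one-dimensional $X^H$ really is a smooth affine curve to which the topological count applies), and that it has pure dimension one (which follows from the connectedness in (1) together with smoothness, since a connected smooth variety is irreducible). I do not expect either to be a genuine obstacle; the substantive content is carried entirely by the Smith-theoretic acyclicity of $X^H$, which is the same input already invoked in Lemma \ref{Lemma 7.0}, and the remainder is a routine homotopy computation for open Riemann surfaces.
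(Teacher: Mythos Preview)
Your argument is correct. The paper does not actually give a proof: it simply cites \cite[Lemma 1.5]{MS}. You have supplied the standard Smith-theoretic argument, which is almost certainly what lies behind that citation as well; the mod-$p$ acyclicity of $X^H$ gives (1) immediately, and your homotopy computation for an open Riemann surface cleanly finishes (2).

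One small remark: you begin by writing ``since $X$ is a smooth homology plane,'' but the lemma as stated does not include smoothness in its hypotheses (the paper's running definition of homology plane is only ``normal affine''). This does not affect part (1), where only triangulability and $\Z/p$-acyclicity are used, and it is harmless for the application since Theorem~\ref{Theorem 7.1} assumes $X$ smooth. But if you want the proof to match the lemma's stated generality, you would either add smoothness as a hypothesis or note that the linearization step in (2) uses it.
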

\begin{proof}
See \cite[Lemma 1.5]{MS}.
\end{proof}
\svskip
\textit{Proof} of Theorem \ref{Theorem 7.1}. It is well known that $\Aut(X)$ is finite (since $\overline\kappa(X)=2$).
We now proceed in several steps.
\svskip

{\sc Step 1.}\ By Lemma \ref{Lemma 7.0} $\chi(X')=1$. We  claim that there is a point $P\in X$ with nontrivial isotropy group. In fact, otherwise the
quotient morphism $q : X \to X'$ is an \'etale finite morphism. Hence $\chi(X)=|G|\chi(X')$. Since
$\chi(X)=\chi(X')=1$ this is a contradiction.
\svskip

{\sc Step 2.}\ Let $P \in X$ be a point with nontrivial isotropy group $H_P$. Then the tangential
representation of $H_P$ on $T_{X,P}$ does not contain a pseudo-reflection. In fact, if an element $h$
is a pseudo-reflection, we may assume that $h$ has prime order $p$. Let $H$ be the cyclic group
generated by $h$. Then $\dim X^H=1$. Now $X^H$ is an affine line by Lemma \ref{Lemma 7.3}, and
this is impossible by Lemma \ref{Lemma 7.2}. This implies that $X^{H_P}$ is a single point.
\svskip

{\sc Step 3.}\ Let $Q=q(P)$. Then $q^{-1}(Q)$ consists of $n$ points $P_1, \ldots, P_n$,
where $P_1=P$ and $n=[G : H_P]$. We shall show that there is a unique point $P$ with nontrivial isotropy
group, which becomes a fixed point under $G$. In fact, let $\{P_{11}, \ldots, P_{1n_1},P_{21}, \ldots,
P_{2n_2}, \ldots, P_{r1}, \ldots, P_{rn_r}\}$ exhaust all points of $X$ with nontrivial isotropy groups,
where $P_{i1}, \ldots, \\ P_{in_i}$ have conjugate isotropy groups for $1 \le i \le r$. Then
$P_{i1}, \ldots, P_{in_i}$ are mapped to the same point $Q_i$ of $X'$. Since
$q : X\setminus \{P_{11}, \ldots, P_{rn_r}\} \to X'\setminus \{Q_1, \ldots, Q_r\}$ is an \'etale finite morphism,
we have the equality $\chi(X)-(n_1+\cdots+n_r)=|G|(\chi(X')-r)$. Let $m_i$ be the order of the isotropy
group of $P_{i1}$. Then $n_im_i=|G|$. Since $\chi(X)=\chi(X')=1$, we deduce the equality
\[
r-1=\frac{1}{m_1}+\cdots+\frac{1}{m_r}-\frac{1}{|G|}\ .
\]
Since $m_i \ge 2$, we have
\[
r-1 \le \frac{r}{2}-\frac{1}{|G|} < \frac{r}{2}\ .
\]
Hence $r=1$ since $r > 0$ by step 1. So, $r=1$ and $m_1=|G|$, i.e. there is a unique isotropy
group and it is a $G$-fixed point. This proves the assertion (1).
\svskip

{\sc Step 4.}\ We shall prove the remaining assertions. Let $\rho : G \To GL(2,\C)$ be the
tangential representation at the point $P$. Then $\rho$ is injective. In fact, if $g$ is in the kernel
of $\rho$, then $g$ acts trivially on the completion of $\mathcal{O}_{X,P}$, and hence on $\SO_{X,P}$.
Hence $g$ is the identity. By step 2, $G$ is a small finite subgroup of $GL(2,\C)$. Let $Q=q(P)$.
Then $q : X\setminus \{P\} \To X'\
\setminus \{Q\}$ is a Galois \'etale covering with group $G$.  Also, $q$ induces a covering $U \To U'$, where $U$ is a simply connected punctured neighborhood of $P$ and $U'$ a punctured neighborhood of $Q$. This proves (3). Under the assumptions of (4), $X\setminus \{P\}$ is simply connected, so $\pi_1 (X'\setminus \{Q\})=G$.
$\hfill \Box$
\svskip

\textit{Proof} of Theorem 2. All statements except that $\Aut(V)$ is cyclic follow from Theorem \ref{Theorem 7.1}. In particular, $\Aut(V)$ is the local fundamental group at $q$. Now $V'=V/\Aut(V)$ is $\Z$-acyclic by Lemma \ref{Lemma 7.0} and $\lkd(V'\setminus\{q\})=\lkd(V\setminus\{p\})=\lkd(V)=2$. Hence $\Aut(V)$ is cyclic by Theorem 1.
$\hfill \Box$

\svskip

\med\noin
R.V. Gurjar, School of Mathematics, Tata Institute for Fundamental Research, 400005 Homi Bhabha Road,
Mumbai, India; \\ e-mail: gurjar@math.tifr.res.in

\med\noin
M. Koras, Institute of Mathematics, Warsaw University, ul. Banacha 2, Warsaw, Poland; \\
e-mail: koras@mimuw.edu.pl

\med\noin
M. Miyanishi, Research Center for Mathematical Sciences, Kwansei Gakuin University, Hyogo 669-1337,
Japan;\\
e-mail: miyanisi@kwansei.ac.jp

\med\noin
P. Russell, Department of Mathematics and Statistics, McGill University, Montreal, Canada;\\
e-mail: russell@math.mcgill.ca
\end{document}